\numberwithin{equation}{subsection}
\setlist[enumerate]{leftmargin=0.2in}
\setlist[itemize]{leftmargin=0.2in}
\definecolor{popblue}{RGB}{55,115,255}
\definecolor{lightbl}{RGB}{155,205,255}
\definecolor{depthbl}{RGB}{145,215,255}
\definecolor{fancyre}{RGB}{225,55,115}
\definecolor{lightgr}{RGB}{230,255,230}
\definecolor{darkgre}{RGB}{25,105,25}
\definecolor{darkblu}{RGB}{15,75,185}
\definecolor{mellowy}{RGB}{225,225,35}
\renewcommand{\tilde}[1]{\widetilde{#1}}
\renewcommand{\Bar}{\overline}
\renewcommand{\S}{\mathbb{S}}
\newcommand{\R}{\mathbb{R}}
\newcommand{\N}{\mathbb{N}}
\newcommand{\Z}{\mathbb{Z}}
\newcommand{\C}{\mathbb{C}}
\newcommand{\T}{\mathbb{T}}
\newcommand{\imp}{\;\Rightarrow\;}
\newcommand{\m}{\mathrm}
\newcommand{\lv}{\lVert}
\newcommand{\rv}{\rVert}
\newcommand{\al}{\alpha}
\newcommand{\es}{\varnothing}
\newcommand{\ep}{\varepsilon}
\newcommand{\f}{\frac}
\newcommand{\sig}{\sigma}
\newcommand{\gam}{\gamma}
\newcommand{\del}{\delta}
\newcommand{\pd}{\partial}
\newcommand{\grad}{\nabla}
\newcommand{\bpm}{\begin{pmatrix}}
\newcommand{\epm}{\end{pmatrix}}
\renewcommand{\le}{\leqslant}
\renewcommand{\ge}{\geqslant}
\newcommand{\bnorm}[1]{\Big\lv#1\Big\rv}
\newcommand{\tnorm}[1]{\lv#1\rv}
\newcommand{\bp}[1]{\Big(#1\Big)}
\renewcommand{\sp}[1]{\big(#1\big)}
\newcommand{\tp}[1]{(#1)}
\newcommand{\babs}[1]{\Big|#1\Big|}
\newcommand{\tabs}[1]{|#1|}
\newcommand{\bsb}[1]{\Big[{#1}\Big]}
\newcommand{\tsb}[1]{[{#1}]}
\newcommand{\bcb}[1]{\Big\{{#1}\Big\}}
\newcommand{\tcb}[1]{\{{#1}\}}
\providecommand{\tbr}[1]{\langle #1 \rangle}
\renewcommand{\bf}[1]{\mathbf{#1}}
\newcommand{\ii}{\mathsf{i}}
\DeclareMathOperator{\supp}{supp}
\DeclareMathOperator{\sech}{sech}
\newtheorem{introthmcounter}{Theorem}
\declaretheoremstyle[
    headfont=\bfseries\rmfamily\color{popblue}, bodyfont=\normalfont, %bodyfont=\itshape,
    mdframed={
        linewidth=0.8pt,
        %rightline=false, topline=false, bottomline=false,
        linecolor=lightbl, backgroundcolor=lightbl!5,
        skipabove=4pt,
    }
]{blueStyle}
\declaretheoremstyle[
    headfont=\bfseries\rmfamily\color{popblue}, 
    bodyfont=\itshape,
]{boringStyle}
\declaretheorem[style=boringStyle, numberwithin=section, name=Proposition]{propC}
\declaretheorem[style=boringStyle, sibling=propC, name=Lemma]{lemC}
\declaretheorem[style=boringStyle, sibling=propC, name=Theorem]{thmC}
\declaretheorem[style=boringStyle, sibling=propC, name=Corollary]{coroC}
\declaretheorem[style=boringStyle, sibling=propC, name=Remark]{rmkC}
\declaretheoremstyle[
    headfont=\bfseries\rmfamily\color{darkgre}, bodyfont=\normalfont, %bodyfont=\itshape,
    mdframed={
        linewidth=0.8pt,
        %rightline=false, topline=false, bottomline=false,
        linecolor=darkgre!35, backgroundcolor=lightgr!7,
        skipabove=4pt,
    }
]{greenStyle}
\declaretheoremstyle[
    headfont=\bfseries\rmfamily\color{fancyre}, bodyfont=\normalfont, %bodyfont=\itshape,
    mdframed={
        linewidth=0.8pt,
        %rightline=false, topline=false, bottomline=false,
        linecolor=fancyre!35, backgroundcolor=mellowy!6,
        skipabove=4pt,
    }
]{redStyle}
\declaretheorem[style=boringStyle, sibling=propC, name=Definition]{defnC}
\declaretheorem[style=boringStyle, sibling=introthmcounter, name=Theorem]{introthm}
\DeclareFontFamily{U}{cbgreek}{}
\DeclareFontShape{U}{cbgreek}{m}{n}{
  <-6>    grmn0500
  <6-7>   grmn0600
  <7-8>   grmn0700
  <8-9>   grmn0800
  <9-10>  grmn0900
  <10-12> grmn1000
  <12-17> grmn1200
  <17->   grmn1728
}{}
\DeclareFontShape{U}{cbgreek}{bx}{n}{
  <-6>    grxn0500
  <6-7>   grxn0600
  <7-8>   grxn0700
  <8-9>   grxn0800
  <9-10>  grxn0900
  <10-12> grxn1000
  <12-17> grxn1200
  <17->   grxn1728
}{}
\DeclareRobustCommand{\Qoppa}{%
  \text{\usefont{U}{cbgreek}{\normalorbold}{n}\symbol{21}}%
}
\DeclareRobustCommand{\sampi}{%
  \text{\usefont{U}{cbgreek}{\normalorbold}{n}\symbol{27}}%
}
\DeclareRobustCommand{\Sampi}{%
  \text{\usefont{U}{cbgreek}{\normalorbold}{n}\symbol{23}}%
}
\newcommand{\normalorbold}{%
  \ifnum\pdf@strcmp{\math@version}{bold}=\z@ bx\else m\fi
}
\author{R\u{a}zvan-Octavian Radu}
\address{
	Department of Mathematics\\
	Princeton University\\
	Princeton, NJ 08544, USA
}
\email[R.-O. Radu]{rradu@math.princeton.edu}
\thanks{R.-O. Radu was partially supported by a Charlotte Elizabeth Procter Fellowship and by an NSF Grant (DMS \#2350252)}
\author{Noah Stevenson}
\address{
	Department of Mathematics\\
	Princeton University\\
	Princeton, NJ 08544, USA
}
\email[N. Stevenson]{stevenson@princeton.edu}
\thanks{N. Stevenson was supported by an NSF Graduate Research Fellowship}
\title[Desingularization of steady rotating vortex patches]{Desingularization of nondegenerate rotating vortex patches}
\subjclass[2020]{Primary 35Q35, 76B47; Secondary 35B25, 47J07, 76U99}
\keywords{Desingularization, vortex patches, Euler equations, steady rotating solutions, Newton's method}
\begin{document}
% ***(((***(((***(()%!% ***(((***(((***(()%!% ***(((***(((***(()%!% ***(((***(((***(()%!% ***(((***(((***(()%!% ***(((***(((***(()%!% ***(((***(((***(()%!% ***(((***(((***(()%!% ***(((***(((***(()%!% ***(((***(((***(()%!% ***(((***(((***(()%!% ***(((***(((***(()%!% ***(((***(((***(()%!% ***(((***(((***(()%!% ***(((***(((***(()%!% ***(((***(((***(()%!% ***(((***(((***(()%!% ***(((***(((***(()%!% ***(((***(((***(()%!% ***(((***(((***(()%!
\begin{abstract}
    This paper analyzes the space of steady rotating solutions to the two-dimensional incompressible Euler equations nearby vortex patch solutions satisfying a natural nondegeneracy condition. We address the question of desingularization and prove that such vortex patch states are the limit of rotating Euler solutions that are smooth to infinite order, have compact vorticity support, and respect dihedral symmetry. Our nondegeneracy condition is proved to be satisfied by Kirchhoff ellipses and along the local bifurcation curves emanating from the Rankine vortex.

    The construction, which is based on a local stream function formulation in a tubular neighborhood of the patch boundary, is a synthesis of delicate analysis on thin domains, nonlinear a priori estimates, and a custom version of Newton's method. Our techniques are robust enough to additionally allow us to construct exotic families of singular rotating vortex patch-like solutions nearby a given nondegenerate state. To the best of the authors' knowledge, this work constitutes the first desingularization procedure applicable to general families of steady rotating vortex patches. 
\end{abstract}
% ***(((***(((***(()%!% ***(((***(((***(()%!% ***(((***(((***(()%!% ***(((***(((***(()%!% ***(((***(((***(()%!% ***(((***(((***(()%!% ***(((***(((***(()%!% ***(((***(((***(()%!% ***(((***(((***(()%!% ***(((***(((***(()%!% ***(((***(((***(()%!% ***(((***(((***(()%!% ***(((***(((***(()%!% ***(((***(((***(()%!% ***(((***(((***(()%!% ***(((***(((***(()%!% ***(((***(((***(()%!% ***(((***(((***(()%!% ***(((***(((***(()%!% ***(((***(((***(()%!
\maketitle
% ***(((***(((***(()%!% ***(((***(((***(()%!% ***(((***(((***(()%!% ***(((***(((***(()%!% ***(((***(((***(()%!% ***(((***(((***(()%!% ***(((***(((***(()%!% ***(((***(((***(()%!% ***(((***(((***(()%!% ***(((***(((***(()%!% ***(((***(((***(()%!% ***(((***(((***(()%!% ***(((***(((***(()%!% ***(((***(((***(()%!% ***(((***(((***(()%!% ***(((***(((***(()%!% ***(((***(((***(()%!% ***(((***(((***(()%!% ***(((***(((***(()%!% ***(((***(((***(()%!
\section{Introduction}
% ***(((***(((***(()%!% ***(((***(((***(()%!% ***(((***(((***(()%!% ***(((***(((***(()%!% ***(((***(((***(()%!% ***(((***(((***(()%!% ***(((***(((***(()%!% ***(((***(((***(()%!% ***(((***(((***(()%!% ***(((***(((***(()%!% ***(((***(((***(()%!% ***(((***(((***(()%!% ***(((***(((***(()%!% ***(((***(((***(()%!% ***(((***(((***(()%!% ***(((***(((***(()%!% ***(((***(((***(()%!% ***(((***(((***(()%!% ***(((***(((***(()%!% ***(((***(((***(()%!

% ***(((***(((***(()%!% ***(((***(((***(()%!% ***(((***(((***(()%!% ***(((***(((***(()%!% ***(((***(((***(()%!% ***(((***(((***(()%!% ***(((***(((***(()%!% ***(((***(((***(()%!% ***(((***(((***(()%!% ***(((***(((***(()%!% ***(((***(((***(()%!% ***(((***(((***(()%!% ***(((***(((***(()%!% ***(((***(((***(()%!% ***(((***(((***(()%!% ***(((***(((***(()%!% ***(((***(((***(()%!% ***(((***(((***(()%!% ***(((***(((***(()%!% ***(((***(((***(()%!
\subsection{The Euler equations in the plane and in the disk, steady rotating solutions, and elliptical vortices}\label{subsection on opening}
% ***(((***(((***(()%!% ***(((***(((***(()%!% ***(((***(((***(()%!% ***(((***(((***(()%!% ***(((***(((***(()%!% ***(((***(((***(()%!% ***(((***(((***(()%!% ***(((***(((***(()%!% ***(((***(((***(()%!% ***(((***(((***(()%!% ***(((***(((***(()%!% ***(((***(((***(()%!% ***(((***(((***(()%!% ***(((***(((***(()%!% ***(((***(((***(()%!% ***(((***(((***(()%!% ***(((***(((***(()%!% ***(((***(((***(()%!% ***(((***(((***(()%!% ***(((***(((***(()%!

In this paper we study inviscid incompressible fluids whose motion is described by solutions to the two-dimensional Euler system of equations. Two fluid geometries are within the scope of our analysis: the planar case of a fluid defined on all of $\R^2$ and the disk case of a fluid confined to $\Bar{B(0,R)}$ for some $R>0$. To consolidate notation, we shall make the convention that $\Bar{B(0,R)} = \R^2$ and $\pd B(0,R) = \es$ when $R = \infty$.

In Eulerian variables, the system of Euler equations relates the velocity vector $u:\R^+\times\Bar{B(0,R)}\to\R^2$ and the scalar pressure $p:\R^+\times\Bar{B(0,R)}\to\R$ via the bulk equations
\begin{equation}\label{dynamical planar euler equations}
  \pd_t u + u\cdot\grad u + \grad p = 0\text{ and }\grad\cdot u = 0
\end{equation}
and, when $R<\infty$, the no-penetration boundary condition
\begin{equation}\label{dynamical no penetration condition}
    u(t,x)\cdot x/|x| = 0\text{ for all }x\in\pd B(0,R),\;t\in\R^+.
\end{equation}
As is well-known (see, e.g., \cite{MR861488,MR1867882,MR1217252,MR1245492,MR4475666}), the above equations~\eqref{dynamical planar euler equations} and~\eqref{dynamical no penetration condition} can be equivalently formulated in terms of the vorticity $\tilde{\omega}:\R^+\times\Bar{B(0,R)}\to\R$ and the stream function $\psi:\R^+\times\Bar{B(0,R)}\to\R$ via the bulk equations
\begin{equation}\label{dynamical planar Euler, vorticity stream function formulation}
  \pd_t\tilde{\omega} + \grad^\perp\psi\cdot\grad\tilde{\omega} = 0\text{ and }\Delta\psi = \tilde{\omega},
\end{equation}
where $\grad^\perp = \tp{-\pd_2,\pd_1}$ denotes the counterclockwise rotated gradient, and the boundary condition
\begin{equation}\label{vorticity stream no penetration condition}
    \grad^\perp\psi(t,x)\cdot x/|x| = 0\text{ for all }x\in\pd B(0,R),\;t\in\R.
\end{equation}

One recovers the velocity and pressure of~\eqref{dynamical planar euler equations} from the vorticity and stream function of~\eqref{dynamical planar Euler, vorticity stream function formulation} and~\eqref{vorticity stream no penetration condition} via the relationships
\begin{equation}\label{recovery identities}
  u = \grad^\perp\psi\text{ and }-\Delta p = \grad\cdot\grad\cdot\tp{u\otimes u}.
\end{equation}

We are interested in the class of \emph{steady rotating} solutions to equations~\eqref{dynamical planar Euler, vorticity stream function formulation} and~\eqref{vorticity stream no penetration condition}; these are time-independent when viewed in a coordinate system rotating at a constant angular velocity. Let us fix an arbitrary angular velocity $\Omega\in\R$ and make the ansatz that there exists $\omega,\Psi:\Bar{B(0,R)}\to\R$ such that for all $t\in\R^+$ and $x\in\Bar{B(0,R)}$ we have
\begin{equation}\label{the relationship to the steady rotating unknowns}
  \tilde{\omega}(t,x) = \omega(\bf{R}(\Omega t)x)\text{ and }\psi(t,x) = \Psi(\bf{R}(\Omega t)x) + \Omega|\bf{R}(\Omega t)x|^2/2
\end{equation}
where for $s\in\R^+$ we have defined the rotation matrix
\begin{equation}\label{the rotation matrix}
  \bf{R}(s) = \bpm\cos(s)&-\sin(s)\\\sin(s)&\cos(s)\epm.
\end{equation}
The corresponding bulk equations for the \emph{time-independent} scalars $\omega$ and $\Psi$, called the vorticity and the relative stream function, respectively, are then computed to be the following:
\begin{equation}\label{the steady rotating equations for vorticity and relative stream function}
  \grad^\perp\Psi\cdot\grad\omega = 0\text{ and }\Delta\Psi = \omega - 2\Omega.
\end{equation}
When $R<\infty$, we additionally have the boundary condition
\begin{equation}\label{boundary condition for relative stream function}
    \grad^\perp\Psi(x)\cdot x/|x| = 0\text{ for all }x\in\pd B(0,R).
\end{equation}
On the other hand, when $R = \infty$, we supplement the second equation of~\eqref{the steady rotating equations for vorticity and relative stream function} with the condition $\grad\tp{\Psi + \Omega|\cdot|^2/2}(x)\to0$ as $|x|\to\infty$.

While the equations~\eqref{the steady rotating equations for vorticity and relative stream function} and~\eqref{boundary condition for relative stream function} were derived from the parent Euler system~\eqref{dynamical planar euler equations} under a sufficient amount of assumed smoothness, to properly cast our analysis and results we must interpret the equations more broadly in a sense of weak solutions. This is made precise by the following definition.
\begin{defnC}[Steady rotating weak solutions to the Euler system]\label{defn od steady rotating weak solutions to the planar Euler system}
  Let $R\in(0,\infty]$. We say that $\Omega\in\R$, $\Psi\in C^1\tp{\Bar{B(0,R)}}$, and $\omega\in\tp{L^1\cap L^\infty}\tp{\Bar{B(0,R)}}$ are a weak solution to equations~\eqref{the steady rotating equations for vorticity and relative stream function} and~\eqref{boundary condition for relative stream function} if the following hold.
  \begin{enumerate}
    \item For almost every $x\in\Bar{B(0,R)}$ we have satisfied
    \begin{equation}\label{weak equation 1}
      \grad\Psi(x) = -\Omega x - \int_{B(0,R)}\bf{K}_R\tp{x,y}^\perp\omega(y)\;\m{d}y
    \end{equation}
    with $\bf{K}_R$ the Biot-Savart kernel, given for $R<\infty$ by
    \begin{equation}\label{bio savart in a domain}
        \bf{K}_R(x,y) = \f{1}{2\pi}\f{\tp{x - y}^\perp}{|x - y|^2} + \f{1}{2\pi}\f{\tp{R^2y - |y|^2x}^\perp}{R^4 - 2R^2x\cdot y + |x|^2|y|^2}.
    \end{equation}
    When $R = \infty$, $\bf{K}_\infty$ is simply the above right hand side's first term.
    \item For all $f\in C^1_{\m{c}}\tp{\Bar{B(0,R)}}$ we have satisfied
    \begin{equation}\label{weak equation 2}
      \int_{B(0,R)}\omega(y)\grad^\perp\Psi(y)\cdot\grad f(y)\;\m{d}y = 0.
    \end{equation}
    \item For all $x\in\pd B(0,R)$ we have satisfied $\grad^\perp\Psi(x)\cdot x/|x| = 0$.
  \end{enumerate}
\end{defnC}

Weak solutions to equation~\eqref{the steady rotating equations for vorticity and relative stream function} in the sense of Definition~\ref{defn od steady rotating weak solutions to the planar Euler system} give rise to so-called Yudovich weak solutions (see Section 3.1 of~\cite{MR158189}, Section 8.2 of~\cite{MR1867882}, or Section 2.3 in~\cite{MR1245492}) to system~\eqref{dynamical planar euler equations}. Explicitly if $R\in(0,\infty]$, $\Omega\in\R$, $\Psi\in C^1\tp{\Bar{B(0,R)}}$, and $\omega\in\tp{L^1\cap L^\infty}\tp{\Bar{B(0,R)}}$ are a steady rotating weak solution in the sense of Definition~\ref{defn od steady rotating weak solutions to the planar Euler system} and we define the dynamic scalars $\tilde{\omega}$ and $\psi$ as in equation~\eqref{the relationship to the steady rotating unknowns} and let $u = \grad^\perp\psi$, then the following hold.
    \begin{itemize}
        \item We have the inclusions $\tilde{\omega}\in L^\infty\tp{[0,\infty);\tp{L^1\cap L^\infty}\tp{B(0,R)}}$ and $u\in L^\infty\tp{[0,\infty);\m{LL}\tp{\Bar{B(0,R)}}}$ where $\m{LL}$ denotes the collection of logarithmic-Lipschitz functions (see Section~\ref{section on notational conventions}).
        \item We have the identities
        \begin{equation}
            u = \int_{B(0,R)}\bf{K}_R(\cdot,y)\tilde{\omega}(y)\;\m{d}y,\;\tp{\grad^\perp\cdot u} = \tilde{\omega}\;\text{ and }\grad\cdot u = 0.
        \end{equation}
        in the sense of distributions on $B(0,R)$ where $\bf{K}_R$ is as in~\eqref{bio savart in a domain}. We also have, in the case $R<\infty$, the satisfaction of the no penetration boundary condition 
        \begin{equation}
            u(x)\cdot x/|x| = 0\text{ for all }x\in\pd B(0,R).
        \end{equation}
        \item For all $T>0$ and all $f\in C^1\tp{[0,T];C^1_{\m{c}}\tp{B(0,R)}}$ it holds that
        \begin{equation}
            \int_{B(0,R)}f(T,\cdot)\tilde{\omega}(T,\cdot) - \int_{B(0,R)}f(0,\cdot)\tilde{\omega}(0,\cdot) = \int_0^T\int_{B(0,R)}\tp{\pd_tf + u\cdot\grad f}\tilde{\omega}
        \end{equation}
    \end{itemize}

An important and well-studied class of solutions to the two-dimensional steady rotating system Euler equations are the \emph{steady rotating vortex patches}, which are also known as \emph{V-states}, see, e.g., \cite{DEEM_ZABUSKY,MR646163,MR734586,MR858995,MR3054601,MR3482335,MR3462104,MR3570233,MR3543554,MR3545942,XJM} and references therein. These are weak solutions to the steady rotating Euler equations in the sense of Definition~\ref{defn od steady rotating weak solutions to the planar Euler system} in which the vorticity is singular, being the discontinuous characteristic function of an open, bounded, and simply connected set with boundary belonging to some regularity class. Among this class of steady rotating vortex patches, there is an \emph{explicit} subfamily known as the Kirchhoff elliptical vortices (see Kirchhoff~\cite{Kirchhoff} or Article 159 of Chapter VII in Lamb~\cite{MR1317348}). Modulo the symmetries of the Euler equations, these form a one parameter family, that we enumerate in a particular way. For $\xi\in\R^+$ we define
\begin{equation}\label{the kirchhoff ellispses as sets}
    E^\xi = \tcb{x\in\R^2\;:\;x_1^2 + \tp{x_2/\tanh\tp{\xi}}^2<1}\subset\R^2\text{ and }\Omega^\xi = \tanh(\xi)/(1+\tanh(\xi))^2.
\end{equation}
The $\xi$-Kirchhoff vortex is a steady rotating (with angular velocity $\Omega^\xi$) weak solution (in the sense of Definition~\ref{defn od steady rotating weak solutions to the planar Euler system} with $R = \infty$) to~\eqref{the steady rotating equations for vorticity and relative stream function} with vorticity $\omega = \mathds{1}_{E^\xi}$ and, for some constant $c^\xi\in\R$, relative stream function $\Psi = \Psi^\xi$ satisfying
\begin{equation}\label{KEAF}
    \Psi^\xi\tp{x} = \tp{\bf{N}\ast\mathds{1}_{E^\xi}}(x) - \Omega^\xi\tabs{x}^2/2 - c^\xi\text{ for }x\in\R^2\text{ and }\Psi^\xi\tp{\pd E^\xi}=\tcb{0}
\end{equation}
with
\begin{equation}\label{defn of the newtonian potential}
    \bf{N}(x) = \tp{1/2\pi}\log\tabs{x}\text{ for }x\in\R^2\setminus\tcb{0}
\end{equation}
the Newtonian potential. In the extreme case $\xi=\infty$ we have that $E^\infty = B(0,1)$ is simply the open unit disk and $\Omega^\infty = 1/4$. Due to radial symmetry, $\omega = \mathds{1}_{B(0,1)}$ is the vorticity of a weak solution to~\eqref{the steady rotating equations for vorticity and relative stream function} not just for $\Omega = 1/4$, but for all $\Omega\in\R$. This family of circular vortex solutions are known as the Rankine vortices~\cite{RANKINE}.

A motivation for this work's forthcoming analysis is the following natural question: Are the class of singular solutions given by the steady rotating vortex patches able to be realized as limits of classical $C^\infty$-smooth solutions having the same qualitative properties, such as steady rotation and compact vorticity support, while respecting discrete symmetries? A positive answer to the previous desingularization question would show that these singular solutions are further from being pathological and, in a sense, are more physically meaningful.

In this paper we provide a partial positive answer to the preceding query. We formulate a notion of nondegeneracy for steady rotating vortex patches that is satisfied when a particular related linear operator (having the form of a compact perturbation of the identity) has a trivial kernel. When this condition is satisfied by a patch, we prove, among several other things, that the patch can be desingularized in the sense that it is the limit of smooth solutions to equations~\eqref{the steady rotating equations for vorticity and relative stream function} and~\eqref{boundary condition for relative stream function} that respect certain desirable qualitative properties.

It is clearly important to verify that there exist steady rotating patches that satisfy our nondegeneracy condition. We provide two uncountable families of examples. The first are the Kirchhoff elliptical vortices; more precisely, we show that there exists a countable set $\mathcal{N}\subset\R^+$ such that for all $\xi\in\R^+\setminus\mathcal{N}$ the $\xi$-Kirchhoff vortex of equations~\eqref{the kirchhoff ellispses as sets} and~\eqref{KEAF} is nondegenerate and hence can be desingularized. The second family of examples is given by the $m$-fold symmetric V-states of Burbea~\cite{MR646163} that locally bifurcate from the Rankine vortex (these are discussed in detail in Section~\ref{subsection on perturbations of the rankine vortex}). The authors conjecture -- but make no attempt to prove in this work -- that our nondegeneracy condition, that is precisely stated in the fourth item of Definition~\ref{defn of admissible patches} below, is a generic property of steady rotating vortex patches.

% ***(((***(((***(()%!% ***(((***(((***(()%!% ***(((***(((***(()%!% ***(((***(((***(()%!% ***(((***(((***(()%!% ***(((***(((***(()%!% ***(((***(((***(()%!% ***(((***(((***(()%!% ***(((***(((***(()%!% ***(((***(((***(()%!% ***(((***(((***(()%!% ***(((***(((***(()%!% ***(((***(((***(()%!% ***(((***(((***(()%!% ***(((***(((***(()%!% ***(((***(((***(()%!% ***(((***(((***(()%!% ***(((***(((***(()%!% ***(((***(((***(()%!% ***(((***(((***(()%!
\subsection{Survey of prior work}\label{subsection on literature review}
% ***(((***(((***(()%!% ***(((***(((***(()%!% ***(((***(((***(()%!% ***(((***(((***(()%!% ***(((***(((***(()%!% ***(((***(((***(()%!% ***(((***(((***(()%!% ***(((***(((***(()%!% ***(((***(((***(()%!% ***(((***(((***(()%!% ***(((***(((***(()%!% ***(((***(((***(()%!% ***(((***(((***(()%!% ***(((***(((***(()%!% ***(((***(((***(()%!% ***(((***(((***(()%!% ***(((***(((***(()%!% ***(((***(((***(()%!% ***(((***(((***(()%!% ***(((***(((***(()%!

The incompressible Euler equations~\eqref{dynamical planar euler equations} are among the first PDEs to be written down and have been at the focal point of intense research for nearly a quarter of a millennium. As such, we make no attempt to give a comprehensive literature review; rather we focus just on the results for the two-dimensional equations that are most closely related to our own.

The local and global well-posedness of the initial value problem for system~\eqref{dynamical planar euler equations} in the class of classical solutions is due to Lichtenstein~\cite{MR1544733}, H\"{o}lder~\cite{MR1545431}, and Wolibner~\cite{MR1545430}; modern treatments of these facts are found in Kato and Ponce~\cite{MR864654}. The corresponding global well-posedness result in a class of weak solutions for the vorticity formulation was proved by Yudovich~\cite{MR158189}; see also Bardos~\cite{MR333488}. Extensions to larger classes of (more singular) vortex data have been made by Yudovich~\cite{MR1312975}, Serfati~\cite{SERFATI}, and Elgindi, Murray, and Said~\cite{ELGINDI_MURRAY_SAID}.

Among the most well-known explicit solutions to~\eqref{dynamical planar euler equations} are the Kirchhoff elliptical vortices. Regimes of spectral stability and instability for the Kirchhoff elliptical vortices were first established by Love~\cite{MR1576343}. Promotion to nonlinear stability came with the work of Tang~\cite{MR911087} and Wan~\cite{MR861881}. In the complementary regime, nonlinear instability was proved by Guo, Hallstrom, and Spirn~\cite{MR2039699}. Pulvirenti and Wan~\cite{MR795112} established nonlinear stability of the Rankine vortices.

The distinguishing feature of two-dimensional Euler flow governed by equation~\eqref{dynamical planar Euler, vorticity stream function formulation} is the transportation of the vorticity along the velocity vector field; hence, when the initial value problem is given an initial vorticity that is the characteristic function of a set, this property is propagated in time. These weak solutions are known as \emph{vortex patches}. Global well-posedness of vortex patches in spaces of sufficient boundary regularity was proved by Chemin~\cite{MR1131586,MR1235440}, Serfati~\cite{MR1270072}, and Bertozzi-Constantin~\cite{MR1207667}; see also Radu~\cite{MR4509831}. Analysis of more singular vortex patch solutions can be found in the work of Danchin~\cite{MR1452164,MR1809342}, Elgindi-Jeong~\cite{MR4537304,MR4155190}, and Elgindi-Jo~\cite{ELGINDIJO}.

The steady rotating vortex patch solutions discussed in Section~\ref{subsection on opening} are evidently a special type of vortex patch solution whose dynamics are described simply by constant rotation. The only known explicit and nonradial rotating vortex patches are the Kirchhoff ellipses. There has long been an industry, starting with the numerical work of Deem and Zabusky~\cite{DEEM_ZABUSKY}, devoted to the construction of other steady rotating vortex patch solutions (also known as V-states). The first approach to obtaining such states as local bifurcations from the Rankine vortices was introduced by Burbea~\cite{MR646163}. The method was then revisited and placed on more solid technical ground by Hmidi, Mateu, and Verdera~\cite{MR3054601}. Local bifurcations near Kirchhoff ellipses were constructed by Castro, C\'ordoba, and G\'omez-Serrano~\cite{MR3462104} and Hmidi and Mateu~\cite{MR3543554}. Analytic regularity of the bifurcating patch's boundary was proved by Castro, C\'ordoba, and G\'omez-Serrano~\cite{MR3462104}. Global bifurcations from the Rankine vortices were first obtained by Hassainia, Masmoudi, and Wheeler~\cite{MR4156612}. We also mention that: Hmidi and Mateu~\cite{MR3607460} constructed disconnected steady rotating vortex patches; Hoz, Hmidi, and Mateu~\cite{MR3507551} found doubly connected planar vortex states; and Hoz, Hassainia, Hmidi, and Mateu~\cite{MR3570233} uncovered nontrivial rotating vortex patches confined to the disk. 

The literature related to the specific task of steady vortex patch desingularization for the Euler equations is much sparser, consisting of, to the best of the authors' knowledge, two specific results. Castro, C\'{o}rdoba, and G\'{o}mez-Serrano~\cite{MR3900813} construct via a delicate local bifurcation argument starting from Rankine vortices uniformly rotating Euler solutions with compact and class $C^2$ vorticity and without radial symmetry. Elgindi and Huang~\cite{MR4841732}, employing the Schauder fixed point theorem, desingularize the spatially periodic Bahouri-Chemin patch.

Our work in this paper, therefore, stands as the first general desingularization procedure for steady rotating vortex patches; moreover, we desingularize to the class of $C_{\m{c}}^\infty$-smooth states. We address \emph{any} nondegenerate V-state and prove that two well-known families of V-states -- Kirchhoff's ellipses and Burbea's Rankine perturbations -- are generically nondegenerate. 

Finally, let us remark that our results belong to the general structural study of the space of stationary or rigidly evolving solutions to the two-dimensional system of Euler equations. We list a few representative results of this subfield: Choffrut and \v{S}ver\'{a}k~\cite{MR2899685}; Choffrut and Sz\'{e}kelyhidi~\cite{MR3505175}; Constantin, Drivas, and Ginsberg~\cite{MR4275792}; and Coti Zelati, Elgindi, and Widmayer~\cite{MR4544663}.

% ***(((***(((***(()%!% ***(((***(((***(()%!% ***(((***(((***(()%!% ***(((***(((***(()%!% ***(((***(((***(()%!% ***(((***(((***(()%!% ***(((***(((***(()%!% ***(((***(((***(()%!% ***(((***(((***(()%!% ***(((***(((***(()%!% ***(((***(((***(()%!% ***(((***(((***(()%!% ***(((***(((***(()%!% ***(((***(((***(()%!% ***(((***(((***(()%!% ***(((***(((***(()%!% ***(((***(((***(()%!% ***(((***(((***(()%!% ***(((***(((***(()%!% ***(((***(((***(()%!
\subsection{Main results and discussion}\label{subsection on main results and discussion}
% ***(((***(((***(()%!% ***(((***(((***(()%!% ***(((***(((***(()%!% ***(((***(((***(()%!% ***(((***(((***(()%!% ***(((***(((***(()%!% ***(((***(((***(()%!% ***(((***(((***(()%!% ***(((***(((***(()%!% ***(((***(((***(()%!% ***(((***(((***(()%!% ***(((***(((***(()%!% ***(((***(((***(()%!% ***(((***(((***(()%!% ***(((***(((***(()%!% ***(((***(((***(()%!% ***(((***(((***(()%!% ***(((***(((***(()%!% ***(((***(((***(()%!% ***(((***(((***(()%!

As we alluded to in Section~\ref{subsection on opening}, our main theorems apply for a class of \emph{admissible} steady rotating vortex patches obeying a certain nondegeneracy condition. We now make these precise in the next definition. Recall that $\bf{N}$ from~\eqref{defn of the newtonian potential} denotes the Newtonian potential.

\begin{defnC}[Admissible steady rotating vortex patch solutions]\label{defn of admissible patches}
  We say that $\Psi\in C^{1,1}\tp{\R^2}$, $\Omega,c\in\R$, $\es\neq\tilde{U}\subset\R^2$, $m\in\N^+$ form an admissible tuple if the following are satisfied.
  \begin{enumerate}
    \item The set $\tilde{U}$ is open, bounded, simply connected, and has $m$-fold dihedral symmetry in the sense that
    \begin{equation}
        x=(x_1,x_2)\in\tilde{U}\text{ if and only if }(x_1,-x_2)\in\tilde{U}\text{ if and only if }\bf{R}\tp{2\pi/m}x\in\tilde{U},
    \end{equation}
    where $\bf{R}$ is defined in~\eqref{the rotation matrix}. The boundary $\Sigma = \pd\tilde{U}$ is class $C^{1,1}$.
    \item For all $x\in\Sigma$ we have $\Psi(x) = 0$ and if $\nu:\Sigma\to\R^2$ denotes the outward pointing unit normal, then $\min_{\Sigma}\tp{\nu\cdot\grad\Psi}>0$.
    \item For all $x\in\R^2$ it holds that
    \begin{equation}\label{the stream function form of the equation}
      \Psi(x) = \tp{\bf{N}\ast\mathds{1}_{\tilde{U}}}(x) - \Omega|x|^2/2 - c.
    \end{equation}
    \item If $\phi\in C^0\tp{\Sigma}$ has $m$-fold dihedral symmetry in the sense that
    \begin{equation}
        \phi(x) = \phi(x_1,-x_2) = \phi(\bf{R}\tp{2\pi/m}x)
    \end{equation}
    for all $x = (x_1,x_2)\in\Sigma$, and satisfies for all $x\in\Sigma$ the identity
    \begin{equation}\label{the nondegeneracy condition}
      \phi(x) + \int_{\Sigma}\bf{N}\tp{x - y}\f{\phi(y)}{\nu(y)\cdot\grad\Psi(y)}\;\m{d}\mathcal{H}^1\tp{y} = 0,
    \end{equation}
    then $\phi = 0$. Here $\m{d}\mathcal{H}^1$ denotes integration with respect to the one-dimensional Hausdorff measure.
  \end{enumerate}
\end{defnC}

The first three items of Definition~\ref{defn of admissible patches} guarantee in particular that the triple $\Omega$, $\Psi$, and $\omega = \mathds{1}_{\tilde{U}}$ are a steady rotating weak solution in the sense of Definition~\ref{defn od steady rotating weak solutions to the planar Euler system} with $R = \infty$. It is worth noting that the positive normal derivative assertion of the second item of Definition~\ref{defn of admissible patches} holds automatically in the case that $\Omega<1/2$, as a simple consequence of the Hopf boundary point lemma.

The fourth item is the nondegeneracy condition. Roughly speaking, this condition is saying that the linearization at the particular patch solution of~\eqref{the stream function form of the equation} has no kernel. As we shall prove in Theorem~\ref{thm_main_4} below, the collection of admissible states in the sense of Definition~\ref{defn of admissible patches} is far from being empty.

At last, we are ready to state our main theorems, the proofs of which are found in Section~\ref{subsection on proofs of main theorems}. Our first result gives a positive answer to the question of classical realization for rotating patches.

\begin{introthm}[Desingularization of admissible states]\label{thm_main_1}
    Let $\Psi$, $\Omega$, $c$, $\tilde{U}$, and $m$ be an admissible steady rotating vortex patch in the sense of Definition~\ref{defn of admissible patches} and let $\gam\in C^\infty\tp{\R}$ be any function that satisfies $\gam(t) = 0$ for $t\le -1$ and $\gam(t) = 1$ for $t\ge 1$. There exists $\ep_\star\in(0,1]$, $0<C<\infty$, and an open and bounded neighborhood $U\supset\Sigma$ with the property that for all $0<\ep\le\ep_\star$ there exists $\omega_\ep\in C^\infty_{\m{c}}\tp{\R^2}$ and $\Psi_\ep\in C^\infty\tp{\R^2}$ such that 
        \begin{equation}\label{THM_1_LIMS}
            \forall\;\al\in\tp{0,1},\;\tnorm{\grad\tp{\Psi_\ep - \Psi}}_{C^\al_{\m{b}}\tp{\R^2}}\to0\text{ as }\ep\to0
        \end{equation}
        and the following hold.
    \begin{enumerate}
        \item For all $x\in\R^2$
        \begin{equation}
            \Psi_\ep = \tp{\bf{N}\ast\omega_\ep}\tp{x} - \tp{\Omega/2}|x|^2 - c\text{ and }\grad^\perp\Psi_\ep\tp{x}\cdot\grad\omega_\ep\tp{x} = 0.
        \end{equation}
        In particular, the vorticity-relative stream function formulation of the steady rotating planar Euler system~\eqref{the steady rotating equations for vorticity and relative stream function} is classically satisfied with angular velocity $\Omega$, vorticity $\omega_\ep$, and relative stream function $\Psi_\ep$.
        \item We have $C\tnorm{\omega_\ep}_{C^1\tp{\R^2}}\ge1/\ep$ and the deviation estimates
        \begin{equation}\label{THM_I_deviation}
            \tnorm{\omega_\ep - \mathds{1}_{\tilde{U}}}_{L^1\tp{\R^2}} + \tabs{\tnorm{\grad\omega_\ep}_{\m{TV}\tp{\R^2}} - \tnorm{\grad\mathds{1}_{\tilde{U}}}_{\m{TV}\tp{\R^2}}} + \tnorm{\grad\tp{\Psi_\ep - \Psi}}_{L^\infty\tp{\R^2}}\le C\ep\log\tp{1/\ep},
        \end{equation}
        where $\tnorm{\cdot}_{\m{TV}\tp{\R^2}}$ denotes the total variation norm on the space of vector valued measures.
        \item We have
        \begin{equation}
            \forall\;x\in U,\;\omega_\ep\tp{x} = \gam\bp{-\f{\Psi_\ep\tp{x}}{\ep}}\text{ and }\forall\;x\in\R^2\setminus U,\;\omega\tp{x}\in\tcb{0,1}.
        \end{equation}
        \item The functions $\Psi_\ep$ and $\omega_\ep$ have $m$-fold dihedral symmetry.
    \end{enumerate}
\end{introthm}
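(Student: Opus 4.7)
\emph{Reduction to a nonlinear fixed point for $\Psi_\ep$.} The plan is to eliminate $\omega_\ep$ in favor of $\Psi_\ep$. Fix a tubular neighborhood $U$ of $\Sigma$ small enough that $\grad\Psi$ does not vanish on $\bar U$. The required ansatz $\omega_\ep = \gam(-\Psi_\ep/\ep)$ on $U$, together with $\omega_\ep = \mathds{1}_{\tilde U}$ off $U$, automatically enforces the transport identity $\grad^\perp\Psi_\ep\cdot\grad\omega_\ep = 0$, since $\omega_\ep$ becomes a composition with $\Psi_\ep$. Substituting into~\eqref{the stream function form of the equation} and writing $\Psi_\ep = \Psi + v_\ep$ reduces the problem to finding a small $v_\ep$ satisfying the fixed-point equation
\begin{equation*}
    v_\ep \;=\; \mathcal{F}_\ep(v_\ep) \;\coloneqq\; \bf{N}\ast\bp{\gam\sp{-(\Psi + v_\ep)/\ep}\mathds{1}_U + \mathds{1}_{\tilde U\setminus U} - \mathds{1}_{\tilde U}}.
\end{equation*}
Because $\gam\sp{-\Psi/\ep}\mathds{1}_U - \mathds{1}_{\tilde U\cap U}$ is bounded and supported in the $\ep$-thin transition layer $\tcb{\tabs{\Psi}\le\ep}$, the initial residual $\mathcal{F}_\ep(0)$ has $C^1$-norm $O\tp{\ep\log(1/\ep)}$, which sets the natural target size of $v_\ep$ and hence of $\Psi_\ep - \Psi$.

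\emph{Thin-domain coordinates and the linearization.} On $U$ I would pass to Fermi coordinates $(s,r)$, with $s$ arclength along $\Sigma$ (chosen compatibly with the $m$-fold dihedral symmetry) and $r$ signed distance (positive outside $\tilde U$). By item (2) of Definition~\ref{defn of admissible patches} one has $\Psi(s,r) = a(s)r + O(r^2)$ with $a(s) \coloneqq \tp{\nu\cdot\grad\Psi}(s) > 0$ bounded below, so the rescaling $r = \ep\rho$ converts the active layer into a strip of unit thickness in $(s,\rho)$. Differentiating $\mathcal{F}_\ep$ at $v = 0$ yields
\begin{equation*}
    \mathcal{F}'_\ep(0)\phi(x) \;=\; -\int_U \bf{N}(x - y)\,\ep^{-1}\gam'\sp{-\Psi(y)/\ep}\phi(y)\,\m{d}y,
\end{equation*}
and the same change of variables, combined with $\int_\R a(s)\gam'(-a(s)\rho)\,\m{d}\rho = 1$, shows that the weight $\ep^{-1}\gam'\sp{-\Psi/\ep}\m{d}y$ concentrates as $\ep\to 0$ to $a(s)^{-1}\m{d}\mathcal{H}^1\restriction_{\Sigma}$. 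Consequently $I - \mathcal{F}'_\ep(0)$ converges formally to the operator
\begin{equation*}
    \phi\;\mapsto\;\phi + \int_\Sigma \bf{N}\tp{\cdot - y}\,\f{\phi(y)}{\nu\cdot\grad\Psi(y)}\,\m{d}\mathcal{H}^1(y)
\end{equation*}
from item (4) of Definition~\ref{defn of admissible patches}. Restricting to the closed subspace of dihedrally symmetric functions, the nondegeneracy hypothesis and the Fredholm alternative give trivial kernel for the limit operator, and thus a uniform-in-$\ep$ invertibility bound for $I - \mathcal{F}'_\ep(0)$.

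\emph{Newton iteration, symmetries, and deviation estimates.} I expect the chief obstacle to be the rigorous execution of this uniform linear invertibility: one needs a thin-domain functional-analytic framework that reconciles the $\ep$-collapse of the transition layer with both the logarithmic kernel of $\bf{N}$ and the $\ep^{-1}$ loss incurred by differentiating $\gam'\sp{-\Psi/\ep}$, in norms strong enough to close the nonlinear estimates on $\mathcal{F}_\ep(v_\ep) - \mathcal{F}_\ep(0) - \mathcal{F}'_\ep(0)v_\ep$ but still weak enough that the symbol-level $\ep\to 0$ limit survives. Granted this, a Newton iteration starting at $v_\ep^{(0)} = 0$ converges to a solution $v_\ep$ of the advertised size $O\tp{\ep\log(1/\ep)}$, with each step equivariant under the dihedral group so that the symmetry of item (4) in the theorem is automatic. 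Smoothness of $\omega_\ep$ and $\Psi_\ep$ follows from $\gam\in C^\infty$ and elliptic bootstrap, and the condition $\omega_\ep\in\tcb{0,1}$ off $U$ is built into the construction. The $L^1$-estimate $\tnorm{\omega_\ep - \mathds{1}_{\tilde U}}_{L^1}\le C\ep$ is the measure of the transition layer; the total variation control reduces, via the coarea formula, to convergence of the level sets $\tcb{\omega_\ep = t}$ toward $\Sigma$; and the $L^\infty$ and $C^\al_{\m{b}}$ control of $\grad\tp{\Psi_\ep - \Psi}$ in~\eqref{THM_I_deviation} and~\eqref{THM_1_LIMS} follows from Calder\'on--Zygmund estimates applied to $\bf{N}\ast\tp{\omega_\ep - \mathds{1}_{\tilde U}}$.
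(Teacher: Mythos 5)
Your high-level architecture matches the paper's: eliminate the vorticity via $\omega_\ep = \gam(-\Psi_\ep/\ep)$ on a tubular neighborhood of $\Sigma$, formulate a perturbative nonlinear equation for the stream-function correction, compute the linearization at zero, recognize its weak limit as the nondegeneracy operator from item (4) of Definition~\ref{defn of admissible patches}, and run a Newton-type scheme. The deviation estimates via coarea and Calder\'on--Zygmund and the dihedral equivariance are likewise correct in spirit.

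There are, however, two genuine gaps, which you flag as ``the chief obstacle'' but do not resolve. First, the claim that nondegeneracy plus the Fredholm alternative yields a uniform-in-$\ep$ invertibility bound for $I - \mathcal{F}'_\ep(0)$ does not follow from perturbation theory: the kernels $\ep^{-1}\gam'(-\Psi/\ep)\,\m{d}y$ converge to $(\nu\cdot\grad\Psi)^{-1}\,\m{d}\mathcal{H}^1\restriction_\Sigma$ only in the weak-$\ast$ sense, not in any topology that transports invertibility of the limit operator back to the family. The paper instead proves a uniform $C^0$-coercivity estimate (Proposition~\ref{prop on quantitative closed range estimate}) by a compactness-plus-contradiction argument, with the needed collective precompactness supplied by uniform logarithmic-Lipschitz bounds on the Newtonian potential over $\ep$-thin sets (Proposition~\ref{prop on uniform intrgrals estimates for the Newtonian potential}).

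Second, and more fundamentally: even granting that $C^0$ coercivity, the derivative of the map and its inverse both have $C^1$-to-$C^1$ operator norms diverging like $1/\ep$ (Corollary~\ref{coro on invertibility of the derivative}). A vanilla Newton or contraction iteration in the natural space $C^1$ is therefore not visibly convergent: each correction step appears to amplify the $C^1$ residual by $1/\ep$, precisely the singularity you identify. The paper's resolution is the abstract quantitative local surjectivity result (Theorem~\ref{thm on abstract quant loc surj}), a continuous-time damped Newton homotopy in which the right inverse $R$ is allowed to have arbitrarily large operator norm provided the \emph{nonlinear} map obeys a uniform a priori estimate (hypothesis (1) of that theorem); the latter is the reverse logarithmic-Lipschitz inequality of Corollary~\ref{coro on reverse Holder estimate}, uniform in $\ep$ because the compact remainder of the equation satisfies $\ep$-independent log-Lipschitz bounds (Proposition~\ref{prop on preliminary mapping estimates II}). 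Your proposal anticipates that ``a thin-domain functional-analytic framework'' is required but does not identify either ingredient --- the nonlinear a priori estimate in an $\ep$-uniform norm and the homotopy-continuation Newton scheme that tolerates unbounded linear inverses --- and these together constitute the core mechanism on which the theorem rests.
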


The machinery we develop to prove Theorem~\ref{thm_main_1}, which is the content of Section~\ref{section on analysis near admissible states}, is quite robust and finds applications beyond just the task of admissible patch desingularization. The two subsequent main theorems highlight some of what is possible in this direction.

Our next result, also addressing admissible states, shows that any neighborhood of the rotating vortex patch contains a whole zoo of singular solutions that look like linear combinations of nested patches. Thus one can think of the given nondegenerate patch solution as able to be split into sums of multiple patches in many different ways.

\begin{introthm}[Splitting of admissible states]\label{thm_main_2}
   Let $\Psi$, $\Omega$, $c$, $\tilde{U}$, and $m$ be an admissible steady rotating vortex patch in the sense of Definition~\ref{defn of admissible patches}, $M\in\N^+$, and $\tcb{\sig_k}_{k=-M}^M\subset\R$ satisfy $\sum_{k=-M}^M\sig_k = 1$. There exists $\rho_\star\in(0,1]$, $0<C<\infty$, and an open and bounded neighborhood $U\supset\Sigma$ with the property that for all $0<\rho\le\rho_\star$ there exists $\omega_\rho\in\tp{L^\infty\cap\m{BV}}_{\m{c}}\tp{\R^2}$ (with the subscript `$\m{c}$' indicating compact support) and $\Psi_\rho\in\bigcap_{0<\al<1}C^{1,\al}\tp{\R^2}$ such that 
       \begin{equation}\label{THM_2_LIMS}
           \forall\;\al\in\tp{0,1},\;\tnorm{\grad\tp{\Psi_\rho - \Psi}}_{C^\al_{\m{b}}\tp{\R^2}}\to0\text{ as }\rho\to0
       \end{equation}
       and the following hold.
   \begin{enumerate}
       \item The steady rotating planar Euler system is satisfied, in the weak sense of Definition~\ref{defn od steady rotating weak solutions to the planar Euler system} with $R = \infty$, by the angular velocity $\Omega$, vorticity $\omega_\rho$, and relative stream function $\Psi_\rho$.
       \item There exist open, bounded domains of class $\bigcap_{0<\al<1}C^{1,\al}$, denoted $\tcb{W_\rho^k}_{k=-M}^M$, satisfying the following.
       \begin{enumerate}
           \item The monotone inclusion relations
           \begin{equation}\label{the monotone inclusion relations}
               W^{M}_\rho\subset\cdots\subset W^0_\rho\subset\cdots\subset W^{-M}_\rho
           \end{equation}
           hold.
           \item The vorticity is given by the linear combination
            \begin{equation}\label{the vorticity cake}
                \omega_\rho = \sum_{k=-M}^M\sig_k\mathds{1}_{W^k_\rho}
            \end{equation}
       almost everywhere in $\R^2$.
       \item The distance estimates 
       \begin{equation}\label{the distance estimates for the set boundaries}
           C\cdot \m{dist}\tp{\pd W^k_\rho,\pd W^{k+1}_\rho}\ge\rho,\;k\in\tcb{-M,\dots,M-1}
       \end{equation}
       hold.
       \end{enumerate}
       \item We have the deviation estimates
       \begin{equation}\label{estimates of the fourth item}
           \tnorm{\omega_\rho - \mathds{1}_{\tilde{U}}}_{L^1\tp{\R^2}} + \tabs{\tnorm{\grad\omega_\rho}_{\m{TV}\tp{\R^2}} - \tnorm{\grad\mathds{1}_{\tilde{U}}}_{\m{TV}\tp{\R^2}}} + \tnorm{\grad\tp{\Psi_\rho - \Psi}}_{L^\infty\tp{\R^2}}\le C\rho\log\tp{1/\rho},
       \end{equation}
       where, again, $\tnorm{\cdot}_{\m{TV}\tp{\R^2}}$ denotes the total variation norm.
       \item We have
       \begin{equation}\label{other vorticity cake}
           \omega_\rho = \sum_{k=-M}^M\sig_k\mathds{1}_{(0,\infty)}\tp{-\Psi_\rho - k\rho}\text{ a.e. in }U
       \end{equation}
       and, in particular, for all $k\in\tcb{-M,\dots,M}$ it holds that $\Psi_\rho + k\rho = 0$ on $\pd W^k_\rho$.
       \item The functions $\Psi_\rho$ and $\omega_\rho$ have $m$-fold dihedral symmetry.
   \end{enumerate}
\end{introthm}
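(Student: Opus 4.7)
The plan is to obtain Theorem~\ref{thm_main_2} as a variant of Theorem~\ref{thm_main_1}, running the Newton's-method framework from Section~\ref{section on analysis near admissible states} but with the smooth profile $\gam$ replaced by the piecewise constant step profile
\begin{equation*}
    f_\rho(s) = \sum_{k=-M}^M\sig_k\mathds{1}_{(0,\infty)}\tp{-s - k\rho}\text{ for }s\in\R.
\end{equation*}
The target is a pair $\tp{\Psi_\rho,\omega_\rho}$ with $\omega_\rho = f_\rho\tp{\Psi_\rho}$ throughout the tubular neighborhood $U$ of $\Sigma$, extended by the constant $1$ on the innermost component $\tcb{\Psi_\rho<-M\rho}$ and by $0$ on $\tcb{\Psi_\rho>M\rho}$. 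Since $\omega_\rho$ is a BV function of $\Psi_\rho$, the transport equation $\grad^\perp\Psi_\rho\cdot\grad\omega_\rho = 0$ is automatic in the distributional sense, and the problem reduces to the semilinear elliptic equation $\Delta\Psi_\rho = f_\rho\tp{\Psi_\rho} - 2\Omega$ with the usual decay at infinity. The nested sets $W_\rho^k = \tcb{\Psi_\rho\le -k\rho}$ then realize the structure in items~(2) and~(4).

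First I would recast the unknown $\Psi_\rho$ in the tubular coordinates around $\Sigma$ used in the proof of Theorem~\ref{thm_main_1}, parameterizing the $2M+1$ level curves $\pd W_\rho^k$ as normal graphs over $\Sigma$. The nonlinear fixed-point problem arising from Biot--Savart reconstruction is then a coupled system of $2M+1$ scalar equations on $\Sigma$ indexed by $k$. Because the consecutive jumps of $f_\rho$ are separated by $\rho$ in parameter space and $\grad\Psi$ has positive normal component on $\Sigma$ by the second item of Definition~\ref{defn of admissible patches}, the corresponding level curves are physically separated by a distance $\gtrsim\rho$, giving~\eqref{the distance estimates for the set boundaries}. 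In the limit $\rho\to0$ these $2M+1$ curves coalesce onto $\Sigma$, and since $\sum_k\sig_k = 1$ the linearization of the coupled system at the base patch collapses to the single-jump operator appearing in~\eqref{the nondegeneracy condition}. The nondegeneracy hypothesis then guarantees invertibility of this operator on the $m$-fold dihedrally symmetric subspace of $C^0\tp{\Sigma}$, and hence uniform-in-$\rho$ invertibility of the full coupled linearization by a standard perturbation argument.

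The hardest part, I expect, is closing the Newton iteration in a topology that does not use derivatives of $f_\rho$, since the profile is only BV. Accordingly $\omega_\rho$ is only $L^\infty\cap\m{BV}$ and $\Psi_\rho$ is only $C^{1,\al}$ for every $\al<1$, by standard Newtonian potential estimates applied to a bounded vorticity supported on a set with $C^{1,\al}$ jump curves. The a priori bounds and contraction estimates of Section~\ref{section on analysis near admissible states}, originally derived for smooth profiles, must therefore be reworked in the $C^{1,\al}$ category on thin domains, verifying that the multi-jump nonlinear operator and its Fr\'echet derivative act boundedly between the appropriate Banach spaces of graph parameterizations without any recourse to smoothness of $f_\rho$. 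Assuming the framework of Section~\ref{section on analysis near admissible states} is set up so that the profile enters only as a bounded, compactly supported, monotone function, this should be a direct, if notationally heavier, adaptation.

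Once $\Psi_\rho$ is produced, the quantitative estimates of~\eqref{estimates of the fourth item} follow quickly: the $L^1$ bound on $\omega_\rho - \mathds{1}_{\tilde{U}}$ comes from the fact that both functions agree outside a tube of width $O\tp{M\rho}$ around $\Sigma$; the total variation estimate comes from the $C^{1,\al}$ convergence of the $2M+1$ jump curves to $\Sigma$ with weights summing to $1$; and the $L^\infty$ gradient bound with logarithmic factor is the standard Biot--Savart estimate against an $L^\infty$ vorticity of compact support. The convergence $\grad\tp{\Psi_\rho - \Psi}\to0$ in $C^\al_{\m{b}}\tp{\R^2}$ for each $\al<1$ asserted in~\eqref{THM_2_LIMS} then follows by interpolation with a uniform $C^{1,\al'}$ bound for some $\al'>\al$. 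Finally, $m$-fold dihedral symmetry of $\tp{\Psi_\rho,\omega_\rho}$ is automatic: the ansatz, tubular coordinates, and Newton iteration all preserve this symmetry, so the fixed point lies in the symmetric subspace and item~(5) follows.
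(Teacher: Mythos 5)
Your proposal takes a genuinely different route from the paper. The paper never works directly with the step profile $f_\rho$: it replaces $f_\rho$ by the smooth regularization $\mathsf{v}_{\ep,\rho}(t)=\sum_k\sigma_k\gam(-(t+k\rho)/\ep)$, solves the resulting $\ep$-regularized stream-function equation uniformly in $0<\ep\le\ep_\star$ (Corollary~\ref{coro on existence and estimates for the regularized equation}, built on the $\mathrm{LL}$-space estimates of Propositions~\ref{prop on uniform intrgrals estimates for the Newtonian potential}--\ref{prop on quantitative closed range estimate} and the Newton scheme of Theorem~\ref{thm on abstract quant loc surj}), and only then passes to the limit $\ep\to0$ at fixed $\rho$, using BV compactness and Proposition~\ref{prop on limit identification} to identify the singular vorticity function. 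You propose instead to dispense with the $\ep$-regularization entirely and go straight to a multi-curve boundary (Burbea-type) formulation. That is a legitimate alternative in spirit, but your sketch vacillates between the two incompatible viewpoints and has a gap at the crucial step.

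Concretely: if you keep $\Psi_\rho$ as the unknown and write $\omega_\rho=f_\rho(\Psi_\rho)$, then the nonlinear operator is not Fr\'echet differentiable --- this is exactly the obstruction the paper highlights in Section~\ref{subsection on main results and discussion}, and it cannot be ``reworked'' in the $C^{1,\al}$ category; this is why the paper regularizes first. If instead you genuinely switch to the $(2M+1)$ curve parametrizations as independent unknowns, the operator \emph{is} smooth and $f_\rho$ never needs to be differentiated (so the ``hardest part'' you flag is not in fact present in that formulation), but then you are faced with two issues that ``a standard perturbation argument'' does not dispose of. First, uniform-in-$\rho$ invertibility of the coupled $(2M+1)\times(2M+1)$ linearization as the curves coalesce onto $\Sigma$: the limiting block operator has matrix structure $\delta_{kj}\mathrm{Id}+\sigma_j K_0$, and showing its kernel is trivial (which does reduce to~\eqref{the nondegeneracy condition} since $\sum_k\sigma_k=1$) still leaves the job of establishing a $\rho$-uniform bound on the inverse; that requires a compactness/closed-range argument of exactly the type the paper builds in Proposition~\ref{prop on quantitative closed range estimate} via level-set coordinates and $\mathrm{LL}$ norms, and it is not standard. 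Second, in the boundary formulation the nesting $W^M_\rho\subset\cdots\subset W^{-M}_\rho$ of~\eqref{the monotone inclusion relations} is \emph{not} automatic: independent curve perturbations may intersect, and the constraint must be verified a posteriori or enforced during the iteration. In the paper's formulation it is free, because the $W^k_\rho$ are by construction sublevel sets of a single function $\Psi_\rho$. So the proposal contains the right heuristics (and the ancillary estimates~\eqref{the distance estimates for the set boundaries} and~\eqref{estimates of the fourth item} are sketched essentially correctly), but the uniform invertibility and the nesting are genuine missing pieces.
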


Our final construction around admissible states addresses what we call trapping. The vortex patches in the scope of Definition~\ref{defn of admissible patches} are solutions to the Euler equations over the entire plane $\R^2$. One may wonder if the solutions can be confined to a circular disk region of a large but finite radius at the cost of a small perturbation in the patch and in the stream function. The following result shows that this is indeed true.

\begin{introthm}[Trapping of admissible states]\label{thm_main_3}
  Let $\Psi$, $\Omega$, $c$, $\tilde{U}$, and $m$ be an admissible steady rotating vortex patch in the sense of Definition~\ref{defn of admissible patches}. There exists $1\le R_\star<\infty$, $0<C,C_1<\infty$ with $C_1\le R_{\star}$, and an open and bounded neighborhood $U\supset\Sigma$ with the property that for all $R_\star\le R<\infty$ there exists $\omega_R\in\tp{L^\infty\cap\m{BV}}_{\m{c}}\tp{B(0,R/2)}$ (again with the subscript `$\m{c}$' indicating compact support) and $\Psi_R\in\bigcap_{0<\al<1}C^{1,\al}\tp{\Bar{B(0,R)}}$ such that
  \begin{equation}\label{THM_3_LIMS}
      \forall\tilde{R}\in\R^+,\;\forall\;\al\in\tp{0,1},\;\tnorm{\grad\tp{\Psi_R - \Psi}}_{C^\al_{\m{b}}\tp{\Bar{B(0,\min\tcb{R,\tilde{R}})}}}\to0\text{ as }R\to\infty
  \end{equation}
  and the following hold.
  \begin{enumerate}
      \item The steady rotating planar Euler system is satisfied, in the weak sense of Definition~\ref{defn od steady rotating weak solutions to the planar Euler system}, in the domain $B(0,R)$ with angular velocity $\Omega$, vorticity $\omega_{R}$, and relative stream function $\Psi_{R}$.
      \item There exists an open, bounded, and class $\bigcap_{0<\al<1}C^{1,\al}$ domain $W_R\subset B(0,C_1)$ such that $\omega_{R} = \mathds{1}_{W_R}$ and 
      \begin{equation}
          \pd W_R = \tcb{x\in U\;:\;\Psi_R(x) = 0}.
      \end{equation}
      \item We have the deviation estimates
      \begin{equation}\label{the convergence rate}
          \tnorm{\omega_R - \mathds{1}_{\tilde{U}}}_{L^1\tp{B(0,R)}} + \tabs{\tnorm{\grad\omega_R}_{\m{TV}\tp{B(0,R)}} - \tnorm{\grad\mathds{1}_{\tilde{U}}}_{\m{TV}\tp{B(0,R)}}} + \tnorm{\grad\tp{\Psi_R - \Psi}}_{L^\infty\tp{B(0,R)}}\le C\log\tp{R}/R^2,
      \end{equation}
      where $\tnorm{\cdot}_{\m{TV}\tp{B(0,R)}}$ denotes the total variation norm.
      \item The functions $\Psi_R$ and $\omega_R$ have $m$-fold dihedral symmetry.
  \end{enumerate}
\end{introthm}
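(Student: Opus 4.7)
The plan is to apply the general Newton-type perturbation framework of Section~\ref{section on analysis near admissible states} to the disk geometry, treating the switch from the whole-plane kernel $\bf{K}_\infty$ to the disk Biot--Savart kernel $\bf{K}_R$ of~\eqref{bio savart in a domain} as a small forcing. First I would set up the ansatz $\Psi_R = \Psi + \phi_R$ with $\phi_R$ small and define the candidate patch by $W_R = \tcb{x \in U \;:\; \Psi_R(x) > 0}$ for a fixed tubular neighborhood $U$ of $\Sigma$. By item~(2) of Definition~\ref{defn of admissible patches}, $\nu\cdot\grad\Psi$ is uniformly positive on $\Sigma$, so for $\tnorm{\grad\phi_R}_{L^\infty}$ small enough the set $\pd W_R$ is a $C^{1,\al}$ graph over $\Sigma$, and the problem reduces to a scalar equation for $\phi_R\res_\Sigma$.

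Next I would write $\bf{K}_R = \bf{K}_\infty + \bf{K}_R^{\m{im}}$, with $\bf{K}_R^{\m{im}}$ the image contribution in~\eqref{bio savart in a domain}. Substituting the ansatz into~\eqref{weak equation 1} and subtracting the unperturbed identity~\eqref{the stream function form of the equation} yields an equation of the schematic form
\begin{equation}
\phi_R(x) + \tp{\bf{N}\ast\tp{\mathds{1}_{W_R} - \mathds{1}_{\tilde U}}}(x) = h_R(x) + c_R',
\end{equation}
where $h_R$ is a scalar potential for $\int \bf{K}_R^{\m{im}}(\cdot,y)^\perp \mathds{1}_{W_R}(y)\,\m{d}y$ and $c_R'$ is a constant accounting for the boundary value of $\Psi_R$ on $\pd B(0,R)$. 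The decisive quantitative estimate is that, on any fixed bounded region and for $R$ large, the image kernel and its derivatives satisfy $\tabs{\pd_x^\beta \bf{K}_R^{\m{im}}(x,y)} \lesssim_\beta 1/R^2$ uniformly for $y$ in that region; thus $h_R$ is $O(1/R^2)$ in $C^{1,\al}_{\m{b}}$ near $\Sigma$, and the extra $\log(R)$ factor appearing in~\eqref{the convergence rate} arises only when TV and Hölder bounds are tracked through the Yudovich-type logarithmic loss during the iteration. Linearizing the reduced equation at $\phi_R = 0$ produces precisely the operator $I + \mathcal{L}$ of the nondegeneracy identity~\eqref{the nondegeneracy condition}, which, restricted to the closed subspace of $m$-fold dihedrally symmetric boundary functions, is a compact perturbation of the identity with trivial kernel by item~(4) of Definition~\ref{defn of admissible patches}, hence invertible with $R$-independent norm.

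Combining the source bound with this invertibility feeds the Newton scheme of Section~\ref{section on analysis near admissible states} and produces a unique symmetric solution $\phi_R$ with $\tnorm{\phi_R}_{C^{1,\al}} \lesssim \log(R)/R^2$, from which~\eqref{THM_3_LIMS} and the deviation bounds~\eqref{the convergence rate} follow. The remaining assertions are then checked by construction: the choice $\omega_R = \mathds{1}_{W_R}$ together with $\Psi_R\res_{\pd W_R} = 0$ and~\eqref{weak equation 1} yields weak satisfaction of Definition~\ref{defn od steady rotating weak solutions to the planar Euler system}; the no-penetration condition on $\pd B(0,R)$ is automatic because $\bf{K}_R$ was constructed to enforce it; the $m$-fold dihedral symmetry is inherited since $\bf{K}_R^{\m{im}}$ is invariant under the symmetry group and the iteration is carried out in the symmetric subspace; and the inclusion $W_R \subset B(0,C_1) \subset B(0,R/2)$ holds for $R \ge R_\star$ once $C_1$ is chosen so that $\Sigma \subset B(0,C_1/2)$, since $\pd W_R$ is within Hausdorff distance $O(\log(R)/R^2)$ of $\Sigma$. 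The principal obstacle I anticipate is verifying that the invertibility of $I + \mathcal{L}$ from the whole-plane nondegeneracy condition persists, \emph{uniformly in $R$}, for the linearization of the disk-confined equation; this requires treating the image contribution to the \emph{linearized operator} (not only to the forcing) as a small compact perturbation in the relevant boundary Hölder norm and controlling its dependence on the Newton iterate's parameterization of $\pd W_R$ as $R \to \infty$.
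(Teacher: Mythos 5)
Your high-level mechanism is the same one the paper uses: the passage from the plane to the disk is encoded as a perturbation of the Green's function that is small on compact sets (in the paper this is the kernel $\bf{N}_{\Qoppa}$ with $\Qoppa = R^{-2}$, and the smallness enters through the source bound $\tnorm{f^0_{\ep,\rho,\Qoppa}}_{C^1\tp{\Bar{U}}}\le C\Qoppa\log\tp{1/\Qoppa}$, which is where the $\log R/R^2$ rate comes from), and the nondegeneracy condition supplies invertibility of the linearization uniformly in $R$. The genuine gap is in how you propose to solve the nonlinear problem. You keep the vorticity in sharp-interface form $\mathds{1}_{W_R}$ with $W_R$ a sublevel set of $\Psi + \phi_R$ and then ``feed the Newton scheme of Section~\ref{section on analysis near admissible states}.'' But Theorem~\ref{thm on abstract quant loc surj} requires a continuously differentiable map together with a Lipschitz family of right inverses of its derivative, and the sharp-interface map $\phi\mapsto\phi + \bf{N}_\Qoppa\ast\tp{\mathds{1}_{\tcb{\Psi+\phi<0}}-\mathds{1}_{\tilde{U}}}$ is exactly the operator the paper points out is neither Fr\'echet $C^1$ nor locally Lipschitz in any natural space; the linearization ``$I+\mathcal{L}$'' you invoke is only formal. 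This is precisely why the paper regularizes the Heaviside with the parameter $\ep$, proves surjectivity of the smooth maps $\pmb{F}_{\ep,0,\Qoppa}$ on a ball of radius \emph{uniform} in $(\ep,\Qoppa)$ (their derivative norms blow up as $\ep\to0$, which is what the custom Newton theorem with the nonlinear a priori estimate of Corollary~\ref{coro on reverse Holder estimate} is designed to tolerate), and only afterwards sends $\ep\to0$ at fixed $\Qoppa=R^{-2}$, recovering $\omega_R=\mathds{1}_{W_R}$, the $C^{1,\al}$ regularity of $\pd W_R$, and the TV part of~\eqref{the convergence rate} through the compactness and limit-identification results (Proposition~\ref{prop on limit identification}, Corollary~\ref{coro on refined estimates on level sets}). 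Without this regularize-and-limit step, or else a genuine contour-dynamics reformulation in which the unknown is a boundary parametrization together with a proof that that map is smooth and that its linearized kernel is governed by the nondegeneracy condition (an identification of the delicate type carried out in Lemma~\ref{lem on riemann hilbert}), your Newton iteration has no hypotheses to run on, and the claimed uniqueness of $\phi_R$ is likewise unsupported.

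Two secondary points. First, the announced ``reduction to a scalar equation for $\phi_R\res_\Sigma$'' is not actually a reduction: $\mathds{1}_{W_R}$ depends on $\phi_R$ throughout a neighborhood of $\Sigma$, not on its trace, so one must either work with the full tubular-neighborhood unknown (the paper works in $C^1_{\tp{m}}\tp{\Bar{U}}$) or introduce and analyze an explicit interface parametrization; relatedly, the patch should be $W_R=\tcb{x\in U\;:\;\Psi_R(x)<0}\cup\Bar{U^{\m{in}}}$ rather than $\tcb{x\in U\;:\;\Psi_R(x)>0}$, and its $C^{1,\al}$ regularity is a consequence of $\min_{\Bar{U}}\tabs{\grad\Psi_R}>0$ obtained after the construction, not something available when setting up the scheme. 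Second, the issue you single out at the end as the principal obstacle --- persistence, uniformly in $R$, of invertibility of the linearized operator under the image-kernel correction --- is in fact the easy part: on compact sets $\tabs{\pd_x^\beta\bf{K}_R^{\m{im}}}\lesssim R^{-2}$, so the correction is $O(R^{-2})$ in operator norm and a Neumann series suffices; the paper handles it even more directly by folding $\Qoppa$ into the compactness argument of Proposition~\ref{prop on quantitative closed range estimate}. The unaddressed differentiability issue above is the real obstruction.
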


The last main theorem of the paper, whose proof is the content of Section~\ref{section on existence of admissible states}, gives two examples of families of rotating vortex patches satisfying the conditions of Definition~\ref{defn of admissible patches}.

\begin{introthm}[Existence of admissible states]\label{thm_main_4}
Definition~\ref{defn of admissible patches} is nonvacuous. In fact:
\begin{enumerate}
    \item There exists a countable set $\mathcal{N}\subset\R^+$ such that for all $\xi\in\R^+\setminus\mathcal{N}$ the $\xi$-Kirchhoff elliptical vortex, defined in~\eqref{the kirchhoff ellispses as sets} and~\eqref{KEAF}, is admissible in the sense of Definition~\ref{defn of admissible patches} with $\Psi = \Psi^\xi$, $\Omega = \Omega^\xi$, $c = c^\xi$, $\tilde{U} = E^\xi$, and $m=2$.
    \item See Theorem~\ref{thm on admissibility of rankine vortex perturbations} for a precise statement; informally: for every $\N\ni m\ge 2$, the $m$-fold symmetric states locally bifurcating from the circular Rankine vortex with angular velocity $\Omega_m= (m-1)/2m$ (in other words Burbea's $m$-fold symmetric V-states) are admissible in the sense of Definition~\ref{defn of admissible patches}.
\end{enumerate}
\end{introthm}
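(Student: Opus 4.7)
The first three items of Definition~\ref{defn of admissible patches} are immediate for both families: the dihedral symmetry and the stream function identity \eqref{the stream function form of the equation} are inherent to the explicit Kirchhoff solution \eqref{KEAF} and to the Crandall--Rabinowitz construction of Burbea's branch, while the positive normal derivative follows from direct computation for the Kirchhoff ellipses (where $\Omega^\xi<1/4<1/2$) and from the Hopf boundary point lemma for the Burbea states (where $\Omega(s)$ lies near $\Omega_m=(m-1)/(2m)<1/2$, $m\ge 2$). The theorem therefore reduces to verifying the fourth item, namely injectivity on the $m$-fold dihedrally symmetric class of
\begin{equation*}
    \mathcal{T}_\Psi\phi(x) := \phi(x) + \int_\Sigma\bf{N}(x-y)\f{\phi(y)}{\nu(y)\cdot\grad\Psi(y)}\;\m{d}\mathcal{H}^1(y).
\end{equation*}

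For part (1) the plan is to diagonalize $\mathcal{T}_{\Psi^\xi}$ via a Joukowski conformal parametrization. Setting $\zeta=e^\xi e^{i\theta}$, the map $\zeta\mapsto(\zeta+\zeta^{-1})/(2\cosh\xi)$ carries $\tcb{|\zeta|=e^\xi}$ onto $\Sigma=\pd E^\xi$, yielding $x(\theta)=(\cos\theta,\tanh(\xi)\sin\theta)$. Since $\Psi^\xi$ is a quadratic polynomial inside $E^\xi$, one computes $\nu\cdot\grad\Psi^\xi(x(\theta))=\tanh(\xi)(1+\tanh\xi)^{-2}\sqrt{\tanh^2(\xi)\cos^2\theta+\sin^2\theta}$; the Euclidean arc-length element on $\Sigma$ carries the identical square-root factor, so these cancel and $\mathcal{T}_{\Psi^\xi}$ reduces to identity-plus-convolution on $S^1$, hence acts diagonally on Fourier modes. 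The Joukowski factorization splits the log kernel into $\log|e^{i\theta}-e^{i\theta'}|+\log|e^{2\xi}e^{i(\theta+\theta')}-1|$ plus a constant, and the standard Fourier expansions of these logarithms give, for every integer $k\ge 1$,
\begin{equation*}
    \mathcal{T}_{\Psi^\xi}\cos(k\theta)=\lambda_k(\xi)\cos(k\theta),\quad\lambda_k(\xi)=1-\f{(1+\tanh\xi)^2(1+e^{-2k\xi})}{2k\tanh\xi},
\end{equation*}
with an analogous explicit value at $k=0$. The $2$-fold dihedral symmetry retains only even $k$, i.e.\ the subclass $\tcb{\cos(2j\theta):j\ge 0}$. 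Each $\lambda_{2j}$ is real-analytic on $\R^+$ with $\lambda_{2j}(\xi)\to-\infty$ as $\xi\to 0^+$ and with a finite limit as $\xi\to\infty$, hence $\lambda_{2j}\not\equiv 0$ and its zero set in $\R^+$ is discrete. Taking $\mathcal{N}:=\bigcup_{j\ge 0}\lambda_{2j}^{-1}(\tcb{0})$ furnishes the required countable exceptional set.

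For part (2) the substitution $\phi=\eta\cdot(\nu\cdot\grad\Psi)$ identifies $\mathcal{T}_\Psi$ with the Fr\'echet derivative of the V-state equation $\Psi|_{\pd U}=0$ under normal boundary perturbations $\eta$ of $\pd U$. The Hmidi--Mateu--Verdera instance of Crandall--Rabinowitz produces an analytic curve $s\mapsto(U_s,\Omega(s))$ emanating at $s=0$ from $(B(0,1),\Omega_m)$ with $m$-fold dihedral symmetry preserved; at $s=0$ the linearization restricted to this class has one-dimensional kernel spanned by $\cos(m\theta)$, as already visible from the Rankine eigenvalue $1-m/|k|$ vanishing at $|k|=m$. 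Reformulating the V-state equation as a conformal or graph parametrization of $\pd U_s$ over a fixed reference circle (in the spirit of the analytic regularity framework of Castro--C\'ordoba--G\'omez-Serrano) yields an analytic family $s\mapsto I+\mathcal{K}(s)$; Kato perturbation theory for the simple isolated eigenvalue $\mu(s)$ of $I+\mathcal{K}(s)$ crossing $0$ at $s=0$ then converts the Crandall--Rabinowitz transversality built into Burbea's bifurcation into $\mu'(0)\neq 0$, so $\mu(s)\neq 0$ on a punctured neighborhood of $0$ and admissibility holds at all nearby bifurcating V-states.

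The Kirchhoff portion is essentially a self-contained explicit diagonalization whose only delicate ingredient, the cancellation between the arc-length element and the normal-derivative factor, is robust. The main obstacle is the Burbea case: one must set up a genuinely analytic framework for $I+\mathcal{K}(s)$ despite the a priori only $C^{1,\al}$ regularity of $\pd U_s$, then identify the Crandall--Rabinowitz transversality used by Hmidi--Mateu--Verdera with the analytic eigenvalue-crossing statement required here, and finally invoke analytic Fredholm theory to rule out persistent kernels of $I+\mathcal{K}(s)$ along the bifurcating branch.
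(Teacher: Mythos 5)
Your treatment of the Kirchhoff case (part 1) is essentially the paper's argument: the Joukowski parametrization $x(\theta)=(\cos\theta,\tanh\xi\sin\theta)$ coincides with the paper's elliptic coordinate $\bf{e}(\xi,\cdot)$ on the boundary, and the cancellation you identify between the arc-length element and $\nu\cdot\grad\Psi^\xi$ is exactly the paper's identity $\bf{J}(\xi,\eta)/\pd_1\tilde\Psi^\xi(\xi,\eta)=2e^{2\xi}/\sinh(2\xi)$. Your eigenvalue formula agrees after simplification, and your ``analytic and not identically zero'' argument for countability of $\mathcal{N}$ is sound (the paper is sharper, proving each mode has at most one zero, but that precision is not needed for the statement).

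The Burbea case (part 2) has a genuine gap. You treat the nondegeneracy condition~\eqref{the nondegeneracy condition} as directly interchangeable with triviality of $\ker D_1\mathcal F(\phi(s),\lambda(s))$ on $X^{k,\alpha}$, modulo a change of unknown. This is not a bijection. The passage from a scalar density $\varphi$ on $\partial D_m$ to a conformal perturbation $\delta$ goes through a Riemann--Hilbert problem (the paper's Lemma~\ref{lem on riemann hilbert}), and because $\Phi_m'$ has zero winding number the associated index is such that $\delta$ naturally lives in the \emph{extended} space $X^{k-1,\alpha}_{\m{ext}}$, i.e.\ it may carry an extra $w^1$ mode. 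The function $\Phi_m$ itself lies in $X^{k-1,\alpha}_{\m{ext}}\setminus X^{k,\alpha}$ and is always in $\ker D_1\mathcal F(\Phi_m-w,\lambda_m)$ (dilation invariance: $\mathcal F(\Phi_m-w+t\Phi_m,\lambda_m)=(1+t)^2\mathcal F(\Phi_m-w,\lambda_m)$). So triviality of $\ker D_1\mathcal F$ on $X^{k,\alpha}$ rules out $\delta$ only when the coefficient $a_1$ of $w$ vanishes; when $a_1\neq 0$ one must additionally show $\delta\neq a_1\Phi_m$, and this is where the paper invokes the area condition $\mathcal H^2(D_m)+4\pi c_m\neq 0$, proved by a nontrivial divergence-theorem computation together with continuity from the Rankine vortex. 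Your sketch contains no substitute for the Riemann--Hilbert construction or the area argument, and without them the dilation mode is unaccounted for.

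A secondary concern: the claim that Crandall--Rabinowitz transversality ``converts into $\mu'(0)\neq 0$'' for the eigenvalue of $\mathcal{T}_{\Psi_s}$ is not justified. Transversality is a derivative in $\Omega$, whereas your $\mu$ is parametrized by $s$ along the branch, and the bifurcation is pitchfork with $\lambda'(0)=0$, $\lambda''(0)\neq 0$; the $\lambda$-contribution to $\mu'(0)$ therefore vanishes, and whether the $\phi'(0)=\bar w^{m-1}$ contribution produces a nonzero $\mu'(0)$ (rather than a second-order crossing) is an explicit computation you have not done. The paper sidesteps this entirely by using Theorem~1.17 of Crandall--Rabinowitz, which gives triviality of $\ker D_1\mathcal F$ directly from $\lambda'(s)\neq 0$, without tracking the eigenvalue of $\mathcal{T}_{\Psi_s}$.
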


The remainder of this subsection is devoted to a brief and high-level discussion of the proof of our main results. Let $\Psi$, $\Omega$, $c$, $\tilde{U}$, and $m$ be an admissible steady rotating vortex patch solution in the sense of Definition~\ref{defn of admissible patches}. The starting point is the equation~\eqref{the stream function form of the equation} that relates the stream function $\Psi$ to the interior vorticity support $\tilde{U}$. For the purposes of this discussion, let us suppose that $\Psi<0$ in $\tilde{U}$. This condition is not built into Definition~\ref{defn of admissible patches}, but is satisfied thanks to a maximum principle argument, if we know additionally that $\Omega<1/2$. Let us also pretend that $\Psi>0$ in all of $\R^2\setminus\Bar{\tilde{U}}$ -- this is not really true, but it is convenient for the ensuing imprecise discussion. These assumptions allow us to imagine that
\begin{equation}
    \mathds{1}_{\tilde{U}} = \mathds{1}_{\tp{0,\infty}}\tp{-\Psi}.
\end{equation}

Then, we obtain a closed equation in terms of the stream function with a Heaviside \emph{vorticity function}; we write this as an abstract operator equation $\mathsf{F}[\Psi] = 0$ with
\begin{equation}\label{_one_operator_}
    \tp{\mathsf{F}\tsb{\Psi}}\tp{x} = \Psi(x) - \f{1}{2\pi}\int_{\R^2}\log|x - y|\mathds{1}_{\tp{0,\infty}}\tp{-\Psi(y)}\;\m{d}y + \Omega|x|^2/2 + c = 0\text{ for all } x\in\R^2.
\end{equation}
The goal then is to construct nearby Euler solutions $\tilde{\Psi}$ that solve a perturbed operator equation of the form $\mathsf{F}_{\mathsf{v}}[\tilde{\Psi}] = 0$ with
\begin{equation}\label{the peturbed equation to be solved}
    \tp{\mathsf{F}_{\mathsf{v}}[\tilde{\Psi}]}\tp{x} = \tilde{\Psi}\tp{x} - \f{1}{2\pi}\int_{\R^2}\log\tabs{x - y}\mathsf{v}\tp{-\tilde{\Psi}(y)}\;\m{d}y + \Omega|x|^2/2 + c \text{ for all }x\in\R^2.
\end{equation}
where $\mathsf{v}:\R\to\R$ is a vorticity function that is in an appropriate sense close to the Heaviside function $\mathds{1}_{\tp{0,\infty}}$. For example, in Theorem~\ref{thm_main_1} we take (with $\ep>0$ small)
\begin{equation}\label{smooth case}
    \mathsf{v}\tp{t} = \gam\tp{t/\ep}\text{ with }\gam\in C^\infty\tp{\R},\;\gam(t) = 0\text{ for }t\le-1\text{ and }\gam(t) = 1\text{ for }t \ge 1;
\end{equation}
on the other hand, for Theorem~\ref{thm_main_2}, we take (with $\rho>0$ small)
\begin{equation}\label{singular case}
    \mathsf{v}\tp{t} = \sum_{k=-M}^M\sig_k\mathds{1}_{\tp{0,\infty}}\tp{t - k\rho}\text{ with }\sum_{k=-M}^M\sig_k = 1.
\end{equation}
Note that taking $\sig_0 = 1$ and $\sig_k = 0$ for $k\neq 0$ in~\eqref{singular case} recovers the Heaviside vorticity function.

One might hope to address these perturbed equations~\eqref{the peturbed equation to be solved} through some sort of implicit function theorem (IFT) or Banach fixed point argument. Indeed, it is this hope that is the inspiration for the instantiation of our nondegeneracy condition given in the fourth item of Definition~\ref{defn of admissible patches}. By formally taking the Fr\'{e}chet derivative of the expression for $\mathsf{F}$ given in equation~\eqref{_one_operator_} at the admissible stream function $\Psi$ in an arbitrary direction $\phi$ and restricting the result to $\Sigma$, we get exactly the left hand side of~\eqref{the nondegeneracy condition}. Thus we see precisely that the nondegeneracy condition is equivalent to the invertibility of this formal functional derivative.

However, substantial technical obstructions arise that preclude a direct invocation of the IFT or contraction mapping principle. More specifically, the operator $\mathsf{F}$~\eqref{_one_operator_} appears not to be $C^1$ in the Fr\'{e}chet sense or even locally Lipschitz for any choice of suitable function spaces. The reason for this failure is precisely the discontinuous vorticity function $\mathsf{v} = \mathds{1}_{(0,\infty)}$. 

As a first step in solving the perturbed equations $\mathsf{F}_{\mathsf{v}}[\tilde{\Psi}] =  0$, we can replace the Heaviside vorticity function of~\eqref{_one_operator_} with a nearby one that is qualitatively smooth. Executing this swap restores Fr\'{e}chet smoothness and a local Lipschitz condition, at the expense of introducing a source term. We define the following regularized versions of~\eqref{singular case}:
\begin{equation}
    \mathsf{v}_{\ep,\rho}(t) = \sum_{k=-M}^M\sig_k\gam\bp{\f{t - k\rho}{\ep}}.
\end{equation}
The singular case of the vorticity function is ultimately recovered by taking the limit as $\ep\to0$ in the above -- of course, this requires carefully tracking various quantities.

We let $X = C^1_{\m{b}}(\R^2)$ be an ambient Banach space and set up the operator
\begin{equation}\label{e o fim}
    \mathsf{G}_{\ep,\rho}:B_X(0,r)\to X\text{ with action }\mathsf{G}_{\ep,\rho}[\phi] = \mathsf{F}_{\mathsf{v}_{\ep,\rho}}[\Psi + \phi] - \mathsf{F}_{\mathsf{v}_{\ep,\rho}}[\Psi]\text{ for }\phi\in B_X(0,r).
\end{equation}
We endeavor to solve, at least for sufficiently small $\ep$ and $\rho$, the equation, that is equivalent to $\mathsf{F}_{\mathsf{v}}[\tilde{\Psi}] = 0$,
\begin{equation}\label{peturbed operator equation}
    \mathsf{G}_{\ep,\rho}[\psi] = \mathsf{g}_{\ep,\rho},\text{ for the source term }\mathsf{g}_{\ep,\rho} = \mathsf{F}[\Psi] - \mathsf{F}_{\mathsf{v}_{\ep,\rho}}[\Psi]\in X
\end{equation}
with $\psi = \tilde{\Psi} - \Psi$ denoting the relative stream function perturbation. Note crucially that in the formulation above, the operator $\mathsf{G}_{\ep,\rho}$ is smooth; however, the norms of its derivatives are blowing up in the limit $\ep\to0$.

The following two facts are sufficient to guarantee the existence of solutions to the perturbed operator equation~\eqref{peturbed operator equation}.
\begin{enumerate}
    \item We have the limit: $\lim_{\ep,\rho\to0}\tnorm{\mathsf{g}_{\ep,\rho}}_{X} = 0$. This essentially follows from the coarea formula.
    \item Uniform local surjectivity. More precisely: there exists $r_1,\ep_1,\rho_1>0$ such that for all $0<\ep\le\ep_1$ and $0\le\rho<\rho_1$ we have $B_X(0,r_1)\subset\mathsf{G}_{\ep,\rho}\tsb{B_X(0,r)}$.
\end{enumerate}

The proof of the latter fact is a delicate matter due to the development of singularities in the limit as $\ep\to0$ and its competition with the requirement that the radius $r_1>0$ be taken \emph{independent} of small $\ep,\rho>0$. To wit: the nondegeneracy condition of Definition~\ref{defn of admissible patches} does ensure that $D\mathsf{G}_{\ep,\rho}[0]:X\to X$ is invertible and so the IFT indeed gives local invertibility of $\mathsf{G}_{\ep,\rho}$ near zero; the problem, however, is that the operator norms satisfy $\min\tcb{\tnorm{DG_{\ep,\rho}[0]}_{\mathcal{L}\tp{X}},\tnorm{DG_{\ep,\rho}[0]^{-1}}_{\mathcal{L}\tp{X}}}\to\infty$ as $\ep\to0$. Thus, the domains granted by the IFT are shrinking to nothing and we do not yet satisfy the second fact above.

Our construction of the local inverse that does satisfy the uniform local surjectivity condition instead proceeds via a version of Newton's method capable of handling this singular limit. We formulate this tool abstractly in Theorem~\ref{thm on abstract quant loc surj} below, but the main hypotheses that guarantee a local inverse are:
\begin{enumerate}
    \item A nonlinear a priori estimate. There exists $c_1,c_2$ with $c_2<r$ ($r$ being the domain radius from~\eqref{e o fim}) such that (uniformly for small $\ep$ and $\rho$) $\tnorm{\mathsf{G}_{\ep,\rho}[\phi]}_{X}\le c_1$ implies that $\tnorm{\phi}_X\le c_2$.
    \item Qualitative invertibility of the derivative in a uniform neighborhood. There exists $c_2<c_3\le r$ (again uniform for small $\ep$ and $\rho$) such that for all $\tnorm{\phi}_X\le c_3$ we have that $D\mathsf{G}_{\ep,\rho}[\phi]:X\to X$ is an invertible map.
\end{enumerate}
The key feature is that the second item allows for the norm of the derivative to become unboundedly large in the singular limit of small parameters. What is important is that the size of the neighborhood in which the derivative's inverse exists is not collapsing. The role of the nonlinear a priori estimate is to ensure that the iterates in Newton's method do not leave a bounded region, thus facilitating the convergence. It is a pleasant fact that the two hypotheses of our uniform local surjectivity theorem when applied to the maps $\mathsf{G}_{\ep,\rho}$~\eqref{e o fim} are satisfied as consequences of the nondegeneracy condition~\eqref{the nondegeneracy condition} from Definition~\ref{defn of admissible patches}.

After we have constructed the uniform local inverses of the maps $\mathsf{G}_{\ep,\rho}$, we, roughly speaking, obtain Theorem~\ref{thm_main_1} by setting $\rho = 0$. On the other hand, Theorem~\ref{thm_main_2} is obtained by taking the limit $\ep\to0$ for each fixed $\rho>0$. Theorem~\ref{thm_main_3} is ultimately proved by a strategy similar to the one described above, but we introduce another type of perturbation to the base operator equation~\eqref{_one_operator_} by varying the Green's function.

The next subsection is a notation interlude. The plan for the remainder of the paper is as follows. Section~\ref{section on existence of admissible states} is devoted to the proof of Theorem~\ref{thm_main_4}, with Sections~\ref{subsection on Kirchhoff elliptical vortices} and~\ref{subsection on perturbations of the rankine vortex} handling the first and second items of this main theorem, respectively. Next, in Section~\ref{section on analysis near admissible states}, we essentially prove Theorems~\ref{thm_main_1}, \ref{thm_main_2}, and \ref{thm_main_3} simultaneously. Section~\ref{subsection on preliminary estimates and identities} witnesses the derivation of a flurry of preliminary tools and estimates. Next, in Section~\ref{subsection on key consequences of nondegeneracy}, we establish the most salient consequences of the nondegeneracy condition of the fourth item of Definition~\ref{defn of admissible patches}. After that, in Section~\ref{subsection on quantitative local invertibility} we construct our abstract quantitative local surjectivity theorem based on Newton's method and then apply it to problem at hand. We conclude Section~\ref{section on analysis near admissible states} with the proof of our main theorems in Section~\ref{subsection on proofs of main theorems}. Finally, Appendix~\ref{appendix on tools from analysis} records some useful analysis supplements.

% ***(((***(((***(()%!% ***(((***(((***(()%!% ***(((***(((***(()%!% ***(((***(((***(()%!% ***(((***(((***(()%!% ***(((***(((***(()%!% ***(((***(((***(()%!% ***(((***(((***(()%!% ***(((***(((***(()%!% ***(((***(((***(()%!% ***(((***(((***(()%!% ***(((***(((***(()%!% ***(((***(((***(()%!% ***(((***(((***(()%!% ***(((***(((***(()%!% ***(((***(((***(()%!% ***(((***(((***(()%!% ***(((***(((***(()%!% ***(((***(((***(()%!% ***(((***(((***(()%!
\subsection{Conventions of notation}\label{section on notational conventions}
% ***(((***(((***(()%!% ***(((***(((***(()%!% ***(((***(((***(()%!% ***(((***(((***(()%!% ***(((***(((***(()%!% ***(((***(((***(()%!% ***(((***(((***(()%!% ***(((***(((***(()%!% ***(((***(((***(()%!% ***(((***(((***(()%!% ***(((***(((***(()%!% ***(((***(((***(()%!% ***(((***(((***(()%!% ***(((***(((***(()%!% ***(((***(((***(()%!% ***(((***(((***(()%!% ***(((***(((***(()%!% ***(((***(((***(()%!% ***(((***(((***(()%!% ***(((***(((***(()%!

The set of natural numbers is denoted by $\N = \tcb{0,1,2,3,\dots}$ and we also denote $\N^+ = \N\setminus\tcb{0}$. The set of positive real numbers is denoted by $\R^+ = \tp{0,\infty}$. The characteristic function of a set $E_0\subset E_1$ is $\mathds{1}_{E_0}:E_1\to\tcb{0,1}$. If $E$ is a subset of any topological space, the closure and interior are denoted $\Bar{E}$ and $\m{int}E$, respectively. If $A$ and $B$ are subsets of a topological space then $A\Subset B$ means that $\Bar{A}\subset\m{int}B$. Let $m\in\N^+$; we say that $E\subset\R^2$ has $m$-fold dihedral symmetry if 
\begin{equation}
    x = (x_1,x_2)\in E\text{ if and only if }(x_1,-x_2)\in E\text{ if and only if }\bf{R}\tp{2\pi/m}x\in E
\end{equation}
where $\bf{R}$ is from~\eqref{the rotation matrix}.

If $\es\neq U\subset\R^2$ is open, $k\in\N$, and $\al\in[0,1]$ we let $C^{k,\al}\tp{U}$ denote the vector space of $k$-times differentiable functions that, together with their first $k$ derivatives, are locally $\al$-H\"{o}lder continuous in $U$. If $U$ is bounded, we shall also consider the Banach subspace of functions which are $C^{k,\al}$ up to the boundary:
\begin{equation}
    C^{k,\al}\tp{\Bar{U}} = \tcb{f\in C^{k,\al}\tp{U}\;:\;\tnorm{f}_{C^{k,\al}\tp{\Bar{U}}}<\infty}
\end{equation}
where
\begin{equation}
    \tnorm{f}_{C^{k,\al}\tp{\Bar{U}}} = \sum_{j=0}^k\bp{\sup_{x\in U}\tabs{\grad^j f(x)} + \sup_{x\neq\tilde{x}}\f{\tabs{\grad^j f(x) -\grad^jf(\tilde{x})}}{|x - \tilde{x}|^{\al}}}
\end{equation}
When $\al = 0$, we write $C^k\tp{U}$ and $C^{k}\tp{\Bar{U}}$ in place of $C^{k,0}\tp{U}$ and $C^{k,0}\tp{\Bar{U}}$, respectively. 

For a bounded open set $U\subset\R^2$ the collection of logarithmic-Lipschitz functions is the Banach space
\begin{equation}\label{the space LL}
    \m{LL}\tp{\Bar{U}} = \tcb{f\in C^0\tp{\Bar{U}}\;:\;\tnorm{f}_{\m{LL}\tp{\Bar{U}}}<\infty},\;\tnorm{f}_{\m{LL}\tp{\Bar{U}}} = \tnorm{f}_{C^0\tp{\Bar{U}}} + \sup_{\substack{x,y\in U\\x\neq y}}\f{\tabs{f(x) - f(y)}}{|x - y|\tp{1 + \tabs{\log\tabs{x - y}}}}.
\end{equation}
We shall also define the related space
\begin{equation}\label{the space LL1}
    \m{LL}^1\tp{\Bar{U}} = \tcb{f\in C^1\tp{\Bar{U}}\;:\;\grad f\in\m{LL}\tp{\Bar{U};\R^2}}\text{ with the norm }\tnorm{f}_{\m{LL}^1\tp{\Bar{U}}} = \tnorm{f}_{C^1\tp{\Bar{U}}} + \tnorm{\grad f}_{\m{LL}\tp{\Bar{U}}}.
\end{equation}

If $U\subset\R^2$ has $m$-fold dihedral symmetry and $X(U)$ is a Banach space of functions defined on $U$, we let $X_{\tp{m}}\tp{U}$ denote the closed subspace of functions having $m$-fold dihedral symmetry, precisely:
\begin{equation}
    X_{\tp{m}}\tp{U} = \tcb{f\in X(U)\;:\;\forall\;x = (x_1,x_2)\in U,\;f(x) = f(x_1,-x_2) = f\tp{\bf{R}\tp{2\pi/m}x}}.
\end{equation}

% ***(((***(((***(()%!% ***(((***(((***(()%!% ***(((***(((***(()%!% ***(((***(((***(()%!% ***(((***(((***(()%!% ***(((***(((***(()%!% ***(((***(((***(()%!% ***(((***(((***(()%!% ***(((***(((***(()%!% ***(((***(((***(()%!% ***(((***(((***(()%!% ***(((***(((***(()%!% ***(((***(((***(()%!% ***(((***(((***(()%!% ***(((***(((***(()%!% ***(((***(((***(()%!% ***(((***(((***(()%!% ***(((***(((***(()%!% ***(((***(((***(()%!% ***(((***(((***(()%!
\section{Existence of admissible states}\label{section on existence of admissible states}
% ***(((***(((***(()%!% ***(((***(((***(()%!% ***(((***(((***(()%!% ***(((***(((***(()%!% ***(((***(((***(()%!% ***(((***(((***(()%!% ***(((***(((***(()%!% ***(((***(((***(()%!% ***(((***(((***(()%!% ***(((***(((***(()%!% ***(((***(((***(()%!% ***(((***(((***(()%!% ***(((***(((***(()%!% ***(((***(((***(()%!% ***(((***(((***(()%!% ***(((***(((***(()%!% ***(((***(((***(()%!% ***(((***(((***(()%!% ***(((***(((***(()%!% ***(((***(((***(()%!

In this subsection we establish the existence of admissible steady rotating vortex patches that satisfy Definition~\ref{defn od steady rotating weak solutions to the planar Euler system}. The lion's share of the work is, perhaps unsurprisingly, devoted to checking that the fourth item of the aforementioned definition -- the nondegeneracy condition -- holds. In Section~\ref{subsection on Kirchhoff elliptical vortices} we consider the Kirchhoff elliptical vortices and most of the verification is explicit computation. In contrast, the content of Section~\ref{subsection on perturbations of the rankine vortex}, which addresses $m$-fold symmetric perturbations of the circular vortex, is a more involved indirect argument exploiting subtle connections between various equivalent formulations of the steady rotating patch equations.

% ***(((***(((***(()%!% ***(((***(((***(()%!% ***(((***(((***(()%!% ***(((***(((***(()%!% ***(((***(((***(()%!% ***(((***(((***(()%!% ***(((***(((***(()%!% ***(((***(((***(()%!% ***(((***(((***(()%!% ***(((***(((***(()%!% ***(((***(((***(()%!% ***(((***(((***(()%!% ***(((***(((***(()%!% ***(((***(((***(()%!% ***(((***(((***(()%!% ***(((***(((***(()%!% ***(((***(((***(()%!% ***(((***(((***(()%!% ***(((***(((***(()%!% ***(((***(((***(()%!
\subsection{Kirchhoff elliptical vortices}\label{subsection on Kirchhoff elliptical vortices}
% ***(((***(((***(()%!% ***(((***(((***(()%!% ***(((***(((***(()%!% ***(((***(((***(()%!% ***(((***(((***(()%!% ***(((***(((***(()%!% ***(((***(((***(()%!% ***(((***(((***(()%!% ***(((***(((***(()%!% ***(((***(((***(()%!% ***(((***(((***(()%!% ***(((***(((***(()%!% ***(((***(((***(()%!% ***(((***(((***(()%!% ***(((***(((***(()%!% ***(((***(((***(()%!% ***(((***(((***(()%!% ***(((***(((***(()%!% ***(((***(((***(()%!% ***(((***(((***(()%!

Recall the one parameter families of ellipses $\tcb{E^\xi}_{\xi\in\R^+}$ and angular velocities $\tcb{\Omega^\xi}_{\xi\in\R^+}$ defined in~\eqref{the kirchhoff ellispses as sets}. The purpose of the following result is to be more precise about how these continua form the family of Kirchhoff vortex patch solutions and to set up notation for subsequent results.

\begin{lemC}[Stream functions for Kirchhoff vortices]\label{lem on stream functions for Kirchhoff vortices}
    For each $\xi\in\R^+$ there exists $c^\xi\in\R$ such that the function $\Psi^\xi\in C^{1,1}_{\tp{2}}\tp{\R^2}$ defined via
    \begin{equation}\label{stream function definition}
        \Psi^\xi\tp{x} = \tp{\bf{N}\ast\mathds{1}_{E^\xi}}\tp{x} - \Omega^\xi|x|^2/2 - c^\xi\text{ for }x\in\R^2
    \end{equation}
    obeys the succeeding items.
    \begin{enumerate}
        \item For all $x\in\pd E^\xi$ we have $\Psi^\xi\tp{x} = 0$.
        \item If $\tilde{\Psi}^\xi:\R^+\times\R\to\R$, defined via
        \begin{equation}\label{a stone}
            \tilde{\Psi}^{\xi}\tp{\zeta,\eta} = \Psi^\xi\tp{\sech\tp{\xi}\cosh(\zeta)\cos(\eta),\sech\tp{\xi}\sinh(\zeta)\sin(\eta)}\text{ for all }\tp{\zeta,\eta}\in\R^+\times\R,
        \end{equation}
        denotes $\Psi^\xi$ in elliptical coordinates, then for all $\zeta\in[\xi,\infty)$ and $\eta\in\R$ we have the formula
        \begin{equation}\label{a stick}
            \tilde{\Psi}^\xi\tp{\zeta,\eta} = \f{\tanh(\xi)}{4}\bp{2\tp{\zeta - \xi} - \f{\cosh(2\zeta) - \cosh(2\xi)}{e^{2\xi}}} + \f{\tanh\tp{\xi}}{4}\bp{\f{1}{e^{2\zeta}} - \f{1}{e^{2\xi}}}\cos(2\eta).
        \end{equation}
    \item For all $\eta\in\R$ we have 
    \begin{equation}\label{normal derivative in elliptical coordinates}
        \pd_1\tilde{\Psi}^\xi\tp{\xi,\eta} = \f{\tanh(\xi)}{2e^{2\xi}}\tp{\cosh\tp{2\xi} - \cos\tp{2\eta}}.
    \end{equation}
    and hence $\min_{\pd E^\xi}\tabs{\grad\Psi^{\xi}}>0$.
    \end{enumerate}
\end{lemC}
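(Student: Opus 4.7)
Items (1) and (3) are direct algebraic consequences of~\eqref{a stick}. Setting $\zeta = \xi$ in~\eqref{a stick} both groupings of terms carry vanishing prefactors (respectively $2(\zeta - \xi) - [\cosh 2\zeta - \cosh 2\xi]/e^{2\xi}$ and $e^{-2\zeta} - e^{-2\xi}$), yielding item (1). For item (3), differentiating~\eqref{a stick} in $\zeta$, evaluating at $\xi$, and using $\sinh(2\xi)/e^{2\xi} = (1 - e^{-4\xi})/2$ together with $\cosh(2\xi)/e^{2\xi} = (1 + e^{-4\xi})/2$ rearranges the result into~\eqref{normal derivative in elliptical coordinates}. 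The positivity $\min_{\partial E^\xi}|\nabla \Psi^\xi| > 0$ then follows from $\cosh(2\xi) > 1 \ge \cos(2\eta)$ and the positivity of the conformal metric factor $\sech(\xi)\sqrt{\sinh^2 \xi + \sin^2 \eta}$ relating $\partial_\zeta \tilde{\Psi}^\xi$ to $\nabla \Psi^\xi \cdot \nu$. So the substance of the lemma is item (2).

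To prove (2), my plan is to characterize $\Phi^\xi := \bf{N} \ast \mathds{1}_{E^\xi}$ by its defining PDE and growth properties and then verify that~\eqref{a stick} together with a suitable interior expression produces such a function. Concretely, $\Phi^\xi$ is the unique (up to additive constant) element of $C^{1,1}_{\loc}(\R^2)$ with $\Delta \Phi^\xi = \mathds{1}_{E^\xi}$ distributionally and $\Phi^\xi(x) - (\tanh \xi / 2)\log|x|$ bounded as $|x| \to \infty$, the constant being fixed by the convolution formula. The identity $|x|^2 = (\sech^2 \xi / 2)(\cosh 2\zeta + \cos 2\eta)$ collapses the proposed exterior combination to
\[
\tilde{\Psi}^\xi + \Omega^\xi |x|^2/2 = \tfrac{\tanh \xi}{2}(\zeta - \xi) + \tfrac{\tanh \xi(1 + e^{-4\xi})}{8} + \tfrac{\tanh \xi}{4}\, e^{-2\zeta}\cos 2\eta,
\]
each summand harmonic in $(\zeta, \eta)$ and hence harmonic in $x$ by conformal invariance of $\Delta$ in two dimensions, with the correct leading asymptotic $(\tanh \xi / 2)\log|x| + O(1)$ since $\log|x| = \zeta + O(1)$ as $\zeta \to \infty$ and $|E^\xi|/(2\pi) = \tanh \xi / 2$.

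For the interior, the unique quadratic polynomial of the form $\kappa\tp{x_1^2 + (\coth \xi)^2 x_2^2 - 1}$ with Laplacian $1 - 2\Omega^\xi$ has $\kappa = \sinh^2 \xi / (2 e^{2\xi})$ and serves as the candidate for $\Psi^\xi$ on $E^\xi$. Value matching on $\partial E^\xi$ (both zero) defines $c^\xi$ as the common boundary value of $\Phi^\xi - \Omega^\xi |x|^2/2$ and simultaneously confirms the Kirchhoff-equilibrium constancy of this combination on $\partial E^\xi$. Normal-derivative matching is a direct hyperbolic calculation, obtained by computing $\partial_\zeta \tilde{\Psi}^\xi(\xi, \cdot)$ from item (3) divided by the conformal factor and comparing to the interior gradient on $\partial E^\xi$; these agree precisely because $\Omega^\xi = \tanh(\xi)/(1+\tanh\xi)^2$. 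The resulting piecewise function is $C^{1,1}_{\loc}(\R^2)$, satisfies $\Delta = \mathds{1}_{E^\xi}$ distributionally, and has the prescribed log-growth, so it equals $\Phi^\xi$ by the uniqueness characterization, establishing~\eqref{a stick}. The main technical obstacle is this careful $C^1$-matching step across $\partial E^\xi$, where the explicit value of $\Omega^\xi$ enters decisively through the numerous hyperbolic identities.
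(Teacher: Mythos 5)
Your proposal is correct but takes a genuinely different route from the paper. The paper's proof borrows from Lamb (Article 159 of Chapter VII in~\cite{MR1317348}) the explicit elliptical-coordinate expression $\tilde{\Psi}^\xi_{\m{ell}}(\zeta,\eta) = \tp{\tanh\xi/4}\tp{2\zeta + e^{-2\zeta}\cos 2\eta}$ for the logarithmic potential of $\mathds{1}_{E^\xi}$, adds the rotation stream function $\tilde{\Psi}^\xi_{\m{rot}}$, subtracts the $\zeta=\xi$ boundary value to arrive at~\eqref{a stick}, and finally invokes Liouville's theorem to identify this construction with the convolution formula~\eqref{stream function definition}. You instead take~\eqref{a stick} as a \emph{candidate} exterior expression, pair it with the interior quadratic $\kappa\tp{x_1^2 + \coth^2\xi\,x_2^2 - 1}$, $\kappa = \sinh^2\xi/\tp{2e^{2\xi}}$, verify by a direct hyperbolic calculation (which I checked: both normal derivatives on $\pd E^\xi$ reduce to $\tp{\sinh\xi/e^{2\xi}}\sqrt{\sinh^2\xi + \sin^2\eta}$ after simplifying $\cosh 2\xi - \cos 2\eta = 2\tp{\sinh^2\xi + \sin^2\eta}$ and $2\kappa/\sinh\xi = \sinh\xi/e^{2\xi}$) that the two pieces glue to a $C^{1,1}$ function, and conclude by the same Liouville-type uniqueness argument. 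Your version is self-contained, at the cost of carrying out the computation Lamb already did; the paper's is shorter but leaves the origin of~\eqref{a stick} opaque. One cosmetic slip in your final sentence: the object with distributional Laplacian $\mathds{1}_{E^\xi}$ and logarithmic growth is the piecewise candidate \emph{plus} $\Omega^\xi|x|^2/2$, and uniqueness pins it down only up to the additive constant you then absorb into $c^\xi$; the candidate for $\Psi^\xi$ itself has Laplacian $\mathds{1}_{E^\xi} - 2\Omega^\xi$ and grows quadratically. Not a gap, just imprecise wording in the last line.
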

\begin{proof}
    First, let $\tilde{\Psi}_{\m{rot}}:\R^+\times\R\to\R$ denote the elliptical coordinate description of the stream function associated with uniform rotation by the angular velocity $\Omega^\xi$; more precisely
    \begin{equation}
        \tilde{\Psi}^\xi_{\m{rot}}\tp{\zeta,\eta} = -\f{\Omega^\xi}{2}\tp{\tp{\sech\tp{\xi}\cosh\tp{\zeta}\cos\tp{\eta}}^2 + \tp{\sech\tp{\xi}\sinh\tp{\zeta}\sin\tp{\eta}}^2}
        =-\f{\tanh\tp{\xi}}{4e^{2\xi}}\tp{\cosh\tp{2\zeta} + \cos\tp{2\eta}}
    \end{equation}
    for all $\zeta\in\R^+$ and $\eta\in\R$.

    Next, by repeating the computations in Article 159 of Chapter VII of Lamb~\cite{MR1317348} we obtain a particular $\tilde{\Psi}^\xi_{\m{ell}}:\R^+\times\R\to\R$ that satisfies for $\xi\le\zeta<\infty$ and $\eta\in\R$ the identity
    \begin{equation}
        \tilde{\Psi}_{\m{ell}}^\xi\tp{\zeta,\eta} = \f{\tanh\tp{\xi}}{4}\bp{2\zeta + \f{\cos(2\eta)}{e^{2\zeta}}}.
    \end{equation}
    Now we define $\tilde{\Psi}^\xi:\R^+\times\R\to\R$ via
    \begin{equation}\label{aguas de marco}
        \tilde{\Psi}^\xi\tp{\zeta,\eta} = \tp{\tilde{\Psi}^\xi_{\m{rot}} + \tilde{\Psi}^\xi_{\m{ell}}}\tp{\zeta,\eta} - \tp{\tilde{\Psi}^\xi_{\m{rot}} + \tilde{\Psi}^\xi_{\m{ell}}}\tp{\xi,\eta}.
    \end{equation}

    After routine computation, it is clear that~\eqref{aguas de marco} agrees with the right hand side of~\eqref{a stick} on the set $\zeta\ge\xi$, $\eta\in\R$. The aforementioned construction in Lamb~\cite{MR1317348} ensures that there exists $\Psi^\xi\in C^{1,1}_{\tp{2}}\tp{\R^2}$ uniquely determined by the property that
    \begin{equation}
        \Psi^\xi\tp{\sech(\xi)\cosh(\zeta)\cos(\eta),\sech(\xi)\sinh(\zeta)\sin(\eta)} = \tilde{\Psi}^\xi\tp{\zeta,\eta}\text{ for all }\zeta\in\R^+,\;\eta\in[0,2\pi).
    \end{equation}

    Let us now see that $\Psi^\xi$ is the desired relative stream function that satisfies~\eqref{stream function definition}. The previous construction following Lamb~\cite{MR1317348} guarantees that
    \begin{equation}
        \Delta\Psi^\xi = \mathds{1}_{E^\xi} - 2\Omega^\xi\text{ in }\R^2,\;\Psi^\xi = 0\text{ on }\pd E^\xi,\text{ and }\lim_{|x|\to\infty}\tp{\grad\Psi^\xi\tp{x} + \Omega^\xi x} = 0.
    \end{equation}
    Therefore, Liouville's theorem for harmonic functions (see, e.g., Theorem 2.16 in Folland~\cite{MR1357411}) assures us that
    \begin{equation}
        \grad\tp{\Psi^\xi + \Omega^\xi|\cdot|^2/2 - \bf{N}\ast\mathds{1}_{E^\xi}} = 0\text{ in }\R^2.
    \end{equation}
    The existence of the number $c^\xi$ seen in~\eqref{stream function definition} is now immediate.
\end{proof}

Armed with sufficiently explicit formulae for the Kirchhoff vortices' relative stream functions, we are in a position to unpack the nondegeneracy condition of the fourth item of Definition~\ref{defn of admissible patches}.

\begin{propC}[Nondegeneracy on Kirchhoff stream functions, I]\label{kirchhoff nondegeneracy 1}
    Fix $\xi\in\R^+$ and let $\Sigma^\xi = \pd E^\xi$. The following are equivalent for $\phi\in C^0_{\tp{2}}\tp{\Sigma^\xi}$.
    \begin{enumerate}
        \item For all $x\in\Sigma^\xi$ we have
        \begin{equation}\label{the spectral condition of Kirchhoff}
            \phi(x) + \int_{\Sigma^\xi}\bf{N}\tp{x - y}\f{\phi(y)}{\tabs{\grad\Psi^\xi\tp{y}}}\;\m{d}\mathcal{H}^1\tp{y} = 0
        \end{equation}
        where $\m{d}\mathcal{H}^1$ denotes integration with respect to the one-dimensional Hausdorff measure.
        \item If we let $\rho\in C^0\tp{2\pi\T}$ be defined via $\rho(\eta) = \phi\tp{\cos(\eta),\tanh\tp{\xi}\sin\tp{\eta}}$ for $\eta\in 2\pi\T$ then
        \begin{equation}\label{zero vanish cond}
            \tp{\sinh\tp{2\xi} - 2e^{2\xi}\log\tp{1 + e^{-2\xi}}}\mathscr{F}[\rho](0) =  0
        \end{equation}
        and for $k\in\Z\setminus\tcb{0}$
        \begin{equation}\label{nonzero vanish cond}
            \tp{|k|\sinh(2\xi) - e^{2\xi}\tp{1+e^{-2\xi|k|}}}\mathscr{F}[\rho]\tp{k} = 0.
        \end{equation}
        Here $\mathscr{F}[\rho]$ denotes the Fourier transform of $\rho$.
    \end{enumerate}
\end{propC}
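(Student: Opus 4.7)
The plan is to parametrize $\Sigma^\xi$ by the elliptic angle $\eta\in 2\pi\T$ via $y\tp{\eta} = \tp{\cos\eta,\tanh\tp{\xi}\sin\eta}$ (the curve $\zeta = \xi$ in the elliptical coordinates of Lemma~\ref{lem on stream functions for Kirchhoff vortices}) and to diagonalize the operator in~\eqref{the spectral condition of Kirchhoff} in the Fourier basis on $2\pi\T$. Both directions of the equivalence then reduce to a readoff of the eigenvalues on each mode. Two preparatory computations will be required: a simplification of the weighted surface measure, and a Fourier expansion of the log kernel on $\Sigma^\xi\times\Sigma^\xi$.

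Since $\tilde\Psi^\xi\tp{\xi,\cdot}\equiv 0$, the tangential derivative on $\Sigma^\xi$ vanishes and $\tabs{\grad\Psi^\xi\tp{y\tp{\eta}}} = \tabs{\pd_1\tilde\Psi^\xi\tp{\xi,\eta}}/h\tp{\xi,\eta}$ with $h\tp{\xi,\eta} = \tabs{y'\tp{\eta}} = \sqrt{\sin^2\eta + \tanh^2\tp{\xi}\cos^2\eta}$ the scale factor of the elliptical coordinates at $\zeta=\xi$. Combining~\eqref{normal derivative in elliptical coordinates} with the algebraic identity $h^2\tp{\xi,\eta} = \tp{1/2}\sech^2\tp{\xi}\tp{\cosh\tp{2\xi} - \cos\tp{2\eta}}$, a routine trigonometric expansion, the $\eta$-dependence cancels and yields
\[
\f{d\mathcal{H}^1\tp{y\tp{\eta}}}{\tabs{\grad\Psi^\xi\tp{y\tp{\eta}}}} = \f{2e^{2\xi}}{\sinh\tp{2\xi}}\,d\eta.
\]
For the kernel, I would write $z_j = \sech\tp{\xi}\cosh\tp{\xi + i\eta_j}$ for points on $\Sigma^\xi$ and use $\cosh\tp{a} - \cosh\tp{b} = 2\sinh\tp{\tp{a+b}/2}\sinh\tp{\tp{a-b}/2}$ together with $\tabs{\sinh\tp{a+ib}}^2 = \tp{\cosh\tp{2a} - \cos\tp{2b}}/2$ to obtain $\tabs{x\tp{\eta_1} - y\tp{\eta_2}}^2 = \sech^2\tp{\xi}\sp{\cosh\tp{2\xi} - \cos\tp{\eta_1+\eta_2}}\sp{1 - \cos\tp{\eta_1-\eta_2}}$. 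Term-by-term application of the classical identity $\log\tp{\cosh a - \cos b} = \tabs{a} - \log 2 - 2\sum_{n\ge 1}e^{-n\tabs{a}}\cos\tp{nb}/n$ to each factor then gives
\[
\log\tabs{x\tp{\eta_1} - y\tp{\eta_2}} = \xi + \log\tp{\sech\tp{\xi}/2} - \sum_{n=1}^\infty\f{e^{-2n\xi}\cos\tp{n\tp{\eta_1+\eta_2}}}{n} - \sum_{n=1}^\infty\f{\cos\tp{n\tp{\eta_1-\eta_2}}}{n},
\]
the last (conditionally convergent) sum being interpreted as the $L^1\tp{2\pi\T}$ function $-\log\tabs{2\sin\tp{\tp{\eta_1-\eta_2}/2}}$.

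Inserting both formulas into~\eqref{the spectral condition of Kirchhoff} and expanding $\rho\tp{\eta} = \phi\tp{y\tp{\eta}} = \sum_k\mathscr{F}\tsb{\rho}\tp{k}e^{ik\eta}$, orthogonality makes the operator block-diagonal on the two-dimensional spans $\tcb{e^{ik\cdot}, e^{-ik\cdot}}$: the translation-invariant difference-kernel acts diagonally, while the sum-kernel $\cos\tp{n\tp{\eta_1+\eta_2}}$ swaps modes of opposite sign. The dihedral hypothesis $\phi\in C^0_{\tp{2}}\tp{\Sigma^\xi}$ translates to $\rho\tp{\eta} = \rho\tp{-\eta} = \rho\tp{\eta+\pi}$, which forces $\mathscr{F}\tsb{\rho}\tp{k}=\mathscr{F}\tsb{\rho}\tp{-k}\in\R$ for even $k$ and $\mathscr{F}\tsb{\rho}\tp{k} = 0$ for odd $k$. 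Under this constraint the two kernel contributions combine in phase on each even-cosine mode, yielding the single eigenvalue $\tp{\tabs{k}\sinh\tp{2\xi} - e^{2\xi}\tp{1+e^{-2\tabs{k}\xi}}}/\tp{\tabs{k}\sinh\tp{2\xi}}$ on mode $k\neq 0$. Multiplying by $\tabs{k}\sinh\tp{2\xi}$ gives~\eqref{nonzero vanish cond}; for odd $k$ the condition is automatic since $\mathscr{F}\tsb{\rho}\tp{k} = 0$. For $k = 0$, the constant part of the kernel contributes $2\pi\tp{\xi + \log\tp{\sech\tp{\xi}/2}} = -2\pi\log\tp{1 + e^{-2\xi}}$ (via $\log\tp{\sech\tp{\xi}/2} = -\xi - \log\tp{1 + e^{-2\xi}}$), producing the factor $\sinh\tp{2\xi} - 2e^{2\xi}\log\tp{1 + e^{-2\xi}}$ in~\eqref{zero vanish cond}.

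The principal technical challenge will be the Fourier expansion of the log kernel on the ellipse and the verification that the sum- and difference-kernel contributions combine coherently on each even-cosine mode; this in turn depends crucially on the full dihedral symmetry, without which the operator would only be block-diagonal rather than diagonal. The conditional convergence of $\sum n^{-1}\cos\tp{n\theta}$ at $\theta = 0$ is only a minor issue, resolved by interpreting the series in $L^1\tp{2\pi\T}$ and integrating against the continuous $\rho$.
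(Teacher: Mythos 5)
Your proposal is correct and follows essentially the same route as the paper's proof: parametrize $\Sigma^\xi$ by the elliptic angle, observe that the weighted density $\m{d}\mathcal{H}^1/\tabs{\grad\Psi^\xi}$ reduces to the constant multiple $\tp{2e^{2\xi}/\sinh\tp{2\xi}}\m{d}\eta$, decompose the logarithmic kernel into a translation-invariant piece and an additive-argument piece, and diagonalize in Fourier, using the reality of the Fourier coefficients (from the $e_2$-reflection symmetry) to combine the two kernel contributions into a single scalar on each mode. The only technical differences are cosmetic: you derive the constant density directly from the conformal scale factor and the vanishing of the tangential derivative rather than through the coarea formula plus a change of variables, and you obtain the kernel's Fourier expansion from the classical identity $\log\tp{\cosh a - \cos b} = \tabs{a} - \log 2 - 2\sum_{n\ge 1}n^{-1}e^{-n\tabs{a}}\cos\tp{nb}$ rather than citing Lemmas 3.3 and 3.4 of Castro, C\'ordoba, and G\'omez-Serrano, which makes the argument slightly more self-contained without changing its structure.
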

\begin{proof}
    The first stage of the proof is to write the right hand integral operator of $\phi$ in~\eqref{the spectral condition of Kirchhoff} in terms of $\rho$. Let $\phi\in C^0_{\tp{2}}\tp{\Sigma^\xi}$ and $\upphi\in C^0\tp{\R^2}$ be such that $\phi = \upphi$ on $\Sigma^\xi$. Such a $\upphi$ exists according to the Tietze extension theorem; see, e.g. Theorem 4.16 in Folland~\cite{MR1681462}. By using the coarea formula (see, e.g., Theorem 3.2.11 in~\cite{MR257325} or Theorem 3.13 in~\cite{MR3409135}), or arguing as in the proof of Proposition~\ref{prop on limit identification} below, one sees that for every $x\in\Sigma^\xi$
    \begin{equation}\label{going up a dimension}
        \int_{\Sigma^\xi}\bf{N}\tp{x - y}\f{\phi(y)}{\tabs{\grad\Psi^\xi\tp{y}}}\;\m{d}\mathcal{H}^1\tp{y} = \lim_{\ep\to0}\f{1}{\ep}\int_{U\cap\tcb{|\Psi^\xi|<\ep}}\bf{N}\tp{x - y}\upphi(y)\;\m{d}y
    \end{equation}
    where $U\supset\Sigma^\xi$ is a bounded open set with the property that $\tcb{x\in U\;:\;\Psi^\xi\tp{x} = 0} = \Sigma^\xi$ and $\min_{\Bar{U}}|\grad\Psi^\xi|>0$ (see the third item of Lemma~\ref{lem on stream functions for Kirchhoff vortices}).

    Now for $\ep\in\R^+$ sufficiently small, we change variables in the right hand side integral~\eqref{going up a dimension} according to the elliptical coordinates map
    \begin{equation}
        \bf{e}:\R^+\times2\pi\T\to\R^2\text{ defined via }\bf{e}\tp{\zeta,\eta} = \tp{\sech\tp{\xi}\cosh\tp{\zeta}\cos\tp{\eta},\sech\tp{\xi}\sinh\tp{\zeta}\sin\tp{\eta}}
    \end{equation}
    which has Jacobian
    \begin{equation}\label{the jacobian identity}
        \bf{J}:\R^+\times 2\pi\T\to\R^+\text{ defined via }\bf{J}\tp{\zeta,\eta} = \sech^2\tp{\xi}\tp{\cosh\tp{2\zeta} - \cos\tp{2\eta}}/2.
    \end{equation}
    Thus, upon writing $x = \bf{e}\tp{\xi,\tilde{\eta}}$ for some $\tilde{\eta}\in2\pi\T$ we have
    \begin{multline}\label{the CoV identity}
        \int_{U\cap\tcb{|\Psi^\xi|<\ep}}\bf{N}\tp{x - y}\upphi(y)\;\m{d}y \\= \f{1}{\ep}\int_0^\infty\int_0^{2\pi}\bf{N}\tp{\bf{e}\tp{\xi,\tilde{\eta}} - \bf{e}\tp{\zeta,\eta}}\tp{\upphi\circ\bf{e}}\tp{\zeta,\eta}\tp{\mathds{1}_{\bf{e}\in U\cap\tcb{|\tilde{\Psi}^\xi|<\ep}}\bf{J}}\tp{\zeta,\eta}\;\m{d}\eta\;\m{d}\zeta,
    \end{multline}
    with $\tilde{\Psi}^\xi$ as defined in~\eqref{a stone}. Now we pass to the limit as $\ep\to0$ on the right hand side of~\eqref{the CoV identity} while using an argument similar to the justification of~\eqref{going up a dimension} to obtain
    \begin{equation}\label{galaxy tablet}
        \int_{\Sigma^\xi}\bf{N}\tp{x - y}\f{\phi(y)}{\tabs{\grad\Psi^\xi\tp{y}}}\;\m{d}\mathcal{H}^1\tp{y} = \int_0^{2\pi}\bf{N}\tp{\bf{e}\tp{\xi,\tilde{\eta}} - \bf{e}\tp{\xi,\eta}}\rho(\eta)\f{\bf{J}\tp{\xi,\eta}}{\pd_1\tilde{\Psi}^\xi\tp{\xi,\eta}}\;\m{d}\eta
    \end{equation}
    with $\rho = \phi\circ\bf{e}\tp{\xi,\cdot}$. Note that in deriving~\eqref{galaxy tablet}, we have tacitly used the identity $|\grad\tilde{\Psi}^\xi\tp{\xi,\eta}| = \pd_1\tilde{\Psi}^\xi\tp{\xi,\eta}$ on $\tcb{\tilde{\Psi}^\xi = 0}\cap U$, which is a consequence of Lemma~\ref{lem on stream functions for Kirchhoff vortices}. Now we have the following key equality, that follows from~\eqref{normal derivative in elliptical coordinates} and~\eqref{the jacobian identity}:
    \begin{equation}\label{the crucial identity}
        \f{\bf{J}\tp{\xi,\eta}}{\pd_1\tilde{\Psi}^\xi\tp{\xi,\eta}} = \f{e^{2\xi}\sech^2\tp{\xi}}{\tanh\tp{\xi}} = \f{2 e^{2\xi}}{\sinh\tp{2\xi}}.
    \end{equation}
    Notice that crucially the dependence on $\eta$ drops out of the right hand side.

    Before substituting~\eqref{the crucial identity} into~\eqref{galaxy tablet}, we turn our attention to simplification of the kernel expression involving $\bf{N}$. One computes that
    \begin{equation}
        \tabs{\bf{e}(\xi,\tilde{\eta}) - \bf{e}\tp{\xi,\eta}}^2 = 4\sech^2\tp{\xi}\sin^2(\tp{\eta - \tilde{\eta}}/2)\tp{\sinh^2\tp{\xi} + \sin^2\tp{\tp{\eta + \tilde{\eta}}/2}}
    \end{equation}
    and hence for the kernels
    \begin{equation}
        K_1(\upeta) = \log\sin^2\tp{\upeta/2}/4\pi\text{ and }K_2\tp{\upeta} = \log\tp{\sinh^2\tp{\xi} + \sin^2\tp{\upeta/2}}/4\pi
    \end{equation}
    we have 
    \begin{equation}\label{the kernel decomposition}
        \bf{N}\tp{\bf{e}\tp{\xi,\tilde{\eta}} - \bf{e}\tp{\xi,\eta}} = \log\tp{2\sech(\xi)}/2\pi + K_1\tp{\eta - \tilde{\eta}} + K_2\tp{\eta + \tilde{\eta}}.
    \end{equation}
    Synthesizing~\eqref{galaxy tablet}, \eqref{the crucial identity}, and~\eqref{the kernel decomposition} gives us our sought after relationship:
    \begin{equation}\label{sought after relationship}
        \int_{\Sigma^\xi}\bf{N}\tp{x - y}\f{\phi(y)}{\tabs{\grad\Psi^\xi\tp{y}}}\;\m{d}\mathcal{H}^1\tp{y} = \f{2e^{2\xi}}{\sinh\tp{2\xi}}\int_{0}^{2\pi}\bp{\f{1}{2\pi}\log\tp{2\sech\tp{\xi}} + K_1\tp{\eta - \tilde{\eta}} + K_2\tp{\eta + \tilde{\eta}}}\rho\tp{\eta}\;\m{d}\eta.
    \end{equation}
    The second stage of the proof is to analyze the action in Fourier space of the following integral operators:
    \begin{equation}
        \tp{O_1\uprho}\tp{\tilde{\eta}} = \int_{0}^{2\pi}K_1(\eta - \tilde{\eta})\uprho(\eta)\;\m{d}\eta\text{ and }\tp{O_2\uprho}\tp{\tilde{\eta}} = \int_{0}^{2\pi}K_2\tp{\eta + \tilde{\eta}}\uprho(\eta)\;\m{d}\eta,
    \end{equation}
    defined for $\uprho\in C^0\tp{2\pi\T}$ and $\tilde{\eta}\in 2\pi\T$. The following identities for $k\in\Z$ are consequences of Lemmas 3.3 and 3.4 in Castro, C\'ordoba, and G\'omez-Serrano~\cite{MR3462104}:
    \begin{equation}\label{special integral identities}
        \int_{0}^{2\pi}K_1(\eta)e^{-\ii k\eta}\;\m{d}\eta = -\f{1}{2}\begin{cases}
            \log 4&\text{if }k=0,\\
            |k|^{-1}&\text{if }k\neq 0,
        \end{cases}
        \text{ and }
        \int_0^{2\pi}K_2(\eta)e^{-\ii k\eta}\;\m{d}\eta = -\f12\begin{cases}
            \log 4 - 2\xi&\text{if }k=0,\\
            e^{-2|k|\xi}|k|^{-1}&\text{if }k\neq 0.
        \end{cases}
     \end{equation}
     Therefore, for any $\uprho\in C^0\tp{2\pi\T}$ and $k\in\Z$ we can compute 
     \begin{equation}\label{Fourier action}
         \mathscr{F}[O_1\uprho](k) = -\f{\mathscr{F}[\uprho]\tp{k}}{2}\begin{cases}
             \log 4&\text{if }k= 0,\\
             |k|^{-1}&\text{if }k\neq 0,
         \end{cases}
         \text{ and }
         \mathscr{F}[O_2\uprho]\tp{k} = -\f{\Bar{\mathscr{F}[\uprho]\tp{k}}}{2}\begin{cases}
             \log 4 - 2\xi&\text{if }k=0,\\
             e^{-2|k|\xi}|k|^{-1}&\text{if }k\neq0.
         \end{cases}
     \end{equation}

     We now have all of the tools we need to prove the stated equivalence. Identity~\eqref{the spectral condition of Kirchhoff} holds for a $\phi\in C^0\tp{\Sigma^\xi}$ if and only if for $\rho = \phi\circ\bf{e}\tp{\xi,\cdot}$ we have
     \begin{equation}\label{the Fourier identity}
         \rho + 2e^{2\xi}\tp{\log\tp{2\sech\tp{\xi}}\mathscr{F}[\rho](0) + O_1\rho + O_2\rho}/\sinh(2\xi) = 0.
     \end{equation}
     Then equality~\eqref{the Fourier identity} holds if and only if each Fourier coefficient of the left hand side vanishes. Since $\phi$ is $e_2$-symmetric, we deduce $\rho(\eta) = \rho(-\eta)$ for all $\eta\in 2\pi\T$ and $\mathscr{F}[\rho](k) = \m{Re}\mathscr{F}[\rho](|k|)$ for all $k\in\Z$. We then may use the previous computations to see that this latter condition is equivalent to the desired identities~\eqref{zero vanish cond} and~\eqref{nonzero vanish cond}.
\end{proof}

\begin{coroC}[Nondegeneracy on Kirchhoff stream functions, II]\label{coro on nondegen kirchoff II}
    There exists an at most countable set $\mathcal{N}\subset\R^+$ such that for all $\xi\in\R^+\setminus\mathcal{N}$ the only solution $\phi\in C^0_{\tp{2}}\tp{\Sigma_\xi}$ to equation~\eqref{the spectral condition of Kirchhoff} is $\phi = 0$.
\end{coroC}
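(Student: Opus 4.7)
The plan is to combine Proposition \ref{kirchhoff nondegeneracy 1} with the observation that the multipliers appearing in \eqref{zero vanish cond}--\eqref{nonzero vanish cond} are real-analytic functions of $\xi$, which forces their zero sets to be discrete (hence at most countable), and then to exploit the $2$-fold dihedral symmetry to throw away the odd Fourier modes.

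First I would unpack the symmetry constraint on $\phi\in C^0_{\tp{2}}\tp{\Sigma^\xi}$ via the boundary parametrization $\eta\mapsto\tp{\cos\eta,\tanh\tp{\xi}\sin\eta}$. The reflection $(x_1,x_2)\mapsto(x_1,-x_2)$ corresponds to $\eta\mapsto-\eta$, and the rotation $\bf{R}\tp{\pi}$ corresponds to $\eta\mapsto\eta+\pi$, so $\rho = \phi\circ\bf{e}\tp{\xi,\cdot}$ satisfies $\rho(\eta)=\rho(-\eta)=\rho(\eta+\pi)$. The translation invariance by $\pi$ immediately forces $\mathscr{F}[\rho](k)=0$ for every odd $k\in\Z$, so the content of Proposition~\ref{kirchhoff nondegeneracy 1} is just: $\phi$ solves \eqref{the spectral condition of Kirchhoff} if and only if $f_{|k|}(\xi)\mathscr{F}[\rho](k)=0$ for every \emph{even} $k$, where
\begin{equation*}
  f_0(\xi) = \sinh(2\xi) - 2e^{2\xi}\log\bp{1+e^{-2\xi}},\quad f_k(\xi) = k\sinh(2\xi) - e^{2\xi}\bp{1 + e^{-2\xi k}}\;\text{for }k\ge1.
\end{equation*}

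Next I would verify that each of these multipliers is analytic in $\xi$ and not identically zero. For $k=2$ this is trivial: a direct computation yields $f_2(\xi) = -2e^{-2\xi}$, so the mode $|k|=2$ imposes no constraint whatsoever on $\xi$. For $k=0$ a quick asymptotic check shows $f_0(0^+) = -2\log 2 < 0$ while $f_0(\xi)\to\infty$ as $\xi\to\infty$ (since $\sinh(2\xi)\sim e^{2\xi}/2$ and $2e^{2\xi}\log(1+e^{-2\xi})\to 2$), so $f_0\not\equiv 0$. For even $k\ge 4$ one rewrites $f_k(\xi) = (k/2-1)e^{2\xi} - (k/2)e^{-2\xi} - e^{2(1-k)\xi}$, which equals $-2$ at $\xi=0$ and diverges to $+\infty$ as $\xi\to\infty$, so it is likewise not identically zero.

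Setting $\mathcal{N}_k = \{\xi\in\R^+ : f_k(\xi)=0\}$ for each even $k\in\N$ (with $\mathcal{N}_2=\emptyset$ by the above), the zero set of a non-vanishing real-analytic function on $\R^+$ has no accumulation point in $\R^+$ and so, writing $\R^+ = \bigcup_{n\in\N^+}[1/n,n]$, intersects each compact subinterval in only finitely many points. Hence every $\mathcal{N}_k$ is at most countable. Taking
\begin{equation*}
    \mathcal{N} = \bigcup_{\substack{k\in\N\\k\text{ even}}}\mathcal{N}_k
\end{equation*}
as a countable union of countable sets, $\mathcal{N}$ is itself at most countable. For $\xi\in\R^+\setminus\mathcal{N}$ every multiplier $f_k(\xi)$ is nonzero on the even modes, forcing all Fourier coefficients of $\rho$ to vanish, hence $\rho\equiv 0$ and therefore $\phi\equiv 0$ on $\Sigma^\xi$. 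No real obstacle is anticipated: the work is essentially the two-line check that $f_2\equiv -2e^{-2\xi}$ and the asymptotic sanity check that $f_0$ and $f_k$ for even $k\ge 4$ are not identically zero.
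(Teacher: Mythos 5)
Your argument is correct, and it reaches the conclusion by a genuinely different (and somewhat softer) route than the paper. The paper also reduces, via Proposition~\ref{kirchhoff nondegeneracy 1}, to the multipliers $\Sampi_k$ (your $f_k$), but then it establishes by hand that each $\Sampi_k$ has \emph{at most one} positive zero: it computes the limits at $\xi\to0^+$ and $\xi\to\infty$ and checks sign conditions on $\Sampi_k'$ (showing $\Sampi_0$ and $\Sampi_k$, $k\ge3$, are strictly monotone with a sign change, while $\Sampi_1$ and $\Sampi_2$ never vanish), so that $\mathcal{N}$ is a countable union of singletons over \emph{all} $k\in\N$. You instead (i) exploit the $\pi$-rotation part of the $2$-fold dihedral symmetry to note $\rho(\eta)=\rho(\eta+\pi)$, so the odd-mode constraints are vacuous — a point the paper does not use, dealing with $k=1$ directly instead — and (ii) replace the monotonicity analysis by the observation that each $f_k$ is real-analytic and not identically zero (value $-2$, resp.\ $-2\log 2$, near $\xi=0$; and your explicit identity $f_2\equiv-2e^{-2\xi}$ matches the paper's claim $\Sampi_2<0$), so its zero set is discrete in $\R^+$ and hence at most countable, and the countable union over even $k$ is countable. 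Your version is shorter and avoids all derivative computations; the paper's version buys sharper structural information about $\mathcal{N}$ (at most one excluded parameter per Fourier mode, none for $k=1,2$), which is more than the corollary needs. Both only use the forward implication of Proposition~\ref{kirchhoff nondegeneracy 1}, so no gap arises from not constructing solutions at the exceptional parameters.
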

\begin{proof}
    As suggested by the second item of Proposition~\ref{kirchhoff nondegeneracy 1}, we consider the sequence of functions $\tcb{\Sampi_k}_{k\in\N}$ defined for $\xi\in\R^+$ via
    \begin{equation}
        \Sampi_0\tp{\xi} = \sinh(2\xi) - 2 e^{2\xi}\log\tp{1 + e^{-2\xi}}\text{ and for }k>0,\;\Sampi_k\tp{\xi} = k\sinh(2\xi) - e^{2\xi}\tp{1 + e^{-2k\xi}}.
    \end{equation}

    We deduce from Proposition~\ref{kirchhoff nondegeneracy 1} that for a fixed $\xi\in\R^+$ there exists $\phi\in C^0_{\tp{2}}\tp{\Sigma^\xi}\setminus\tcb{0}$ satisfying~\eqref{the spectral condition of Kirchhoff} if and only if there exists $k\in\N$ with $\Sampi_k(\xi) = 0$. Thus the result will follow as soon as we show that for each $k\in\N$ there exists at most one $\xi\in\R^+$ with $\Sampi_k\tp{\xi} = 0$.

    For $k = 0$ we show that $\Sampi_0$ has a unique positive zero. This follows from the observations that 1) $\lim_{\xi\to0}\Sampi_0\tp{\xi}=-2\log 2$, 2) $\lim_{\xi\to\infty}\Sampi_0\tp{\xi} = \infty$, and 3) $\Sampi_0'(\xi)>0$.

    For $k\in\tcb{1,2}$, we show that $\Sampi_k$ has no zeros. For $k=1$, we have $\lim_{\xi\to0}\Sampi_1(\xi)=-2$ and $\Sampi_1'(\xi)<0$ for all $\xi\in\R^+$ and hence $\Sampi_1(\xi)< -2$ for all $\xi\in\R^+$. For $k=2$, we calculate that $\Sampi_2(\xi) <0$ for all $\xi\in\R^+$.

    For $\N\ni k\ge 3$, we show that $\Sampi_k$ has a unique positive zero. This follows from the observations: 1) $\lim_{\xi\to0}\Sampi_k(\xi) = -2$, 2) $\lim_{\xi\to\infty}\Sampi_k\tp{\xi} = \infty$, and 3) $\Sampi_k'(\xi)>0$ for all $\xi\in\R^+$.
\end{proof}

Synthesizing the previous calculations gives us the main result of this subsection.
\begin{thmC}[Admissibility of Kirchhoff elliptical vortices]\label{thm on admissibility of Kirchhoff}
    Let $\mathcal{N}\subset\R^+$ be the countable set from Corollary~\ref{coro on nondegen kirchoff II}. For all $\xi\in\R^+\setminus\mathcal{N}$ the Kirchhoff elliptical vortex defined in~\eqref{the kirchhoff ellispses as sets} and~\eqref{KEAF} is an admissible steady rotating vortex patch in the sense of Definition~\ref{defn of admissible patches}.
\end{thmC}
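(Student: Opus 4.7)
My plan is to verify the four items of Definition~\ref{defn of admissible patches} in turn for the $\xi$-Kirchhoff vortex, taking $m=2$, $\tilde U = E^\xi$, $\Omega = \Omega^\xi$, and $\Psi = \Psi^\xi$ (with $c = c^\xi$ produced by Lemma~\ref{lem on stream functions for Kirchhoff vortices}). Each check is essentially a bookkeeping exercise against what has already been assembled.

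For item (1), the set $E^\xi = \tcb{x\in\R^2 : x_1^2 + (x_2/\tanh\xi)^2<1}$ is an open, bounded, simply connected ellipse whose boundary is a real-analytic curve and hence of class $C^{1,1}$. The defining inequality depends only on $x_1^2$ and $x_2^2$, so $E^\xi$ is invariant under $x\mapsto(x_1,-x_2)$ and under $x\mapsto-x = \bf{R}(2\pi/2)x$, giving $2$-fold dihedral symmetry. For item (3), the identity $\Psi^\xi(x) = (\bf{N}\ast\mathds{1}_{E^\xi})(x) - \Omega^\xi|x|^2/2 - c^\xi$ is precisely formula~\eqref{stream function definition} of Lemma~\ref{lem on stream functions for Kirchhoff vortices}. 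For item (2), the first assertion $\Psi^\xi = 0$ on $\Sigma^\xi = \pd E^\xi$ is the first bullet of Lemma~\ref{lem on stream functions for Kirchhoff vortices}; the positivity $\min_{\Sigma^\xi}(\nu\cdot\grad\Psi^\xi)>0$ of the outward normal derivative follows from formula~\eqref{normal derivative in elliptical coordinates}, since the elliptical coordinate $\zeta$ increases in the outward direction at $\zeta = \xi$ and $\pd_1\tilde{\Psi}^\xi(\xi,\eta) = (\tanh\xi/(2e^{2\xi}))(\cosh(2\xi)-\cos(2\eta))$ is bounded below by a positive constant on $\eta\in 2\pi\T$. (Alternatively, as noted in the paper, $\Omega^\xi<1/2$ and so the Hopf boundary point lemma applies directly.)

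The only substantive item is the nondegeneracy condition (4). Here I simply invoke Corollary~\ref{coro on nondegen kirchoff II}: for $\xi\in\R^+\setminus\mathcal{N}$, the only $\phi\in C^0_{(2)}(\Sigma^\xi)$ satisfying the boundary integral equation~\eqref{the spectral condition of Kirchhoff} is $\phi=0$. That equation is exactly~\eqref{the nondegeneracy condition} specialized to $\Psi = \Psi^\xi$, $\Sigma = \Sigma^\xi$, after one identifies $\nu\cdot\grad\Psi^\xi = |\grad\Psi^\xi|$ on $\Sigma^\xi$ using item (2) (as $\Psi^\xi$ vanishes on the boundary, its gradient is normal to $\Sigma^\xi$ there). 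Thus all four items of Definition~\ref{defn of admissible patches} are satisfied and the theorem follows; there is no remaining obstacle, since the analytic heart of the matter has been handled by Proposition~\ref{kirchhoff nondegeneracy 1} and Corollary~\ref{coro on nondegen kirchoff II}.
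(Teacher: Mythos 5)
Your proof is correct and takes essentially the same approach as the paper: verify item (1) directly from the form of $E^\xi$, items (2) and (3) from Lemma~\ref{lem on stream functions for Kirchhoff vortices}, and item (4) from Corollary~\ref{coro on nondegen kirchoff II}. The paper's proof is more terse, but the one genuinely useful detail you add — the observation that $\nu\cdot\grad\Psi^\xi = |\grad\Psi^\xi|$ on $\Sigma^\xi$ is what makes~\eqref{the spectral condition of Kirchhoff} coincide with~\eqref{the nondegeneracy condition} — is indeed the step needed to see that Corollary~\ref{coro on nondegen kirchoff II} delivers item (4).
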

\begin{proof}
    To verify Definition~\ref{defn of admissible patches} we take $\xi\in\R^+\setminus\mathcal{N}$, $\Psi = \Psi^\xi$, $\Omega = \Omega^\xi$, $c = c^\xi$, and $\tilde{U} = E^\xi$. The satisfaction of the first item of the definition is immediate. The second and third items follow from Lemma~\ref{lem on stream functions for Kirchhoff vortices}. The final item is a consequence of Corollary~\ref{coro on nondegen kirchoff II}.
\end{proof}

% ***(((***(((***(()%!% ***(((***(((***(()%!% ***(((***(((***(()%!% ***(((***(((***(()%!% ***(((***(((***(()%!% ***(((***(((***(()%!% ***(((***(((***(()%!% ***(((***(((***(()%!% ***(((***(((***(()%!% ***(((***(((***(()%!% ***(((***(((***(()%!% ***(((***(((***(()%!% ***(((***(((***(()%!% ***(((***(((***(()%!% ***(((***(((***(()%!% ***(((***(((***(()%!% ***(((***(((***(()%!% ***(((***(((***(()%!% ***(((***(((***(()%!% ***(((***(((***(()%!
\subsection{Perturbations of the Rankine vortex}\label{subsection on perturbations of the rankine vortex}
% ***(((***(((***(()%!% ***(((***(((***(()%!% ***(((***(((***(()%!% ***(((***(((***(()%!% ***(((***(((***(()%!% ***(((***(((***(()%!% ***(((***(((***(()%!% ***(((***(((***(()%!% ***(((***(((***(()%!% ***(((***(((***(()%!% ***(((***(((***(()%!% ***(((***(((***(()%!% ***(((***(((***(()%!% ***(((***(((***(()%!% ***(((***(((***(()%!% ***(((***(((***(()%!% ***(((***(((***(()%!% ***(((***(((***(()%!% ***(((***(((***(()%!% ***(((***(((***(()%!

We now turn our focus to the verification of the admissibility of the steady rotating patches with discrete rotation symmetry, locally bifurcating from the Rankine vortex. We start by recalling the set-up of their construction, following Burbea~\cite{MR646163}, Hmidi and Mateu~\cite{MR3054601}, and Hassainia, Masmoudi, and Wheeler~\cite{MR4156612}. 

Let $D$ be a bounded, simply connected domain corresponding to a rotating vortex patch solution, and denote by $\mathbb{D} = B(0,1)$ the open unit-disk with boundary $\S^1 = \partial\mathbb{D}$. Assuming that the boundary $\partial D$ is of $C^{k,\alpha}$ regularity ($k\in\N^+, \al\in(0,1)$), we infer that a conformal mapping $\Phi:\C\setminus\Bar{\mathbb{D}}\to\C\setminus\Bar{D}$ extends to a $C^{k,\alpha}$ mapping $\Phi:\C\setminus\mathbb{D}\to\C\setminus D$, such that $\Phi:\S^1 \to \partial D$ is a $C^{k,\alpha}$ parametrization of the boundary of the patch (see, e.g., Theorems 3.5 and 3.6 in Pommerenke~\cite{MR1217706}). Interpreting the gradient of the relative stream function as a complex scalar under the usual identification $\nabla \Psi = \partial_1 \Psi + \ii \partial_2 \Psi$, equation~\eqref{weak equation 1} on the boundary $\partial D$ of the patch reads 
\begin{equation} \label{compnaphi}
    -\Bar{\nabla \Psi}(\Phi(w)) =  \Omega \overline {\Phi (w)} + \frac{1}{4\pi\ii} \int_{\S^1} \frac{\Bar{\Phi(\tau)} - \Bar{\Phi(w)}}{\Phi(\tau) - \Phi(w)} \Phi'(\tau)\;\m{d}\tau,
\end{equation}
for all $w \in \S^1$. On the other hand, since $\Psi(\Phi(w)) = 0$ for all $w \in \S^1$ and $\ii w \Phi'(w)$ is the tangent vector at $\Phi(w)$, we have that 
\begin{equation}
    \m{Re}\tcb{\overline{\nabla \Psi} (\Phi(w)) \ii w \Phi'(w)  } = 0,
\end{equation}
which when combined with~\eqref{compnaphi} implies 
\begin{equation} \label{Burbea1}
    \m{Im} \bcb{ \bp{ \Omega \overline {\Phi (w)} + \frac{1}{4\pi \ii} \int_{\S^1} \frac{\overline{\Phi(\tau)} - \overline{\Phi(w)}}{\Phi(\tau) - \Phi(w)} \Phi'(\tau)\;\m{d}\tau}w \Phi'(w)} = 0.
\end{equation}
This is a version of the Burbea map, first introduced in~\cite{MR646163}. Following Hassainia, Masmoudi, and Wheeler~\cite{MR4156612}, we condense the notation in~\eqref{Burbea1} in terms of the Cauchy integral operator 
\begin{equation}
    \mathcal{C}(\Phi): f \mapsto \frac{1}{2\pi\ii} \int_{\S^1} \frac{f(\tau) - f(w)}{\Phi(\tau) - \Phi(w)} \Phi'(\tau) \;\m{d}\tau.
\end{equation}
Equation~\eqref{Burbea1}, then, becomes
\begin{equation}
    \m{Im}\tcb{\tp{\Omega \overline{\Phi} + \mathcal{C}(\Phi)\Bar{\Phi}/2 }w \Phi'} = 0.
\end{equation}
It is not difficult to see that, for all $\Omega \in \R$, $\Phi(z) = z$ solves the equation above on $\S^1$.

In the following definition, we instantiate the Burbea map on perturbations around the Rankine vortex that satisfy the reflection symmetry $\overline{\phi(w)} = \phi(\overline w)$.

\begin{defnC}[Burbea map around the Rankine vortex]\label{defn of burbea map and spaces}
  Let $k \ge 1$ and $\alpha \in (0,1)$. Consider the Banach spaces
  \begin{equation}
      X^{k,\alpha} = \bcb{\phi \in C^{k,\alpha}(\S^1; \C)\;:\; \phi(w) = \sum_{n=0}^\infty a_n \Bar{w}^n; a_n \in \R}
  \end{equation}
  and
  \begin{equation}
      Y^{k-1,\alpha} = \tcb{f \in C^{k-1, \alpha}(\S^1; \R)\;:\; f(\overline w) = - f(w) },
  \end{equation}
  and the open subset $\es\neq U^{k,\alpha} \subset X^{k,\alpha}$ defined by
  \begin{equation}
      U^{k,\alpha} = \bcb{\phi \in X^{k,\alpha}\;:\; \inf_{\tau \neq w} \babs{1+\frac{ \phi(\tau) - \phi(w)}{\tau - w} } > 0 }.
  \end{equation}
  We define the Burbea map around the Rankine vortex $\mathcal F:U^{k,\alpha} \times \R \rightarrow Y^{k-1,\alpha}$ by 
  \begin{equation} \label{BurbeaAroundRankine}
      \mathcal{F}(\phi,\Omega) = \m{Im}\tcb{\tp{\Omega\tp{\overline w + \overline {\phi}}+ \mathcal{C}(w + \phi)\tp{\overline w + \overline \phi}/2 }w\tp{1+\phi'}}.
  \end{equation}
\end{defnC}

In the previous definition, the space $X^{k,\alpha}$ does not encode any discrete rotation symmetry for the boundary of the patch. We remark, however, that the spaces can be modified in a way that keeps track of this information.

\begin{rmkC}\label{remark on smoothness}
  The Burbea map considered in Definition~\ref{defn of burbea map and spaces} is well-defined and analytic thanks to the analyticity of the Cauchy integral operator on $w + U^{k,\alpha}$ (see~\cite{MR1719083}; also Lemma 2.6 of~\cite{MR4156612} for a very similar setting to the one we consider here). Moreover, given $m \in \N^+,$ the spaces $X^{k,\alpha}$ and $Y^{k-1,\alpha}$, and the open set $U^{k,\alpha}$ can be replaced by 
  \begin{equation}
      X_m^{k,\alpha} = \bcb{\phi \in X^{k,\alpha}\;:\; \phi(w) = \sum_{n=1}^\infty a_n \overline{w}^{nm-1}},\;Y^{k-1,\alpha}_m = \tcb{f \in Y^{k-1,\alpha}\;:\; f(e^{2\pi\ii/m}w) = f(w) }
  \end{equation}
  and $U^{k,\alpha}_m = U^{k,\alpha} \cap X^{k,\alpha}_m$, respectively (see Section 2.2 in~\cite{MR4156612} or Section 3.7 of~\cite{MR3054601}).
\end{rmkC}

We now state the main existence result for nontrivial steady rotating vortex patches with dihedral symmetry. Most of the content of the following theorem is already proven in the literature in various forms, the closest to the formulation below being that of~\cite{MR4156612}. The novelty consists solely in the triviality of the kernel of $D_1\mathcal{F}$ on the bifurcating curve, locally near the bifurcation point.

\begin{thmC}[Bifurcations from the Rankine vortex]\label{thm on bifurcations from the rankine vortex}
  For $\N\ni m\ge 2$, let $\Omega_m = \tp{m-1}/2m$. There exists $\epsilon,\tilde{\epsilon} > 0$ with $\tilde{\epsilon}\le\epsilon$, an open set $(0,\Omega_m)\in V\subset U^{k,\alpha}\times \R$, and analytic functions $(\phi,\lambda):(-\epsilon, \epsilon) \rightarrow V$ such that the following hold.
  \begin{enumerate}
      \item For the map $\mathcal{F}$ the curve $(\phi, \lambda)$ is the unique bifurcating branch of solutions in $V$: 
      \begin{equation}
          \tp{V \cap \mathcal F^{-1}\{0\}} \setminus\tp{ \{0\}\times\R} = \left\{(\phi(s),\lambda(s))\;:\;0<|s|<\epsilon \right\}.
      \end{equation}
      \item The branch bifurcates from the Rankine vortex (i.e. $\phi(0) = 0$) through the $m$-fold symmetric class (i.e. $\phi(s) \in X^{k,\alpha}_m$ for all $s \in (-\epsilon, \epsilon)$) and satisfies $\phi'(0) = \overline w^{m-1}$.
      \item The bifurcation is of subcritical pitchfork type; in other words $\lambda(0) = \Omega_m$, $\lambda'(0) = 0$, $\lambda''(0) = -\tp{m-1}/2 < 0$. Consequently, for all $0<|s|\le\tilde{\epsilon}$,  $D_1\mathcal{F}(\phi(s), \lambda(s)):X^{k,\alpha} \to Y^{k-1,\alpha}$ has trivial kernel and $\lambda(s)<\Omega_m<1/2$.
  \end{enumerate}
\end{thmC}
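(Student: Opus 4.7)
The strategy is to apply the Crandall--Rabinowitz local bifurcation theorem at the trivial solution $(\phi,\lambda) = (0,\Omega_m)$ of the Burbea map $\mathcal{F}$, after restriction to the $m$-fold symmetric subspace $X_m^{k,\alpha}$ described in Remark~\ref{remark on smoothness}. Since $\mathcal{F}(0,\Omega)\equiv 0$ for all $\Omega\in\R$, Fredholm analysis of $D_1\mathcal{F}(0,\Omega)$ reduces to Fourier computation on the Cauchy integral operator: each mode $\bar{w}^n$ is diagonalized with a multiplier $\sigma_n(\Omega)$ proportional to $(n+1)\Omega - n/2$, whose only zero at $\Omega=\Omega_m=(m-1)/2m$ occurs at $n=m-1$. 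This gives a one-dimensional kernel spanned by $\phi_0 := \bar{w}^{m-1}$, and transversality $\sigma_{m-1}'(\Omega_m)\neq 0$ is automatic. The Crandall--Rabinowitz theorem then produces the analytic curve $(\phi(s),\lambda(s)) = (s\phi_0 + O(s^2),\Omega_m + O(s))$ with local uniqueness in $V$. This part, along with the expansion $\lambda'(0)=0$, $\lambda''(0)=-(m-1)/2$, is carried out in~\cite{MR646163,MR3054601,MR4156612}, so I would cite those references for items (1) and (2) and most of (3), and concentrate on the novel trivial-kernel claim.

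To record the pitchfork expansion explicitly, I would insert $\phi(s) = s\phi_0 + s^2\phi_2 + O(s^3)$ and $\lambda(s) = \Omega_m + s\lambda_1 + s^2\lambda_2 + O(s^3)$ into $\mathcal{F}(\phi(s),\lambda(s))=0$ and project order by order onto the one-dimensional cokernel direction $\zeta^*$ of $D_1\mathcal{F}(0,\Omega_m)$. Because $\mathcal{F}(0,\cdot)\equiv 0$ kills every pure $\lambda$-derivative at the origin, the $s^2$-projection reduces to
\[
    \tfrac{1}{2}\langle D_{11}\mathcal{F}(0,\Omega_m)(\phi_0,\phi_0),\zeta^*\rangle + \lambda_1\,\langle D_{12}\mathcal{F}(0,\Omega_m)\phi_0,\zeta^*\rangle = 0.
\]
A direct expansion of the Burbea map shows that $\langle D_{11}\mathcal{F}(0,\Omega_m)(\phi_0,\phi_0),\zeta^*\rangle = 0$---essentially a parity/dihedral symmetry consideration since squaring $\phi_0=\bar{w}^{m-1}$ produces only Fourier modes incompatible with $\zeta^*$---and transversality forces $\lambda_1=0$. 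The $s^3$-projection then uniquely determines $\lambda_2 = \lambda''(0)/2$, and the explicit calculation (as in~\cite{MR3054601}) produces the value $\lambda''(0) = -(m-1)/2$.

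The novel claim is the trivial kernel of $D_1\mathcal{F}(\phi(s),\lambda(s))$ on the full space $X^{k,\alpha}$ for $0<|s|\le\tilde{\epsilon}$. Since $\mathcal{F}$ is jointly analytic and $D_1\mathcal{F}(\phi(s),\lambda(s))$ is Fredholm of index zero, standard analytic perturbation theory (Kato) promotes the simple zero eigenvalue of $D_1\mathcal{F}(0,\Omega_m)$ to an analytic family $\mu(s)$ with $\mu(0)=0$, while all other eigenvalues remain uniformly bounded away from zero. The Crandall--Rabinowitz perturbed-eigenvalue identity (Theorem 1.16 of Crandall--Rabinowitz 1971, or Theorem I.7.3 in Kielh\"ofer's book) gives the asymptotic
\[
    \mu(s) = -\sigma_{m-1}'(\Omega_m)\,s\,\lambda'(s)\,(1+o(1)) \quad \text{as } s\to 0.
\]
In our pitchfork setting $\lambda'(s) = \lambda''(0)s + O(s^2)$ with $\lambda''(0)=-(m-1)/2 \neq 0$, so
\[
    \mu(s) = \sigma_{m-1}'(\Omega_m)\cdot\tfrac{m-1}{2}\,s^2 + O(s^3),
\]
which is nonzero for $0<|s|$ sufficiently small. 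Shrinking $\tilde{\epsilon}$ accordingly yields $\ker D_1\mathcal{F}(\phi(s),\lambda(s))=\{0\}$ for all $0<|s|\le\tilde{\epsilon}$. The inequality $\lambda(s) = \Omega_m - (m-1)s^2/4 + O(s^3) < \Omega_m < 1/2$ is immediate from $\lambda''(0)<0$ and $\Omega_m=(m-1)/2m<1/2$ for $m\ge 2$. The main subtle point is the application of the Crandall--Rabinowitz eigenvalue formula in the pitchfork (non-transverse in $\lambda$) regime: the usual conclusion that $\mu'(0)\neq 0$ from $\lambda'(0)\neq 0$ fails, and one must instead exploit that $s\lambda'(s)$ has a non-vanishing second-order Taylor coefficient at $s=0$ coming from $\lambda''(0)\neq 0$.
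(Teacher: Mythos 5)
Your proposal is correct and follows essentially the same route as the paper: apply (an analytic version of) Crandall--Rabinowitz at $(0,\Omega_m)$ after a Fourier-mode analysis of $D_1\mathcal{F}(0,\Omega)$, establish the pitchfork structure $\lambda'(0)=0$, $\lambda''(0)=-(m-1)/2$ by projecting the Taylor expansion of $\mathcal F(\phi(s),\lambda(s))=0$ against the cokernel, and deduce the trivial kernel of $D_1\mathcal{F}(\phi(s),\lambda(s))$ for small $s\neq 0$ from the Crandall--Rabinowitz eigenvalue perturbation theorem applied in the pitchfork regime, using $\lambda''(0)\neq 0$ to conclude $s\lambda'(s)\neq 0$. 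The only discrepancy is a citation slip: the eigenvalue-perturbation asymptotic you use is Theorem 1.17 of Crandall--Rabinowitz (1971), not Theorem 1.16, which is what the paper cites.
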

\begin{proof}
The proof is based on an application of the Crandall-Rabinowitz Theorem~\cite{MR288640}. With the aim of verifying its assumptions, we begin by computing the Fr\'echet derivative of $\mathcal F$: 
\begin{equation} \label{Frechet1}
    D_1\mathcal{F}(\phi, \Omega)[\xi] = \m{Im} \tcb{\tp{\Omega \overline \xi + D\mathcal{C}(\Phi)[\xi]\overline{\Phi}/2 + \mathcal{C}(\Phi) \overline \xi/2}w\Phi' + \tp{\Omega \overline \Phi + \mathcal{C}(\Phi) \overline \Phi/2}w \xi '},
\end{equation}
where we have used the notation $\Phi(w) = w + \phi(w)$ and 
\begin{equation} \label{Cauchy_der_1}
    D \mathcal{C}(\Phi)[\xi]f(w) = \frac{1}{2\pi\ii} \int_{\S^1} \frac{f(\tau) - f(w)}{\Phi(\tau) - \Phi(w)} \xi'(\tau)\;\m{d}\tau - \frac{1}{2\pi\ii} \int_{\S^1} \frac{(f(\tau) - f(w))(\xi(\tau) - \xi(w))}{(\Phi(\tau) - \Phi(w))^2}\Phi'(\tau)\;\m{d}\tau.
\end{equation}
When we consider $\phi = 0$, simple computations (see, for instance, the proof of Lemma 3.2\footnote{Note that the statement of Lemma 3.2. in~\cite{MR4156612} does not precisely match the form we claim in~\eqref{linearizz}. This is due to a typo in equation (3.1) in the statement of the cited lemma. Following their proof, however, yields the claimed result.} of~\cite{MR4156612}) show that $D \mathcal C(w)[\xi]\overline w = \mathcal C(w) \overline \xi = 0$, and $\mathcal C(w) \overline w = - \overline w$. It follows, then, that
\begin{equation} \label{linearizz}
    D_1 \mathcal{F}(0, \Omega) [\xi] = \m{Im}\tcb{(\Omega - 1/2) \xi' + \Omega w \overline \xi }.
\end{equation}
By using the series expansion of $\xi \in X^{k,\alpha}$, $\xi(w) = \sum_{n=0}^\infty a_n \overline w^n$, we find
\begin{equation}
    D_1\mathcal{F}(0, \Omega)[\xi] =  \frac{1}{2\ii} \sum_{n=0}^\infty a_n \bp{\Omega + n\bp{\Omega - \f12}} \tp{w^{n+1} - \overline w^{n+1}}.
\end{equation}
When $\Omega = \Omega_m$, it is clear that 
\begin{equation}
    \m{ker} D_1\mathcal{F}(0,\Omega_m) = \m{span}\tcb{\overline w^{m-1}} \in X_m^{k,\alpha}.
\end{equation}
Moreover, the range of $D_1\mathcal{F}(0,\Omega_m)$ is of codimension one both when viewed as a mapping from $X^{k,\alpha}\to Y^{k-1,\alpha}$, and when viewed as a mapping from $X^{k,\alpha}_m \to Y_m^{k-1,\alpha}$. Finally, the following transversality condition holds: 
\begin{equation} \label{transvers}
    D_1D_2\mathcal F(0,\Omega_m)[\overline w^{m-1}] =m\tp{w^m - \overline w^m}/2\ii \notin \m{ran} D_1\mathcal{F}(0,\Omega_m).
\end{equation}
Thus the assumptions of Theorem in 1.7 in Crandall and Rabinowitz~\cite{MR288640} are satisfied; however, here we employ the analytic version given by Theorem 8.3.1 in Buffoni and Toland~\cite{MR1956130}. The latter result implies the existence of $\epsilon > 0$ and the analytic functions $(\phi,\lambda)$ claimed in the statement of the theorem such that the first item above holds, $\phi(0) = 0$, $\phi'(0) = \overline w^{m-1}$, and $\lambda(0) = \Omega_m$. Moreover, the fact that $\phi(s) \in X_{m}^{k,\alpha}$ follows from the application of the Crandall-Rabinowitz Theorem both to $\mathcal F:U^{k,\alpha} \times \R \to Y^{k-1,\alpha}$ and to its restriction $\mathcal F:U_m^{k,\alpha} \times \R \to Y_m^{k-1,\alpha}$, together with the uniqueness of the bifurcating branch. It remains, then, to verify the third item. 

The fact that $\lambda'(0) = 0$ (i.e. that the bifurcation is pitchfork) can be readily obtained from symmetry considerations (see the proof of Theorem 3.3. in~\cite{MR4156612}). Nevertheless, we obtain this result here as well, through direct computation, in the process of proving the subcriticality condition $\lambda''(0) < 0$. We claim that 
\begin{equation}\label{pitchfork}
    0 = D_1^2 \mathcal{F}(0, \Omega_m)[\phi'(0),\phi'(0)] + 2 D_1D_2 \mathcal{F}(0,\Omega_m)[\phi'(0)]\lambda'(0) + D_1 \mathcal{F}(0, \Omega_m)[\phi''(0)],
\end{equation}
and, assuming that $\lambda'(0) = 0$, it holds that 
\begin{multline}\label{subcrit}
    0 = D_1^3 \mathcal{F}(0,\Omega_m)[\phi'(0),\phi'(0),\phi'(0)] + 3 D_1^2 \mathcal{F}(0,\Omega_m)[\phi'(0),\phi''(0)]\\ + 3 D_1D_2 \mathcal F(0, \Omega_m)[\phi'(0)] \lambda''(0) + D_1\mathcal{F}(0,\Omega_m)[\phi'''(0)]. 
\end{multline}
By differentiating with respect to $s$ in $\mathcal{F}(\phi(s), \lambda(s)) = 0$, we obtain 
\begin{equation}
    D_1 \mathcal F(\phi(s), \lambda(s))[\phi'(s)] + D_2\mathcal{F}(\phi(s), \lambda(s))\lambda'(s) = 0.
\end{equation}
Differentiating once more in $s$ we obtain 
\begin{multline}
    0 = D_1^2\mathcal{F}(\phi(s), \lambda(s))[\phi'(s), \phi'(s)] + 2D_1D_2\mathcal{F}(\phi(s),\lambda(s))[\phi'(s)] \lambda'(s) + D_1 \mathcal{F}(\phi(s), \lambda(s))[\phi''(s)] \\ + D_2^2 \mathcal F(\phi(s), \lambda(s)) (\lambda'(s))^2 + D_2 \mathcal F(\phi(s), \lambda(s)) \lambda''(s). 
\end{multline}
On the other hand, $D_2^k \mathcal F(0,\Omega) = 0$ for all $\Omega \in \R$ and $k \in \N$. Consequently, evaluating the previous expression at $s = 0$ yields equation~\eqref{pitchfork}. To obtain~\eqref{subcrit}, we differentiate the above once more in $s$ and then evaluate at $s = 0$, while using the assumption $\lambda'(0)=0$.

We now claim that
\begin{equation} \label{Frechet2}
    D^2_1 \mathcal{F}(0,\Omega_m)[\overline w^{m-1},\overline w^{m-1}] = 0
\end{equation}
and that 
\begin{equation} \label{Frechet3}
    D^3_1 \mathcal{F}(0, \Omega_m)[\overline w^{m-1},\overline w^{m-1},\overline w^{m-1}] =3(m-1)m\tp{w^{m}-\overline w^{m}}/4\ii \notin \m{ran}D_1\mathcal{F}(0,\Omega_m)
\end{equation}
Assuming equations~\eqref{Frechet2} and~\eqref{Frechet3} hold for the moment, note that~\eqref{pitchfork} becomes 
\begin{equation}
    2 D_1D_2 \mathcal{F}(0, \Omega_m)[\overline w^{m-1}] \lambda'(0) + D_1\mathcal{F}(0,\Omega_m)[\phi''(0)] = 0.
\end{equation}
By virtue of the transversality condition \eqref{transvers}, we obtain $\lambda'(0) = 0$ and, consequently, that $\phi''(0) \in \m{ker} D_1 \mathcal{F}(0,\Omega_m)$, which implies $\phi''(0) = a \overline w^{m-1}$ for some $a \in \R$. Knowing now that $\lambda'(0) = 0$, equation~\eqref{subcrit} holds unconditionally, and, under the assumptions~\eqref{Frechet2} and~\eqref{Frechet3}, this equation becomes 
\begin{equation}
    D_1^3 \mathcal{F}(0,\Omega_m)[\overline w^{m-1},\overline w^{m-1},\overline w^{m-1}] + 3 D_1D_2 \mathcal{F}(0,\Omega_m) \lambda''(0) + D_1\mathcal{F}(0,\Omega_m)[\phi'''(0)] = 0. 
\end{equation}
Since the first two terms above are not in $\m{ran}D_1\mathcal{F}(0,\Omega_m)$, we obtain $\lambda''(0) = -(m-1)/2$, as desired.

With the aim of proving relations \eqref{Frechet2} and \eqref{Frechet3}, we take the second Fr\'echet derivative in \eqref{Frechet1}: 
\begin{equation} \label{Frechet2Calc}
    D^2_1\mathcal{F}(\phi, \Omega)[\xi, \xi] = \m{Im}\tcb{\tp{D^2\mathcal C(\Phi)[\xi, \xi] \Bar{\Phi}/2 +D\mathcal C(\Phi)[\xi]\overline \xi }w \Phi'  + \tp{2\Omega \overline \xi +  D\mathcal C (\Phi)[\xi]\overline \Phi + \mathcal C(\Phi) \overline \xi}w\xi'}
\end{equation}
where 
\begin{multline}
    D^2 \mathcal C(\Phi)[\xi,\xi]f(w) = \frac{2}{2\pi\ii} \int_{\S^1} \frac{(f(\tau)-f(w))(\xi(\tau)-\xi(w))^2}{(\Phi(\tau)- \Phi(w))^3} \Phi'(\tau) \;\m{d}\tau \\- \frac{2}{2\pi\ii} \int_{\S^1} \frac{(f(\tau)- f(w))(\xi(\tau) - \xi(w))}{(\Phi(\tau)-\Phi(w))^2} \xi'(\tau)\;\m{d}\tau.
\end{multline}
Restricting now to the case $\Phi(w) = w$ and $\xi(w) = \overline{w}^{m-1}$, we find that
\begin{multline}
    D^2 \mathcal C(w)[\overline w^{m-1}, \overline w^{m-1}]\overline w = \frac{2}{2\pi\ii} \int_{\S^1} \frac{(\overline \tau - \overline w)(\overline \tau^{m-1} - \overline w^{m-1})^2}{(\tau - w)^3} \;\m{d}\tau \\
     + \frac{2(m-1)}{2\pi\ii} \int_{\S^1} \frac{(\overline \tau - \overline w)(\overline \tau ^{m-1} - \overline w^{m-1})}{(\tau - w)^2} \frac
    {1}{\tau^{m}} \;\m{d}\tau \\ 
    = -\frac{2}{2\pi\ii} \int_{\S^1} \frac{\tp{\tau^{m-2}+...+w^{m-2}}^2}{w^{2m-1}\tau^{2m-1}}\;\m{d}\tau + \frac{2(m-1)}{2\pi\ii}\int_{\S^1} \frac{\tau^{m-2}+...+w^{m-2}}{w^{m}\tau^{2m}}\;\m{d}\tau = 0.
\end{multline}
On the other hand, 
\begin{multline}
    D \mathcal C(w)[\overline w^{m-1}]w^{m-1} = -\frac{m-1}{2\pi\ii} \int_{\S^1} \frac{\tau^{m-1} - w^{m-1}}{\tau -w} \frac{1}{\tau^{m}}\;\m{d}\tau - \frac{1}{2\pi\ii} \int_{\S^1} \frac{(\tau^{m-1} - w^{m-1})(\overline \tau^{m-1} - \overline w^{m-1})}{(\tau - w)^2}\;\m{d}\tau \\ 
    = -\frac{m-1}{2\pi\ii} \int_{\S^1} \frac{\tau^{m-2}+...+w^{m-2}}{\tau^{m}}\;\m{d}\tau + \frac{1}{2\pi\ii} \int_{\S^1} \frac{(\tau^{m-2}+...+w^{m-2})^2}{\tau^{m-1} w^{m-1}}\;\m{d}\tau = \frac{m-1}{w}.
\end{multline}
We have calculated, then, that 
\begin{equation}
    D_1^2 \mathcal{F}(0,\Omega_m)[\overline{w}^{m-1},\overline{w}^{m-1}] = \m{Im}\tcb{(m-1)(1-2\Omega_m)} = 0,
\end{equation}
and \eqref{Frechet2} is proven. To show the validity of \eqref{Frechet3}, we take the third Fr\'echet derivative in \eqref{Frechet2Calc}: 
\begin{multline} \label{Frechet3Calc}
    D_1^3 \mathcal{F}(\phi,\Omega)[\xi, \xi, \xi] = \m{Im}\{\tp{D^3 \mathcal C(\Phi)[\xi,\xi,\xi]\overline \Phi/2 + 3 D^2 \mathcal C(\Phi)[\xi, \xi]\overline \xi/2}w\Phi' \\ + \tp{3D^2\mathcal C(\Phi)[\xi, \xi]\overline \Phi/2 + 3D\mathcal C(\Phi)[\xi]\overline \xi}w\xi'\}, 
\end{multline}
where
\begin{multline}
    D^3\mathcal C(\Phi)[\xi,\xi,\xi] f(w) =\\ \frac{6}{2\pi\ii} \int_{\S^1} \frac{(f(\tau) - f(w))(\xi(\tau) - \xi(w))^2}{(\Phi(\tau) - \Phi(w))^3} \xi'(\tau)\;\m{d}\tau  
     - \frac{6}{2\pi\ii} \int_{\S^1} \frac{(f(\tau) - f(w))(\xi(\tau)-\xi (w))^3}{(\Phi(\tau)-\Phi(w))^4}\Phi'(\tau)\;\m{d}\tau.
\end{multline}
For $\Phi(w) = w$ and $\xi(w) = \overline w^{m-1}$, we have 
\begin{multline}
    D^3 \mathcal C(w)[\overline w^{m-1},\overline w^{m-1},\overline w^{m-1}]\overline w = \\-\frac{6(m-1)}{2\pi \ii} \int_{\S^1} \frac{(\overline \tau - \overline w)(\overline \tau^{m-1}-\overline w^{m-1})^2}{(\tau-w)^3}\frac{1}{\tau^m} \;\m{d}\tau  - \frac{6}{2\pi\ii} \int_{\S^1} \frac{(\overline \tau - \overline w)(\overline \tau^{m-1}-\overline w^{m-1})^3}{(\tau - w)^4}\;\m{d}\tau \\ 
    = \frac{6(m-1)}{2\pi\ii} \int_\T \frac{(\tau^{m-2}+...+w^{m-2})^2}{\tau^{3m-1}w^{2m-1}}\;\m{d}\tau - \frac{6}{2\pi\ii} \int_{\S^1} \frac{(\tau^{m-2}+...+w^{m-2})^3}{\tau^{3m-2}w^{3m-2}}\;\m{d}\tau = 0,
\end{multline}
and 
\begin{multline}
    D^2 \mathcal{C}(w)[\overline w^{m-1},\overline w^{m-1}] w^{m-1} = \\\frac{2}{2\pi\ii} \int_{\S^1} \frac{(\tau^{m-1} - w^{m-1})(\overline \tau^{m-1} - \overline w^{m-1})^2}{(\tau - w)^3}\;\m{d}\tau + \frac{2(m-1)}{2\pi\ii} \int_{\S^1} \frac{(\tau^{m-1} - w^{m-1})(\overline \tau ^{m-1} - \overline w^{m-1})}{(\tau -w)^2} \frac{1}{\tau^{m}}\;\m{d}\tau  \\
    = \frac{2}{2\pi\ii}\int_{\S^1} \frac{(\tau^{m-2}+...+w^{m-2})^3}{\tau^{2m-2}w^{2m-2}}\;\m{d}\tau - \frac{2(m-1)}{2\pi\ii}\int_{\S^1} \frac{(\tau^{m-2}+...+w^{m-2})^2}{\tau^{2m-1}w^{m-1}}\;\m{d}\tau \\ 
    = 2 \sum_{\substack{k_1,k_2,k_3 = 0 \\k_1 + k_2 + k_3 = 2m-3}}^{m-2} \frac{w^{3(m-2)-(k_1+k_2+k_3)}}{w^{2m-2}} = \frac{(m-1)(m-2)}{w^{m+1}}.
\end{multline}
By substituting what we have found in \eqref{Frechet3Calc}, we recover \eqref{Frechet3}.

It remains to argue that the kernel of $D_1 \mathcal F(\phi(s), \lambda(s))$ is trivial for $|s| > 0$ sufficiently small. Since $\lambda'(0) = 0$ and $\lambda''(0) \neq 0$, it is the case that $\lambda'(s) \neq 0$ for all $s \neq 0$ sufficiently close to zero. The conclusion follows by Theorem 1.17 in Crandall and Rabinowitz~\cite{MR288640}.
\end{proof}

Let $\Phi_m:\S^1 \rightarrow \C$ now be defined by $\Phi_m(w) = w + \phi(s)(w)$, with $\phi(s) \in X_m^{k,\alpha}$, for some $0<|s|<\tilde{\epsilon}$ such that $\m{ker}D_1 \mathcal{F}(\phi(s), \lambda(s)) = \tcb{0}$ and $\lambda(s) < \Omega_m < 1/2$. Since $\phi(s) \in U^{k,\alpha}$, we have that 
\begin{equation}
    \inf_{\tau \neq w} \babs{\frac{\Phi_m(\tau)-\Phi_m(w)}{\tau - w} }> 0,
\end{equation}
which implies the injectivity of $\Phi_m$. Consequently, $\Phi_m(\S^1)$ is a $C^{k,\alpha}$ simple closed curve bounding a domain $D_m$. Moreover, since $\phi(s) \in X_m^{k,\alpha}$, we have that 
\begin{equation*}
    \Phi_m(w) = w \bp{1+\sum_{n=1}^\infty a_n \overline w^{nm}},
\end{equation*}
for some sequence $\tcb{a_n}_{n=1}^\infty \subset \R$. Therefore, $\Phi_m$ satisfies the reflection symmetry $\overline {\Phi_m(w)}=\Phi_m(\overline w)$ and the discrete rotation symmetry
\begin{equation}
    \Phi_m(e^{2\pi\ii/m}w)=e^{2\pi\ii/m}\Phi_m(w).
\end{equation}
These symmetries translate to $e_2$-reflection and $m$-fold symmetry, respectively, for the domain $D_m$. Let 
\begin{equation} \label{Psim_def}
    \tilde \Psi_m(x) = \bf{N}\ast \mathds{1}_{D_m} - \lambda_m|x|^2/2\text{ for }x\in\R^2,
\end{equation}
where we denote $\lambda(s) = \lambda_m$. Then,~\eqref{compnaphi} holds with $\Psi = \tilde \Psi_m$, $ \Phi = \Phi_m$, and $\Omega = \lambda_m$, while $(\Phi_m, \lambda_m)$ solves~\eqref{Burbea1}. Consequently $\tilde \Psi_m(\Phi_m(w)) = c_m \in \R$. We then let $\Psi_m = \tilde \Psi_m - c_m$. The tuple $(\Psi_m, \lambda_m, c_m, D_m,m)$ satisfies the first, second, and third items of Definition~\ref{defn of admissible patches}. Indeed, the regularity of $\Psi_m$ follows from Proposition 1 in Bertozzi-Constantin~\cite{MR1207667}, the regularity of $\partial D_m$ is satisfied as long as we choose $k \ge 2$, the fact that $\Psi_m$ vanishes on $\partial D_m$ holds by construction as the third item, and the condition $\min_{\partial D_m}(\nu \cdot \nabla \Psi_m) > 0$ follows from the Hopf boundary point lemma (see, e.g., Lemma 3.4 in Gilbarg and Trudinger~\cite{MR1814364}), since $\Delta \Psi_m = 1 - 2\lambda_m > 0$ in $D_m$.

The remainder of this subsection is devoted to showing that the fourth item in Definition~\ref{defn of admissible patches} also holds and, thus, that these $m$-fold symmetric patches are admissible. We first prove a lemma which will aid us in mapping a non-trivial solution to the nondegeneracy condition to a non-trivial kernel of the linearization of the Burbea map.

\begin{lemC}\label{lem on riemann hilbert}
  Let $k \ge 2$ and $\alpha \in (0,1)$, $(\Psi_m, \lambda_m, c_m, D_m,m)$ be one of the tuples described in the preceding paragraphs corresponding to a solution $(\phi(s),\lambda(s)) \in U^{k,\alpha}_m \times \R$ given by Theorem~\ref{thm on bifurcations from the rankine vortex}, and $\varphi \in C_{\tp{m}}^0(\partial D_m)$ satisfying 
  \begin{equation} \label{AssumKer}
      \varphi(x) + \frac{1}{2\pi}\int_{\partial D_m} \log|x-y| \frac{\varphi(y)}{|\nabla \Psi_m(y)|}\;\m{d}\mathcal{H}^1(y) = 0,
  \end{equation}
  for all $x \in \partial D_m$. Then, there exists a solution $\delta\in C^{k-1,\alpha}(\S^1;\C)$ to the Riemann-Hilbert problem 
  \begin{equation}\label{i cannot believe this wasnt labeled earlier}
      \m{Re}\tcb{\delta(w) \overline {\nabla \Psi_m}(\Phi_m(w)) } = \varphi(\Phi_m(w))\text{ for all } w\in \S^1,
  \end{equation}
  such that
  \begin{equation} \label{deltaseries}
      \delta(w) = \sum_{\Z\ni n\le 1} a_n w^n,
  \end{equation}
  with $\{a_n\}_{\Z\ni n \le 1}\subset \R$.
\end{lemC}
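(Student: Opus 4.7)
The strategy is to reduce the Riemann--Hilbert problem~\eqref{i cannot believe this wasnt labeled earlier} to a standard exterior Dirichlet problem via a geometric identity coming from the vanishing of $\Psi_m$ on $\partial D_m$. As a preliminary, I would upgrade the regularity of $\varphi$ from $C^0$ to $C^{k-1,\alpha}(\partial D_m)$ via a bootstrap of~\eqref{AssumKer}: its right-hand side is a logarithmic single-layer potential with density $\varphi/|\nabla\Psi_m|$ along the $C^{k,\alpha}$-curve $\partial D_m$, and $|\nabla\Psi_m|$ is $C^{k-1,\alpha}$ and bounded below on $\partial D_m$ thanks to the second item of Definition~\ref{defn of admissible patches}. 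Standard boundary mapping properties of single-layer potentials then iteratively improve the H\"older regularity of $\varphi$ by one derivative per step, saturating at the boundary's regularity.

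The key geometric identity comes from the fact that, since $\Psi_m$ vanishes on $\partial D_m$, its gradient is parallel to the outward unit normal there, with the Hopf-type positivity condition fixing the orientation. When $\partial D_m$ is parametrized counterclockwise by $w\mapsto\Phi_m(w)$ for $w\in\S^1$, the complex tangent vector at $\Phi_m(w)$ is $iw\Phi_m'(w)$, so the complex outward unit normal is $w\Phi_m'(w)/|\Phi_m'(w)|$. These two observations combine to yield
\begin{equation*}
    \overline{\nabla\Psi_m(\Phi_m(w))}\cdot w\Phi_m'(w) = |\nabla\Psi_m(\Phi_m(w))|\cdot|\Phi_m'(w)|\in(0,\infty)\quad\text{for all }w\in\S^1.
\end{equation*}

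Motivated by this identity, I would make the substitution $\delta(w)=\Lambda(w)\cdot w\Phi_m'(w)$, which converts~\eqref{i cannot believe this wasnt labeled earlier} into the exterior Dirichlet problem of finding $\Lambda:\C\setminus\Bar{\mathbb{D}}\to\C$ holomorphic in $|w|>1$, continuous up to $\S^1$ in the class $C^{k-1,\alpha}$, bounded at infinity, and satisfying
\begin{equation*}
    \m{Re}\,\Lambda(w) = \tilde\varphi(w):=\f{\varphi(\Phi_m(w))}{|\nabla\Psi_m(\Phi_m(w))|\cdot|\Phi_m'(w)|}\quad\text{for all }w\in\S^1.
\end{equation*}
The datum $\tilde\varphi$ is real-valued and lies in $C^{k-1,\alpha}(\S^1)$ because its numerator has this regularity and the denominator is bounded below. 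Existence of such $\Lambda$ with $\m{Im}\,\Lambda(\infty)=0$ is classical, via the exterior Schwarz integral formula or equivalently by inverting $w\mapsto 1/\bar w$ to an interior Dirichlet problem on $\mathbb{D}$. The reflection symmetry $\overline{\Phi_m(\bar w)}=\Phi_m(w)$, which holds because $\phi\in X_m^{k,\alpha}$ has real Taylor coefficients, together with the reflection symmetry of $\varphi$, forces $\tilde\varphi(\bar w)=\tilde\varphi(w)$, and consequently $\Lambda$ has real Taylor coefficients at infinity.

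The Laurent structure and the Riemann--Hilbert identity are then verified directly. Since $\phi(w)=\sum_{n\ge 1}a_n\bar w^{nm-1}$ with $a_n\in\R$ extends to $\sum_{n\ge 1}a_nw^{1-nm}$ holomorphic in $|w|>1$, one finds $w\Phi_m'(w)=w+\sum_{n\ge 1}(1-nm)a_nw^{1-nm}$ with real coefficients and powers in $\{1,1-m,1-2m,\ldots\}$; multiplying by $\Lambda(w)=\sum_{n\le 0}\lambda_nw^n$ with $\lambda_n\in\R$ produces $\delta=\Lambda\cdot w\Phi_m'$ with Laurent series of the form~\eqref{deltaseries}. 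Finally, via the key identity,
\begin{equation*}
    \m{Re}\{\delta(w)\overline{\nabla\Psi_m(\Phi_m(w))}\} = |\nabla\Psi_m(\Phi_m(w))|\cdot|\Phi_m'(w)|\cdot\m{Re}\,\Lambda(w) = \varphi(\Phi_m(w)),
\end{equation*}
as needed. The main obstacle is spotting the change of unknown $\delta=\Lambda\cdot w\Phi_m'$ that turns the Riemann--Hilbert problem into a tractable exterior Dirichlet problem; once this reduction is made, the remainder is routine harmonic analysis on the unit disk.
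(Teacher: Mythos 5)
Your proof is correct, and while it shares the same key observation as the paper's --- namely that $\overline{\nabla\Psi_m}(\Phi_m(w))\cdot w\Phi_m'(w)$ is a nowhere-vanishing real-valued function, owing to $\nabla\Psi_m$ being proportional to the outward normal on $\partial D_m$ --- the reduction you carry out is genuinely cleaner. The paper sets $\delta(w) = w\overline{f(w)}$ with $f$ holomorphic in the interior disk $\mathbb D$, which turns~\eqref{i cannot believe this wasnt labeled earlier} into a bona fide Riemann--Hilbert problem $\m{Re}\{\Phi_m'(w)f(w)\} = \varphi(\Phi_m(w))/F(w)$ whose coefficient $\Phi_m'$ must be checked to have zero winding number before invoking Muskhelishvili's theory. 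You instead factor the entire coefficient at once, setting $\delta = \Lambda\cdot w\Phi_m'$ with $\Lambda$ holomorphic in $|w|>1$; this collapses the problem to a pure exterior Dirichlet problem $\m{Re}\,\Lambda = \tilde\varphi$ whose solvability is classical harmonic analysis on the circle, with no winding number to compute. The two $\delta$'s produced are essentially the same object (indeed $\Lambda = \overline{f}/\Phi_m'$ on $\S^1$), and in both cases the Laurent structure~\eqref{deltaseries} is a convolution-support count combined with the reality of the Fourier coefficients forced by the reflection symmetries of $\varphi$, $\Phi_m$, and $\Psi_m$. One minor remark on the bootstrap: the paper actually differentiates~\eqref{AssumKer} tangentially and recasts the derivative via the Cauchy integral operator to reach $\varphi\circ\Phi_m\in C^{k,\alpha}(\S^1)$; your target of $C^{k-1,\alpha}$ is one notch lower but is all that is ultimately usable, since $\Phi_m'$ and $|\Phi_m'|$ in your Dirichlet data are only $C^{k-1,\alpha}$, so nothing is lost.
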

\begin{proof}
We claim first that $\varphi(\Phi_m) \in C^{k,\alpha}(\S^1;\R)$. Since $(\Psi_m,\Phi_m)$ satisfy~\eqref{compnaphi}, the boundedness of the Cauchy integral operator $\mathcal C(\Phi_m) :C^{k,\alpha}(\S^1;\C)\rightarrow C^{k,\alpha}(\S^1;\C)$ implies that $\nabla \Psi_m(\Phi_m) \in C^{k,\alpha}(\S^1;\C)$. Moreover, we can express equation~\eqref{AssumKer} as 
\begin{equation}
    \varphi(\Phi_m(e^{\ii\theta})) + \frac{1}{2\pi}\int_{2\pi\T} \log\tabs{\Phi_m(e^{\ii\theta})-\Phi_m(e^{\ii\vartheta}) } \frac{\varphi(\Phi_m(e^{\ii\vartheta}))}{|\nabla \Psi_m(\Phi_m(e^{\ii\vartheta}))|} |\Phi_m'(e^{\ii\vartheta})|\;\m{d}\vartheta = 0,\quad\theta\in2\pi\T
\end{equation}
Taking a derivative along the unit circle, we obtain 
\begin{equation}
    \pd_{\theta}(\varphi \circ \Phi_m)(e^{\ii\theta}) + \m{Re}\bcb{\ii e^{\ii\theta}\Phi_m'(e^{\ii\theta})\frac{1}{2\pi} \m{p.v.} \int_{2\pi\T}\frac{1}{\Phi_m(e^{\ii\theta}) - \Phi_m(e^{\ii\vartheta})}\frac{\varphi(\Phi_m(e^{\ii\vartheta}))}{|\nabla \Psi_m(\Phi_m(e^{\ii\vartheta}))|} |\Phi_m'(e^{\ii\vartheta})|\;\m{d}\vartheta}=0,
\end{equation}
and, thus, with $w = e^{\ii\theta}$, we have 
\begin{equation}
    \pd_\theta(\varphi \circ \Phi_m)(w) + \m{Re} \bcb{w\Phi_m'(w) \frac{1}{2\pi} \m{p.v.}\int_{\S^1} \frac{g(\tau)}{\Phi_m(w) - \Phi_m(\tau)} \Phi_m'(\tau)\;\m{d}\tau} = 0,
\end{equation}
where 
\begin{equation} \label{def_g}
    g(\tau) = \frac{\varphi(\Phi_m(\tau))|\Phi_m'(\tau)|}{|\nabla \Psi_m(\Phi_m)|\tau\Phi_m'(\tau)}.
\end{equation}
Consequently, we can express the derivative of $\varphi \circ \Phi_m$ in terms of the Cauchy integral operator: 
\begin{equation} \label{BootStra}
    \pd_\theta(\varphi \circ \Phi_m)(w) + \m{Im}\tcb{w\Phi_m'(w)\mathcal C(\Phi_m)g} = 0.
\end{equation}
By the proof of Proposition~\ref{prop on uniform intrgrals estimates for the Newtonian potential}, $\varphi \in C^0(\pd D_m)$ satisfying~\eqref{AssumKer} implies $\varphi \in \m{LL}(\pd D_m) \subset C^\alpha(\pd D_m)$. On the other hand, if $\varphi \circ \Phi_m \in C^{j,\alpha}(\S^1;\R)$, then $g \in C^{\min\{j,k-1\},\alpha}(\S^1;\C)$, so~\eqref{BootStra} implies that $\varphi \circ \Phi_m \in C^{\min\{j,k-1\}+1,\alpha}(\S^1;\R)$. A finite induction argument proves our claim that $\varphi \circ \Phi_m \in C^{k,\alpha}(\S^1; \R)$. 

Since $\mathcal F(\Phi_m(w) - w, \lambda_m) = 0$, we have that $\m{Im}\tcb{\overline{\nabla \Psi_m}(\Phi_m(w))w\Phi_m'(w)} = 0$. Consequently, there exists a nowhere-vanishing function $\tilde F \in C^{k-1,\alpha}(\S^1; \R)$ such that $\overline{\nabla \Psi_m}(\Phi_m(w))w\Phi_m'(w) = \tilde F(w)$. Note that since $\Psi_m$ is reflection invariant, we have that $\overline{\nabla \Psi_m}(w) = \nabla \Psi_m(\overline w)$, and, therefore, $\tilde F(\overline w) = \tilde F(w)$. We write 
\begin{equation} \label{def_F}
    \overline{\nabla\Psi_m}(\Phi_m(w)) = F(w) \overline{\Phi'_m}(w) \overline w,
\end{equation}
with 
\begin{equation}\label{def__F}
    F(w) = \tilde F(w)/|\Phi_m'(w)|^2,
\end{equation}
and note that $F$ is, likewise, non-vanishing and reflection invariant. By the classical theory for the Riemann-Hilbert problem (see, e.g., Muskhelishvili~\cite{MR58845} or Theorem A.1. in~\cite{MR4156612} for a compact statement), since $\Phi'_m$ has vanishing winding number, there exists a holomorphic $f : \mathbb D \rightarrow \C$, which extends as a $C^{k-1,\alpha}$ mapping to the closure of the disk, is reflection invariant $\overline {f(w)} = f(\overline w)$, and solves 
\begin{equation}
    \m{Re}\{\Phi'_m(w)f(w)\}= \varphi(\Phi_m(w))/F(w)\text{ for all } w \in \S^1.
\end{equation}
We define, then, $\delta \in C^{k-1,\alpha}(\S^1;\C)$ by $\delta(w) = w \overline{f(w)}$. We have that 
\begin{equation}
    \m{Re}\tcb{\delta(w) \overline{\nabla \Psi_m}(\Phi_m\tp{w})} = F(w) \m{Re}\tcb{\Phi_m'(w) f(w)} = \varphi(\Phi_m(w)),
\end{equation}
as desired. Moreover, since $f$ is holomorphic in the disk and reflection invariant, there exists a sequence $\{b_n\}_{n \in \N}\subset \R$ such that $f(w) = \sum_{n=0}^\infty b_n w^n$. Identity~\eqref{deltaseries} is then satisfied provided that $a_n = b_{1-n}$.
\end{proof}

At last, we are now ready to prove the main result of this subsection. 

\begin{thmC}[Admissibility of Rankine vortex perturbations]\label{thm on admissibility of rankine vortex perturbations}
  Let $\N \ni k \ge 2$ and $\al \in (0,1)$. There exists $0 < \epsilon' \le \tilde{\epsilon}$, with $\tilde \epsilon>0$ given by Theorem~\ref{thm on bifurcations from the rankine vortex}, such that the tuple $(\Psi_m, \lambda_m, c_m, D_m,m)$ corresponding, via Theorem~\ref{thm on bifurcations from the rankine vortex} and the subsequent discussion, to a solution $(\phi(s), \lambda(s))\in U_m^{k,\alpha} \times \R$, $0<|s|\le \epsilon'$ is admissible in the sense of Definition~\ref{defn of admissible patches}.
\end{thmC}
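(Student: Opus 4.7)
Items (1)--(3) of Definition~\ref{defn of admissible patches} for the tuple $(\Psi_m, \lambda_m, c_m, D_m, m)$ have already been established in the paragraphs following Theorem~\ref{thm on bifurcations from the rankine vortex}, so the task is to verify the nondegeneracy condition, item (4). I argue by contrapositive: let $\varphi \in C^0_{\tp{m}}\tp{\pd D_m}$ satisfy equation~\eqref{AssumKer}; the plan is to produce from $\varphi$ an element of $\m{ker}\,D_1 \mathcal{F}\tp{\phi(s), \lambda(s)}$, whose triviality for $0 < |s| \le \tilde{\epsilon}$ (item (3) of Theorem~\ref{thm on bifurcations from the rankine vortex}) forces $\varphi \equiv 0$. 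Setting $\epsilon' := \tilde{\epsilon}$ then completes the proof.

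First I would bootstrap the regularity of $\varphi$: Proposition~\ref{prop on uniform intrgrals estimates for the Newtonian potential} gives $\varphi \in \m{LL}\tp{\pd D_m} \subset C^\al\tp{\pd D_m}$, exactly as in the opening of the proof of Lemma~\ref{lem on riemann hilbert}, and iterating the Cauchy-integral identity~\eqref{BootStra} upgrades $\varphi \circ \Phi_m$ to $C^{k,\al}\tp{\S^1;\R}$. Applying Lemma~\ref{lem on riemann hilbert} then supplies $\delta \in C^{k-1,\al}\tp{\S^1;\C}$ of the form $\delta(w) = \sum_{\Z\ni n \le 1} a_n w^n$ with real coefficients, satisfying $\m{Re}\tcb{\delta(w)\,\overline{\grad\Psi_m}\tp{\Phi_m(w)}} = \varphi\tp{\Phi_m(w)}$ on $\S^1$. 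Since $\grad\Psi_m = |\grad\Psi_m|\nu$ on $\pd D_m$ with outward unit normal $\nu = w\Phi_m'/|w\Phi_m'|$ (by item (2) of admissibility), this identity means precisely that the normal component of the boundary perturbation induced by $\Phi_m \mapsto \Phi_m + s\delta$ is $s\eta$, where $\eta = \tp{\varphi/|\grad\Psi_m|}\circ\Phi_m$.

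The central step is to convert $\delta$ into a kernel element of the Burbea linearization. The guiding principle is that equation~\eqref{AssumKer} is the boundary restriction of the linearization of the fixed-point condition~\eqref{the stream function form of the equation}, while the Burbea equation~\eqref{Burbea1} is the conformal reformulation of the same condition (derived from~\eqref{compnaphi}); the two kernels must therefore be in natural correspondence. Concretely, I would decompose $\delta = \xi + \ii\beta\,\overline{\grad\Psi_m}\tp{\Phi_m}$ with real $\beta\in C^{k-1,\al}\tp{\S^1;\R}$ in such a way that $\xi$ lies in $X^{k-1,\al}_m$; the tangential correction $\ii\beta\,\overline{\grad\Psi_m}\tp{\Phi_m}$ does not affect the normal component. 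The $m$-fold symmetry of $\varphi$, combined with the compatibility $\Phi_m\tp{e^{2\pi\ii/m}w} = e^{2\pi\ii/m}\Phi_m(w)$ and the rotational transformation rule $\grad\Psi_m\tp{e^{2\pi\ii/m}x} = e^{2\pi\ii/m}\grad\Psi_m(x)$, forces the coefficients $a_n$ to vanish unless $n\equiv 1\pmod m$. The surviving terms with $n\le 0$ have the form $\bar w^{(1-n)}$ which is compatible with the structure $\bar w^{km-1}$ of $X^{k-1,\al}_m$, so once the leftover $a_1w$ contribution is absorbed into the tangential correction the required $\xi$ emerges. A direct calculation paralleling the derivation of~\eqref{Burbea1} from~\eqref{compnaphi} and tracking the summands in the formula~\eqref{Frechet1} then confirms $D_1\mathcal{F}\tp{\phi(s),\lambda(s)}[\xi] = 0$.

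Applying item (3) of Theorem~\ref{thm on bifurcations from the rankine vortex} gives $\xi \equiv 0$, so $\delta$ is purely tangential; taking $\m{Re}\tcb{\delta\,\overline{\grad\Psi_m}\tp{\Phi_m}}$ yields $\varphi \circ \Phi_m \equiv 0$ on $\S^1$ and hence $\varphi\equiv 0$ on $\pd D_m$. The principal technical obstacle I anticipate is the decomposition step just described: the Riemann-Hilbert solution $\delta = w\,\overline{f(w)}$ carries an $a_1 w$ Laurent component, corresponding formally to a rescaling of the conformal uniformization rather than an allowed deformation in $X^{k-1,\al}_m$. Showing that this term is tangential, or vanishes altogether when~\eqref{AssumKer} is combined with the $m$-fold symmetry, requires a careful analysis of the range of $\beta\mapsto \ii\beta\,\overline{\grad\Psi_m}\tp{\Phi_m}$ inside $C^{k-1,\al}\tp{\S^1;\C}/X^{k-1,\al}_m$. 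A likely secondary difficulty is a regularity upgrade of $\xi$ from $C^{k-1,\al}$ to $C^{k,\al}$ so that the kernel triviality in Theorem~\ref{thm on bifurcations from the rankine vortex} applies directly in its stated form; this should follow from standard Schauder-type regularity for the Cauchy-integral operator $\mathcal{C}(\Phi_m)$.
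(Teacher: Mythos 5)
Your proposal begins the same way the paper does---regularity bootstrap for $\varphi$, then Lemma~\ref{lem on riemann hilbert} to produce $\delta\in C^{k-1,\alpha}(\S^1;\C)$ with the Laurent structure $\delta(w)=\sum_{n\le1}a_nw^n$---and you correctly isolate the $a_1w$ component as the obstacle. But the mechanism you propose for dealing with it cannot work, and the actual resolution in the paper is missing from your argument.

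The decomposition you suggest, $\delta = \xi + \ii\beta\,\overline{\grad\Psi_m}(\Phi_m)$ with $\beta$ real and $\xi\in X^{k-1,\alpha}_m$, has two problems. The smaller one is a conjugation slip: since the paper identifies $\grad\Psi_m$ with the outward normal $\nu$ as a complex scalar and the Riemann--Hilbert condition is $\m{Re}\{\delta\,\overline{\grad\Psi_m}(\Phi_m)\}=\varphi(\Phi_m)$ (a Euclidean dot product of $\delta$ with $\grad\Psi_m$), the \emph{tangential} direction is $\ii\grad\Psi_m(\Phi_m)$, not $\ii\,\overline{\grad\Psi_m}(\Phi_m)$. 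The fatal problem is structural: $\Phi_m$ itself lies in $\ker D_1\mathcal{F}(\Phi_m-w,\lambda_m)$ (because $\mathcal{F}(\Phi_m-w+t\Phi_m,\lambda_m)=(1+t)^2\mathcal{F}(\Phi_m-w,\lambda_m)=0$), it contributes a $w$ term with coefficient $1$, and it is \emph{not} tangential---its normal component is $\m{Re}\{\Phi_m\overline{\grad\Psi_m}(\Phi_m)\}=x\cdot\grad\Psi_m(x)\neq0$ on $\pd D_m$. So even if you could manufacture a decomposition and prove $\xi=0$, you would be left with $\delta$ possibly equal to a nonzero multiple of $\Phi_m$, which is neither in $X^{k-1,\alpha}_m$ nor tangential, and would yield $\varphi\neq0$. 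Your conclusion that $\xi=0\Rightarrow\delta$ purely tangential $\Rightarrow\varphi=0$ does not follow.

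The paper's route is different and is what actually closes the gap. It first proves, by a genuinely nontrivial computation (equations~\eqref{der_of_Re}--\eqref{Cauchy_comm}), that $D_1\mathcal{F}(\Phi_m-w,\lambda_m)[\delta]=0$ directly in the extended space $X^{k-1,\alpha}_{\m{ext}}$; you wave at this as ``a direct calculation paralleling the derivation of~\eqref{Burbea1},'' underestimating it. Then, for $a_1\neq0$, it subtracts $a_1\Phi_m$ (not a tangential piece) to land back in $X^{k-1,\alpha}$, and triviality of the kernel there forces $\delta=a_1\Phi_m$. The crucial remaining step, which your proposal omits entirely, is to rule out $\delta=a_1\Phi_m$: substituting $\varphi(x)=x\cdot\grad\Psi_m(x)$ into~\eqref{AssumKer} and unwinding via the divergence theorem and~\eqref{Psim_def} yields $\varphi(x)=x\cdot\grad\Psi_m(x)-(\mathcal{H}^2(D_m)+4\pi c_m)/2\pi$ on $\pd D_m$, so compatibility requires $\mathcal{H}^2(D_m)+4\pi c_m=0$, which the paper then disproves by a continuity argument near the Rankine vortex (for which $\mathcal{H}^2(\mathbb{D})+4\pi\mathfrak{c}_{\Omega_m}=\pi-2\pi\Omega_m>0$). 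This area--constant identity is the heart of the proof and is what selects the subsidiary smallness parameter $\epsilon'$; it has no analogue in your sketch. As written your proposal identifies the right obstruction but leaves it unresolved, and the strategy it hints at (tangential absorption) would fail.
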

\begin{proof}
  As already remarked in the discussion following the proof of Theorem~\ref{thm on bifurcations from the rankine vortex}, the first, second, and third items in Definition~\ref{defn of admissible patches} are easily seen to hold for the tuple $(\Psi_m,\lambda_m,c_m,D_m,m)$. For the purpose of obtaining a contradiction, assume that item four fails to hold; namely, that there exists a nontrivial solution $\varphi \in C_{\tp{m}}^0(\partial D_m)\setminus\tcb{0}$ satisfying equation~\eqref{AssumKer}. Then, Lemma~\ref{lem on riemann hilbert} applies and yields a function $\delta \in C^{k-1,\alpha}(\S^1;\C)$ which satisfies~\eqref{i cannot believe this wasnt labeled earlier} and, by the assumed non-triviality of $\varphi$, does not vanish identically. 

  Note that, while we have defined $\mathcal F$ on the domains $U^{k,\alpha}$ for the purpose of applying the Crandall-Rabinowitz Theorem, the expression~\eqref{BurbeaAroundRankine} is also well-defined on the Banach space 
  \begin{equation}
      X_{\m{ext}}^{k-1,\alpha} = \bcb{\phi \in C^{k-1,\alpha}(\S^1,\C)\;:\;\phi(w) = \sum_{n=-1}^\infty a_n \overline w^{n}; a_n \in \R},
  \end{equation}
  of which $\delta$ is an element. The Fr\'echet derivative $D_1 \mathcal F(\Phi_m -w,\lambda_m)$ exists, then, on $X_{\m{ext}}^{k,\alpha}$ and its expression coincides with the one in \eqref{Frechet1}. We claim that $D_1 \mathcal F(\Phi_m -w, \lambda_m)[\delta] = 0$. To this end, we compute 
  \begin{multline} \label{der_of_Re}
      \partial_\theta \m{Re}\tcb{\delta(e^{\ii \theta}) \overline{\nabla \Psi_m }(\Phi_m(e^{\ii\theta}))} =  
      \m{Im}\{e^{\ii \theta} \delta'(e^{\ii \theta})\tp{\lambda_m \overline{\Phi_m}(e^{\ii \theta}) + \mathcal{C}(\Phi_m)\overline{\Phi_m}(e^{\ii \theta})/2} \\
     - \ii  \delta(e^{\ii \theta})\tp{\lambda_m \overline{\partial_\theta \Phi_m}(e^{i\theta}) + \partial_\theta\tp{\mathcal C(\Phi_m)\overline \Phi_m}(e^{\ii \theta})/2}\}.
  \end{multline}
  For the tangential derivative of the Cauchy integral operator, we have 
  \begin{multline}
      \partial_\theta \mathcal C(\Phi_m)\overline \Phi_m(e^{\ii \theta}) = \frac{1}{2\pi \ii} \m{p.v.} \int_{2\pi \T} \frac{\overline{\partial_\theta \Phi_m}(e^{\ii \theta})}{\Phi_m(e^{\ii \theta})-\Phi_m(e^{\ii \vartheta})}\ii e^{\ii \vartheta} \Phi_m'(e^{\ii \vartheta})\;\m{d}\vartheta \\  + \frac{\partial_\theta \Phi_m(e^{\ii \theta})}{2\pi \ii} \m{p.v.} \int_{2 \pi \T} \tp{\overline \Phi_m(e^{\ii \theta}) - \overline \Phi_m(e^{\ii \vartheta})}\partial_\vartheta \frac{1}{\Phi_m(e^{\ii \vartheta}) - \Phi_m(e^{\ii \theta})}\;\m{d}\theta \\ 
      = - \frac12 \overline{\partial_\theta \Phi_m}(e^{\ii \theta}) + \frac{\partial_\theta \Phi_m(e^{\ii \theta})}{2\pi \ii} \m{p.v.} \int_{2 \pi \T} \frac{\overline{\partial_\vartheta \Phi_m}(e^{\ii \vartheta })}{\Phi_m(e^{\ii \vartheta})-\Phi_m(e^{\ii \theta})}\;\m{d}\vartheta  
      = \partial_\theta \Phi_m(e^{\ii \theta}) \mathcal C(\Phi_m) \bsb{\frac{\overline{\partial_\theta \Phi_m}}{\partial_\theta \Phi_m}}(e^{\ii \theta}).
  \end{multline}
  Using the notation $w = e^{\ii \theta}$, we can express \eqref{der_of_Re} as 
  \begin{multline} \label{specific_tan_der}
      \partial_\theta \m{Re}\{\delta \overline{\nabla \Psi_m}(\Phi_m)  \}(w) = \m{Im} \{\tp{\lambda_m \overline {\Phi_m}(w) + \mathcal C(\Phi_m)\overline{\Phi_m}(w)/2 }w \delta'(w) \\ 
      + \tp{\lambda_m \overline \delta(w) + \delta(w) \mathcal{C}(\Phi_m)[\overline{\partial_\theta \Phi_m} / \partial_\theta \Phi_m] /2 }w \Phi_m'(w) \}.
  \end{multline}
  On the other hand, with $\varphi\circ \Phi_m = \m{Re}\{\delta \overline {\nabla \Psi_m}(\Phi_m)\}$, equation \eqref{def_g} becomes 
  \begin{multline} \label{specific_g}
      g(w) = \frac{1}{w \Phi_m'(w)} \m{Re}\bcb{\delta(w) \frac{\overline{\nabla \Psi_m}(\Phi_m(w))}{|\nabla \Psi_m(\Phi_m(w))|}|\Phi'_m(w)| }\\
      = \frac{1}{w \Phi_m'(w)} \m{sgn}(F) \m{Re}\tcb{\delta(w) \overline{\Phi_m'(w)}\overline w}= \frac{1}{2}\bp{\overline \delta(w) - \delta(w)\frac{\overline{\partial_\theta \Phi_m}}{\partial_\theta \Phi_m}(w)},
  \end{multline}
  where, for the second equality, we have recalled that $F$ is defined in~\eqref{def__F} and used \eqref{def_F}. For the third equality, the fact that $\m{sgn}(F) = 1$ was invoked. The latter can be deduced as follows: from \eqref{def_F}, it follows that, as long as $\|\Phi_m - w\|_{C^1(\S^1)}$ is sufficiently small,
  \begin{equation}
      \m{sgn}(F) = \m{sgn}\tp{\m{Re}\tcb{\overline{\nabla \Psi_m}w\Phi_m'}} = \m{sgn}\tp{\m{Re}\tcb{\overline{\nabla \Psi_m}w}} = \m{sgn}(\pd_{\m{r}}\Psi_m)=1,
  \end{equation}
  with $\pd_{\m{r}}$ denoting the derivative in the radial direction. Consequently, by plugging \eqref{specific_tan_der} and \eqref{specific_g} into \eqref{BootStra}, we obtain
  \begin{multline} \label{PreKer_delta}
      \m{Im} \{\tp{\lambda_m \overline {\Phi_m}(w) + \mathcal C(\Phi_m)\overline{\Phi_m}(w)/2 }w \delta'(w)  
      + \tp{\lambda_m \overline \delta(w) + \mathcal C(\Phi_m)\overline \delta(w)/2-\mathcal C(\Phi_m)[ \delta \overline{\partial_\theta \Phi_m}/\partial_\theta \Phi_m]/2 \\+ \delta(w) \mathcal{C}(\Phi_m)[\overline{\partial_\theta \Phi_m} / \partial_\theta \Phi_m] /2 }w \Phi_m'(w) \} = 0.
  \end{multline}
  Finally, we note that, by integrating the second term by parts in \eqref{Cauchy_der_1} with $\xi = \delta$ and $f = \overline {\Phi_m}$, we obtain
  \begin{multline} \label{Cauchy_comm}
      D\mathcal C(\Phi_m)[\delta]\overline{\Phi_m}(w) =\\ - \frac{1}{2\pi\ii}\int_{2\pi\T}\frac{\overline{\partial_\vartheta \Phi_m}(e^{\ii \vartheta})(\delta(e^{\ii \vartheta})-\delta(e^{\ii \theta}))}{\Phi_m(e^{\ii \vartheta})-\Phi_m(e^{\ii \theta})}\;\m{d}\vartheta = - \frac{1}{2\pi\ii}\int_{\S^1}\frac{\overline{\partial_\theta \Phi_m}}{\partial_\theta \Phi_m}(\tau)\frac{(\delta(\tau)-\delta(w))}{\Phi_m(\tau)-\Phi_m(w)} \Phi_m'(\tau)\;\m{d}\tau\\ = \delta(w) \mathcal{C}(\Phi_m)[\overline{\partial_\theta \Phi_m} / \partial_\theta \Phi_m](w) -  \mathcal{C}(\Phi_m)[\delta\overline{\partial_\theta \Phi_m} / \partial_\theta \Phi_m](w).
  \end{multline}
  In view of \eqref{Frechet1}, equations \eqref{PreKer_delta} and \eqref{Cauchy_comm} prove our claim that $D_1 \mathcal F(\Phi_m -w, \lambda_m)[\delta] = 0$.

  If the coefficient $a_1 \in \R$ in the series expansion \eqref{deltaseries} vanishes, then $\delta $ belongs to the smaller space $X^{k-1,\alpha}$ and hence is a non-trivial element in the kernel of $D_1 \mathcal{F}(\Phi_m-w,\lambda_m)$, contradicting item three of Theorem \ref{thm on bifurcations from the rankine vortex}. Assume, then, $a_1 \neq 0$. First, we have that, for all $t \in \R$, it holds that
  \begin{equation}
      \mathcal F(\Phi_m-w+t\Phi_m, \lambda_m) = (1+t)^2 \mathcal F(\Phi_m -w,\lambda_m) = 0,
  \end{equation}
  and, thus, $\Phi_m \in \m{ker}D_1\mathcal F(\Phi_m-w,\lambda_m)$. Consequently, $\delta - a_1\Phi_m \in X^{k-1,\alpha}\cap \m{ker}D_1 \mathcal F(\Phi_m-w,\lambda_m)$. We will obtain the desired contradiction once we show that $\delta \neq a_1 \Phi_m$. 
  
  Under the assumption that it is, in fact, the case that $\delta = a_1 \Phi_m$, it follows that $\varphi(\Phi_m) = \m{Re}\tcb{\Phi_m \overline {\nabla \Psi_m}}$ yields a solution to \eqref{AssumKer}. Switching to a real-variables formulation, this implies that
  \begin{equation}\label{thing we will later say absurd}
      \varphi(x) = x \cdot \grad \Psi_m(x)\text{ for all }x\in\pd D_m.
  \end{equation}
  Consequently,
\begin{multline*}
    \varphi(x) = - \frac{1}{2\pi} \int_{\partial D_m} \log |x-y| y \cdot \frac{\nabla \Psi_m(y)}{|\nabla \Psi_m(y)|}\;\m{d}\mathcal H^1(y) 
    \\= - \frac{1}{2\pi} \int_{D_m} \grad_y\cdot\tp{\log|x-y| y }\;\m{d}y 
    = -\frac{1}{2\pi} \int_{D_m} \frac{y-x}{|y-x|^2} \cdot (y-x+x)\;\m{d}y - \frac{1}{\pi} \int_{D_m} \log|x-y|\;\m{d}y 
    \\= -\frac{1}{2\pi}\mathcal{H}^2(D_m) - \frac{1}{2\pi} x \cdot \int_{D_m} \nabla_y\log|x-y|\;\m{d}y - \frac{1}{\pi} \int_{D_m} \log|x-y|\;\m{d}y,
\end{multline*}
where we have applied the divergence theorem, keeping in mind that $\nabla \Psi_m / |\nabla \Psi_m|$ is the outward-pointing unit normal on $\partial D_m$, and used the notation $\mathcal H^2$ to denote the $2$-dimensional Hausdorff measure on $\R^2$. By the definition \eqref{Psim_def} of $\Psi_m$,
\begin{equation}
    \nabla \Psi_m(x) + \lambda_m x = \frac{1}{2\pi}\int_{D_m} \nabla_x \log|x-y|\;\m{d}y = -\frac{1}{2\pi}\int_{D_m} \nabla_y \log|x-y|\;\m{d}y,
\end{equation}
which implies 
\begin{equation}
    \frac{1}{2\pi} x \cdot \int_{D_m} \nabla_y\log|x-y|\;\m{d}y = - x\cdot \nabla \Psi_m(x) - \lambda_m |x|^2.
\end{equation}
It follows that 
\begin{equation}
    \varphi(x) = -\frac{1}{2\pi}\mathcal H^2(D_m) + x \cdot \nabla \Psi_m(x) + 2 \bp{\lambda_m \frac{|x|^2}{2} - \frac
    {1}{2\pi} \int_{D_m} \log|x-y|\;\m{d}y}.
\end{equation}
On the other hand, since
\begin{equation}
    \Psi_m(x) + \lambda_m \frac{|x|^2}{2} - \frac{1}{2\pi}\int_{D_m}\log|x-y|\;\m{d}y + c_m =0,
\end{equation}
we conclude that 
\begin{equation}
    \varphi(x) = - \mathcal H^2(D_m)/2\pi + x \cdot \nabla \Psi_m(x) - 2\Psi_m(x) - 2c_m.
\end{equation}
Restricting to $x \in \partial D_m$, the latter reads
\begin{equation}
    \varphi(x) = x \cdot \nabla \Psi_m(x) - \tp{\mathcal H^2(D_m) + 4\pi c_m}/2\pi,
\end{equation}
which, by equality~\eqref{thing we will later say absurd}, is absurd whenever 
\begin{equation} \label{contracond}
    \mathcal H^2(D_m) + 4 \pi c_m \neq 0.
\end{equation}

We show now by a continuity argument that \eqref{contracond} holds for all $m$-fold symmetric patches near the Rankine vortex. The stream function of the latter circular vortex has the following explicit form 
\begin{equation}
    \Psi_\Omega = \frac{1}{4} \begin{cases}
        (1-2\Omega)(|x|^2 - 1) &\text{for } x \in \mathbb D, \\ 
        2 \log|x| - 2\Omega(|x|^2-1) &\text{for } x \in  \R^2\setminus\mathbb D,
    \end{cases}
\end{equation}
in terms of the angular velocity $\Omega \in \R$ and, therefore, $c = \mathfrak{c}_\Omega = - \Omega/2$ as in~\eqref{the stream function form of the equation}. Since $\mathcal H^2(\mathbb{D}) = \pi$, we obtain that \eqref{contracond} holds for all $\Omega \neq 1/2$; in particular, it holds for all bifurcation angular velocities $\Omega_m$ given in Theorem \ref{thm on bifurcations from the rankine vortex}. On the other hand, we have that 
\begin{equation}
    B(0,1-\|\Phi_m-w\|_{C^0(\S^1)}) \subset D_m \subset B(0,1+\|\Phi_m-w\|_{C^0(\S^1)}),
\end{equation}
Then, 
\begin{equation}
    \mathcal H^2(D_m) \ge \mathcal H^2\tp{B(0,1-\|\Phi_m-w\|_{C^0(\S^1)})} = \mathcal H^2(\mathbb D)\tp{1-\|\Phi_m - w\|_{C^0(\mathbb S^1)}}^2 \ge \mathcal H^2(\mathbb D) / 2,
\end{equation}
as long as $\|\Phi_m - w\|_{C^0(\mathbb S^1)}$ is sufficiently small. Moreover, for any $x \in \partial D_m$,
\begin{equation}
    c_m = -\lambda_m |x|^2/2 + \frac{1}{2\pi} \int_{D_m} \log|x-y|\;\m{d}y.
\end{equation}
The latter is clearly a continuous mapping of $(\Phi_m,\lambda_m)\in  C^0(\S^1)\times \R$ and, thus, $c_m \ge\mathfrak{c}_{\Omega_m}/2$ provided $\|\Phi_m -w\|_{C^0(\S^1)}$ and $|\lambda_m - \Omega_m|$ are chosen sufficiently small. We obtain, then, that 
\begin{equation}
    \mathcal H^2(D_m) + 4 \pi c_m \ge \tp{\mathcal H^2(\mathbb D) + 4\pi c_{\Omega_m} }/2 > 0,
\end{equation}
and, therefore~\eqref{contracond} holds, thus showing the non-triviality of $\delta - a_1 \Phi_m \in X^{k,\alpha}\cap\ker D_1 \mathcal F(\Phi_m-w,\lambda_m)$. This was the desired contradiction in the case $a_1 \neq 0$ and we conclude that there is no non-trivial solution $\varphi \in C_{\tp{m}}^0(\partial D_m)$ to \eqref{AssumKer}. The admissibility of the tuple $(\Psi_m, \lambda_m, c_m, D_m,m)$ is, thus, verified. 
\end{proof}

% ***(((***(((***(()%!% ***(((***(((***(()%!% ***(((***(((***(()%!% ***(((***(((***(()%!% ***(((***(((***(()%!% ***(((***(((***(()%!% ***(((***(((***(()%!% ***(((***(((***(()%!% ***(((***(((***(()%!% ***(((***(((***(()%!% ***(((***(((***(()%!% ***(((***(((***(()%!% ***(((***(((***(()%!% ***(((***(((***(()%!% ***(((***(((***(()%!% ***(((***(((***(()%!% ***(((***(((***(()%!% ***(((***(((***(()%!% ***(((***(((***(()%!% ***(((***(((***(()%!
\section{Analysis near admissible states}\label{section on analysis near admissible states}
% ***(((***(((***(()%!% ***(((***(((***(()%!% ***(((***(((***(()%!% ***(((***(((***(()%!% ***(((***(((***(()%!% ***(((***(((***(()%!% ***(((***(((***(()%!% ***(((***(((***(()%!% ***(((***(((***(()%!% ***(((***(((***(()%!% ***(((***(((***(()%!% ***(((***(((***(()%!% ***(((***(((***(()%!% ***(((***(((***(()%!% ***(((***(((***(()%!% ***(((***(((***(()%!% ***(((***(((***(()%!% ***(((***(((***(()%!% ***(((***(((***(()%!% ***(((***(((***(()%!

Let us instantiate, for the entirety of Section~\ref{section on analysis near admissible states}, a fixed but arbitrary $m\in\N^+$ and tuple $\Psi\in C^{1,1}_{\tp{m}}\tp{\R^2}$, $\Omega,c\in\R$, and $\es\neq\tilde{U}\subset\R^2$ satisfying the admissibility conditions of Definition~\ref{defn of admissible patches}. We know that such a tuple exists thanks to the results of Section~\ref{section on existence of admissible states}. Recall that $\Sigma = \pd\tilde{U}$ denotes the patch boundary and also has $m$-fold dihedral symmetry.

We shall also fix the following other objects. Let $\gam\in C^\infty\tp{\R}$ be any smooth function that satisfies $\gam(t) = 0$ for $t\le - 1$ and $\gam(t) = 1$ for $t\ge 1$. Also let $M\in\N$ and $\tcb{\sig_k}_{k=-M}^M\subset\R$ satisfy
\begin{equation}\label{normalization of the splitting}
  \sum_{k=-M}^M\sigma_k = 1.
\end{equation}

For $\Qoppa\in[0,1]$, the Newtonian potential or the Poisson kernel is a function
\begin{equation}
    \bf{N}_\Qoppa:\tcb{\tp{x,y}\in\tp{\R^2}^2\;:\;x\neq y\text{ and }x\neq y/\Qoppa|y|^2}\to\R
\end{equation}
defined via
\begin{equation}\label{generalized Newtonian potentials}
  \bf{N}_\Qoppa(x,y) = \f{1}{4\pi}\log\tabs{x - y}^2 - \f{1}{4\pi}\log\tp{1 - 2\Qoppa x\cdot y + \Qoppa^2|x|^2|y|^2}.
\end{equation}
Perhaps unsurprisingly, the kernel $\bf{N}_\Qoppa$~\eqref{generalized Newtonian potentials} is related to the Biot-Savart kernels of equation~\eqref{bio savart in a domain} via 
\begin{equation}
    \grad_1\bf{N}_{\Qoppa}\tp{x,y} = \bf{K}_{\Qoppa^{-1/2}}\tp{x,y}.
\end{equation}

The operator given by integration against the kernels~\eqref{generalized Newtonian potentials} is denoted, by an abuse of notation, $\bf{N}_\Qoppa:C^0_{\m{c}}\tp{B(0,\Qoppa^{-1/2})}\to C^0\tp{B(0,\Qoppa^{-1/2})}$ and has the action
\begin{equation}\label{generalized poisson solver}
  \bf{N}_{\Qoppa}\tsb{f}(x) = \int_{\R^2}\bf{N}_{\Qoppa}\tp{x,y}f(y)\;\m{d}y,\text{ for }f\in C^0_{\m{c}}\tp{B(0,\Qoppa^{-1/2})}\text{ and }x\in\R^2.
\end{equation}
This operator has the property that, for all $f\in C^0_{\m{c}}\tp{B(0,\Qoppa^{-1/2})}$, in the sense of weak solutions the following equations are satisfied
\begin{equation}
  \Delta\tp{\bf{N}_{\Qoppa}[f]} = f \text{ in }B(0,\Qoppa^{-1/2})\text{ and }\bf{N}_{\Qoppa}[f] = -\f{\log\Qoppa}{4\pi}\int_{\R^2}f(y)\;\m{d}y\text{ on }\pd B(0,\Qoppa^{-1/2}).
\end{equation}
Moreover, the operators $\bf{N}_{\Qoppa}$ preserve $m$-fold dihedral symmetry.

% ***(((***(((***(()%!% ***(((***(((***(()%!% ***(((***(((***(()%!% ***(((***(((***(()%!% ***(((***(((***(()%!% ***(((***(((***(()%!% ***(((***(((***(()%!% ***(((***(((***(()%!% ***(((***(((***(()%!% ***(((***(((***(()%!% ***(((***(((***(()%!% ***(((***(((***(()%!% ***(((***(((***(()%!% ***(((***(((***(()%!% ***(((***(((***(()%!% ***(((***(((***(()%!% ***(((***(((***(()%!% ***(((***(((***(()%!% ***(((***(((***(()%!% ***(((***(((***(()%!
\subsection{Preliminary estimates and identities}\label{subsection on preliminary estimates and identities}
% ***(((***(((***(()%!% ***(((***(((***(()%!% ***(((***(((***(()%!% ***(((***(((***(()%!% ***(((***(((***(()%!% ***(((***(((***(()%!% ***(((***(((***(()%!% ***(((***(((***(()%!% ***(((***(((***(()%!% ***(((***(((***(()%!% ***(((***(((***(()%!% ***(((***(((***(()%!% ***(((***(((***(()%!% ***(((***(((***(()%!% ***(((***(((***(()%!% ***(((***(((***(()%!% ***(((***(((***(()%!% ***(((***(((***(()%!% ***(((***(((***(()%!% ***(((***(((***(()%!

Before we can get to the heart of our desingularization, splitting, and trapping constructions, we need to first set the stage. This, admittedly lengthy and technical, subsection paves the way by setting up the functional framework and operators along with recording and justifying a whirlwind of estimates.

Our first result of the subsection unpacks some important immediate consequences of Definition~\ref{defn of admissible patches}.

\begin{lemC}[Simple consequences of admissibility]\label{lem  on simple consequences of admissibility}
  There exists an open, bounded, and connected set $U\subset\R^2$ that has $m$-fold dihedral symmetry and a class $C^{1,1}$ boundary such that the following hold.
  \begin{enumerate}
    \item We have the inclusion $\Sigma\subset U$.
    \item We have the equality $\Sigma = \tcb{x\in U\;:\;\Psi(x) = 0}$.
    \item We have the estimate $2\min_{\Bar{U}}\tabs{\grad\Psi}\ge\min_{\Sigma}\tp{\nu\cdot\grad\Psi}$.
    \item The complement $\R^2\setminus\Bar{U}$ has two connected components $U^{\m{in}},U^{\m{out}}\subset\R^2$; these satisfy
    \begin{equation}
        U^{\m{in}}\subset\tilde{U},\;\m{dist}\tp{U^{\m{in}},U^{\m{out}}}>0,\text{ and }U^{\m{out}}\cap\tilde{U} = \es.
    \end{equation}
    See Figure~\ref{fig on the Uberhoods} for a depiction of $U$, $U^{\m{in}}$, and $U^{\m{out}}$.
  \end{enumerate}
\end{lemC}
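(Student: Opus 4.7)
The plan is to take $U$ to be a sufficiently thin tubular neighborhood of $\Sigma$, constructed via the signed distance function. Since $\Sigma$ is a compact class $C^{1,1}$ Jordan curve (connected because $\tilde U$ is bounded and simply connected in $\R^2$), standard differential geometry produces $\delta_0>0$ and the signed distance function $d_\Sigma$ (negative on $\tilde U$, positive on $\R^2\setminus\Bar{\tilde U}$) which is $C^{1,1}$ on $\{|d_\Sigma|<\delta_0\}$, with $\nabla d_\Sigma$ extending the outward unit normal $\nu$ and closest-point projection onto $\Sigma$ Lipschitz. For each $0<\delta\le\delta_0$ the set $\{|d_\Sigma|<\delta\}$ is open, bounded, has $C^{1,1}$ boundary equal to the two parallel curves $\{d_\Sigma=\pm\delta\}$, and inherits $m$-fold dihedral symmetry from $\Sigma$ (which inherits it from $\tilde U$ by the first item of Definition~\ref{defn of admissible patches}).

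I would then choose $0<\delta\le\delta_0$ small enough to satisfy two quantitative conditions. First, since on $\Sigma$ the gradient $\nabla\Psi$ is parallel to $\nu$ (as $\Psi$ vanishes on $\Sigma$), we have $|\nabla\Psi|=\nu\cdot\nabla\Psi\ge\min_\Sigma(\nu\cdot\nabla\Psi)>0$ on $\Sigma$; by uniform continuity of $\nabla\Psi\in C^{0,1}(\R^2;\R^2)$ and compactness of $\Sigma$, shrinking $\delta$ ensures
\begin{equation}
    \min_{\{|d_\Sigma|\le\delta\}}|\nabla\Psi|\ge\tfrac{1}{2}\min_\Sigma(\nu\cdot\nabla\Psi),
\end{equation}
which is exactly item~(3). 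Second, the same positivity plus the fundamental theorem of calculus along normal rays $t\mapsto x+t\nu(x)$ (which parametrize $\{|d_\Sigma|<\delta\}$ bijectively via the closest-point projection) gives $\Psi(x+t\nu(x))=\Psi(x)+t\,\nu(x)\cdot\nabla\Psi(x)+O(t^2)$, so for $\delta$ small enough $\Psi$ has the same sign as $t$ on each such ray, in particular nonzero for $0<|t|\le\delta$. This verifies item~(2) after setting $U=\{|d_\Sigma|<\delta\}$ and item~(1) is automatic.

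For item~(4), the complement $\R^2\setminus\Bar U$ decomposes into the disjoint open sets $U^{\m{in}}=\{d_\Sigma<-\delta\}$ and $U^{\m{out}}=\{d_\Sigma>\delta\}$, each connected because $\tilde U$ and $\R^2\setminus\Bar{\tilde U}$ are connected (the latter by simple connectedness of the bounded $\tilde U$) and the tubular neighborhood retracts onto $\Sigma$. By the sign convention, $U^{\m{in}}\subset\tilde U$ and $U^{\m{out}}\cap\tilde U=\es$, while $\m{dist}(U^{\m{in}},U^{\m{out}})\ge 2\delta>0$ by the triangle inequality applied to $d_\Sigma$. Connectedness of $U$ itself follows because $U$ deformation-retracts onto the connected curve $\Sigma$.

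I do not anticipate any serious obstacle; the only place requiring real care is arranging the full list of conditions simultaneously (the quantitative gradient bound, sign of $\Psi$, regularity of the boundary, and dihedral symmetry) with a single choice of $\delta$, which the compactness of $\Sigma$ handles uniformly.
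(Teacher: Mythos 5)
Your construction is correct, and it is the same basic idea as the paper's (a thin tubular neighborhood of $\Sigma$), but realized differently. The paper first takes a metric tube $\hat{U}=\{\mathrm{dist}(\cdot,\Sigma)<\uprho\}$ on which $|\nabla\Psi|\ge\tfrac12\min_\Sigma(\nu\cdot\nabla\Psi)$ and then sets $U=\{x\in\hat U:|\Psi(x)|<\sampi_1\}$ with $\sampi_1$ small enough that this set is compactly contained in $\hat U$; the payoff is that $\partial U$ consists of level curves $\{\Psi=\pm\sampi_1\}$, so the $C^{1,1}$ boundary regularity is immediate from the implicit function theorem applied to $\Psi\in C^{1,1}$ with nonvanishing gradient, and the boundary curves of $U$ are themselves level sets of $\Psi$ (convenient, though not strictly needed later). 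Your choice $U=\{|d_\Sigma|<\delta\}$ instead puts the burden on the regularity of the signed distance function: you need the fact that $d_\Sigma$ is $C^{1,1}$ in a neighborhood of a $C^{1,1}$ Jordan curve (equivalently, that the parallel curves at sub-reach distance are $C^{1,1}$). This is true, but be aware that the most commonly cited textbook version (e.g.\ Gilbarg--Trudinger, Lemma 14.16) assumes a $C^{2}$ boundary; for $C^{1,1}$ one has to invoke the sharper statement (positive reach / two-sided ball condition, as in Delfour--Zol\'esio or Krantz--Parks), so you should cite that explicitly rather than ``standard differential geometry.'' On the other hand, your argument is more explicit than the paper's on the one point the paper leaves implicit: the identity $\{x\in U:\Psi(x)=0\}=\Sigma$ genuinely requires the normal-ray monotonicity argument you give (with $\delta$ small compared to $\min_\Sigma(\nu\cdot\nabla\Psi)$ divided by the Lipschitz constant of $\nabla\Psi$), and the same is true of the connectedness and two-component structure of $\mathbb{R}^2\setminus\Bar U$, which you justify via the normal retraction and the $1$-Lipschitz property of $d_\Sigma$. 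So: no gap, a slightly heavier standard input for boundary regularity, and a more detailed verification of items (2) and (4) than the paper records.
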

\begin{proof}
    The $U$ shall be taken as an appropriate topological tubular neighborhood of $\Sigma$. By a simple compactness argument using the properties from the second item of Definition~\ref{defn of admissible patches}, there exists $0<\uprho\le 1$ such that 
    \begin{equation}
        \hat{U} = \tcb{x\in\R^2\;:\;\m{dist}\tp{x,\Sigma}<\uprho}\text{ satisfies }\inf\tcb{\tabs{\grad\Psi(x)}\;:\;x\in\hat{U}}\ge\min_{\Sigma}\tp{\nu\cdot\grad\Psi}/2.
    \end{equation}
    Now, there exists $\sampi_1,\sampi_2\in(0,1]$ sufficiently small so that
    \begin{equation}
        \tcb{x\in\hat{U}\;:\;|\Psi(x)|\le\sampi_1}\subset\tcb{x\in\hat{U}\;:\;\m{dist}\tp{x,\pd\hat{U}}>\sampi_2}.
    \end{equation}
    We therefore may take $U = \tcb{x\in\hat{U}\;:\;|\Psi(x)|<\sampi_1}$. The items of the lemma are now simple consequences of this definition.
\end{proof}

\begin{figure}[!h]
    \centering
    \includegraphics[width=0.5\linewidth]{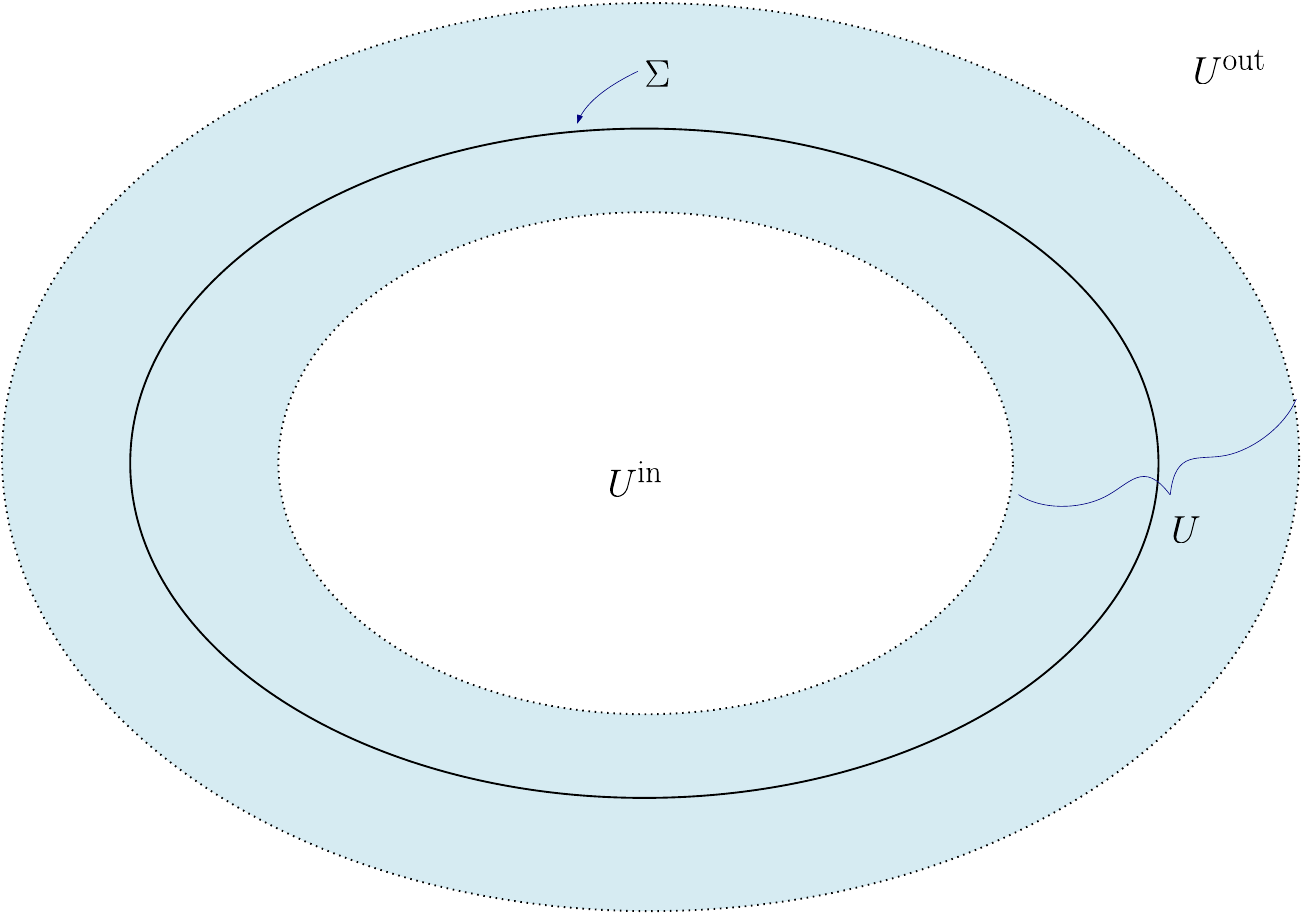}
    \caption{Shown here are the sets $U$, $U^{\m{in}}$, and $U^{\m{out}}$ produced by Lemma~\ref{lem  on simple consequences of admissibility} and their relation to $\Sigma$ (the curve in solid black). The region in blue is the topological tubular neighborhood $U$. The set $U^{\m{in}}$ is the bounded connected component of the white region. The set $U^{\m{out}}$ is the complement of $\Bar{U\cup U^{\m{in}}}$. The support of the vorticity $\tilde{U}$ is not emphasized in the diagram, but it is the bounded simply connected region enclosed by $\Sigma$.}
    \label{fig on the Uberhoods}
\end{figure}

In the next result, we introduce a family of spatially localized regularizations to the Heaviside function and identify a few basic properties.

\begin{lemC}[Regularized Heaviside operators]\label{lem on regularized hevi}
  There exists $\ep_0,r_0,\varrho_0,\Qoppa_0\in(0,1]$ such that the following hold for all $0<\ep\le\ep_0$.
  \begin{enumerate}
    \item For all $\psi\in C^1_{\tp{m}}\tp{\Bar{U}}$ satisfying $\tnorm{\psi}_{C^1\tp{\Bar{U}}}\le \tp{M+1}r_0$ we have 
    \begin{equation}
      \m{dist}\tp{\pd U,\tcb{x\in U\;:\;|\Psi(x) + \psi(x)|\le\ep}}\ge\varrho_0\text{ and }4\min_{\Bar{U}}|\grad\tp{\Psi + \psi}|\ge\min_{\Sigma}\tp{\nu\cdot\grad\Psi}.
    \end{equation}
    \item The regularized Heaviside operator $\Gamma^\Psi_\ep:\Bar{B(0,(M+1)r_0)}\subset C^1_{\tp{m}}\tp{\Bar{U}}\to C^1_{\tp{m},\m{c}}\tp{\R^2}$ with action given via
    \begin{equation}\label{heaviside}
      \tsb{\Gamma^\Psi_\ep\tp{\psi}}\tp{x} = \begin{cases}
        \gam\sp{-\f{\Psi(x) + \psi(x)}{\ep}}&\text{if }x\in\Bar{U},\\
        1&\text{if }x\in U^{\m{in}},\\
        0&\text{if }x\in U^{\m{out}},
      \end{cases}
    \end{equation}
    is well defined; moreover its restrictions satisfy $C_{\tp{m}}^\ell\tp{\Bar{U}}\to C^\ell_{\tp{m},\m{c}}\tp{\R^2}$ for every $\ell\in\N^+$.
    \item The derivative regularized Heaviside operator $\tp{\Gamma_\ep^\Psi}':\Bar{B(0,(M+1)r_0)}\subset C^1_{\tp{m}}\tp{\Bar{U}}\to C^1_{\tp{m},\m{c}}\tp{\R^2}$ with action given by
    \begin{equation}\label{derivative heaviside}
      \tsb{\tp{\Gamma^\Psi_\ep}'\tp{\psi}}\tp{x} = \begin{cases}
        -\f{1}{\ep}\gamma'\sp{-\f{\Psi(x) + \psi(x)}{\ep}}&\text{if }x\in\Bar{U},\\
        0&\text{if }x\in\R^2\setminus\Bar{U},
      \end{cases}
    \end{equation}
    is well defined; moreover its restrictions satisfy $C_{\tp{m}}^\ell\tp{\Bar{U}}\to C^\ell_{\tp{m},\m{c}}\tp{\R^2}$ for every $\ell\in\N^+$.
    \item For all $\psi\in C^1_{\tp{m}}\tp{\Bar{U}}$ we have that $\m{supp}\tp{\Gamma^\Psi_\ep\tp{\psi}}\subset B(0,\Qoppa_0^{-1/2}/2)$.
    \item For all $\psi\in C^1_{\tp{m}}\tp{\Bar{U}}$ we have that $\m{supp}\tp{\tp{\Gamma^\Psi_\ep}'\tp{\psi}}\subset U$.
  \end{enumerate}
\end{lemC}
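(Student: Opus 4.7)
The plan is to first select parameters via compactness, then deduce item (1) from triangle inequalities, and finally obtain items (2)--(5) by patching together the piecewise formula for $\Gamma^\Psi_\epsilon(\psi)$ across $\partial U$ and applying the chain rule.

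\textbf{Parameter selection.} By Lemma~\ref{lem  on simple consequences of admissibility}(2), $\Psi$ vanishes on $\bar U$ exactly along $\Sigma$, and $\Sigma$ is compactly contained in $U$. Continuity and compactness therefore produce $\delta_0, \varrho_0 \in (0,1]$ such that $|\Psi(x)| \ge 2\delta_0$ for every $x \in \bar U$ with $\m{dist}(x,\partial U) \le \varrho_0$. Set $\kappa := \min_{\bar U} |\nabla \Psi| > 0$ (positive by Lemma~\ref{lem  on simple consequences of admissibility}(3)), $\epsilon_0 := \delta_0/2$, and choose $r_0 \in (0,1]$ with $(M+1) r_0 \le \min\tcb{\delta_0/2, \kappa/2}$. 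Finally pick $\Qoppa_0 \in (0,1]$ so that $\bar U \cup U^{\m{in}} \Subset B(0, \Qoppa_0^{-1/2}/2)$, which is possible since the set on the left is bounded.

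\textbf{Verification of item (1).} For any $\epsilon \le \epsilon_0$ and $\psi$ satisfying $\tnorm{\psi}_{C^1(\bar U)} \le (M+1) r_0$, the reverse triangle inequality yields $|\Psi + \psi| \ge 2\delta_0 - \delta_0/2 > \epsilon$ throughout the $\varrho_0$-strip around $\partial U$ inside $\bar U$, giving the distance inequality. Similarly $|\nabla(\Psi+\psi)| \ge \kappa - (M+1)r_0 \ge \kappa/2$ on $\bar U$; combining with Lemma~\ref{lem  on simple consequences of admissibility}(3) delivers the gradient bound.

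\textbf{Verification of items (2)--(5).} The crux is that the piecewise expression for $\Gamma^\Psi_\epsilon(\psi)$ glues together consistently. By item (1), $|\Psi+\psi| > \epsilon$ in the $\varrho_0$-neighborhood of $\partial U$ inside $\bar U$, so $\gam(-(\Psi+\psi)/\epsilon)$ is identically $0$ or $1$ on each connected component of that strip. Since $\nabla \Psi$ does not vanish on $\bar U$ and $U$ is a two-sided tubular neighborhood of $\Sigma$ (with its inner boundary component bordering $U^{\m{in}} \subset \tilde U$, where $\Psi < 0$ near $\Sigma$, and its outer boundary component bordering $U^{\m{out}}$, disjoint from $\tilde U$, where $\Psi > 0$ near $\Sigma$), this constant is $1$ on the inner component and $0$ on the outer. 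Hence the piecewise definition matches the constant extensions on $U^{\m{in}}$ and $U^{\m{out}}$ smoothly. The chain rule together with $\gam \in C^\infty$ then yields the stated $C^\ell$ mappings in items (2) and (3). Item (4) follows from the inclusion $\m{supp}\,\Gamma^\Psi_\epsilon(\psi) \subset \bar U \cup U^{\m{in}} \Subset B(0,\Qoppa_0^{-1/2}/2)$. For item (5), $\m{supp}(\gam') \subset [-1,1]$ forces $\m{supp}\,(\Gamma^\Psi_\epsilon)'(\psi) \subset \tcb{|\Psi + \psi| \le \epsilon}$, which by item (1) sits inside $U$. The $m$-fold dihedral symmetry is preserved throughout since the cutoff is applied pointwise and $\Psi, \psi$ are symmetric.

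\textbf{Main obstacle.} Modest as the proof is, the one subtle step is verifying the sign matching of $\Psi + \psi$ on the two components of $\partial U$, which is necessary for the piecewise definition of $\Gamma^\Psi_\epsilon(\psi)$ to be even continuous (let alone $C^\ell$). This follows from the non-vanishing of $\nabla \Psi$ on $\bar U$ combined with the precise tubular construction of $U$ inside the proof of Lemma~\ref{lem  on simple consequences of admissibility}.
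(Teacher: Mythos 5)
Your proposal is correct and follows essentially the same route as the paper: choose the parameters so that $|\Psi+\psi|>\ep$ and $|\grad(\Psi+\psi)|$ is bounded below near $\pd U$, then observe that $\gam\tp{-(\Psi+\psi)/\ep}$ is identically $1$ (resp.\ $0$) near $\pd U\cap\pd U^{\m{in}}$ (resp.\ $\pd U\cap\pd U^{\m{out}}$), so the piecewise definitions glue smoothly and the support claims follow. Your write-up is in fact more detailed than the paper's proof (explicit parameter choices and the sign-matching discussion, which, as you note, ultimately rests on the tubular construction of $U$ in Lemma~\ref{lem  on simple consequences of admissibility}), and no step is missing.
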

\begin{proof}
  The first item follows simply by selecting sufficiently small parameters. The second item follows from the first in that: For $x\in\Bar{U}$ in an open neighborhood of the boundary component $\pd U\cap \pd U^{\m{in}}$ (resp. $\pd U\cap \pd U^{\m{out}}$) we have that the composition $\gam\tp{-\tp{\Psi(x) + \psi(x)}/\ep}$ is identically $1$ (resp. $0$). Thus the pointwise definition~\eqref{heaviside} is smoothly matching. The third item follows by similar observations. The fourth and fifth items are immediate.
\end{proof}

We are now in a position to introduce the main objects of study throughout Section~\ref{section on analysis near admissible states}. In particular, the third item of the following result introduces the \emph{perturbed} patch equation; the overarching goal of Section~\ref{section on analysis near admissible states} is to construct solutions to the operator equation posed in~\eqref{equation - perturbation form}.

\begin{lemC}[Principal actors and their basic properties]\label{lem on principal actors and their basic properties}
  Let $\ep_0$, $r_0$, and $\Qoppa_0$ be as in Lemma~\ref{lem on regularized hevi}. For $0<\ep\le\ep_0$, $0\le\rho\le r_0$, and $0\le\Qoppa<\Qoppa_0$ the following hold.
  \begin{enumerate}
    \item The operators $\pmb{K}_{\ep,\rho,\Qoppa},\pmb{F}_{\ep,\rho,\Qoppa}:\Bar{B(0,r_0)}\subset C^1_{\tp{m}}\tp{\Bar{U}}\to C^1_{\tp{m}}\tp{\Bar{U}}$ with actions given via
    \begin{equation}\label{the nonlienar operators}
      \pmb{K}_{\ep,\rho,\Qoppa}\tp{\psi} = -\sum_{k=-M}^M\sig_k\bf{N}_{\Qoppa}[\Gamma^\Psi_\ep\tp{\psi + k\rho} - \Gamma^\Psi_\ep\tp{k\rho}]\text{ and }\pmb{F}_{\ep,\rho,\Qoppa}\tp{\psi} = \psi + \pmb{K}_{\ep,\rho,\Qoppa}\tp{\psi}
    \end{equation}
    are well-defined and, on the interior of their domain, smooth in the Fr\'{e}chet sense. Here $\bf{N}_{\Qoppa}$ and $\Gamma^\Psi_\ep$ are as defined in equations~\eqref{generalized Newtonian potentials} and~\eqref{heaviside}, respectively.
    \item For any $\psi\in B(0,r_0)\subset C^1_{\tp{m}}\tp{\Bar{U}}$ and $\phi\in C^1_{\tp{m}}\tp{\Bar{U}}$ the derivative of $\pmb{F}_{\ep,\rho,\Qoppa}$ obeys the formula 
    \begin{equation}\label{the linearization identity}
        D\pmb{F}_{\ep,\rho,\Qoppa}\tp{\psi}\phi = \phi + \pmb{\kappa}_{\ep,\rho,\Qoppa}\tp{\psi}\phi
    \end{equation}
    with
    \begin{equation}\label{the linearized operator compact part}
      \pmb{\kappa}_{\ep,\rho,\Qoppa}\tp{\psi}\phi = -\sum_{k=-M}^M\sig_k\bf{N}_{\Qoppa}\tsb{\tp{\Gamma^\Psi_\ep}'\tp{\psi + k\rho}\phi}
    \end{equation}
    \item With the functions $f_{\ep,\rho,\Qoppa}\in C^1_{\tp{m}}\tp{\Bar{U}}$ defined via
    \begin{equation}\label{the source term target}
      f_{\ep,\rho,\Qoppa} = \sum_{k=-M}^M\sig_k\tp{\bf{N}_{\Qoppa}\tsb{\Gamma_\ep^\Psi\tp{k\rho}} - \bf{N}\ast\mathds{1}_{\tilde{U}}}
    \end{equation} 
    the following two items are equivalent for $\psi\in\Bar{B(0,r_0)}\subset C^1_{\tp{m}}\tp{\Bar{U}}$:
    \begin{enumerate}
      \item We have satisfied 
      \begin{equation}\label{equation - perturbation form}
        \pmb{F}_{\ep,\rho,\Qoppa}\tp{\psi} = f_{\ep,\rho,\Qoppa}.
      \end{equation}
      \item We have satisfied
      \begin{equation}\label{equation - natural form}
        \Psi + \psi = \sum_{k = -M}^M\sig_k\bf{N}_{\Qoppa}\tsb{\Gamma_\ep^\Psi\tp{\psi + k\rho}} - \Omega\tabs{\cdot}^2/2 - c.
      \end{equation}
    \end{enumerate}
    \end{enumerate}
\end{lemC}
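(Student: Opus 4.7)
The plan is to treat the three items in sequence, relying on the structural Lemma~\ref{lem on regularized hevi}. For the first item, well-definedness follows by tracking inputs and outputs at each stage of the composition defining $\pmb{K}_{\ep,\rho,\Qoppa}$. Given $\psi\in\Bar{B(0,r_0)}\subset C^1_{\tp{m}}\tp{\Bar U}$ and $k\in\tcb{-M,\dots,M}$, the triangle inequality yields $\tnorm{\psi+k\rho}_{C^1\tp{\Bar U}}\le\tp{M+1}r_0$, placing $\psi+k\rho$ in the domain on which the first two items of Lemma~\ref{lem on regularized hevi} grant $\Gamma^\Psi_\ep\tp{\psi+k\rho}\in C^1_{\tp{m},\m{c}}\tp{\R^2}$. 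The fourth item of that lemma confines the support to $B\tp{0,\Qoppa_0^{-1/2}/2}$, which is compactly contained in $B\tp{0,\Qoppa^{-1/2}}$ whenever $\Qoppa<\Qoppa_0$; thus the integral operator $\bf{N}_\Qoppa$ from~\eqref{generalized poisson solver} is applicable. Standard estimates for integration against the logarithmic kernel in~\eqref{generalized Newtonian potentials} show that $\bf{N}_\Qoppa$ sends $L^\infty_\m{c}\tp{B(0,\Qoppa^{-1/2})}$ continuously into $\bigcap_{0<\al<1}C^{1,\al}\tp{\R^2}$, and $m$-fold dihedral symmetry is preserved because the kernel depends only on $|x-y|^2$ and on $|x|^2|y|^2 - 2x\cdot y$, both invariant under orthogonal transformations. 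Restricting to $\Bar U$ places the output in $C^1_{\tp{m}}\tp{\Bar U}$ as claimed.

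For Fr\'echet smoothness on the open ball, it suffices to note that $\psi\mapsto\gam\tp{-\tp{\Psi+\psi+k\rho}/\ep}$ is a superposition operator with $\gam\in C^\infty\tp{\R}$ acting on an element of the Banach algebra $C^1\tp{\Bar U}$, hence is $C^\infty$ as a map $C^1\tp{\Bar U}\to C^1\tp{\Bar U}$; composition with the bounded linear operator $\bf{N}_\Qoppa$ preserves smoothness. The piecewise extension in~\eqref{heaviside} across $\pd U$ causes no loss of regularity because the first item of Lemma~\ref{lem on regularized hevi} ensures that the argument of $\gam$ lies in the constant region $\tabs{\cdot}\ge 1$ throughout a neighborhood of $\pd U$, so the smooth matching is automatic.

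For the second item, I compute the derivative of $\psi\mapsto\Gamma^\Psi_\ep\tp{\psi+k\rho}$ in direction $\phi$ via the chain rule: this yields $-\tp{1/\ep}\gam'\sp{-\tp{\Psi+\psi+k\rho}/\ep}\phi$ on $\Bar U$ and vanishes on $\R^2\setminus\Bar U$, which is precisely $\tp{\Gamma_\ep^\Psi}'\tp{\psi+k\rho}\phi$ by~\eqref{derivative heaviside}. Linearity of $\bf{N}_\Qoppa$ and of the identity map $\psi\mapsto\psi$ then furnish the formulas~\eqref{the linearization identity} and~\eqref{the linearized operator compact part}.

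For the third item, I would unfold~\eqref{equation - perturbation form} using the definitions~\eqref{the nonlienar operators} and~\eqref{the source term target}. Canceling the common term $\sum_k\sig_k\bf{N}_\Qoppa\tsb{\Gamma^\Psi_\ep\tp{k\rho}}$ on both sides and invoking $\sum_{k=-M}^M\sig_k = 1$ from~\eqref{normalization of the splitting} reduces the equation to
\begin{equation*}
  \psi = \sum_{k=-M}^M\sig_k\bf{N}_\Qoppa\tsb{\Gamma^\Psi_\ep\tp{\psi+k\rho}} - \bf{N}\ast\mathds{1}_{\tilde U}.
\end{equation*}
Finally, the third item of Definition~\ref{defn of admissible patches} identifies $\bf{N}\ast\mathds{1}_{\tilde U}$ with $\Psi + \Omega\tabs{\cdot}^2/2 + c$, which rearranges exactly to~\eqref{equation - natural form}. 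Every step is reversible, so the equivalence follows. No step in this proof is particularly delicate; the mildest bookkeeping lies in item~1, where the chain of $C^1$-to-$C^1$ mapping claims and the preservation of symmetry must be chased through the composition.
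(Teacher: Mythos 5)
Your proposal is correct and follows essentially the same route as the paper: the paper treats items (2) and (3) as direct computations (your chain-rule and cancellation-plus-$\sum_k\sig_k=1$ arguments are exactly those computations) and proves item (1) by viewing $\pmb{K}_{\ep,\rho,\Qoppa}$ as a bounded linear integral operator (the support of $\Gamma^\Psi_\ep(\psi+k\rho)-\Gamma^\Psi_\ep(k\rho)$ lying in $\Bar{U}$, well inside $B(0,\Qoppa^{-1/2})$) composed with the smooth superposition operator $\psi\mapsto\gam\sp{-\tp{\Psi+\psi+k\rho}/\ep}$, with Fr\'echet smoothness supplied by the converse of Taylor's theorem — precisely the standard Nemytskii-operator fact you assert. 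The only caveat worth recording is that the Banach-algebra structure of $C^1\tp{\Bar{U}}$ by itself does not give smoothness of composition with a general $\gam\in C^\infty$ (that step is exactly the converse-Taylor/omega-lemma argument the paper cites from Abraham--Marsden--Ratiu), but this is a matter of citation rather than a gap.
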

\begin{proof}
  The second and third items are direct computations, as such we omit their simple proofs.

  Let us turn our attention to the first item. Since the difference between $\pmb{F}$ and $\pmb{K}$ is merely the identity map, it suffices to observe that $\pmb{K}$ is smooth between the stated spaces. To see this, we note that for $\psi\in B(0,r_0)\subset C^1_{\tp{m}}\tp{\Bar{U}}$, $\phi\in C^1_{\tp{m}}\tp{\Bar{U}}$, and $x\in U$ the following formula holds:
  \begin{equation}
      \tsb{\pmb{K}_{\ep,\rho,\Qoppa}\tp{\psi}}\tp{x} = -\sum_{k=-M}^M\sig_k\int_{U}\bf{N}_{\Qoppa}\tp{x,y}\bp{\gam\bp{-\f{\Psi(y) + \psi(y) + k\rho}{\ep}} - \gam\bp{-\f{\Psi(y) + k\rho}{\ep}}}\;\m{d}y.
  \end{equation}
  It is now apparent that $\pmb{K}_{\ep,\rho,\Qoppa}$ is a sum of terms having the form of a bounded linear operator on $C^1_{\tp{m}}\tp{\Bar{U}}$ acting on outer composition with the smooth function $\gam$. Fr\'{e}chet smoothness is now a routine consequence of the converse to Taylor's theorem (see, e.g., Section 2.4B in Abraham, Marsden, and Ratiu~\cite{MR960687}).
\end{proof}

We now further examine the linear map from equation~\eqref{the linearized operator compact part} by proving a series of simple bounds. Note that in what follows the dependence on $\ep$ in the inequalities is likely far from optimal. Indeed, the forthcoming proof invokes truly the most brutal estimates available. It is only the qualitative assertions of this result that are important for the overarching strategy.

\begin{lemC}[Preliminary mapping estimates, I]\label{lem on prelim estimates 1}
  Let $\ep_0$, $r_0$, and $\Qoppa_0$ be the positive values granted by Lemma~\ref{lem on regularized hevi}. There exists $C\in\R^+$ with the property that for all $0<\ep\le\ep_0$, $0\le\rho\le r_0$, $0\le\Qoppa\le\Qoppa_0$, $\psi\in\Bar{B(0,r_0)}\subset C^1_{\tp{m}}\tp{\Bar{U}}$, and $\phi\in C^0_{\tp{m}}\tp{\Bar{U}}$ we have that $\pmb{\kappa}_{\ep,\rho,\Qoppa}\tp{\psi}\phi\in C^1_{\tp{m}}\tp{\Bar{U}}$ with the estimate
  \begin{equation}\label{linear remainder c1}
    \tnorm{\pmb{\kappa}_{\ep,\rho,\Qoppa}\tp{\psi}\phi}_{C^1\tp{\Bar{U}}}\le\tp{C/\ep}\tnorm{\phi}_{C^0\tp{\Bar{U}}}.
  \end{equation}
  If we suppose additionally that $\phi\in C^1_{\tp{m}}\tp{\Bar{U}}$, then $\pmb{\kappa}_{\ep,\rho,\Qoppa}\tp{\psi}\phi\in C^2_{\tp{m}}\tp{\Bar{U}}$ with the estimate
  \begin{equation}\label{linear remainder c2}
    \tnorm{\pmb{\kappa}_{\ep,\rho,\Qoppa}\tp{\psi}\phi}_{C^2\tp{\Bar{U}}}\le\tp{C/\ep^2}\tnorm{\phi}_{C^1\tp{\Bar{U}}}.
  \end{equation}
  If we suppose further that $\tilde{\psi}\in\Bar{B(0,r_0)}\subset C^1_{\tp{m}}\tp{\Bar{U}}$, then we have the estimate
  \begin{equation}\label{linear remainder lipschitz}
    \tnorm{\pmb{\kappa}_{\ep,\rho,\Qoppa}\tp{\psi}\phi - \pmb{\kappa}_{\ep,\rho,\Qoppa}\tp{\tilde{\psi}}\phi}_{C^1\tp{\Bar{U}}}\le\tp{C/\ep^2}\tnorm{\psi - \tilde{\psi}}_{C^0\tp{\Bar{U}}}\tnorm{\phi}_{C^0\tp{\Bar{U}}}.
  \end{equation}
\end{lemC}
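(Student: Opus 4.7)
The three estimates reduce to two elementary ingredients: chain-rule bounds on $\tp{\Gamma^\Psi_\ep}'\tp{\psi+k\rho}$ and its spatial derivatives, together with the linear mapping properties of $\bf{N}_\Qoppa$ sending compactly supported $L^\infty$ data into $C^1$ and compactly supported $C^1$ data into $C^2$, both uniformly in $\Qoppa \in \sb{0, \Qoppa_0}$. Preservation of $m$-fold dihedral symmetry is automatic because $\Psi$ is $m$-fold symmetric, the regularized Heaviside construction of Lemma~\ref{lem on regularized hevi} only involves pointwise composition, and $\bf{N}_\Qoppa\tp{x,y}$ depends only on the rotation- and reflection-invariant quantities $\tabs{x-y}$, $x\cdot y$, and $\tabs{x}\tabs{y}$. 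Uniformity in $\Qoppa$ then follows once we observe that item 4 of Lemma~\ref{lem on regularized hevi} places the supports of all functions $\tp{\Gamma^\Psi_\ep}'\tp{\psi+k\rho}\phi$ strictly inside $B\tp{0,\Qoppa_0^{-1/2}/2}\subset B\tp{0,\Qoppa^{-1/2}}$, so the smooth correction kernel $r_\Qoppa\tp{x,y} = \bf{N}_\Qoppa\tp{x,y} - \tp{2\pi}^{-1}\log\tabs{x-y}$ is jointly smooth in $\tp{x,y,\Qoppa}$ on the compact relevant set and contributes only trivial bounds.

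For the first ingredient, a direct chain-rule computation using $\tnorm{\Psi}_{C^{1,1}\tp{\R^2}}$, the bound $\tnorm{\psi + k\rho}_{C^1\tp{\Bar{U}}} \le \tp{M+1}r_0$, and the uniform norms of $\gam'$ and $\gam''$ yields
\begin{equation*}
    \tnorm{\tp{\Gamma^\Psi_\ep}'\tp{\psi+k\rho}}_{C^0\tp{\Bar{U}}}\le C/\ep \quad\text{and}\quad \tnorm{\tp{\Gamma^\Psi_\ep}'\tp{\psi+k\rho}}_{C^1\tp{\Bar{U}}}\le C/\ep^2.
\end{equation*}
Moreover, for $\tilde{\psi}\in\Bar{B\tp{0,r_0}}\subset C^1_{\tp{m}}\tp{\Bar{U}}$, the mean value theorem applied to $\gam'$ produces
\begin{equation*}
    \tnorm{\tp{\Gamma^\Psi_\ep}'\tp{\psi+k\rho} - \tp{\Gamma^\Psi_\ep}'\tp{\tilde{\psi}+k\rho}}_{C^0\tp{\Bar{U}}}\le \tp{C/\ep^2}\tnorm{\psi-\tilde{\psi}}_{C^0\tp{\Bar{U}}},
\end{equation*}
with the extra factor of $1/\ep$ relative to $\tnorm{\gam''}_{C^0}$ coming from the inner argument derivative.

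For the second ingredient, split $\bf{N}_\Qoppa = \bf{N}_0 + r_\Qoppa$, where $\bf{N}_0$ denotes the classical Newtonian convolution. The local integrability of the kernel $\tabs{x-y}^{-1}$ in two dimensions furnishes the boundedness $\bf{N}_0: L^\infty_{\m{c}}\tp{B\tp{0,\Qoppa_0^{-1/2}/2}}\to C^1\tp{\Bar{U}}$. For the second-derivative bound, the key observation is that for any $g\in C^1_{\m{c}}\tp{\R^2}$ integration by parts (with no boundary term thanks to compact support) gives $\grad\bf{N}_0\tsb{g} = \bf{N}_0\tsb{\grad g}$; combining this identity with the $L^\infty \to C^1$ bound applied componentwise to $\grad g$ yields $\tnorm{\bf{N}_0\tsb{g}}_{C^2\tp{\Bar{U}}}\le C\tnorm{g}_{C^1\tp{\R^2}}$. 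This shifting of a derivative onto the input is the only nontrivial technical maneuver in the proof.

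Combining the two ingredients handles all three estimates. For~\eqref{linear remainder c1}, the product $\tp{\Gamma^\Psi_\ep}'\tp{\psi+k\rho}\phi$ has $L^\infty$ norm at most $\tp{C/\ep}\tnorm{\phi}_{C^0\tp{\Bar{U}}}$, so applying $\bf{N}_\Qoppa$ and summing over $k$ yields the claim. For~\eqref{linear remainder c2}, the Leibniz rule together with the two chain-rule bounds produces $\tnorm{\tp{\Gamma^\Psi_\ep}'\tp{\psi+k\rho}\phi}_{C^1\tp{\R^2}}\le \tp{C/\ep^2}\tnorm{\phi}_{C^1\tp{\Bar{U}}}$; crucially, $\tp{\Gamma^\Psi_\ep}'\tp{\psi+k\rho}$ vanishes in a neighborhood of $\pd U$ by items 1 and 3 of Lemma~\ref{lem on regularized hevi}, so the product is an element of $C^1_{\m{c}}\tp{\R^2}$ after extension by zero and the $C^1\to C^2$ mapping property of $\bf{N}_\Qoppa$ applies. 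Finally, for~\eqref{linear remainder lipschitz}, multiplying the Lipschitz estimate on $\tp{\Gamma^\Psi_\ep}'$ by $\tnorm{\phi}_{C^0\tp{\Bar{U}}}$ produces an $L^\infty$ bound of $\tp{C/\ep^2}\tnorm{\psi-\tilde{\psi}}_{C^0\tp{\Bar{U}}}\tnorm{\phi}_{C^0\tp{\Bar{U}}}$ on the argument of $\bf{N}_\Qoppa$, and the $L^\infty\to C^1$ property concludes. The principal obstacle is the $C^2$ mapping property for $\bf{N}_\Qoppa$, which the integration-by-parts identity reduces to the already-handled first-derivative bound.
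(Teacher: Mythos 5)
Your proposal is correct and follows essentially the same route as the paper's proof. The paper's argument is a terse series of pointwise estimates on $\pmb{\kappa}_{\ep,\rho,\Qoppa}(\psi)\phi$ and its derivatives, in which the $C/\ep$ factor is extracted from the $L^\infty$ norm of $(\Gamma^\Psi_\ep)'(\psi+k\rho)$, the singularity $\tabs{x-y}^{-1}$ is handled by its local integrability in $\R^2$, and the $C^2$ bound is produced by integrating by parts to shift one gradient off the kernel onto the compactly supported product $(\Gamma^\Psi_\ep)'(\psi+k\rho)\phi$, picking up the extra $1/\ep$ via the chain rule. You have reorganized exactly these ingredients into explicit mapping properties of $\bf{N}_\Qoppa$ ($L^\infty_{\m{c}}\to C^1$ and $C^1_{\m{c}}\to C^2$, with the derivative transfer $\grad\bf{N}_0[g]=\bf{N}_0[\grad g]$) and explicit chain-rule bounds on $(\Gamma^\Psi_\ep)'$; the split $\bf{N}_\Qoppa=\bf{N}_0+r_\Qoppa$ with $r_\Qoppa$ smooth on the relevant compact set, and your justification of symmetry preservation, are both sound and consistent with the paper's implicit use of these facts.
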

\begin{proof}
    We take $C$ to be a finite and positive constant that is allowed to change line by line and depend on $\ep_0$, $r_0$, $\Qoppa_0$, $U$, $U^{\m{in}}$, $U^{\m{out}}$, $\gam$, $M$, $\tcb{\sig_k}_{k=-M}^M$, and $\Psi$.
    
    From simple bounds on $\bf{N}_{\Qoppa}$ and is derivative, we derive the pointwise estimates for $x\in U$:
    \begin{multline}\label{__EE__}
        \tabs{\tsb{\pmb{\kappa}_{\ep,\rho,\Qoppa}\tp{\psi}\phi}\tp{x}}\le \f{C}{\ep}\tnorm{\phi}_{C^0\tp{\Bar{U}}}\int_{U}\tp{\tabs{\log\tabs{x - y}} + 1}\;\m{d}y\\
        \text{ and }\tabs{\grad\tsb{\pmb{\kappa}_{\ep,\rho,\Qoppa}\tp{\psi}\phi}\tp{x}}\le\f{C}{\ep}\tnorm{\phi}_{C^0\tp{\Bar{U}}}\int_{U}\bp{\f{1}{\tabs{x - y}} + 1}\;\m{d}y.
    \end{multline}
    Estimate~\eqref{linear remainder c1} now follows from~\eqref{__EE__}.

    To prove~\eqref{linear remainder c2}, we now integrate by parts to derive the pointwise estimate:
    \begin{equation}
        \tabs{\grad^2\tsb{\pmb{\kappa}_{\ep,\rho,\Qoppa}\tp{\psi}\phi}\tp{x}}\le \f{C}{\ep^2}\tnorm{\phi}_{C^1\tp{\Bar{U}}}\int_{U}\bp{\f{1}{\tabs{x - y}} + 1}\;\m{d}y.
    \end{equation}

    Similarly, inequality~\eqref{linear remainder lipschitz} follows from the fundamental theorem of calculus and the pointwise bounds:
    \begin{multline}
        \tabs{\tsb{\pmb{\kappa}_{\ep,\rho,\Qoppa}\tp{\psi}\phi - \pmb{\kappa}_{\ep,\rho,\Qoppa}\tp{\tilde{\psi}}\phi}\tp{x}}\le \f{C}{\ep^2}\tnorm{\phi}_{C^0\tp{\Bar{U}}}\tnorm{\psi - \tilde{\psi}}_{C^0\tp{\Bar{U}}}\int_{U}\tp{\tabs{\log\tabs{x - y}} + 1}\;\m{d}y\\
        \text{ and }\tabs{\grad\tsb{\pmb{\kappa}_{\ep,\rho,\Qoppa}\tp{\psi}\phi - \pmb{\kappa}_{\ep,\rho,\Qoppa}\tp{\tilde{\psi}}\phi}\tp{x}}\le \f{C}{\ep^2}\tnorm{\phi}_{C^0\tp{\Bar{U}}}\tnorm{\psi - \tilde{\psi}}_{C^0\tp{\Bar{U}}}\int_{U}\bp{\f{1}{\tabs{x - y}} + 1}\;\m{d}y.
    \end{multline}
\end{proof}

To establish more subtle estimates than those of Lemma~\ref{lem on prelim estimates 1}, we require the indispensable tool of uniform local level set coordinates. The construction of these coordinates is digressive from the main narrative line of this section. As such, the proof of the following result is postponed to Appendix~\ref{appendix on uniform local level set coordinates}. In what follows we write for $\ell\in\R^+$ the set $\Bar{\ell Q} = [-\ell,\ell]^2\subset\R^2$.

\begin{lemC}[Uniform local level set coordinates]\label{lem on uniform local level set coordinates}
  There exists $0<\tilde{\ep}_0\le\ep_0$, $0<\tilde{r}_0\le r_0$, $\del_-,\del_+,\ell,R\in\R^+$, $N\in\N^+$, $\tcb{x_i}_{i=1}^N\subset\Sigma$ such that the following hold for all $i\in\tcb{1,\dots,N}$, $\psi\in C^1\tp{\Bar{U}}$ with $\tnorm{\psi}_{C^1\tp{\Bar{U}}}\le\tp{M+1}\tilde{r}_0$.
  \begin{enumerate}
      \item We have $\del_-<\del_+$ and, for all $0<\ep\le\tilde{\ep}_0$, the inclusions
      \begin{equation}\label{p_0}
          B(x_i,2\del_+)\subset U\text{ and }\tcb{x\in U\;:\;\tabs{\Psi(x) + \psi(x)}\le\ep}\subset\bigcup_{i=1}^NB(x_i,\del_-).
      \end{equation}
      \item There exists a map $G^\psi_i\in C^1\tp{\Bar{\ell Q};\R^2}$ which is a diffeomorphism onto its image, satisfies the inclusions
      \begin{equation}\label{p_1}
          B(x_i,2\del_-)\subset G^\psi_i\tp{\Bar{\ell Q}}\subset B(x_i,\del_+),
      \end{equation}
      and for all $y\in\Bar{\ell Q}$ we have the identities 
      \begin{equation}\label{p_2}
          \tp{\Psi + \psi}\circ G^\psi_i(y) = y_2\text{ and }\tabs{\pd_1 G^\psi_i\tp{y}} = \tabs{\grad\tp{\Psi + \psi}\circ G^\psi_i\tp{y}}\det\grad G^\psi_i\tp{y}
      \end{equation}
      along with the estimate
      \begin{equation}\label{p_3}
          \tabs{\grad G^\psi_i\tp{y}} + \tabs{\grad G^\psi_i\tp{y}^{-1}}\le R.
      \end{equation}
      \item The mapping
      \begin{equation}\label{p_4}
          C^1\tp{\Bar{U}}\supset\Bar{B(0,\tilde{r}_0)}\ni\psi\mapsto G^\psi_i\in C^1\tp{\Bar{\ell Q};\R^2}
      \end{equation}
      is continuous moreover, we have the estimate
      \begin{equation}\label{p_5}
          \tabs{\grad G^\psi_i\tp{y} - \grad G^0_i\tp{y}}\le R\tnorm{\psi}_{C^1\tp{\Bar{U}}}.
      \end{equation}
  \end{enumerate}
\end{lemC}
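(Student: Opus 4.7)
The overall approach is to straighten the level sets of $\Psi+\psi$ in a tubular neighborhood of $\Sigma$ by a quantitative implicit function theorem. The key enabling ingredient is the uniform lower bound $\tabs{\grad\tp{\Psi+\psi}}\ge c_0 := \tp{1/4}\min_{\Sigma}\tp{\nu\cdot\grad\Psi}>0$, valid on a neighborhood of $\Sigma$ whenever $\tnorm{\psi}_{C^1\tp{\Bar U}}\le\tp{M+1}\tilde{r}_0$, which is a direct consequence of the first item of Lemma~\ref{lem on regularized hevi}.

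I would first execute a setup stage. By compactness of $\Sigma$ together with Lemma~\ref{lem  on simple consequences of admissibility}, choose $\del_+\in(0,1]$ small enough that $\tcb{x\in\R^2:\m{dist}\tp{x,\Sigma}\le 2\del_+}\subset U$ and that $\tabs{\grad\tp{\Psi+\psi}}\ge c_0$ throughout this neighborhood. Pick $\del_-<\del_+$ and extract, by compactness, a finite family $\tcb{x_i}_{i=1}^N\subset\Sigma$ with $\Sigma\subset\bigcup_{i=1}^N B\tp{x_i,\del_-/2}$. Then shrink $\tilde{\ep}_0$ (and, if necessary, $\tilde{r}_0$) so that the fundamental theorem of calculus along the gradient flow of $\Psi+\psi$, combined with the pointwise lower bound on $\tabs{\grad\tp{\Psi+\psi}}$, yields $\tcb{x\in U:\tabs{\tp{\Psi+\psi}\tp{x}}\le\tilde{\ep}_0}\subset\tcb{x:\m{dist}\tp{x,\Sigma}\le\del_-/2}\subset\bigcup_i B\tp{x_i,\del_-}$. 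This secures~\eqref{p_0}.

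For each $i$, fix an orthonormal frame $\tp{e_i,n_i}$ with $n_i = \grad\Psi\tp{x_i}/\tabs{\grad\Psi\tp{x_i}}$, and define $h^\psi_i\tp{y_1,r} = \tp{\Psi+\psi}\tp{x_i+y_1e_i+rn_i}$. Since $\pd_r h^\psi_i \ge c_0/2$ on a neighborhood of the origin uniformly in $i$ and in $\psi$, the quantitative implicit function theorem---proved by a Banach contraction argument for $r\mapsto r - \tp{\pd_r h^\psi_i\tp{0,0}}^{-1}\tp{h^\psi_i\tp{y_1,r} - y_2}$ with rate depending only on upper and lower $C^1$ bounds on $h^\psi_i$---yields, for some universal $\ell>0$, a $C^1$ function $R^\psi_i:\Bar{\ell Q}\to\R$ satisfying $h^\psi_i\tp{y_1,R^\psi_i\tp{y}}=y_2$. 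Setting $G^\psi_i\tp{y} = x_i + y_1 e_i + R^\psi_i\tp{y}n_i$ and differentiating the defining identity delivers both $\pd_1 G^\psi_i\cdot\grad\tp{\Psi+\psi}\circ G^\psi_i = 0$ and $\pd_2 G^\psi_i\cdot\grad\tp{\Psi+\psi}\circ G^\psi_i = 1$; an elementary linear-algebra expansion in the basis $\tp{g^\perp,g}$ with $g = \grad\tp{\Psi+\psi}\circ G^\psi_i$ then produces the Jacobian identity of~\eqref{p_2}, with the sign of $e_i$ chosen to make $G^\psi_i$ orientation-preserving. The inclusions~\eqref{p_1} follow by first shrinking $\ell$ so that $G^\psi_i\tp{\Bar{\ell Q}}\subset B\tp{x_i,\del_+}$, and then using the uniform inverse bound $\tabs{\tp{\grad G^\psi_i}^{-1}}\le R$ together with a homotopy argument from $G^0_i$ to secure $B\tp{x_i,2\del_-}\subset G^\psi_i\tp{\Bar{\ell Q}}$. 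The derivative bound~\eqref{p_3} comes directly from differentiating the defining identity and using the uniform lower bound on $\pd_r h^\psi_i$; the continuity of $\psi\mapsto G^\psi_i$ and the Lipschitz estimate~\eqref{p_5} are obtained analogously by differentiating the defining identity in $\psi$.

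The main obstacle is securing \emph{uniformity} in both $i\in\tcb{1,\dots,N}$ and $\psi\in\Bar{B\tp{0,\tilde{r}_0}}\subset C^1\tp{\Bar U}$ of the implicit function construction, so that $\ell$, $R$, $\del_\pm$, and $\tilde{\ep}_0$ may be chosen once and for all. This is precisely where the uniform lower bound on $\tabs{\grad\tp{\Psi+\psi}}$ from Lemma~\ref{lem on regularized hevi} is essential: it supplies both the quantitative estimate $\pd_r h^\psi_i\ge c_0/2$ on the relevant domain and a uniform Lipschitz constant for the contraction, without which the chart domain would collapse as $\psi$ varies.
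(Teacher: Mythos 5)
Your proof is correct and takes essentially the same core approach as the paper: straighten the level sets of $\Psi + \psi$ via a quantitative contraction-mapping/implicit-function argument whose uniformity rests on the lower bound for $\tabs{\grad\tp{\Psi+\psi}}$ near $\Sigma$. Two technical implementation choices differ. First, the paper does not use a rotated frame $\tp{e_i,n_i}$ adapted to $\grad\Psi\tp{x_i}$; instead it fixes the standard coordinate basis and, at each $x\in\Sigma$, chooses whichever of the two partials $\pd_1\Psi\tp{x},\pd_2\Psi\tp{x}$ dominates before writing the graph chart $\bf{G}_x\tp{\psi}\tp{y} = x + \tp{y_1,\tsb{\bf{g}_x\tp{\psi}}\tp{y}}$; your normal-aligned frame achieves the same effect and handles the sign of $\det\grad G^\psi_i$ more transparently. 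Second, for the inclusion $B\tp{x_i,2\del_-}\subset G^\psi_i\tp{\Bar{\ell Q}}$, you invoke degree/homotopy from $G^0_i$, whereas the paper estimates the offset $\tabs{\tsb{\bf{g}_x\tp{\psi}}\tp{0}}$ directly and then runs an explicit polygonal-region computation to exhibit an inscribed ball, which has the advantage of producing an explicit value of $\del_-$ in terms of $\ell$ and $\tnorm{\Psi}_{C^{1,1/2}}$. Your degree argument is valid but you should make the order of constant selection explicit — $\ell$ first, then $\del_-$ comparable to $\ell$, then $\tilde{r}_0,\tilde{\ep}_0$ small enough that the homotopy $t\mapsto G^{t\psi}_i$ keeps $G^{t\psi}_i\tp{\pd\,\ell Q}$ outside $B\tp{x_i,2\del_-}$ for all $t\in\tsb{0,1}$ — so that the parameters are genuinely universal. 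Also note that the Lipschitz bound~\eqref{p_5} on the Jacobian difference uses the full $C^{1,1}$ regularity of $\Psi$ (not just $C^{1,1/2}$), since one needs $\tabs{\grad\Psi\tp{G^\psi_i} - \grad\Psi\tp{G^0_i}}\lesssim\tabs{G^\psi_i - G^0_i}\lesssim\tnorm{\psi}_{C^0}$ rather than a fractional power of it.
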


Lemma~\ref{lem on uniform local level set coordinates}'s uniform local level set coordinates permit us to obtain sharp density estimates on level sets.

\begin{coroC}[Refined estimates on level sets]\label{coro on refined estimates on level sets}
    Let $\del_-$, $\tilde{\ep}_0$, and $\tilde{r}_0$ be the parameters from Lemma~\ref{lem on uniform local level set coordinates}. There exists a constant $C\in\R^+$ such that for all $|t|\le\tilde{\ep}_0$, $0<r\le\del_-$, and $\psi\in C^1_{\tp{m}}\tp{\Bar{U}}$ with $\tnorm{\psi}_{C^1\tp{\Bar{U}}}\le\tp{M+1}\tilde{r}_0$ obeying the following properties, in which we use $\mathcal{H}^1$ to denote the one-dimensional Hausdorff measure.
    \begin{enumerate}
        \item We have the estimates
        \begin{equation}\label{hausdorff_1}
            \sup_{x\in\Bar{U}}\mathcal{H}^1\tp{U\cap\tcb{\Psi + \psi = t}\cap B(x,r)}\le Cr\text{ and }\mathcal{H}^1\tp{U\cap\tcb{\Psi + \psi = t}}\le C.
        \end{equation}
        \item We have the estimate 
        \begin{equation}\label{hausdorff_2}
            \sup_{x\in\Bar{U}}\int_{U\cap\tcb{\Psi + \psi = t}\cap B(x,r)}\tabs{\log\tabs{x - y}}\;\m{d}\mathcal{H}^1\tp{y}\le Cr\tp{1 + \log\tp{1/r}}.
        \end{equation}
        \item We have the estimates
        \begin{equation}\label{hausdorff_3}
            \sup_{x\in\Bar{U}}\int_{\tp{U\setminus B(x,r)}\cap\tcb{\Psi + \psi = t}}\f{1}{\tabs{x - y}}\;\m{d}\mathcal{H}^1\tp{y}\le C\tp{1 + \log\tp{1/r}}.
        \end{equation}
        \item We have the estimates
        \begin{equation}\label{the thing we want to estimate the level set continuty}
            \tabs{\mathcal{H}^1\tp{U\cap\tcb{\Psi + \psi = t}} - \mathcal{H}^1\tp{U\cap\tcb{\Psi = 0}}}\le C\tp{t + \tnorm{\psi}_{C^1\tp{\Bar{U}}}}.
        \end{equation}
    \end{enumerate}
\end{coroC}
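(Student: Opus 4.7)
The plan is to exploit the local level-set flattening charts $G_i^\psi:\Bar{\ell Q}\to\R^2$ from Lemma~\ref{lem on uniform local level set coordinates}, in which the level set $\tcb{\Psi+\psi=t}$ pulls back to the horizontal segment $\tcb{y_2=t}\cap\Bar{\ell Q}$, and the bi-Lipschitz bounds \eqref{p_3} control distances up to the factor $R$ in both directions. The covering property \eqref{p_0} ensures that for all admissible $(t,\psi)$ the full level set in $U$ is contained in $\bigcup_{i=1}^N B(x_i,\del_-)$, which reduces each assertion to a one-dimensional computation in a single chart that is then assembled by summing over the finite family.

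For item~(1), the arc in the $i$-th chart is parametrized by $y_1\mapsto G_i^\psi(y_1,t)$, so its one-dimensional Hausdorff measure equals $\int_{-\ell}^\ell\tabs{\pd_1 G_i^\psi(y_1,t)}\,\m{d}y_1\le 2\ell R$; summing over $i\in\tcb{1,\dots,N}$ gives the global bound. For the localized inequality, the inverse bi-Lipschitz estimate forces $(G_i^\psi)^{-1}(B(x,r))$ to have diameter at most $2Rr$, whose intersection with $\tcb{y_2=t}$ has $y_1$-length at most $2Rr$ and pushes forward to arc-measure at most $2R^2 r$. Item~(2) then follows from a dyadic decomposition of $B(x,r)$ into shells $B(x,2^{-k}r)\setminus B(x,2^{-k-1}r)$, on which $\tabs{\log\tabs{x-y}}\le\log(1/r)+(k+1)\log 2$; combining with item~(1) applied at radii $2^{-k}r$ produces the convergent geometric series $\sum_k C 2^{-k}r\tp{\log(1/r)+k+1}\le C'r\tp{1+\log(1/r)}$. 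Item~(3) is analogous with outward dyadic shells $B(x,2^{k+1}r)\setminus B(x,2^k r)$, on each of which $1/\tabs{x-y}\le 2^{-k}/r$ precisely balances the shell-measure bound $C\cdot 2^{k+1}r$ to give an $O(1)$ contribution per shell; there are $O\tp{\log(1/r)}$ shells before reaching the diameter of $U$, and the remaining tail is absorbed by the global bound of item~(1) divided by a positive constant.

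Item~(4) is the most delicate and the principal obstacle. Fix a Lipschitz partition of unity $\tcb{\eta_i}_{i=1}^N$ on $\Bar{U}$ subordinate to $\tcb{B(x_i,2\del_-)}_{i=1}^N$ with $\sum_i\eta_i=1$ on $\bigcup_iB(x_i,\del_-)$, noting that by \eqref{p_0} this latter set contains $U\cap\tcb{\Psi+\psi=t}$ for all $(t,\psi)$ in the admissible range. Pulling back through each chart yields the identity
\begin{equation*}
    \mathcal{H}^1\sp{U\cap\tcb{\Psi+\psi=t}}=\sum_{i=1}^N\int_{-\ell}^\ell\eta_i\sp{G_i^\psi(y_1,t)}\tabs{\pd_1 G_i^\psi(y_1,t)}\,\m{d}y_1,
\end{equation*}
valid both at $(t,\psi)$ and at $(0,0)$. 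Subtracting and estimating the integrand pointwise produces two types of terms: differences of $\pd_1 G_i^\psi(\cdot,t)$ from $\pd_1 G_i^0(\cdot,0)$, and differences of $\eta_i\circ G_i^\psi(\cdot,t)$ from $\eta_i\circ G_i^0(\cdot,0)$. The $\psi$-dependent portions of each are controlled linearly by $\tnorm\psi_{C^1\tp{\Bar{U}}}$ via the gradient estimate~\eqref{p_5}, combined with the continuous dependence~\eqref{p_4} integrated against a fixed anchoring point of the chart family, while the $t$-dependent portions are controlled linearly in $|t|$ because $\Psi\in C^{1,1}$ promotes $G_i^0$ to a $C^{1,1}$ chart with Lipschitz $\pd_1 G_i^0$ in the $y_2$-direction. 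The crux is converting the qualitative continuity asserted in~\eqref{p_4} into the required linear-in-$\tnorm\psi_{C^1}$ rate, which is achieved by anchoring the charts at fixed base points and then using \eqref{p_5} as the quantitative reservoir; once this is done the estimate~\eqref{the thing we want to estimate the level set continuty} follows by summing the $N$ chart-wise contributions.
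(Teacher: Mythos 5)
Your proof is essentially correct and tracks the paper's argument very closely for items~(1)--(3): both pull back to the charts $G_i^\psi$, parametrize the level set by $y_1\mapsto G_i^\psi(y_1,t)$, and run dyadic decompositions to absorb the logarithmic and inverse-distance singularities. The only cosmetic discrepancy in item~(1) is that you invoke the generic bi-Lipschitz bound $|\grad (G_i^\psi)^{-1}|\le R$ to bound the $y_1$-preimage of $B(x,r)$ by $2Rr$, while the paper exploits the graphical form $G_i^\psi(y)=x_i+(y_1,\,\cdot\,)$ to get the sharper $1$-expansiveness $|G_i^\psi(y_1,t)-G_i^\psi(\tilde y_1,t)|\ge|y_1-\tilde y_1|$; this only changes the constant.

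The genuine divergence is in item~(4). The paper first makes a triangle split $|\mathcal{H}^1(\{\Psi+\psi=t\})-\mathcal{H}^1(\{\Psi=0\})|\le\bf{I}_1+\bf{I}_2$, handling the $\psi$-variation $\bf{I}_1$ through the charts and~\eqref{p_5} (as you do), but handling the $t$-variation $\bf{I}_2$ without any charts: since $\grad\Psi/|\grad\Psi|\in C^{0,1}(\Bar U)$, the divergence theorem gives $\bf{I}_2\le C\,\mathcal{H}^2(U\cap\{0<\Psi<t\})$, and the coarea formula combined with item~(1) bounds this area by $Ct$. You instead keep the $t$-variation inside the chart framework by observing that $\Psi\in C^{1,1}$ forces $G_i^0\in C^{1,1}$ and hence $\pd_1 G_i^0$ is Lipschitz in $y_2$, giving the linear-in-$t$ rate directly. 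Your route is sound and arguably more uniform in flavor (everything lives in the charts); the paper's divergence-theorem step is cleaner in that it avoids asserting any regularity of the chart beyond $C^1$, which is all the stated lemma provides. You should be aware that the $C^{1,1}$ regularity of $G_i^0$ is not among the conclusions of Lemma~\ref{lem on uniform local level set coordinates} and would require a supplementary sentence justifying it from~\eqref{partial derivative identities}.

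One point worth making explicit in both your write-up and the paper's: the partition-of-unity terms $\eta_i\circ G_i^\psi-\eta_i\circ G_i^0$ require a $C^0$-bound on $G_i^\psi-G_i^0$ of order $\tnorm\psi_{C^1}$, not merely the gradient bound~\eqref{p_5}. Your appeal to ``anchoring the charts at fixed base points'' is not quite right as stated, since there is no point at which $G_i^\psi$ and $G_i^0$ agree by construction; the actual reason the $C^0$-bound holds is that the first coordinate of $G_i^\psi(y)-G_i^0(y)$ is identically zero, and the second coordinate is the fixed point of a contraction that depends Lipschitz-continuously on $\psi$ in $C^0$ (this is visible in the proof of the lemma, though not in its stated conclusions). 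The paper's proof of $\bf{I}_1$ shares the same implicit reliance, so this is a shared imprecision rather than a gap specific to your argument.
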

\begin{proof}
     We begin by proving the first item. Let $x\in\Bar{U}$. From the first item of Lemma~\ref{lem on uniform local level set coordinates} we have
        \begin{equation}\label{reduction__0}
            \mathcal{H}^1\tp{U\cap\tcb{\Psi + \psi = t}\cap B(x,r)}\le\sum_{i=1}^N\mathcal{H}^1\tp{B(x_i,\del_-)\cap\tcb{\Psi + \psi = t}\cap B(x,r)}
        \end{equation}
        so it suffices to estimate the $i^{\m{th}}$-term in the sum above. For this we use the local parametrization of the $t$-level set determined by $G^\psi_i\tp{\cdot,t}$ where the latter is from the collection of coordinate functions produced by Lemma~\ref{lem on uniform local level set coordinates}. We get
        \begin{equation}\label{reduction__1}
            \mathcal{H}^1\tp{B(x_i,\del_-)\cap\tcb{\Psi + \psi = t}\cap B(x,r)}\le\int_{-\ell}^\ell\mathds{1}_{G^\psi_i\tp{\cdot,t}\in B(x,r)}\tp{y_1} \tabs{\pd_1G^\psi_i\tp{y_1,t}}\;\m{d}y_1.
        \end{equation}
        From estimate~\eqref{p_3} we get $\tabs{\pd_1 G^\psi_i\tp{y_1,t}}\le R$. On the other hand, by inspection of identity~\eqref{the diffeo defn} from the proof of Lemma~\ref{lem on uniform local level set coordinates}, we deduce
        \begin{equation}
            \forall\;y_1,\tilde{y}_1\in[-\ell,\ell],\;\tabs{G^\psi_i\tp{y_1,t} - G^\psi_i\tp{\tilde{y}_1,t}}\ge\tabs{y_1 - \tilde{y}_1}
        \end{equation}
        and hence, by bounding the one-dimensional Hausdorff measure above by the length,
        \begin{equation}\label{reduction__2}
            \mathcal{H}^1\tp{\tcb{y_1\in[-\ell,\ell]\;:\;G^\psi_i\tp{y_1,t}\in B(x,r)}}\le2r
        \end{equation}
        Synthesizing the estimates~\eqref{reduction__0}, \eqref{reduction__1}, and~\eqref{reduction__2} yields
        \begin{equation}
            \mathcal{H}^1\tp{U\cap\tcb{\Psi + \psi = t}\cap B(x,r)}\le\tp{2RM}r
        \end{equation}
        which is the bound claimed on the left hand side of~\eqref{hausdorff_1}. The right hand bound follows from the left by setting $r = \del_-$, and summing over $x\in\tcb{x_1,\dots,x_N}$.

        We turn our attention to the second item, so again let $x\in\Bar{U}$. We break up the region of integration according to dyadic annuli:
        \begin{equation}\label{___)))___}
            \int_{U\cap\tcb{\Psi + \psi = t}\cap B(x,r)}\tabs{\log\tabs{x - y}}\;\m{d}\mathcal{H}^1\tp{y}=\sum_{n=0}^\infty\int_{U\cap\tcb{\Psi + \psi = t}\cap A(x,r/2^n)}\tabs{\log\tabs{x - y}}\;\m{d}\mathcal{H}^1\tp{y}
        \end{equation}
        with $A\tp{x,r/2^n} = B(x,r/2^n)\setminus B(x,r/2^{n+1})$. On the $n^{\m{th}}$ annulus we have the (crude) inequality $\tabs{\log|x - y|}\le2\tp{n + 1}\log\tp{1/r}$ and so~\eqref{___)))___} lends to us the bound:
        \begin{equation}
            \int_{U\cap\tcb{\Psi + \psi = t}\cap B(x,r)}\tabs{\log\tabs{x - y}}\;\m{d}\mathcal{H}^1\tp{y}\le2\log\tp{1/r}\sum_{n=0}^\infty\tp{n+1}\mathcal{H}^1\tp{U\cap\tcb{\Psi + \psi = t}\cap B(x,r/2^n)}.
        \end{equation}
        The right hand side sum's $n^{\m{th}}$ Hausdorff measure can then be estimated by $Cr/2^n$, thanks to the first item. Thus we get the desired estimate~\eqref{hausdorff_2} with a constant $2C\sum_{n=0}^\infty\tp{n + 1}2^{-n}<\infty$.

        We prove the third item via a similar strategy as above. For $x\in\Bar{U}$ we dyadically decompose in a similar fashion, thus obtaining:
        \begin{equation}
            \int_{\tp{U\setminus B(x,r)}\cap\tcb{\Psi + \psi = t}}\f{1}{\tabs{x - y}}\;\m{d}\mathcal{H}^1\tp{y} \le \f{1}{\del_-}\mathcal{H}^1\tp{\tcb{\Psi + \psi = t}} + \sum_{n=0}^{N}\f{2^{n+1}}{\del_-}\mathcal{H}^1\tp{\tcb{\Psi + \psi = t}\cap B(x,\del_-/2^n)}.
        \end{equation}
        where $N\in\N^+$ is the smallest positive integer for which $\del_-/2^N<r$. We then use the first item again to bound the $n^{\m{th}}$ term's Hausdorff measure by $C\del_-/2^n$. This gives us
        \begin{equation}
            \int_{\tp{U\setminus B(x,r)}\cap\tcb{\Psi + \psi = t}}\f{1}{\tabs{x - y}}\;\m{d}\mathcal{H}^1\tp{y}\le\f{C}{\del_-} + 2C\tp{N+1}.
        \end{equation}
        We conclude by noting that $N\le 1+\log\tp{\del_-/r}$.

        Finally, we consider the fourth item. We begin by using the triangle inequality to write
        \begin{equation}
        \tabs{\mathcal{H}^1\tp{\tcb{\Psi + \psi = t}} - \mathcal{H}^{1}\tp{\tcb{\Psi = 0}}}\le\bf{I}_1 + \bf{I}_2
        \end{equation}
        with
    \begin{equation}
        \bf{I}_1 = \tabs{\mathcal{H}^1\tp{\tcb{\Psi + \psi = t}} - \mathcal{H}^1\tp{\tcb{\Psi = t}}}\text{ and }\bf{I}_2 = \tabs{\mathcal{H}^1\tp{\tcb{\Psi = t}} - \mathcal{H}^1\tp{\tcb{\Psi = 0}}}.
    \end{equation}
    Estimating $\bf{I}_2$ is simpler. Consider the case that $t>0$ (we omit the proof of the case $t<0$, as it is similar) We deduce from Definition~\ref{defn of admissible patches} that $\grad\Psi/\tabs{\grad\Psi}\in C^{0,1}\tp{\Bar{U}}$ and so we can use the divergence theorem to derive:
    \begin{equation}\label{_aa__}
        \bf{I}_2 = \babs{\int_{U\cap\tcb{0<\Psi<t}}\grad\cdot\tp{\grad\Psi/\tabs{\grad\Psi}}}\le C\mathcal{H}^2\tp{U\cap\tcb{0<\Psi<t}}.
    \end{equation}
    Then, the coarea formula (see, e.g., Theorem 3.2.11 in Federer~\cite{MR257325} or Theorem 3.13 in Evans and Gariepy~\cite{MR3409135}) in conjunction with the first item in turn gives us
    \begin{equation}\label{_bb__}
        \mathcal{H}^2\tp{U\cap\tcb{0<\Psi<t}} = \int_0^t\int_{\tcb{\Psi = \tau}}\tabs{\grad\Psi(y)}^{-1}\;\m{d}\mathcal{H}^1\tp{y}\;\m{d}\tau\le C t.
    \end{equation}
    Synthesizing~\eqref{_aa__} and~\eqref{_bb__} shows $\bf{I}_2\le Ct$.
    
    Estimating $\bf{I}_1$ is more involved. Let $\tcb{\chi_i}_{i=1}^N\subset C^\infty_{\m{c}}\tp{U}$ be any fixed sequence of functions that satisfy
    \begin{equation}\label{introduction of the POU}
        \m{supp}\chi_i\subset B(x_i,2\del_-)\text{ for every }i\in\tcb{1,\dots,N}\text{ and }\sum_{i=1}^N\chi_i = 1\text{ on the set }\bigcup_{i=1}^NB(x_i,\del_-).
    \end{equation} 
    We use the uniform local level set coordinates constructed by Lemma~\ref{lem on uniform local level set coordinates} to locally parametrize the $t$-level sets; in doing so we get
    \begin{equation}
        \mathcal{H}^1\tp{\tcb{\Psi + \psi = t}} - \mathcal{H}^1\tp{\tcb{\Psi = t}}=\sum_{i=1}^N\int_{0}^\ell\tp{\chi\tp{G^\psi_i\tp{y_1,t}}\tabs{\pd_1G^\psi_i\tp{y_1,t}} - \chi_i\tp{G^0_i\tp{y_1,t}}\tabs{\pd_1 G^0_i\tp{y_1,t}}}\;\m{d}y_1.
    \end{equation}
    Then, using the special continuity estimate~\eqref{p_5} from Lemma~\ref{lem on uniform local level set coordinates}, we arrive at the bound
    \begin{equation}
        \bf{I}_1\le C\tnorm{\psi}_{C^1\tp{\Bar{U}}}.
    \end{equation}
    
    Synthesizing the estimates on $\bf{I}_1$ and $\bf{I}_2$ completes the proof of estimate~\eqref{the thing we want to estimate the level set continuty}.
\end{proof}

The level set coordinate functions constructed by Lemma~\ref{lem on uniform local level set coordinates} combine with the density estimates of the first three items of Corollary~\ref{coro on refined estimates on level sets} to allow us to make the subsequent important boundedness and regularity estimates.

\begin{propC}[Uniform integral estimates for the Newtonian potential over thin domains]\label{prop on uniform intrgrals estimates for the Newtonian potential}
    Let $\tilde{\ep}_0$ and $\tilde{r}_0$ be the positive values granted by Lemma~\ref{lem on uniform local level set coordinates}. There exists $C\in\R^+$ such that for all $\psi\in C^1_{\tp{m}}\tp{\Bar{U}}$ with $\tnorm{\psi}_{C^1\tp{\Bar{U}}}\le(M+1)\tilde{r}_0$ and all $0<\ep\le\tilde{\ep}_0$ we have the estimates
    \begin{equation}\label{first log estimate}
        \sup_{x\in\Bar{U}}\f{1}{\ep}\int_{\tcb{\tabs{\Psi + \psi}\le\ep}}\tp{\tabs{\log\tabs{x - y}} + 1}\;\m{d}y\le C
    \end{equation}
    and
    \begin{equation}\label{second log estimate}
        \sup_{\substack{x,z\in\Bar{U}\\x\neq z}}\f{1}{\ep}\int_{\tcb{\tabs{\Psi + \psi}\le\ep}}\bp{\f{\tabs{\log\tabs{x - y} - \log\tabs{z - y}}}{\tabs{x - z}\tp{1 + \tabs{\log\tabs{x - z}}}} + 1}\;\m{d}y\le C.
    \end{equation}
\end{propC}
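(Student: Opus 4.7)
The plan is to reduce the two-dimensional integrals over the thin strip $\{|\Psi+\psi|\le\ep\}$ to one-dimensional integrals over level sets $\{\Psi+\psi=t\}$ via the coarea formula, and then invoke the density bounds in Corollary~\ref{coro on refined estimates on level sets}. The key point is that the first item of Lemma~\ref{lem on regularized hevi} furnishes a uniform lower bound $|\grad(\Psi+\psi)|\ge\tfrac14\min_\Sigma(\nu\cdot\grad\Psi)>0$ on $\Bar U$, so the Jacobian appearing in the coarea formula is harmless. Concretely, for any nonnegative kernel $F$ we have
\begin{equation}
    \int_{\{|\Psi+\psi|\le\ep\}}F(y)\,\m{d}y
    =\int_{-\ep}^{\ep}\int_{\{\Psi+\psi=t\}}\f{F(y)}{|\grad(\Psi+\psi)(y)|}\,\m{d}\mathcal{H}^1(y)\,\m{d}t,
\end{equation}
so it suffices to bound each inner one-dimensional integral by a constant independent of $t\in[-\tilde{\ep}_0,\tilde{\ep}_0]$, $x,z\in\Bar U$, and $\psi$ in the allowed ball.

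For estimate~\eqref{first log estimate}, I would split $\{\Psi+\psi=t\}$ into the piece inside $B(x,1)$ and its complement. The complement contributes a bounded amount because $|\log|x-y||$ is uniformly bounded there and the full level set has finite $\mathcal{H}^1$-measure by the second inequality of~\eqref{hausdorff_1}. The near piece is controlled directly by~\eqref{hausdorff_2} applied with $r=1$, which yields a uniform constant. Together with the trivial bound on $\mathcal{H}^1(\{\Psi+\psi=t\})$ coming again from~\eqref{hausdorff_1}, the $+1$ term inside the integrand contributes a constant.

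Estimate~\eqref{second log estimate} is the genuine point and relies on a standard log-Lipschitz dissection. Set $\del=|x-z|$ and write the integral as a sum over the near region $E_{\m{near}}=(B(x,2\del)\cup B(z,2\del))\cap\{\Psi+\psi=t\}$ and the far region $E_{\m{far}}=\{\Psi+\psi=t\}\setminus E_{\m{near}}$. On $E_{\m{near}}$ I bound $|\log|x-y|-\log|z-y||\le|\log|x-y||+|\log|z-y||$ and apply~\eqref{hausdorff_2} (centered at $x$ and at $z$) with $r=2\del$, giving a contribution of order $\del(1+\log(1/\del))$; dividing by the denominator $\del(1+|\log\del|)$ yields an absolute constant. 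On $E_{\m{far}}$ I use that $|x-y|$ and $|z-y|$ are comparable (both are at least $2\del$ and differ by at most $\del$), so the mean value inequality for $\log$ gives $|\log|x-y|-\log|z-y||\le C\del/|x-y|$; the resulting integral is controlled by $(1+\log(1/\del))$ via~\eqref{hausdorff_3} with $r=2\del$, and the factor $1/(1+|\log\del|)$ in the denominator cancels this logarithm, again producing an absolute constant. The $+1$ term is absorbed as in the first estimate. Finally, integrating the resulting uniform bound in $t$ over $[-\ep,\ep]$ and dividing by $\ep$ yields the claimed bounds. The main subtlety throughout is tracking that all constants depend only on $\ep_0,\tilde r_0,U,\Psi$ and the geometric constants $\del_{\pm},\ell,R,N$ from Lemma~\ref{lem on uniform local level set coordinates}, but not on $\ep$, $t$, $\psi$, $x$, or $z$; this uniformity is exactly what Corollary~\ref{coro on refined estimates on level sets} was designed to deliver.
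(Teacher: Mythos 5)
Your approach is essentially the same as the paper's: coarea formula, then near/far splitting of the level set, then invoking the density estimates from Corollary~\ref{coro on refined estimates on level sets}. The one difference is cosmetic (you split about two balls $B(x,2\del)\cup B(z,2\del)$ whereas the paper uses a single ball $B((x+z)/2,2|x-z|)$); this changes nothing of substance.

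Two small but genuine oversights concerning the admissible radii. First, for~\eqref{first log estimate} you invoke~\eqref{hausdorff_2} with $r=1$, but Corollary~\ref{coro on refined estimates on level sets} is only stated for $0<r\le\del_-$, and $\del_-$ is small by construction; the fix is to split at radius $\del_-$ instead and bound $|\log|x-y||$ crudely on the annulus $B(x,1)\setminus B(x,\del_-)$. Second, for~\eqref{second log estimate} you apply~\eqref{hausdorff_2} and~\eqref{hausdorff_3} with $r=2\del$, which again requires $2\del\le\del_-$; you need to handle $\del>\del_-/2$ separately, and in that regime the denominator $\del(1+|\log\del|)$ is bounded below so~\eqref{second log estimate} follows immediately from~\eqref{first log estimate} without exploiting any cancellation in the numerator. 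Both fixes are one-liners, and the paper handles them exactly this way.
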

\begin{proof}
    We claim first, with $\mathcal{H}^2$ denoting the two-dimensional Hausdorff measure, that the area estimate
    \begin{equation}\label{The area estimates}
        \mathcal{H}^2\tp{\tcb{\tabs{\Psi + \psi}\le\ep}}\le C\ep
    \end{equation}
    holds. By the coarea formula (e.g. Theorem 3.2.11 in~\cite{MR257325} or Theorem 3.13 in~\cite{MR3409135}) we have 
    \begin{equation}\label{ive got you under my skin}
        \mathcal{H}^2\tp{\tcb{\tabs{\Psi + \psi}\le\ep}}\le2\ep\cdot\sup_{|t|\le\ep}\int_{\tcb{\Psi + \psi = t}}\tabs{\grad\tp{\Psi + \psi}\tp{y}}^{-1}\;\m{d}\mathcal{H}^1\tp{y}
    \end{equation}
    The definition of $\tilde{r_0}$ ensures us that $\min_{\Bar{U}}\tabs{\grad\tp{\Psi + \psi}}\ge\min_{\Bar{U}}\tabs{\grad\Psi}/2$ and so it follows from~\eqref{ive got you under my skin} that
    \begin{equation}\label{fly me to the moon}
        \mathcal{H}^2\tp{\tcb{\tabs{\Psi + \psi}\le\ep}}\le C\ep\cdot\sup_{|t|\le\ep}\mathcal{H}^1\tp{\tcb{\Psi + \psi = t}}.
    \end{equation}
    The bound~\eqref{The area estimates} now follows by combining~\eqref{fly me to the moon} with the first item of Corollary~\ref{coro on refined estimates on level sets}.

    We continue with the proof of estimate~\eqref{first log estimate}. The coarea formula again lends us the initial estimate for any $x\in\Bar{U}$
    \begin{equation}
        \f{1}{\ep}\int_{\tcb{\tabs{\Psi + \psi}\le\ep}}\tabs{\log\tabs{x - y}}\;\m{d}y\le2\sup_{|t|\le\ep}\int_{\tcb{\Psi + \psi = t}}\f{\tabs{\log\tabs{x - y}}}{\tabs{\grad\tp{\Psi + \psi}\tp{y}}}\;\m{d}\mathcal{H}^1\tp{y}.
    \end{equation}
    We can, for $|t|\le\ep$ fixed, split $\tcb{\Psi + \psi = t}$ relative to the ball $B(x,\del_-)$ and use the first item of Corollary~\ref{coro on refined estimates on level sets} to bound
    \begin{equation}\label{will be referenced later}
        \int_{\tcb{\Psi + \psi = t}}\tabs{\log\tabs{x - y}}\;\m{d}\mathcal{H}^1\tp{y}\le C\tabs{\log\del_-} + \int_{\tcb{\Psi + \psi = t}\cap B(x,\del_-)}\tabs{\log\tabs{x - y}}\;\m{d}\mathcal{H}^1\tp{y}.
    \end{equation}
    The final term above is at most $C\del_-\tabs{\log\del_-}$ (by the second item of Corollary~\ref{coro on refined estimates on level sets}) and so we conclude that estimate~\eqref{first log estimate} holds.

    We turn our attention to the proof of~\eqref{second log estimate}. Let $x,z\in\Bar{U}$ with $x\neq z$. If $\tabs{x - z}\ge\del_-/2$, then we do not exploit the difference between the logarithms. The estimate
    \begin{equation}
        \f{1}{\ep}\int_{\tcb{\tabs{\Psi + \psi}\le\ep}}\f{\tabs{\log\tabs{x - y} - \log\tabs{z - y}}}{\tabs{x - z}\tp{1 + \tabs{\log\tabs{x - z}}}}\m{d}y\le\f{4C}{\del_-\log\tp{2/\del_-}}
    \end{equation}
    simply follows from~\eqref{first log estimate}. Thus we need only consider the case that $0<\tabs{x - z}<\del_-/2$.

    By the coarea formula again, we have
    \begin{equation}
        \f{1}{\ep}\int_{\tcb{\tabs{\Psi + \psi}\le\ep}}\f{\tabs{\log\tabs{x - y} - \log\tabs{z - y}}}{\tabs{x - z}\tp{1 + \tabs{\log\tabs{x - z}}}}\m{d}y\le\f{2\sup_{|t|\le\ep}}{\min_{\Bar{U}}\tabs{\grad\Psi}}\int_{\tcb{\Psi + \psi = t}}\f{\tabs{\log\tabs{x - y} - \log\tabs{z - y}}}{\tabs{x - z}\tp{1 + \tabs{\log\tabs{x - z}}}}\;\m{d}\mathcal{H}^1\tp{y}.
    \end{equation}
    Fix $|t|\le\ep$. We make the region splitting
    \begin{equation}\label{region splitting}
        \tcb{\Psi + \psi = t} = \tsb{\tcb{\Psi + \psi = t}\cap B((x+z)/2,2|x - z|)} \cup \tsb{\tcb{\Psi + \psi = t}\setminus B((x+z)/2,2|x - z|)}.
    \end{equation}
    In the first case that $y\in B((x+z)/2,2|x - z|)$, we again do not exploit the difference and instead use the second item of Corollary~\ref{coro on refined estimates on level sets}, giving us
    \begin{equation}
        \int_{\tcb{\Psi + \psi = t}\cap B((x+z)/2,2|x - z|)}\tabs{\log\tabs{x - y} - \log\tabs{z - y}}\;\m{d}\mathcal{H}^1\tp{y}\le 4C|x - z|\tp{1 + \tabs{\log\tabs{2|x - z|}}}.
    \end{equation}
    
    On the other hand, in the second case of~\eqref{region splitting} we have the fundamental theorem of calculus estimate:
    \begin{equation}\label{the FTC estimate}
        \tabs{\log\tabs{x - y} - \log\tabs{z - y}}\le\int_{0}^1\f{\tabs{x - z}}{\tabs{\tau x + (1-\tau)z - y}}\;\m{d}\tau\le\f{4\tabs{x - z}}{3\tabs{y - (x+z)/2}}
    \end{equation}
    for $y\in\tcb{\Psi + \psi = t}\setminus B((x + z)/2,2|x - z|)$. We combine~\eqref{the FTC estimate} with the third item of Corollary~\ref{coro on refined estimates on level sets} to deduce
    \begin{equation}
        \int_{\tcb{\Psi + \psi = t}\setminus B((x + z)/2,2|x - z|)}\tabs{\log\tabs{x - y} - \log\tabs{z - y}}\;\m{d}\mathcal{H}^1\tp{y}\le\f{4C}{3}|x - z|\tp{1 + \tabs{\log\tabs{2|x - z|}}}.
    \end{equation}
    Synthesizing the above casework gives the desired bound~\eqref{second log estimate}.
\end{proof}

The level set coordinates of Lemma~\ref{lem on uniform local level set coordinates} also find an application in the identification of certain limits.

\begin{propC}[Limit identification]\label{prop on limit identification}
    Assume that $g\in C^0\tp{\Bar{U}}$, $\tcb{\ep_n}_{n\in\N}\subset(0,\tilde{\ep}_0]$, $\psi,\tcb{\psi_n}_{n\in\N}\subset\Bar{B(0,(M+1)\tilde{r}_0)}\subset C_{\tp{m}}^1\tp{\Bar{U}}$, and $\phi,\tcb{\phi_n}_{n\in\N}\subset C_{\tp{m}}^0\tp{\Bar{U}}$ satisfy
    \begin{equation}\label{convergence hypotheses}
        \ep_n\to0,\;\tnorm{\psi_n - \psi}_{C^1\tp{\Bar{U}}}\to0,\;\text{and}\;\tnorm{\phi_n-\phi}_{C^0\tp{\Bar{U}}}\to 0\text{ as }n\to\infty.
    \end{equation}
    Then
    \begin{equation}\label{conv_1}
        -\int_{U}g(x)\tsb{\tp{\Gamma_{\ep_n}^\Psi}'\tp{\psi_n}}\tp{x}\phi_n(x)\;\m{d}x \to \int_{U\cap \tcb{\Psi + \psi = 0}}\f{g(y)\phi(y)}{\tabs{\grad\tp{\Psi + \psi}\tp{y}}}\;\m{d}\mathcal{H}^1\tp{y}\text{ as }n\to\infty
    \end{equation}
    and 
    \begin{equation}\label{conv_2}
        \int_{U}g(x)\tsb{\Gamma_{\ep_n}^\Psi\tp{\psi_n}}\tp{x}\phi_n(x)\;\m{d}x\to\int_{U\cap\tcb{\Psi + \psi<0}}g(x)\phi(x)\;\m{d}x\text{ and }n\to\infty.
    \end{equation}
\end{propC}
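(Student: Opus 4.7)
The plan is to invoke the coarea formula together with the uniform local level set coordinates from Lemma~\ref{lem on uniform local level set coordinates} to reduce both convergence assertions to pointwise and uniform convergence statements for integrals parametrized by curves in a fixed coordinate patch.

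For~\eqref{conv_1}, I would first rewrite the integrand using the definition of $(\Gamma_\ep^\Psi)'$ and apply the coarea formula, obtaining
\begin{equation}
    -\int_U g(x)\tsb{\tp{\Gamma_{\ep_n}^\Psi}'\tp{\psi_n}}(x)\phi_n(x)\,\m{d}x = \int_{\R}\gam'(s)\,I_n(s)\,\m{d}s,
\end{equation}
where, after the substitution $t = -\ep_n s$,
\begin{equation}
    I_n(s) = \int_{U\cap\tcb{\Psi + \psi_n = -\ep_n s}}\f{g(y)\phi_n(y)}{\tabs{\grad(\Psi + \psi_n)(y)}}\,\m{d}\mathcal{H}^1(y).
\end{equation}
Since $\gam'\in C_{\m{c}}^\infty([-1,1])$ and $\int_\R\gam'(s)\,\m{d}s = 1$, it suffices to prove that $I_n(s)\to I(0)$ uniformly for $s\in[-1,1]$, where $I(0)$ denotes the right hand side of~\eqref{conv_1}. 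To this end I would fix a smooth partition of unity $\tcb{\chi_i}_{i=1}^N$ as in~\eqref{introduction of the POU}, write $I_n(s)$ as a sum over $i\in\tcb{1,\dots,N}$ of integrals localized to $B(x_i,2\del_-)$, and use the level set coordinates $G_i^{\psi_n}(\cdot,-\ep_n s)$ to parametrize each piece. Thanks to the Jacobian identity~\eqref{p_2}, the local contribution collapses to
\begin{equation}
    \int_{-\ell}^{\ell}\chi_i\tp{G_i^{\psi_n}(y_1,-\ep_n s)}\,g\bp{G_i^{\psi_n}(y_1,-\ep_n s)}\,\phi_n\bp{G_i^{\psi_n}(y_1,-\ep_n s)}\det\grad G_i^{\psi_n}(y_1,-\ep_n s)\,\m{d}y_1,
\end{equation}
which is a completely regular quantity. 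The continuity of $\psi\mapsto G_i^\psi$ granted by~\eqref{p_4}, the Lipschitz estimate~\eqref{p_5}, the uniform bound~\eqref{p_3}, the $C^1$ continuity of each $G_i^\psi$ in its spatial argument, and the convergences $\ep_n\to 0$, $\psi_n\to\psi$ in $C^1$, $\phi_n\to\phi$ in $C^0$ then give uniform convergence of the integrand in $(y_1,s)\in[-\ell,\ell]\times[-1,1]$, whence the uniform convergence of $I_n(s)$ to $I(0)$.

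For~\eqref{conv_2}, the argument is simpler: split $U = T_n\cup\tp{U\setminus T_n}$ where $T_n = \tcb{\tabs{\Psi + \psi_n}\le\ep_n}$. On $U\setminus T_n$, the integrand $\gam(-(\Psi + \psi_n)/\ep_n)$ equals $\mathds{1}_{\tcb{\Psi + \psi_n < -\ep_n}}$ by the defining properties of $\gam$. Since the coarea formula together with the uniform lower bound $\min_{\Bar U}\tabs{\grad(\Psi+\psi)}>0$ implies that the set $\tcb{\Psi + \psi = 0}$ has zero two-dimensional Lebesgue measure, these indicator functions converge almost everywhere to $\mathds{1}_{\tcb{\Psi + \psi<0}}$, and dominated convergence handles the $U\setminus T_n$ contribution. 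For the contribution on $T_n$, boundedness of $\gam$, $g$, $\phi_n$ together with the area estimate $\mathcal{H}^2(T_n)\le C\ep_n$ established in~\eqref{The area estimates} provides an $O(\ep_n)$ bound that vanishes in the limit.

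The main obstacle is the uniform-in-$s$ convergence of $I_n(s)$ in the first part; this is not a routine consequence of the coarea formula alone because the level sets themselves are moving with $n$. The uniform local level set coordinates from Lemma~\ref{lem on uniform local level set coordinates} are precisely the right tool since they trivialize the level set geometry simultaneously for all admissible $\psi$, and their continuous dependence on $\psi$ (items 2 and 3 of that lemma) delivers the uniformity needed to pass to the limit under the integral sign.
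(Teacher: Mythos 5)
Your proposal is correct and follows essentially the same route as the paper: both arguments hinge on the uniform local level set coordinates of Lemma~\ref{lem on uniform local level set coordinates}, the partition of unity~\eqref{introduction of the POU}, the Jacobian identity~\eqref{p_2}, and a dominated/uniform convergence passage, your coarea-first reduction to the level-set integrals $I_n(s)$ being just a cosmetic reordering of the paper's two-dimensional change of variables followed by the rescaling $s_2\mapsto\ep_n s_2$. Your explicit treatment of~\eqref{conv_2} via the splitting $U = T_n\cup(U\setminus T_n)$ and the area bound~\eqref{The area estimates} is a valid instantiation of the ``similar arguments'' the paper leaves unwritten.
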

\begin{proof}
    We only prove~\eqref{conv_1} in the special case that $\psi = 0$. The remaining claims in the general case follow by similar arguments. We shall use the uniform local level set coordinates constructed by Lemma~\ref{lem on uniform local level set coordinates} and the partition of unity~\eqref{introduction of the POU}.
    
    Lemmas~\ref{lem on regularized hevi} and~\ref{lem on uniform local level set coordinates} imply that $\tp{\Gamma^\Psi_{\ep_n}}'\tp{\psi_n}$ is supported in $\bigcup_{i=1}^NB(x_i,\del_-)$ for every $n\in\N$. In turn we can decompose into a sum of $N$ terms and in the $i^{\m{th}}$-term make a change of variables with $G^{\psi_n}_i$:
    \begin{multline}
        \int_{U}g(x)\tsb{\tp{\Gamma_{\ep_n}^\Psi}'\tp{\psi_n}}\tp{x}\phi_n(x)\;\m{d}x = \sum_{i=1}^N I^i_n,\\ \text{where } I^i_n=\f{1}{\ep_n}\int_{-\ell}^\ell\int_{-\ep_n}^{\ep_n}\chi_i\tp{G^{\psi_n}_i\tp{s}}g(G^{\psi_n}_i\tp{s})\gamma'\tp{-s_2/\ep_n}\phi_n(G^{\psi_n}_i\tp{s})\det\grad G^{\psi_n}_i\tp{s}\;\m{d}s_2\;\m{d}s_1.
    \end{multline}
    Now, it is suggested to us by the structure above that we ought to make a further change of variables by replacing $s_2/\ep_n$ by $s_2$. Doing so gives:
    \begin{equation}
        I^i_n = \int_{-\ell}^\ell\int_{-1}^1\gam'(-s_2)\chi_i\tp{G^{\psi_n}_i\tp{s_1,\ep_ns_2}}g(G^{\psi_n}_i\tp{s_1,\ep_n s_2})\phi_n(G^{\psi_n}_i\tp{s_1,\ep_ns_2})\det\grad G^{\psi_n}_i\tp{s_1,\ep_ns_2}\;\m{d}s_2\;\m{d}s_1.
    \end{equation}
    Next, we use the dominated convergence theorem in conjunction with the hypotheses~\eqref{convergence hypotheses} and the third item of Lemma~\ref{lem on uniform local level set coordinates} to deduce that the sequence $\tcb{I^i_n}_{n\in\N}\subset\R$ converges to the limit $I^i\in\R$ given by
    \begin{equation}
        \lim_{n\to\infty}I^i_n = I^i = \int_{-\ell}^{\ell}\chi_i\tp{G^0_i(s_1,0)}g(G^0_i(s_1,0))\phi(G^0_i(s_1,0))\det\grad G^0_i(s_1,0)\;\m{d}s_1.
    \end{equation}
    We next use the right hand equality of~\eqref{p_2} to introduce a line element to the above integral and rewrite it in the coordinate-free fashion:
    \begin{equation}
        I^i = \int_{\Sigma}\chi_i(y)\f{ g(y) \phi(y)}{\tabs{\grad\Psi\tp{y}}}\;\m{d}\mathcal{H}^1\tp{y}.
    \end{equation}
    The proof is then completed upon summing over $i\in\tcb{1,\dots,N}$.
\end{proof}

This subsection's final result provides essential logarithmic-Lipschitz estimates on the operators $\pmb{K}_{\ep,\rho,\Qoppa}$ that we recall are defined in equation~\eqref{the nonlienar operators}.

\begin{propC}[Preliminary mapping estimates, II]\label{prop on preliminary mapping estimates II}
    With $\tilde{\ep}_0$ and $\tilde{r}_0$ as in Proposition~\ref{prop on uniform intrgrals estimates for the Newtonian potential} and $\Qoppa_0$ as in Lemma~\ref{lem on regularized hevi}, there exists $C\in\R^+$ with the property that for all $0<\ep\le\tilde{\ep}_0$, $0\le\rho\le\tilde{r}_0$, $0\le\Qoppa\le\Qoppa_0$, and $\psi,\tilde{\psi}\in\Bar{B(0,\tilde{r}_0)}\subset C^1_{\tp{m}}\tp{\Bar{U}}$ we have the continuity estimate
    \begin{equation}\label{holder bound K dude}
        \tnorm{\pmb{K}_{\ep,\rho,\Qoppa}(\psi) - \pmb{K}_{\ep,\rho,\Qoppa}\tp{\tilde{\psi}}}_{C^1\tp{\Bar{U}}}\le C\tnorm{\psi - \tilde{\psi}}_{C^0\tp{\Bar{U}}}\tp{1 + \tabs{\log\tabs{\tnorm{\psi - \tilde{\psi}}_{C^0\tp{\Bar{U}}}}}}.
    \end{equation}
\end{propC}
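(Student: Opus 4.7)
The plan is to expand the difference using the definition~\eqref{the nonlienar operators} as
\begin{equation*}
  \pmb{K}_{\ep,\rho,\Qoppa}(\psi) - \pmb{K}_{\ep,\rho,\Qoppa}(\tilde\psi) = -\sum_{k=-M}^M\sig_k\bf{N}_\Qoppa[g_k],\quad g_k := \Gamma^\Psi_\ep(\psi+k\rho) - \Gamma^\Psi_\ep(\tilde\psi+k\rho),
\end{equation*}
and to set $\eta = \tnorm{\psi-\tilde\psi}_{C^0(\Bar U)}$. The analysis rests on two complementary features of $g_k$: the pointwise bound $|g_k|\le C\min(\eta/\ep,1)$ (combining the Lipschitz and sup-norm bounds on $\gam$), and the support restriction $\supp g_k\subset S_k := \tcb{|\Psi+\psi+k\rho|\le\ep}\cup\tcb{|\Psi+\tilde\psi+k\rho|\le\ep}$ (outside these strips both $\gam$ evaluations saturate to constants). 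By the area estimate derived inside the proof of Proposition~\ref{prop on uniform intrgrals estimates for the Newtonian potential}, $|S_k|\le C\ep$ regardless of the size of $\eta$.

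For the $C^0$ part, combining the pointwise bound on $g_k$ with estimate~\eqref{first log estimate} yields $|\bf{N}_\Qoppa[g_k](x)|\le C\min(\eta/\ep,1)\cdot\ep = C\min(\eta,\ep)\le C\eta$; the smooth correction in $\bf{N}_\Qoppa$ beyond the logarithm is even easier to handle, since it applies to a function with $L^1$ mass $\le C\min(\eta,\ep)$.

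The main analytical step will be the $C^0$ bound on $\grad\bf{N}_\Qoppa[g_k]$, for which I will establish the auxiliary thin-strip estimate
\begin{equation*}
  \sup_{x\in\Bar U}\int_{\tcb{|\Psi+\psi|\le\ep}}\f{1}{|x-y|}\;\m{d}y\le C\ep(1+\log(1/\ep)).
\end{equation*}
This can be proved in the spirit of Proposition~\ref{prop on uniform intrgrals estimates for the Newtonian potential}: the coarea formula reduces it to bounding $\int_{\tcb{\Psi+\psi=t}}|x-y|^{-1}\;\m{d}\mathcal{H}^1(y)$ uniformly in $t\in[-\ep,\ep]$; since the distance from $x$ to this level curve is bounded below by $c|(\Psi+\psi)(x)-t|$, item (3) of Corollary~\ref{coro on refined estimates on level sets} controls the inner integral by $C(1+|\log|(\Psi+\psi)(x)-t||)$, and a direct integration in $t$ gives the claimed estimate uniformly in $x\in\Bar U$. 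Multiplying by $|g_k|\le C\min(\eta/\ep,1)$ produces $\tnorm{\grad\bf{N}_\Qoppa[g_k]}_{C^0(\Bar U)}\le C\min(\eta,\ep)(1+\log(1/\ep))$.

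To close, observe that $g(s) := s(1+\log(1/s))$ is nondecreasing on $(0,1]$ since $g'(s)=\log(1/s)\ge0$; consequently, for $\eta\le\ep\le1$ we have $\log(1/\ep)\le\log(1/\eta)$, while for $\ep\le\eta\le1$ monotonicity gives $g(\ep)\le g(\eta)$. In either case $\min(\eta,\ep)(1+\log(1/\ep))\le C\eta(1+\log(1/\eta))$, and summing over the finite range $k\in\tcb{-M,\dots,M}$ with bounded weights $\sig_k$ yields~\eqref{holder bound K dude}. The hardest part is the thin-strip integral bound for $|x-y|^{-1}$, which requires the sharp distance-to-level-curve control afforded by Lemma~\ref{lem on uniform local level set coordinates} and the level-set length estimates of Corollary~\ref{coro on refined estimates on level sets}.
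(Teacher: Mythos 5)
Your decomposition and the $|g_k|\le C\min(\eta/\ep,1)$ bound (with $\eta=\tnorm{\psi-\tilde\psi}_{C^0(\Bar U)}$) are correct, and the auxiliary thin-strip estimate $\sup_{x}\int_{\{|\Psi+\psi|\le\ep\}}|x-y|^{-1}\,\m{d}y\le C\ep(1+\log(1/\ep))$ is provable exactly as you sketch. However, the support claim $\supp g_k\subset S_k$ is wrong, and this is load-bearing. Outside the two $\ep$-strips, each $\gam$-evaluation \emph{individually} saturates, but not necessarily to the \emph{same} constant: at a point $y$ where $\Psi(y)+\psi(y)+k\rho\ge\ep$ while $\Psi(y)+\tilde\psi(y)+k\rho\le-\ep$ (possible as soon as $\eta\ge 2\ep$), one Heaviside reads $0$ and the other reads $1$, so $g_k(y)=\pm1$ there. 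The true support is only contained in the thicker region $\{|\Psi+\psi+k\rho|\le\ep+\eta\}$, whose area is of order $\ep+\eta$, not $\ep$. Consequently the intermediate bound $C\min(\eta,\ep)(1+\log(1/\ep))$, which depends on the area being $O(\ep)$, is not what one derives; the final step of the argument, which deduces $C\eta(1+|\log\eta|)$ from it by monotonicity of $s\mapsto s(1+\log(1/s))$, is therefore resting on a false premise.

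The repair is nearby: using the correct support $\{|\Psi+\psi+k\rho|\le\ep+\eta\}$ together with $|g_k|\le C\min(\eta/\ep,1)$ gives, via the thin-strip estimates at thickness $\ep+\eta$, directly
\begin{equation*}
  \tnorm{\grad\bf{N}_\Qoppa[g_k]}_{C^0(\Bar U)}\le C\min(\eta/\ep,1)(\ep+\eta)\sp{1+\log\tp{1/(\ep+\eta)}}\le C\eta\sp{1+\log(1/\eta)},
\end{equation*}
since $\min(\eta/\ep,1)(\ep+\eta)\le 2\eta$ and $\log(1/(\ep+\eta))\le\log(1/\eta)$; no monotonicity trick is required. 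You still must handle separately the regime where $\ep+\eta$ exceeds $\tilde\ep_0$, where~\eqref{first log estimate}, the thin-strip bound, and Corollary~\ref{coro on refined estimates on level sets} are no longer literally applicable; but there $\eta$ is bounded below by a fixed constant, so a crude uniform bound on $\tnorm{\pmb{K}_{\ep,\rho,\Qoppa}(\psi)}_{C^1(\Bar U)}$ (which follows from~\eqref{first log estimate} applied at scale $\ep\le\tilde\ep_0$ together with the $1/|x-y|$ bound) suffices to close.

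For comparison, the paper takes a genuinely different route: it establishes a uniform bound $\tnorm{\pmb{K}_{\ep,\rho,\Qoppa}(\psi)}_{\m{LL}^1(\Bar U)}\le C$ and a Lipschitz bound in the weaker $\m{LL}(\Bar U)$ norm, $\tnorm{\pmb{K}_{\ep,\rho,\Qoppa}(\psi)-\pmb{K}_{\ep,\rho,\Qoppa}(\tilde\psi)}_{\m{LL}(\Bar U)}\le C\eta$, using the fundamental theorem of calculus to represent the difference and Proposition~\ref{prop on uniform intrgrals estimates for the Newtonian potential} to estimate it, and then invokes the log-interpolation inequality of Lemma~\ref{lem on interpolation} in the $\m{LL}/\m{LL}^1/C^1$ scale to produce the logarithmic modulus in one stroke. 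That route avoids any new $|x-y|^{-1}$ thin-strip estimate (which you need) at the cost of proving the abstract interpolation lemma, and it sidesteps the support-accounting subtlety entirely because the FTC representation keeps the region of integration a single $\ep$-strip.
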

\begin{proof}
    The first part of the proof is to establish two auxiliary claims. The first claim is: there exists $C\in\R^+$ such that for all $0<\ep\le\tilde{\ep}_0$, $0\le\rho\le\tilde{r}_0$, $0\le\Qoppa\le\Qoppa_0$, and $\psi\in\Bar{B(0,\tilde{r}_0)}\subset C^1_{\tp{m}}\tp{\Bar{U}}$ we have the estimate
    \begin{equation}\label{ee_0}
        \tnorm{\pmb{K}_{\ep,\rho,\Qoppa}\tp{\psi}}_{\m{LL}^1\tp{\Bar{U}}}\le C.
    \end{equation}
    To prove the claim~\eqref{ee_0}, we begin by noting that the pointwise estimates
    \begin{equation}\label{ee_1}
        \tabs{\tsb{\pmb{K}_{\ep,\rho,\Qoppa}\tp{\psi}}\tp{x}}\le C\int_{U}\tp{\tabs{\log\tabs{x - y}} + 1}\;\m{d}y,\;\tabs{\grad\tsb{\pmb{K}_{\ep,\rho,\Qoppa}\tp{\psi}}\tp{x}}\le C\int_{U}\bp{\f{1}{\tabs{x - y}} + 1}\;\m{d}y
    \end{equation}
    hold for $x\in U$. If $\tilde{x}\in U\setminus\tcb{x}$ we also have 
    \begin{equation}\label{ee_2}
        \tabs{\grad\tsb{\pmb{K}_{\ep,\rho,\Qoppa}\tp{\psi}}\tp{x} - \grad\tsb{\pmb{K}_{\ep,\rho,\Qoppa}\tp{\psi}}\tp{\tilde{x}}}\le C\bp{\int_{U}\babs{\f{x - y}{\tabs{x - y}^2} - \f{\tilde{x} - y}{\tabs{\tilde{x} - y}^2}}\;\m{d}y + \tabs{x - \tilde{x}}}.
    \end{equation}
    
    Estimates~\eqref{ee_1} evidently establish the variant of~\eqref{ee_0} obtained by replacing $\tnorm{\cdot}_{\m{LL}^1\tp{\Bar{U}}}$ by $\tnorm{\cdot}_{C^1\tp{\Bar{U}}}$. To obtain the remaining $\m{LL}\tp{\Bar{U}}$-norm on the derivative as a consequence of~\eqref{ee_2}, further analysis is required. We shall split the integral on the right of~\eqref{ee_2} over the following two regions:
    \begin{equation}
        E_{x,\tilde{x}} = \tcb{y\in U\;:\;|y - (x + \tilde{x})/2|\ge 2\tabs{x - \tilde{x}}},\;F_{x,\tilde{x}} = \tcb{y\in U\;:\;\tabs{y - \tp{x + \tilde{x}}/2}<2|x - \tilde{x}|}.
    \end{equation}
    If $y\in E_{x,\tilde{x}}$ then $2\min_{0\le\tau\le 1}\tabs{y - \tau x - (1-\tau)\tilde{x}}\ge\tabs{y - (x + \tilde{x})/2} + \tabs{x - \tilde{x}}$ and so the fundamental theorem of calculus leads to the estimate
    \begin{equation}
        \int_{E_{x,\tilde{x}}}\babs{\f{x - y}{\tabs{x - y}^2} - \f{\tilde{x} - y}{\tabs{\tilde{x} - y}^2}}\;\m{d}y\le C\tabs{x - \tilde{x}}\int_{E_{x,\tilde{x}}}\f{1}{\tabs{x - \tilde{x}}^2 + \tabs{y - (x + \tilde{x})/2}^2}\;\m{d}y\le C|x - \tilde{x}|\tp{1 + \tabs{\log\tabs{x - \tilde{x}}}}.
    \end{equation}
    where $C$ is a constant depending only on $U$. On the other hand, for the region $F_{x,\tilde{x}}$ there is no need to exploit the difference, instead we simply bound
    \begin{equation}
        \int_{F_{x,\tilde{x}}}\babs{\f{x - y}{\tabs{x - y}^2} - \f{\tilde{x} - y}{\tabs{\tilde{x} - y}^2}}\;\m{d}y\le 2\int_{B(0,4|x - \tilde{x}|)}\f{1}{\tabs{y}}\;\m{d}y\le C\tabs{x - \tilde{x}}.
    \end{equation}
    Synthesizing the above gives the first auxiliary claim~\eqref{ee_0}.

    Let us move on to the second auxiliary claim: there exists $C\in\R^+$ such that for all $0<\ep\le\tilde{\ep}_0$, $0\le\rho\le\tilde{r}_0$, $0\le\Qoppa\le\Qoppa_0$, and $\psi,\tilde{\psi}\in\Bar{B(0,\tilde{r}_0)}\subset C^1_{\tp{m}}\tp{\Bar{U}}$ we have the estimate
    \begin{equation}\label{aux_claim_taxed}
        \tnorm{\pmb{K}_{\ep,\rho,\Qoppa}\tp{\psi} - \pmb{K}_{\ep,\rho,\Qoppa}\tp{\tilde{\psi}}}_{\m{LL}\tp{\Bar{U}}}\le C\tnorm{\psi - \tilde{\psi}}_{C^0\tp{\Bar{U}}}.
    \end{equation}
    Thanks to the fundamental theorem of calculus, we have the following pointwise expression for $x\in U$:
    \begin{multline}\label{alternative pointwise expression}
        \tsb{\pmb{K}_{\ep,\rho,\Qoppa}\tp{\psi} - \pmb{K}_{\ep,\rho,\Qoppa}\tp{\tilde{\psi}}}\tp{x} \\= \sum_{k=-M}^M\f{\sig_k}{\ep}\int_0^1\int_{U}\bf{N}_{\Qoppa}\tp{x,y}\gamma'\bp{-\f{\Psi(y) + k\rho + \tau\psi(y) + (1-\tau)\tilde{\psi}(y)}{\ep}}\tp{\psi - \tilde{\psi}}\tp{y}\;\m{d}y\;\m{d}\tau.
    \end{multline}
    Therefore, if we define the regions indexed by $k\in\tcb{-M,\dots,M}$
    \begin{equation}
        R_{k}(\tau) = \tcb{y\in U\;:\;\tabs{\Psi(y) + \tau\psi(y) + \tp{1 - \tau}\tilde{\psi}\tp{y} + k\rho}\le\ep},
    \end{equation}
    we see that~\eqref{alternative pointwise expression} lends us the estimates
    \begin{equation}\label{l_1}
        \tabs{\tsb{\pmb{K}_{\ep,\rho,\Qoppa}\tp{\psi} - \pmb{K}_{\ep,\rho,\Qoppa}\tp{\tilde{\psi}}}\tp{x}}\le\f{C}{\ep}\tnorm{\psi - \tilde{\psi}}_{C^0\tp{\Bar{U}}}\sum_{k=-M}^M\int_{0}^1\int_{U}\mathds{1}_{R_{k}\tp{\tau}}\tp{s}\tp{\tabs{\log\tabs{x - y}} + 1}\;\m{d}y
    \end{equation}
    and
    \begin{multline}\label{l_2}
        \f{\tabs{\tsb{\pmb{K}_{\ep,\rho,\Qoppa}\tp{\psi} - \pmb{K}_{\ep,\rho,\Qoppa}\tp{\tilde{\psi}}}(x) - \tsb{\pmb{K}_{\ep,\rho,\Qoppa}\tp{\psi} - \pmb{K}_{\ep,\rho,\Qoppa}\tp{\tilde{\psi}}}\tp{\tilde{x}}}}{\tabs{x - \tilde{x}}\tp{1 + \tabs{\log\tabs{x - \tilde{x}}}}}\\
        \le\f{C}{\ep}\tnorm{\psi - \tilde{\psi}}_{C^0\tp{\Bar{U}}}\sum_{k=-M}^M\int_0^1\int_U\mathds{1}_{R_{k}\tp{\tau}}\tp{s}\f{\tp{\tabs{\log\tabs{x - y} - \log\tabs{\tilde{x} - y}} + |x -\tilde{x}|}}{\tabs{x - \tilde{x}}\tp{1 + \tabs{\log\tabs{x - \tilde{x}}}}}\;\m{d}y\;\m{d}\tau
    \end{multline}
    for $x,\tilde{x}\in U$ with $x\neq\tilde{x}$. Proposition~\ref{prop on uniform intrgrals estimates for the Newtonian potential} can now be applied to estimate the right hand sides of~\eqref{l_1} and~\eqref{l_2}; the result is that the second auxiliary claim~\eqref{aux_claim_taxed} holds. 

    The final thrust of the proof is to combine the above auxiliary claims to deduce~\eqref{holder bound K dude}. First we note that thanks to the interpolation estimates of Appendix~\ref{appendix on an interpolation inequality}, we have
    \begin{multline}
        \tnorm{\pmb{K}_{\ep,\rho,\Qoppa}\tp{\psi} - \pmb{K}_{\ep,\rho,\Qoppa}\tp{\tilde{\psi}}}_{C^1\tp{\Bar{U}}}\\\le C \tnorm{\pmb{K}_{\ep,\rho,\Qoppa}\tp{\psi} - \pmb{K}_{\ep,\rho,\Qoppa}\tp{\tilde{\psi}}}_{\m{LL}\tp{\Bar{U}}}\bp{1 + \babs{\log\babs{\f{c\tnorm{\pmb{K}_{\ep,\rho,\Qoppa}(\psi) - \pmb{K}_{\ep,\rho,\Qoppa}\tp{\tilde{\psi}}}_{\m{LL}\tp{\Bar{U}}}}{\tnorm{\pmb{K}_{\ep,\rho,\Qoppa}\tp{\psi} - \pmb{K}_{\ep,\rho,\Qoppa}\tp{\tilde{\psi}}}_{\m{LL}^1\tp{\Bar{U}}}}}}}.
    \end{multline}
    Now for the $\tnorm{\cdot}_{\m{LL}\tp{\Bar{U}}}$ and $\tnorm{\cdot}_{\m{LL}^1\tp{\Bar{U}}}$ factors on the right hand side we employ the bounds from the second and first auxiliary claims, respectively.
\end{proof}

% ***(((***(((***(()%!% ***(((***(((***(()%!% ***(((***(((***(()%!% ***(((***(((***(()%!% ***(((***(((***(()%!% ***(((***(((***(()%!% ***(((***(((***(()%!% ***(((***(((***(()%!% ***(((***(((***(()%!% ***(((***(((***(()%!% ***(((***(((***(()%!% ***(((***(((***(()%!% ***(((***(((***(()%!% ***(((***(((***(()%!% ***(((***(((***(()%!% ***(((***(((***(()%!% ***(((***(((***(()%!% ***(((***(((***(()%!% ***(((***(((***(()%!% ***(((***(((***(()%!
\subsection{Key consequences of nondegeneracy}\label{subsection on key consequences of nondegeneracy}
% ***(((***(((***(()%!% ***(((***(((***(()%!% ***(((***(((***(()%!% ***(((***(((***(()%!% ***(((***(((***(()%!% ***(((***(((***(()%!% ***(((***(((***(()%!% ***(((***(((***(()%!% ***(((***(((***(()%!% ***(((***(((***(()%!% ***(((***(((***(()%!% ***(((***(((***(()%!% ***(((***(((***(()%!% ***(((***(((***(()%!% ***(((***(((***(()%!% ***(((***(((***(()%!% ***(((***(((***(()%!% ***(((***(((***(()%!% ***(((***(((***(()%!% ***(((***(((***(()%!

In this subsection the nondegeneracy condition~\eqref{the nondegeneracy condition} from Definition~\ref{defn of admissible patches} will now be exploited, unveiling the heart of our construction. Our first result combines nondegeneracy with certain compactness properties of the operators $\pmb{\kappa}$ from~\eqref{the linearized operator compact part} to establish uniformly coercive operator bounds in the supremum norm. We recall that the parameters $\tilde{\ep_0}$ and $\tilde{r}_0$ are given by Lemma~\ref{lem on uniform local level set coordinates} and $\Qoppa_0$ is from Lemma~\ref{lem on regularized hevi}.

\begin{propC}[Quantitative closed range estimates]\label{prop on quantitative closed range estimate}
    There exists $C\in\R^+$, $0<\ep_1\le\tilde{\ep}_0$, $0<\rho_1,r_1< \tilde{r}_0$, and $0<\Qoppa_1\le\Qoppa_0$ with the property that for all $0<\ep\le\ep_1$, $0\le\rho\le\rho_1$, $0\le\Qoppa\le\Qoppa_1$, $\psi^0,\psi^1\in\Bar{B(0,r_1)}\subset C^1_{\tp{m}}\tp{\Bar{U}}$, and $\phi\in C^0_{\tp{m}}\tp{\Bar{U}}$ we have the estimate
    \begin{equation}\label{closed range estimate in C0}
        \tnorm{\phi}_{C^0\tp{\Bar{U}}}\le C\bnorm{\phi + \bsb{\int_0^1\pmb{\kappa}_{\ep,\rho,\Qoppa}\tp{(1-\tau)\psi^0 + \tau\psi^1}\;\m{d}\tau}\phi}_{C^0\tp{\Bar{U}}}
    \end{equation}
\end{propC}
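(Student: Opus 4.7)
I plan to argue by contradiction, combining the compactness inherent to the Newtonian potential on thin sublevel sets with the nondegeneracy condition of item 4 of Definition~\ref{defn of admissible patches}. Suppose no admissible quadruple $(\ep_1, \rho_1, r_1, \Qoppa_1)$ exists. Then there are sequences $\ep_n \to 0$, $\rho_n \to 0$, $\Qoppa_n \to 0$, functions $\psi^0_n, \psi^1_n \in C^1_{\tp{m}}\tp{\Bar{U}}$ with $\tnorm{\psi^j_n}_{C^1\tp{\Bar{U}}} \to 0$, and $\phi_n \in C^0_{\tp{m}}\tp{\Bar{U}}$ with $\tnorm{\phi_n}_{C^0\tp{\Bar{U}}} = 1$, such that setting $\pmb{L}_n = \int_0^1 \pmb{\kappa}_{\ep_n,\rho_n,\Qoppa_n}\tp{(1-\tau)\psi^0_n + \tau\psi^1_n}\;\m{d}\tau$, we have $\tnorm{\phi_n + \pmb{L}_n \phi_n}_{C^0\tp{\Bar{U}}} \to 0$. (If counterexamples only existed with $\ep$, $\rho$, $\Qoppa$ or $r$ bounded away from $0$, we would further shrink these parameters.) The $m$-fold dihedral symmetry is inherited by any limit.

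The first technical step is to show that $\pmb{L}_n \phi_n$ is uniformly bounded in $\m{LL}\tp{\Bar{U}}$. From the formula~\eqref{the linearized operator compact part} one has the pointwise estimate
\begin{equation}
\tabs{\tsb{\pmb{\kappa}_{\ep,\rho,\Qoppa}\tp{\psi}\phi}\tp{x}} \le \f{C\tnorm{\phi}_{C^0\tp{\Bar{U}}}}{\ep}\sum_{k=-M}^M \tabs{\sig_k}\int_{\tcb{\tabs{\Psi + \psi + k\rho}\le\ep}}\tp{\tabs{\log\tabs{x - y}} + 1}\;\m{d}y,
\end{equation}
together with an analogous bound for the logarithmic modulus of continuity. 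Once $\rho_1$ and $r_1$ are small enough that $\psi + k\rho$ lies in the ball of radius $(M+1)\tilde{r}_0$ in $C^1_{\tp{m}}\tp{\Bar{U}}$, the coarea-type estimates~\eqref{first log estimate}--\eqref{second log estimate} of Proposition~\ref{prop on uniform intrgrals estimates for the Newtonian potential} absorb the $1/\ep$ factor and yield the uniform $\m{LL}\tp{\Bar{U}}$-bound. Since $\m{LL}\tp{\Bar{U}} \hookrightarrow C^0\tp{\Bar{U}}$ compactly, a subsequence of $\pmb{L}_n \phi_n$ converges in $C^0\tp{\Bar{U}}$, and combined with $\phi_n + \pmb{L}_n \phi_n \to 0$ this forces $\phi_n \to \phi$ in $C^0\tp{\Bar{U}}$ for some $\phi \in C^0_{\tp{m}}\tp{\Bar{U}}$ with $\tnorm{\phi}_{C^0\tp{\Bar{U}}} = 1$.

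The next step is to identify the limit via Proposition~\ref{prop on limit identification}. Using $\ep_n, \rho_n, \Qoppa_n \to 0$, $\psi^j_n \to 0$ in $C^1$, and $\phi_n \to \phi$ in $C^0$, and applying Proposition~\ref{prop on limit identification} pointwise at each $x \in \Bar{U}$ with test function $y \mapsto \bf{N}_{\Qoppa_n}\tp{x,y}$ (the logarithmic singularity at $y = x$ is handled by a dyadic splitting and the already-established uniform $\m{LL}$-bound, while $\bf{N}_{\Qoppa_n} \to \bf{N}$ off the diagonal), one finds for each $\tau$ and $k$ that
\begin{equation}
-\bf{N}_{\Qoppa_n}\bsb{\tp{\Gamma^\Psi_{\ep_n}}'\tp{(1-\tau)\psi^0_n + \tau\psi^1_n + k\rho_n}\phi_n}\tp{x} \to \int_{\Sigma}\bf{N}\tp{x - y}\f{\phi(y)}{\tabs{\grad\Psi\tp{y}}}\;\m{d}\mathcal{H}^1\tp{y}.
\end{equation}
Weighting by $\sig_k$, invoking the normalization $\sum_k\sig_k = 1$, and integrating in $\tau$ via dominated convergence, we pass to the limit in $\phi_n + \pmb{L}_n\phi_n = o(1)$ to get
\begin{equation}
\phi(x) + \int_\Sigma \bf{N}\tp{x - y}\f{\phi(y)}{\tabs{\grad\Psi(y)}}\;\m{d}\mathcal{H}^1\tp{y} = 0\quad\text{for all }x\in\Bar{U}.
\end{equation}
Restricting to $x\in\Sigma$ is exactly the identity~\eqref{the nondegeneracy condition}, so item 4 of Definition~\ref{defn of admissible patches} forces $\phi|_\Sigma = 0$; feeding this back into the displayed equation yields $\phi \equiv 0$ on $\Bar{U}$, contradicting $\tnorm{\phi}_{C^0\tp{\Bar{U}}} = 1$. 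The main obstacle throughout is the unbounded $1/\ep$ factor in Lemma~\ref{lem on prelim estimates 1}: both the compactness step (uniform $\m{LL}$-bound) and the limit-identification step require delicate use of the log-coarea estimates of Proposition~\ref{prop on uniform intrgrals estimates for the Newtonian potential} to tame it uniformly in $n$.
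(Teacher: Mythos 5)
Your contradiction argument has the same skeleton as the paper's: normalize $\tnorm{\phi_n}_{C^0(\Bar U)}=1$, use the density estimates of Proposition~\ref{prop on uniform intrgrals estimates for the Newtonian potential} to get a uniform $\m{LL}(\Bar U)$-bound on $\varphi_n=-\pmb{L}_n\phi_n$, extract a $C^0$-convergent subsequence via Arzel\`a--Ascoli, identify the limit as an integral over $\Sigma$, and contradict nondegeneracy. The one place you genuinely diverge is in the limit-identification step. The paper pairs $\varphi_n$ against a fixed test function $g\in C^\infty_{\m c}(U)$, then uses Fubini/self-adjointness of $\bf{N}_{\Qoppa_n}$ to rewrite $\langle\varphi_n,g\rangle_{L^2}$ with the Newton operator acting on $g$; since $\bf{N}_{\Qoppa_n}[g]$ is continuous, Proposition~\ref{prop on limit identification} applies directly and dominated convergence finishes the identification in the sense of distributions. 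You instead propose to apply Proposition~\ref{prop on limit identification} pointwise with $g(y)=\bf{N}_{\Qoppa_n}(x,y)$, which is not literally permitted since that kernel has a logarithmic singularity at $y=x$ and so is not in $C^0(\Bar U)$; you acknowledge this and sketch a dyadic splitting. Your route can be made rigorous — split at radius $r$, bound the near part uniformly in $n$ by~\eqref{first log estimate} (and its level-set analogue from Corollary~\ref{coro on refined estimates on level sets}) and let $r\to0$ afterward, or observe that once you know $\varphi_n\to\phi_\star$ distributionally, the uniform $\m{LL}$-bound upgrades this to $C^0$-convergence by equicontinuity — but this requires additional work that the paper sidesteps entirely by moving the kernel onto the smooth test function before passing to the limit. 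The paper's duality trick is cleaner and, in particular, never asks Proposition~\ref{prop on limit identification} to tolerate a singular weight; if you pursue your pointwise variant, you should carry out the near/far splitting explicitly rather than treat it as routine.
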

\begin{proof}
    We shall argue by way of contradiction, so let us suppose that the claim of the proposition is false. Then we would be able to find sequences 
    \begin{multline}
        \tcb{\ep_n}_{n\in\N}\subset(0,\tilde{\ep}_0],\;\tcb{\rho_n}_{n\in\N}\subset[0,\tilde{r}_0),\;\tcb{\Qoppa_n}_{n\in\N}\subset[0,\Qoppa_0],\\
        \tcb{\psi^0_n}_{n\in\N},\tcb{\psi^1_n}_{n\in\N}\subset\Bar{B(0,\tilde{r}_0)}\subset C^1_{\tp{m}}\tp{\Bar{U}},\;\tcb{\phi_n}_{n\in\N}\subset C_{\tp{m}}^0\tp{\Bar{U}}
    \end{multline}
     that satisfy $\tnorm{\phi_n}_{C^0\tp{\Bar{U}}}=1$ for all $n\in\N$ but
    \begin{equation}\label{absurd stuff}
        \lim_{n\to\infty}\tsb{|\ep_n| + |\rho_n| + |\Qoppa_n| + \tnorm{\psi^0_n}_{C^1(\Bar{U})} + \tnorm{\psi^1_n}_{C^1\tp{\Bar{U}}} + \tnorm{\phi_n - \varphi_n}_{C^0\tp{\Bar{U}}}} = 0
    \end{equation}
    where for $n\in\N$ we have defined
    \begin{equation}
        \varphi_n = - \bsb{\int_0^1\pmb{\kappa}_{\ep_n,\rho_n,\Qoppa_n}\tp{(1-\tau)\psi_n^0 + \tau\psi_n^1}\;\m{d}\tau}\phi_n\in C^0\tp{\Bar{U}}.
    \end{equation}

    We claim first that the sequence $\tcb{\varphi_n}_{n\in\N}\subset C_{\tp{m}}^0\tp{\Bar{U}}$ has compact closure. Thanks to the Arzel\`a-Ascoli theorem, it is sufficient to establish that the sequence $\tcb{\varphi_n}_{n\in\N}\subset\m{LL}\tp{\Bar{U}}$ is bounded. Due to $\tnorm{\phi_n}_{C^0\tp{\Bar{U}}} = 1$, support considerations, and Tonelli's theorem  we can directly make the estimates
    \begin{equation}\label{important_bound_1}
        \tabs{\varphi_n(x)} \le \f{C}{\ep_n}\sum_{k=-M}^M\int_0^1\int_{U}\mathds{1}_{E^k_n\tp{\tau}}\tp{y}\tp{\tabs{\log\tabs{x - y}} + 1}\;\m{d}y\;\m{d}\tau
    \end{equation}
    for any $x\in\Bar{U}$. For $\tau\in[0,1]$ and $k\in\tcb{-M,\dots,M}$ we have defined the sets
    \begin{equation}
        E_n^k\tp{\tau} = \tcb{x\in U\;:\;\tabs{\Psi(x) + (1-\tau)\psi^0_n(x) + \tau\psi^1_n(x) + k\rho_n}\le\ep_n}.
    \end{equation}
    Similarly, if $z\in \Bar{U}\setminus\tcb{x}$ we have the bounds
    \begin{equation}\label{important_bound_2}
        \f{\tabs{\varphi_n(x) - \varphi_n(z)}}{\tabs{x - z}\tp{1 + \tabs{\log\tabs{x - z}}}}\le\f{C}{\ep_n}\sum_{k=-M}^M\int_0^1\int_{U}\mathds{1}_{E^k_n\tp{\tau}}\tp{y}\bp{\f{\tabs{\log\tabs{x - y} - \log\tabs{z - y}}}{\tabs{x - z}\tp{1 + \tabs{\log\tabs{x - z}}}} + 1}\;\m{d}y\;\m{d}\tau.
    \end{equation}
    By using~\eqref{important_bound_1} and~\eqref{important_bound_2} in conjunction with Proposition~\ref{prop on uniform intrgrals estimates for the Newtonian potential} and equation~\eqref{absurd stuff} we conclude that $\tcb{\varphi_n}_{n\in\N}$ is indeed bounded in the space $\m{LL}\tp{\Bar{U}}$.

    Thanks to the aforementioned compactness, we are assured the existence of $\phi\in C_{\tp{m}}^0(\Bar{U})$ that we may suppose, after extraction of a subsequence and relabeling, satisfies $\varphi_n\to\phi$ as $n\to\infty$ in the $C^0\tp{\Bar{U}}$ norm. From the final limit of~\eqref{absurd stuff} we deduce further that $\phi_n\to\phi$ as $n\to\infty$ in the same norm. Since $\tnorm{\phi_n}_{C^0(\Bar{U})} = 1$ for all $n\in\N$ we must have $\tnorm{\phi}_{C^0\tp{\Bar{U}}}=1$ as well.

    We next claim that, in the sense of distributions on $U$, we have the convergence of $\varphi_n\to\phi_\star$ as $n\to\infty$ where $\phi_\star\in C^0_{\tp{m}}\tp{\Bar{U}}$ is determined via
    \begin{equation}\label{phi star}
        \phi_\star(x) = -\int_{\Sigma}\bf{N}\tp{x - y}\f{\phi(y)}{\nu\tp{y}\cdot \grad\Psi\tp{y}}\;\m{d}\mathcal{H}^1\tp{y},\;x\in U,
    \end{equation}
    where we recall that $\bf{N} = (2\pi)^{-1}\log\tabs{\cdot}$ is the $\R^2$-Newtonian potential. So let us fix an arbitrary test function $g\in C^\infty_{\m{c}}\tp{U}$ and consider the limit of the $L^2(U)$-pairings $\tcb{\tbr{\varphi_n,g}_{L^2\tp{U}}}_{n\in\N}\subset\R$. Thanks to Fubini's theorem and symmetry of the operators from equation~\eqref{generalized poisson solver}, we first have the identity
    \begin{equation}\label{identity as a distribution}
        \tbr{\varphi_n,g}_{L^2\tp{U}} = \sum_{k=-M}^M\sig_k\int_0^1[\tbr{\tp{\Gamma_{\ep_n}^\Psi}'\tp{(1-\tau)\psi^0_n + \tau\psi^1_n + k\rho_n}\phi_n,\bf{N}_{\Qoppa_n}\tsb{g}}_{L^2(\R^2)}\;\m{d}\tau
    \end{equation}
    for any $n\in\N$. The integrand above is a continuous function in $\tau\in[0,1]$. We deduce uniform boundedness in $n\in\N$ and convergence pointwise thanks to Propositions~\ref{prop on uniform intrgrals estimates for the Newtonian potential} and~\ref{prop on limit identification}. Thus, the dominated convergence theorem applies and we identify (tacitly using~\eqref{normalization of the splitting}) the limit of~\eqref{identity as a distribution} as $n\to\infty$ to be
    \begin{equation}\label{identification of the distributional limit}
        \tbr{\phi,g}_{L^2\tp{U}} = \lim_{n\to\infty}\tbr{\varphi_n,g}_{L^2\tp{U}} = - \int_{\Sigma}\f{\phi(y)}{\nu(y)\cdot\grad\Psi(y)}\tp{\bf{N}\ast g}\tp{y}\;\m{d}\mathcal{H}^1\tp{y} = \tbr{\phi_\star,g}_{L^2\tp{U}}.
    \end{equation}
    
    As~\eqref{identification of the distributional limit} holds for all $g\in C^\infty_{\m{c}}\tp{U}$ we deduce that $\phi = \phi_\star$ where the latter is defined in~\eqref{phi star}. This is a contradiction of the nondegeneracy condition from the fourth item of Definition~\ref{defn of admissible patches}. Therefore our contradiction hypothesis, that the statement of the proposition does not hold, must be false.
\end{proof}

Our first consequence of Proposition~\ref{prop on quantitative closed range estimate} allows us to deduce invertibility of the derivative of the map $\pmb{F}$ (defined in~\eqref{the nonlienar operators}) on an open subset of stream function perturbations. While the estimates~\eqref{important estimate 1} and~\eqref{important estiamte 2} that follow are manifestly \emph{not} uniform in $\ep$, the size of the neighborhood on which we have invertibility is uniform.

\begin{coroC}[Invertibility and estimates on the derivative]\label{coro on invertibility of the derivative}
     Let $\ep_1$, $\rho_1$, $r_1$, and $\Qoppa_1$ be the positive values granted by Proposition~\ref{prop on quantitative closed range estimate}. There exists $C\in\R^+$ such that for all $0<\ep\le\ep_1$, $0\le\rho\le\rho_1$, $0\le\Qoppa\le\Qoppa_1$, and $\psi,\tilde{\psi}\in\Bar{B(0,r_1)}\subset C^1_{\tp{m}}\tp{\Bar{U}}$ the derivative $D\pmb{F}_{\ep,\rho,\Qoppa}(\psi):C^1_{\tp{m}}\tp{\Bar{U}}\to C_{\tp{m}}^1\tp{\Bar{U}}$ is invertible and we have the estimates
    \begin{equation}\label{important estimate 1}
        \tnorm{\tsb{D\pmb{F}_{\ep,\rho,\Qoppa}(\psi)}^{-1}\phi}_{C^1\tp{\Bar{U}}} + \tnorm{\tsb{D\pmb{F}_{\ep,\rho,\Qoppa}\tp{\psi}}\phi}_{C^1\tp{\Bar{U}}}\le \tp{C/\ep}\tnorm{\phi}_{C^1\tp{\Bar{U}}}
    \end{equation}
    and
    \begin{equation}\label{important estiamte 2}
        \tnorm{\tsb{D\pmb{F}_{\ep,\rho,\Qoppa}\tp{\psi}}^{-1}\phi - \tsb{D\pmb{F}_{\ep,\rho,\Qoppa}\tp{\tilde{\psi}}}^{-1}\phi}_{C^1\tp{\Bar{U}}}\le\tp{C/\ep^3}\tnorm{\psi - \tilde{\psi}}_{C^1\tp{\Bar{U}}}\tnorm{\phi}_{C^0\tp{\Bar{U}}}
    \end{equation}
    for all $\phi\in C_{\tp{m}}^1\tp{\Bar{U}}$. 
\end{coroC}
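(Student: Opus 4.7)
The plan is to combine the linearization identity $D\pmb{F}_{\ep,\rho,\Qoppa}(\psi)\phi = \phi + \pmb{\kappa}_{\ep,\rho,\Qoppa}(\psi)\phi$ with the quantitative closed range estimate from Proposition~\ref{prop on quantitative closed range estimate} (specialized to $\psi^0 = \psi^1 = \psi$), and the mapping estimates of Lemma~\ref{lem on prelim estimates 1}. The core qualitative input is the compactness of $\pmb{\kappa}_{\ep,\rho,\Qoppa}(\psi)$ as an operator on $C^1_{(m)}(\Bar U)$: estimate~\eqref{linear remainder c2} shows that it sends bounded sets in $C^1_{(m)}(\Bar U)$ into bounded sets in $C^2_{(m)}(\Bar U)$, and the latter embeds compactly into the former by Arzel\`a-Ascoli. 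Thus $D\pmb{F}_{\ep,\rho,\Qoppa}(\psi) = I + \pmb{\kappa}_{\ep,\rho,\Qoppa}(\psi)$ is a Fredholm operator of index zero, so invertibility is equivalent to injectivity. Injectivity is immediate from Proposition~\ref{prop on quantitative closed range estimate}: if $D\pmb{F}_{\ep,\rho,\Qoppa}(\psi)\phi = 0$, then estimate~\eqref{closed range estimate in C0} yields $\tnorm{\phi}_{C^0(\Bar U)} = 0$, hence $\phi = 0$.

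For the forward bound in~\eqref{important estimate 1}, I combine the triangle inequality with~\eqref{linear remainder c1}:
\begin{equation}
\tnorm{D\pmb{F}_{\ep,\rho,\Qoppa}(\psi)\phi}_{C^1(\Bar U)} \le \tnorm{\phi}_{C^1(\Bar U)} + \tp{C/\ep}\tnorm{\phi}_{C^0(\Bar U)} \le \tp{C/\ep}\tnorm{\phi}_{C^1(\Bar U)}.
\end{equation}
For the inverse bound, I first use~\eqref{closed range estimate in C0} to get the $C^0$-coercivity $\tnorm{\phi}_{C^0(\Bar U)} \le C\tnorm{D\pmb{F}_{\ep,\rho,\Qoppa}(\psi)\phi}_{C^0(\Bar U)}$, then write $\phi = D\pmb{F}_{\ep,\rho,\Qoppa}(\psi)\phi - \pmb{\kappa}_{\ep,\rho,\Qoppa}(\psi)\phi$, take $C^1$-norms, and use~\eqref{linear remainder c1} again to obtain
\begin{equation}
\tnorm{\phi}_{C^1(\Bar U)} \le \tnorm{D\pmb{F}_{\ep,\rho,\Qoppa}(\psi)\phi}_{C^1(\Bar U)} + \tp{C/\ep}\tnorm{\phi}_{C^0(\Bar U)} \le \tp{C/\ep}\tnorm{D\pmb{F}_{\ep,\rho,\Qoppa}(\psi)\phi}_{C^1(\Bar U)};
\end{equation}
substituting $\phi = [D\pmb{F}_{\ep,\rho,\Qoppa}(\psi)]^{-1}\tilde\phi$ yields the claimed bound on the inverse.

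For~\eqref{important estiamte 2}, I use the standard resolvent identity
\begin{equation}
[D\pmb{F}_{\ep,\rho,\Qoppa}(\psi)]^{-1} - [D\pmb{F}_{\ep,\rho,\Qoppa}(\tilde\psi)]^{-1} = [D\pmb{F}_{\ep,\rho,\Qoppa}(\psi)]^{-1}\bp{\pmb{\kappa}_{\ep,\rho,\Qoppa}(\tilde\psi) - \pmb{\kappa}_{\ep,\rho,\Qoppa}(\psi)}[D\pmb{F}_{\ep,\rho,\Qoppa}(\tilde\psi)]^{-1}.
\end{equation}
Setting $\phi_\star = [D\pmb{F}_{\ep,\rho,\Qoppa}(\tilde\psi)]^{-1}\phi$, the closed range estimate gives $\tnorm{\phi_\star}_{C^0(\Bar U)} \le C\tnorm{\phi}_{C^0(\Bar U)}$. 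Then the Lipschitz bound~\eqref{linear remainder lipschitz} produces
\begin{equation}
\tnorm{\bp{\pmb{\kappa}_{\ep,\rho,\Qoppa}(\tilde\psi) - \pmb{\kappa}_{\ep,\rho,\Qoppa}(\psi)}\phi_\star}_{C^1(\Bar U)} \le \tp{C/\ep^2}\tnorm{\psi - \tilde\psi}_{C^0(\Bar U)}\tnorm{\phi}_{C^0(\Bar U)},
\end{equation}
and finally applying the $C/\ep$ bound on $[D\pmb{F}_{\ep,\rho,\Qoppa}(\psi)]^{-1}$ from~\eqref{important estimate 1} and majorizing $\tnorm{\psi - \tilde\psi}_{C^0(\Bar U)}$ by $\tnorm{\psi - \tilde\psi}_{C^1(\Bar U)}$ yields the $C/\ep^3$ factor. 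The only mild subtlety is to be careful that all intermediate norms match, which is handled by routing through the $C^0$ coercivity at the appropriate step; no genuine obstacle arises since all required pieces—Fredholm compactness, coercivity in $C^0$, and the brutal $1/\ep$-weighted bounds—are already in hand from Lemma~\ref{lem on prelim estimates 1} and Proposition~\ref{prop on quantitative closed range estimate}.
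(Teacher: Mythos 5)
Your proposal is correct and takes essentially the same approach as the paper: invertibility via the Fredholm alternative (with compactness of $\pmb{\kappa}_{\ep,\rho,\Qoppa}(\psi)$ supplied by estimate~\eqref{linear remainder c2} and triviality of the kernel supplied by Proposition~\ref{prop on quantitative closed range estimate}), the forward and inverse bounds of~\eqref{important estimate 1} via estimate~\eqref{linear remainder c1} together with the $C^0$-coercivity, and~\eqref{important estiamte 2} via the resolvent identity with~\eqref{linear remainder lipschitz} and the already-established $C/\ep$ inverse bound. Your explicit resolvent identity and the paper's equation $D\pmb{F}_{\ep,\rho,\Qoppa}(\psi)(\varphi-\tilde\varphi)=-\bp{\pmb{\kappa}_{\ep,\rho,\Qoppa}(\psi)-\pmb{\kappa}_{\ep,\rho,\Qoppa}(\tilde\psi)}\tilde\varphi$ are the same algebraic step written in two notations, so there is no genuine divergence.
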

\begin{proof}
    Identity~\eqref{the linearization identity} and estimate~\eqref{linear remainder c2} show that the map $D\pmb{F}_{\ep,\rho,\Qoppa}\tp{\psi}$ as a linear operator is a compact perturbation of the identity and thus has vanishing Fredholm index. Invertibility of $D\pmb{F}_{\ep,\rho,\Qoppa}\tp{\psi}$ follows as soon as we know that $\m{ker}D\pmb{F}_{\ep,\rho,\Qoppa}\tp{\psi} = \tcb{0}$. If $\phi\in C_{\tp{m}}^1\tp{\Bar{U}}$ belongs to this kernel, then we can use estimate~\eqref{closed range estimate in C0} from Proposition~\ref{prop on quantitative closed range estimate} (with $\psi^0 = \psi^1 = \psi$) to deduce that 
    \begin{equation}
        \tnorm{\phi}_{C^0\tp{\Bar{U}}}\le C\tnorm{\phi + \pmb{\kappa}_{\ep,\rho,\Qoppa}\tp{\psi}\phi}_{C^0\tp{\Bar{U}}} = C\tnorm{D\pmb{F}_{\ep,\rho,\Qoppa}\tp{\psi}\phi}_{C^0\tp{\Bar{U}}} = 0.
    \end{equation}
    The kernel is indeed trivial.

    We now prove estimate~\eqref{important estimate 1}. That the second summand on the left hand side is bounded by $\tp{C/\ep}\tnorm{\phi}_{C^1\tp{\Bar{U}}}$ is a consequence of estimate~\eqref{linear remainder c1} from Lemma~\ref{lem on prelim estimates 1}. We now focus on the first summand. Let $\varphi = \tsb{D\pmb{F}_{\ep,\rho,\Qoppa}\tp{\psi}}^{-1}\phi$. Thanks to Proposition~\ref{prop on quantitative closed range estimate} (again with $\psi^0 = \psi^1 = \psi$) we have initially that $\tnorm{\varphi}_{C^0\tp{\Bar{U}}}\le C\tnorm{\phi}_{C^0\tp{\Bar{U}}}$. On the other hand, we have $\varphi = \phi - \pmb{\kappa}_{\ep,\rho,\Qoppa}\tp{\psi}\varphi$ and so estimate~\eqref{linear remainder c1} from Lemma~\ref{lem on prelim estimates 1} provides the bound
    \begin{equation}
        \tnorm{\varphi}_{C^1\tp{\Bar{U}}}\le C\tnorm{\phi}_{C^1\tp{\Bar{U}}} + \tp{C/\ep}\tnorm{\varphi}_{C^0\tp{\Bar{U}}}\le\tp{C/\ep}\tnorm{\phi}_{C^1\tp{\Bar{U}}}.
    \end{equation}
    Estimate~\eqref{important estimate 1} is now established.

    We conclude by proving~\eqref{important estiamte 2}. Let $\varphi = \tsb{D\pmb{F}_{\ep,\rho,\Qoppa}\tp{\psi}}^{-1}\phi$ and $\tilde{\varphi} = \tsb{D\pmb{F}_{\ep,\rho,\Qoppa}\tp{\tilde{\psi}}}^{-1}\phi$. We compute first that
    \begin{equation}
        D\pmb{F}_{\ep,\rho,\Qoppa}\tp{\psi}\tp{\varphi - \tilde{\varphi}} = -\tp{\pmb{\kappa}_{\ep,\rho,\Qoppa}\tp{\psi} - \pmb{\kappa}_{\ep,\rho,\Qoppa}\tp{\tilde{\psi}}}\tilde{\varphi}.
    \end{equation}
    Now take the norm in $C^1\tp{\Bar{U}}$ on both sides of the above equation. On the left we use~\eqref{important estimate 1} while on the right we use estimate~\eqref{linear remainder lipschitz} from Lemma~\ref{lem on prelim estimates 1}. This leaves us with
    \begin{equation}
        \tnorm{\varphi - \tilde{\varphi}}_{C^1(\Bar{U})}\le\tp{C/\ep^3}\tnorm{\psi - \tilde{\psi}}_{C^1\tp{\Bar{U}}}\tnorm{\tilde{\varphi}}_{C^0(\Bar{U})}.
    \end{equation}
    Estimate~\eqref{important estiamte 2} follows by using Proposition~\ref{prop on quantitative closed range estimate} to bound $\tnorm{\tilde{\varphi}}_{C^0(\Bar{U})}\le C\tnorm{\phi}_{C^0\tp{\Bar{U}}}$.
\end{proof}

Our second consequence of Proposition~\ref{prop on quantitative closed range estimate} gives us a reverse uniform continuity estimate on $\pmb{F}$.

\begin{coroC}[Reverse logarithmic-Lipschitz estimate]\label{coro on reverse Holder estimate}
    Let $\ep_1$, $\rho_1$, $\Qoppa_1$, and $r_1$ be the positive values granted by Proposition~\ref{prop on quantitative closed range estimate}. There exists $C\in\R^+$ such that for all $0<\ep\le\ep_1$, $0\le\rho\le\rho_1$, $0\le\Qoppa\le\Qoppa_1$, and $\psi,\tilde{\psi}\in\Bar{B(0,r_1)}\subset C^1_{\tp{m}}\tp{\Bar{U}}$ we have the estimate
    \begin{multline}\label{the reverse Holder bound}
        \tnorm{\psi - \tilde{\psi}}_{C^1(\Bar{U})}\le C\tnorm{\pmb{F}_{\ep,\rho,\Qoppa}\tp{\psi} - \pmb{F}_{\ep,\rho,\Qoppa}\tp{\tilde{\psi}}}_{C^1\tp{\Bar{U}}} \\+ C\tnorm{\pmb{F}_{\ep,\rho,\Qoppa}\tp{\psi} - \pmb{F}_{\ep,\rho,\Qoppa}\tp{\tilde{\psi}}}_{C^0\tp{\Bar{U}}}\tp{1 + \tabs{\log\tnorm{\pmb{F}_{\ep,\rho,\Qoppa}\tp{\psi} - \pmb{F}_{\ep,\rho,\Qoppa}\tp{\tilde{\psi}}}_{C^0\tp{\Bar{U}}}}}.
    \end{multline}
\end{coroC}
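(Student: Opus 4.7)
The plan is to unpack the definition of $\pmb{F}_{\ep,\rho,\Qoppa}$, reduce to a $C^0$-bound via the quantitative closed range estimate of Proposition~\ref{prop on quantitative closed range estimate}, and then upgrade to a $C^1$-bound by exploiting the logarithmic-Lipschitz continuity of $\pmb{K}_{\ep,\rho,\Qoppa}$ granted by Proposition~\ref{prop on preliminary mapping estimates II}. Throughout the argument, abbreviate $\phi = \psi - \tilde{\psi}$ and $\upchi = \pmb{F}_{\ep,\rho,\Qoppa}(\psi) - \pmb{F}_{\ep,\rho,\Qoppa}(\tilde{\psi})$.

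The key identity, obtained from the fundamental theorem of calculus applied to the smooth map $\pmb{K}_{\ep,\rho,\Qoppa}$ (see Lemma~\ref{lem on principal actors and their basic properties}) and identity~\eqref{the linearization identity}, is
\begin{equation}\label{reverse_FTC}
    \upchi = \phi + \pmb{K}_{\ep,\rho,\Qoppa}(\psi) - \pmb{K}_{\ep,\rho,\Qoppa}(\tilde{\psi}) = \phi + \bsb{\int_0^1\pmb{\kappa}_{\ep,\rho,\Qoppa}((1-\tau)\tilde{\psi} + \tau\psi)\;\m{d}\tau}\phi.
\end{equation}
I would apply Proposition~\ref{prop on quantitative closed range estimate} with $\psi^0 = \tilde{\psi}$, $\psi^1 = \psi$ to~\eqref{reverse_FTC}, which directly yields the $C^0$-reduction
\begin{equation}\label{reverse_C0}
    \tnorm{\phi}_{C^0\tp{\Bar{U}}}\le C\tnorm{\upchi}_{C^0\tp{\Bar{U}}}.
\end{equation}

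To upgrade to a $C^1$-bound, I would isolate $\phi$ on the left of~\eqref{reverse_FTC}, namely $\phi = \upchi - \tp{\pmb{K}_{\ep,\rho,\Qoppa}(\psi) - \pmb{K}_{\ep,\rho,\Qoppa}(\tilde{\psi})}$, take the $C^1$-norm, and apply the logarithmic-Lipschitz estimate~\eqref{holder bound K dude} of Proposition~\ref{prop on preliminary mapping estimates II} together with~\eqref{reverse_C0}:
\begin{equation}\label{reverse_C1_step}
    \tnorm{\phi}_{C^1\tp{\Bar{U}}}\le\tnorm{\upchi}_{C^1\tp{\Bar{U}}} + C\tnorm{\phi}_{C^0\tp{\Bar{U}}}\tp{1 + \tabs{\log\tnorm{\phi}_{C^0\tp{\Bar{U}}}}}.
\end{equation}

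The last step is to pass from the modulus of continuity in $\tnorm{\phi}_{C^0\tp{\Bar{U}}}$ to one in $\tnorm{\upchi}_{C^0\tp{\Bar{U}}}$. The map $t\mapsto t\tp{1 + \tabs{\log t}} = t - t\log t$ is monotonically increasing on $(0,1)$ (since its derivative $-\log t$ is positive there), so combining with~\eqref{reverse_C0} the right-hand term of~\eqref{reverse_C1_step} is bounded by $C\tp{C\tnorm{\upchi}_{C^0}}\tp{1 + |\log(C\tnorm{\upchi}_{C^0})|}$ whenever $C\tnorm{\upchi}_{C^0}<1$, and the latter is bounded in turn by a constant multiple of $\tnorm{\upchi}_{C^0\tp{\Bar{U}}}\tp{1 + \tabs{\log\tnorm{\upchi}_{C^0\tp{\Bar{U}}}}}$ after absorbing $|\log C|$ into the prefactor. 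The complementary regime $C\tnorm{\upchi}_{C^0\tp{\Bar{U}}}\ge 1$ is trivial, because then $\tnorm{\phi}_{C^1\tp{\Bar{U}}}\le 2r_1\le 2r_1 C\tnorm{\upchi}_{C^0\tp{\Bar{U}}}$ and the claim follows after enlarging $C$. The main (mild) obstacle is simply bookkeeping this two-regime split and the interplay of constants, but nothing beyond the cited propositions is required.
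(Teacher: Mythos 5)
Your proposal is correct and follows essentially the same route as the paper: write the fundamental-theorem-of-calculus identity for $\pmb{F}_{\ep,\rho,\Qoppa}(\psi) - \pmb{F}_{\ep,\rho,\Qoppa}(\tilde{\psi})$, invoke Proposition~\ref{prop on quantitative closed range estimate} for the $C^0$-bound, and then combine the logarithmic-Lipschitz bound on $\pmb{K}_{\ep,\rho,\Qoppa}$ from Proposition~\ref{prop on preliminary mapping estimates II} with that $C^0$-bound to upgrade to $C^1$. The paper leaves the final bookkeeping (monotonicity of $t\mapsto t(1+|\log t|)$ and the two-regime split) implicit in the phrase ``combining the above \dots gives the claim,'' whereas you spell it out, but the argument is the same.
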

\begin{proof}
    We begin by using Lemma~\ref{lem on principal actors and their basic properties} and the fundamental theorem of calculus to write
    \begin{equation}\label{the fundamental theorem of calculus identity}
        \pmb{F}_{\ep,\rho,\Qoppa}\tp{\psi} - \pmb{F}_{\ep,\rho,\Qoppa}\tp{\tilde{\psi}} = \tp{\psi - \tilde{\psi}} + \bsb{\int_0^1\pmb{\kappa}_{\ep,\rho,\Qoppa}\tp{(1-\tau)\tilde{\psi} + \tau\psi}\;\m{d}\tau}\tp{\psi - \tilde{\psi}}.
    \end{equation}
    We then take the norm in $C^0(\Bar{U})$ of the above expression and use Proposition~\ref{prop on quantitative closed range estimate} to deduce that
    \begin{equation}\label{the low regularity reverse lipschitz estimate}
        \tnorm{\psi - \tilde{\psi}}_{C^0\tp{\Bar{U}}}\le C\tnorm{\pmb{F}_{\ep,\rho,\Qoppa}\tp{\psi} - \pmb{F}_{\ep,\rho,\Qoppa}\tp{\tilde{\psi}}}_{C^0\tp{\Bar{U}}}.
    \end{equation}

    On the other hand from~\eqref{the nonlienar operators} we also have the estimate
    \begin{equation}
        \tnorm{\psi - \tilde{\psi}}_{C^1\tp{\Bar{U}}}\le\tnorm{\pmb{F}_{\ep,\rho,\Qoppa}\tp{\psi} - \pmb{F}_{\ep,\rho,\Qoppa}\tp{\tilde{\psi}}}_{C^1\tp{\Bar{U}}} + \tnorm{\pmb{K}_{\ep,\rho,\Qoppa}\tp{\psi} - \pmb{K}_{\ep,\rho,\Qoppa}\tp{\tilde{\psi}}}_{C^1\tp{\Bar{U}}}.
    \end{equation}
    Combining the above with Proposition~\ref{prop on preliminary mapping estimates II} and the bound~\eqref{the low regularity reverse lipschitz estimate} gives the claim~\eqref{the reverse Holder bound}.
\end{proof}

% ***(((***(((***(()%!% ***(((***(((***(()%!% ***(((***(((***(()%!% ***(((***(((***(()%!% ***(((***(((***(()%!% ***(((***(((***(()%!% ***(((***(((***(()%!% ***(((***(((***(()%!% ***(((***(((***(()%!% ***(((***(((***(()%!% ***(((***(((***(()%!% ***(((***(((***(()%!% ***(((***(((***(()%!% ***(((***(((***(()%!% ***(((***(((***(()%!% ***(((***(((***(()%!% ***(((***(((***(()%!% ***(((***(((***(()%!% ***(((***(((***(()%!% ***(((***(((***(()%!
\subsection{Quantitative local invertibility}\label{subsection on quantitative local invertibility}
% ***(((***(((***(()%!% ***(((***(((***(()%!% ***(((***(((***(()%!% ***(((***(((***(()%!% ***(((***(((***(()%!% ***(((***(((***(()%!% ***(((***(((***(()%!% ***(((***(((***(()%!% ***(((***(((***(()%!% ***(((***(((***(()%!% ***(((***(((***(()%!% ***(((***(((***(()%!% ***(((***(((***(()%!% ***(((***(((***(()%!% ***(((***(((***(()%!% ***(((***(((***(()%!% ***(((***(((***(()%!% ***(((***(((***(()%!% ***(((***(((***(()%!% ***(((***(((***(()%!

Somewhat remarkably, the conclusions of Corollaries~\ref{coro on invertibility of the derivative} and~\ref{coro on reverse Holder estimate} are all we need to deduce local invertibility near zero for the smooth maps $\pmb{F}_{\ep,\rho,\Qoppa}$. The properties established by these results ensure local surjectivity by guaranteeing the convergence of Newton's method, despite the proximity to singularities. We have the following abstract result that is inspired by Hamilton's version of the Nash-Moser inverse function theorem~\cite{MR656198}.

\begin{thmC}[Abstract quantitative local surjectivity]\label{thm on abstract quant loc surj}
    Let $X$ and $Y$ be Banach spaces, $r^+,r^-,\chi\in\R^+$ with $r^-<r^+$, and $F:B_X(0,r^+)\to Y$ be continuously differentiable and satisfy:
    \begin{enumerate}
        \item $F(0) = 0$ and if $x\in B_X(0,r^+)$ is such that $\tnorm{F(x)}_{Y}\le\chi$ then $\tnorm{x}_X\le r^-$.
        \item There exists $L_0\in\R^+$ such that for all $x\in B_X(0,r^+)$ we have the estimates
        \begin{equation}
            \tnorm{F(x)}_X + \tnorm{DF(x)}_{\mathcal{L}(X;Y)}\le L_0.
        \end{equation}
        \item There exists $L_1\in\R^+$ and a map $R:B_X(0,r^+)\to\mathcal{L}\tp{Y;X}$ such that for all $x,\tilde{x}\in B_X(0,r^+)$ with $x\neq\tilde{x}$ and all $y\in Y$ it holds that
        \begin{equation}\label{hypotheses on the right inverse}
            \tnorm{R(x)}_{\mathcal{L}(Y;X)} + \tnorm{R(x) - R(\tilde{x})}_{\mathcal{L}(Y;X)}/\tnorm{x - \tilde{x}}_{X}\le L_1\text{ and }DF(x)R(x)y = y.
        \end{equation}
    \end{enumerate}
    Then, for each $y\in Y$ with $\tnorm{y}_{Y}\le\chi$ there exists $x\in X$ with $\tnorm{x}_{X}\le r^-$ with $F(x) = y$.
\end{thmC}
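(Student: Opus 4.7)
The strategy I would follow is the method of continuity, realized as an initial value problem in $X$. Formally differentiating the target equation $F(x(t)) = ty$ in the parameter $t \in [0,1]$ and invoking the right inverse from hypothesis three leads naturally to the Cauchy problem
\begin{equation}
\dot{x}(t) = R(x(t))y, \qquad x(0) = 0,
\end{equation}
whose initial condition is consistent with $F(0) = 0$ by hypothesis one. Define the vector field $V\colon B_X(0, r^+) \to X$ by $V(x) = R(x)y$. Hypothesis three immediately implies that $V$ is Lipschitz on $B_X(0, r^+)$ with constant at most $L_1\tnorm{y}_Y \le L_1 \chi$ and bounded in norm by the same quantity. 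An application of the Picard--Lindel\"{o}f theorem in Banach spaces then produces a maximal $C^1$ solution $x\colon [0, T^*) \to B_X(0, r^+)$ for some $T^* \in (0, 1]$.

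The core computation along this solution is
\begin{equation}
\f{d}{dt} F(x(t)) = DF(x(t))\,R(x(t))\,y = y \qquad \m{for } t \in [0, T^*),
\end{equation}
which uses the $C^1$ regularity of $F$ and the right-inverse identity $DF(x)R(x) = \m{id}_Y$ from hypothesis three. Integrating from $0$ yields $F(x(t)) = ty$, and hence $\tnorm{F(x(t))}_Y \le t\tnorm{y}_Y \le \chi$ on $[0, T^*)$. The a priori trapping in hypothesis one now fires, producing the uniform estimate $\tnorm{x(t)}_X \le r^- < r^+$ throughout the entire lifespan $[0, T^*)$.

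To conclude $T^* = 1$, I observe that the uniform bound on $V$ forces $x$ to be Lipschitz in $t$, so $x$ extends continuously to $t = T^*$ with $\tnorm{x(T^*)}_X \le r^-$. If $T^* < 1$, restarting Picard--Lindel\"{o}f from the interior point $x(T^*) \in B_X(0, r^+)$ produces a prolongation past $T^*$, contradicting maximality; therefore $T^* = 1$, and the element $x := x(1)$ satisfies $F(x) = y$ and $\tnorm{x}_X \le r^-$, as desired. The main obstacle in executing this plan is not a single calculation but rather the consistency requirement that closes the loop: the a priori bound from hypothesis one must cooperate with the Lipschitz-right-inverse control from hypothesis three to pin the flow inside the strict sub-domain $B_X(0, r^-) \Subset B_X(0, r^+)$, preventing escape from the region where $V$ is defined and Lipschitz. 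Everything else reduces to standard Banach-space ODE theory, and hypothesis two plays no essential role in this continuous-flow formulation (it would, however, be required if one attempted a discrete Newton iteration in place of the ODE, since without a Lipschitz bound on $DF$ the quadratic convergence would need to be replaced by more delicate estimates).
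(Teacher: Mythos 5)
Your proposal is correct, and it genuinely differs from the paper's own argument. You solve a single initial value problem over $[0,1]$, namely $\dot{x}(t) = R(x(t))\,y$, which produces the linear homotopy $F(x(t)) = ty$ and lands exactly on $y$ at time $t=1$. The paper instead runs the damped Newton flow $\dot{\varphi}(t) = R(\varphi(t))\bigl(y - F(\varphi(t))\bigr)$, whose solution satisfies $F(\varphi(t)) = e^{-(t-T_0)}F(\varphi(T_0)) + (1 - e^{-(t-T_0)})y$ and only reaches the target asymptotically; the authors therefore chain together uniform time-steps of length $T$, build the iterates $x_{n+1} = \varphi_{nT}(x_n, y)((n+1)T)$ by a contraction-mapping argument rather than an off-the-shelf Picard--Lindel\"of theorem, and pass to the limit of a Cauchy sequence. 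Your route is shorter, and your observation that hypothesis (2) is inessential is accurate: that bound enters the paper's proof only through the estimate $\|y - F(\gamma(s))\|_Y \le \chi + L_0$ used to pick the time-step, a quantity that simply does not appear in your straight-line flow. The tradeoff is modest: the paper's exponentially damped flow keeps $\|F(\varphi(t))\|_Y \le \chi$ for all $t \ge 0$ automatically and without reference to $t \le 1$, so the a priori trapping is self-reinforcing for arbitrarily long times; your flow relies on restricting to $t\in[0,1]$ so that $t\|y\|_Y \le \chi$, which is exactly what you need but is slightly more fragile as a template. One cosmetic remark: after the extension argument you write ``therefore $T^* = 1$,'' but what the argument actually yields is $T^*\ge 1$ (equivalently, that the solution is defined up to and including $t=1$); either way $x(1)$ is well-defined, $\|x(1)\|_X\le r^-$, and $F(x(1))=y$, so the conclusion stands.
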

\begin{proof}
    We begin by defining an auxiliary map. We set $r^0 = (r^- + r^+)/2$ and 
    \begin{equation}\label{the timestep}
        T = \min\tcb{\tp{r^+ - r^-}/2L_1\tp{\chi + L_0},1/2L_1\tp{\chi + 2L_0}}\in\R^+.
    \end{equation}
    Then, for any $T_0\in[0,\infty)$ let us define
    \begin{equation}\label{the aux map Phi}
        \Phi_{T_0}:\Bar{B_X(0,r^-)}\times\Bar{B_Y\tp{0,\chi}}\times C^0\tp{[T_0,T_0 + T];\Bar{B_X(0,r^0)}}\to C^0\tp{[T_0,T_0 + T];\Bar{B_X(0,r^0)}}
    \end{equation}
    to be the operator
    \begin{equation}\label{the defn of the aux map Phi}
        \Phi_{T_0}(x,y,\gam)(t) = x + \int_{T_0}^tR(\gam(s))\tp{y - F(\gam(s))}\;\m{d}s
    \end{equation}
    for $x\in\Bar{B_X(0,r^-)}$, $y\in\Bar{B_Y(0,\chi)}$, $\gam\in C^0\tp{[T_0,T_0 + T];\Bar{B_X(0,r^0)}}$, and $t\in[T_0,T_0 + T]$. Let us verify that $\Phi_{T_0}$ is well defined in the sense that it maps into the codomain stated in~\eqref{the aux map Phi}. By using the hypotheses of the proposition and the definition of $T$ in~\eqref{the timestep} we estimate for $x$, $y$, $\gam$, and $t$ belonging to the aforementioned sets
    \begin{equation}
        \tnorm{\Phi_{T_0}(x,y,\gam)(t)}_X\le r^- + TL_1\tp{\chi + L_0}\le r^0.
    \end{equation}
    So indeed $\Phi_{T_0}$ is well-defined. 

    We now claim that $\Phi_{T_0}$ is a contraction in its final argument. We estimate for any $x\in\Bar{B_X(0,r^-)}$, $y\in\Bar{B_Y(0,\chi)}$, $\gam,\tilde{\gam}\in C^0\tp{[T_0,T_0 + T],\Bar{B_X(0,r^0)}}$, and $t\in[T_0,T]$ that 
    \begin{equation}
        \tnorm{\Phi_{T_0}\tp{x,y,\gam}(t) - \Phi_{T_0}\tp{x,y,\tilde{\gam}}\tp{t}}_X\le TL_1\tp{\chi + 2L_0}\tnorm{\gam - \tilde{\gam}}_{C^0\tp{[T_0,T_0 + T];X}}.
    \end{equation}
    Thanks to~\eqref{the timestep}, we are assured that $TL_1\tp{\chi + 2L_0}\le1/2$ and thus the above estimate shows the desired contraction estimates.

    We are in a position to invoke the contraction mapping principle and deduce the existence of a function 
    \begin{equation}\label{the fixed point map}
        \varphi_{T_0}:\Bar{B_X(0,r^-)}\times\Bar{B_Y(0,\chi)}\to C^0\tp{[T_0,T_0 + T];\Bar{B_X(0,r^0)}}
    \end{equation}
    with the property that $\Phi_{T_0}\tp{x,y,\varphi_{T_0}(x,y)} = \varphi_{T_0}\tp{x,y}$ for all $x$ and $y$ belonging to the domain of~\eqref{the fixed point map}. Immediately from~\eqref{the defn of the aux map Phi} we deduce that $\varphi_{T_0}(x,y)\in C^1\tp{[T_0,T_0 + T];\Bar{B_X(0,r^0)}}$ and is a solution to the ODE
    \begin{equation}\label{ODE form of the fixed point}
        \varphi_{T_0}\tp{x,y}(T_0) = x\text{ and for all }t\in[T_0,T_0 + T]\;[\varphi_{T_0}\tp{x,y}]'(t) = R(\varphi_{T_0}\tp{x,y}(t))\tp{y - F\tp{\varphi_{T_0}\tp{x,y}\tp{t}}}.
    \end{equation}
    Therefore, for any $z\in Y^\star$ (the dual space of $Y$) we can use~\eqref{ODE form of the fixed point} and the right hand identity of~\eqref{hypotheses on the right inverse} to compute that
    \begin{equation}
        [\tbr{y - F(\varphi_{T_0}\tp{x,y}(\cdot)),z}_{Y,Y^\ast}]'(t) = -\tbr{y - F(\varphi_{T_0}\tp{x,y}(t)),z}_{Y,Y^\ast}
    \end{equation}
    for $t\in[T_0,T_0 + T]$. Hence, for such $t$ we have the identity
    \begin{equation}
        \tbr{y - F(\varphi_{T_0}\tp{x,y}(t)),z}_{Y,Y^\ast} = e^{-\tp{t - T_0}}\tbr{y - F(x),z}_{Y,Y^\ast}.
    \end{equation}
    Since $z\in Y^\ast$ was arbitrary, we deduce that for all $t\in[T_0,T_0 + T]$ 
    \begin{equation}\label{the important identity for newtons method}
        F(\varphi_{T_0}\tp{x,y}(t)) = e^{-\tp{t - T_0}}F(x) + (1 - e^{-(t - T_0)})y.
    \end{equation}

    Let us move on to the next stage of our construction. We fix $y\in\Bar{B_Y(0,\chi)}$ and define a sequence $\tcb{x_n}_{n\in\N}\subset\Bar{B_X(0,r^-)}$ satisfying $F(x_n) = (1-e^{-nT})y$, for all $n\in\N$, inductively as follows. We set $x_0 = 0$; since $F(0) = 0$ we have $F(x_0) = (1 - e^{-0})y$. Now if $n\in\N$ and $x_n\in\Bar{B_X(0,r^-)}$ has been defined and satisfies the correct image identity, we take
    \begin{equation}\label{inductive construction of the sequence}
        x_{n+1} = \varphi_{nT}\tp{x_n,y}((n+1)T)
    \end{equation}
    where $T$ is from~\eqref{the timestep} and $\varphi$ is the fixed point from~\eqref{the fixed point map}. Initially we only know that $x_{n+1}\in\Bar{B_X\tp{0,r^0}}$. However, thanks to~\eqref{the important identity for newtons method} and the induction hypothesis, we get
    \begin{equation}\label{propigation of the indunction hypothesis}
        F(x_{n+1}) = e^{-T}F(x_n) + (1-e^{-T})y = (1 - e^{-(n+1)T})y.
    \end{equation}
    Then from the first hypothesis we find that $\tnorm{x_{n+1}}_{X}\le r^-$ since~\eqref{propigation of the indunction hypothesis} implies that $\tnorm{F(x_{n+1})}_Y\le\chi$. This completes the inductive construction.

    Let us now show that the sequence $\tcb{x_n}_{n\in\N}\subset\Bar{B_X(0,r^-)}$ is Cauchy. To see this, we first use identity~\eqref{inductive construction of the sequence} and the fact that $\varphi$ is a fixed point of $\Phi$ from~\eqref{the defn of the aux map Phi} to obtain the difference identity
    \begin{equation}\label{the differnece identity}
        x_{n+1} - x_n = \int_{nT}^{(n+1)T}R(\varphi_{nT}\tp{x_n,y}\tp{s})\tp{y - F(\varphi_{nT}\tp{x_n,y}(s))}\;\m{d}s.
    \end{equation}
    Now from~\eqref{the important identity for newtons method} it follows that for $s\in[nT,(n+1)T]$
    \begin{equation}
        y - F(\varphi_{nT}\tp{x_n,y}(s)) = e^{-(s - nT)}\tp{y - F(x_n)} = e^{-s}y.
    \end{equation}
    Therefore~\eqref{the differnece identity} admits the upper bound
    \begin{equation}
        \tnorm{x_{n+1} - x_n}_{X} \le L_1\chi\tp{1 - e^{-T}}e^{-nT}\text{ and hence }\sum_{n\in\N}\tnorm{x_{n+1} - x_n}_{X}<\infty.
    \end{equation}
    We conclude that the sequence in question is indeed Cauchy and so there exists $x\in\Bar{B_X(0,r^-)}$ such that $x_n\to x$ as $n\to\infty$. From the identity $F(x_n) = (1-e^{-nT})y$, that holds for every $n\in\N$, we deduce that $F(x) = y$. This completes the proof.
\end{proof}

\begin{coroC}[Local invertibility]\label{coro on local invertibility}
    Let $\ep_1$, $\rho_1$, $\Qoppa_1$, and $r_1$ be the positive values granted by Proposition~\ref{prop on quantitative closed range estimate}. There exists $C,r_2\in\R^+$ with $r_2\le r_1$ such that for all $0<\ep\le\ep_1$, $0\le\rho\le\rho_1$, $0\le\Qoppa\le\Qoppa_1$ the following hold.
    \begin{enumerate}
        \item For each $f\in\Bar{B(0,r_2)}\subset C^1_{\tp{m}}\tp{\Bar{U}}$, there exists a unique $\psi\in\Bar{B(0,r_1)}\subset C^1_{\tp{m}}\tp{\Bar{U}}$ such that $\pmb{F}_{\ep,\rho,\Qoppa}\tp{\psi} = f$. This defines a left inverse $\tp{\pmb{F}_{\ep,\rho,\Qoppa}}^{-1}:\Bar{B(0,r_2)}\to\Bar{B(0,r_1)}$ mapping between subsets of $C^1_{\tp{m}}\tp{\Bar{U}}$.
        \item The left inverses from the previous item obey the logarithmic-Lipschitz estimate
        \begin{equation}
            \tnorm{\tp{\pmb{F}_{\ep,\rho,\Qoppa}}^{-1}\tp{f} - \tp{\pmb{F}_{\ep,\rho,\Qoppa}}^{-1}\tp{\tilde{f}}}_{C^1\tp{\Bar{U}}}\le C\tnorm{f -\tilde{f}}_{C^1\tp{\Bar{U}}} + C\tnorm{f -\tilde{f}}_{C^0\tp{\Bar{U}}}\tp{1 + \tabs{\log\tnorm{f - \tilde{f}}_{C^0\tp{\Bar{U}}}}}
        \end{equation}
        for all $f,\tilde{f}\in\Bar{B(0,r_2)}\subset C^1_{\tp{m}}\tp{\Bar{U}}$.
    \end{enumerate}
\end{coroC}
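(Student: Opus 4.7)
The plan is to apply Theorem~\ref{thm on abstract quant loc surj} with $X = Y = C^1_{\tp{m}}\tp{\Bar{U}}$, $F = \pmb{F}_{\ep,\rho,\Qoppa}$, $r^+ = r_1$, and $r^- = r_1/2$; the parameter $r_2$ in the statement of the corollary will play the role of $\chi$ in that theorem, and will be fixed in the course of checking the hypotheses. The point is that the two most delicate requirements of the abstract theorem -- the nonlinear a priori estimate and the existence of a Lipschitz right inverse -- are supplied directly by Corollaries~\ref{coro on reverse Holder estimate} and~\ref{coro on invertibility of the derivative}, both of which already encode the nondegeneracy assumption through Proposition~\ref{prop on quantitative closed range estimate}. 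The remaining bookkeeping hypothesis on the size of $F$ and $DF$ is furnished by Lemmas~\ref{lem on principal actors and their basic properties} and~\ref{lem on prelim estimates 1}; the constants $L_0$ and $L_1$ produced in this way blow up as $\ep\to 0$, but this is irrelevant, since the abstract theorem permits any finite $L_0,L_1$ and its output radius $r^-$ depends solely on $\chi$.

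The choice of $r_2$ will be made when verifying the a priori estimate. Applying Corollary~\ref{coro on reverse Holder estimate} with $\tilde\psi = 0$, and noting that $\pmb{F}_{\ep,\rho,\Qoppa}(0)=0$, gives for any $\psi\in\Bar{B(0,r_1)}$ the bound
\begin{equation*}
    \tnorm{\psi}_{C^1\tp{\Bar{U}}} \le C\tnorm{\pmb{F}_{\ep,\rho,\Qoppa}\tp{\psi}}_{C^1\tp{\Bar{U}}} + C\tnorm{\pmb{F}_{\ep,\rho,\Qoppa}\tp{\psi}}_{C^0\tp{\Bar{U}}}\tp{1 + \tabs{\log\tnorm{\pmb{F}_{\ep,\rho,\Qoppa}\tp{\psi}}_{C^0\tp{\Bar{U}}}}}.
\end{equation*}
Since the right-hand side, viewed as a function of $\tnorm{\pmb{F}(\psi)}_{C^1}$, is continuous and vanishes at the origin (the logarithmic singularity being absorbed by its prefactor), I can and will choose $r_2\in(0,r_1]$ so small that $\tnorm{\pmb{F}_{\ep,\rho,\Qoppa}(\psi)}_{C^1(\Bar{U})}\le r_2$ forces $\tnorm{\psi}_{C^1(\Bar{U})}\le r_1/2$. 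The crucial observation is that the constant $C$ in Corollary~\ref{coro on reverse Holder estimate} does not depend on $\ep,\rho,\Qoppa$, so this $r_2$ is uniform. For the right inverse, I will take $R(\psi)=[D\pmb{F}_{\ep,\rho,\Qoppa}(\psi)]^{-1}$: its invertibility, the bound $\tnorm{R(\psi)}_{\mathcal L(C^1)}\le C/\ep$, and the Lipschitz estimate with constant $C/\ep^3$ are all supplied by Corollary~\ref{coro on invertibility of the derivative}, estimates~\eqref{important estimate 1} and~\eqref{important estiamte 2}. With all hypotheses in place, Theorem~\ref{thm on abstract quant loc surj} produces, for each $f\in\Bar{B(0,r_2)}\subset C^1_{\tp{m}}\tp{\Bar{U}}$, a preimage $\psi\in\Bar{B(0,r_1/2)}\subset\Bar{B(0,r_1)}$ with $\pmb{F}_{\ep,\rho,\Qoppa}(\psi)=f$.

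Finally, both uniqueness and the logarithmic-Lipschitz estimate of the second item fall out of one further application of Corollary~\ref{coro on reverse Holder estimate}. For uniqueness, if $\pmb{F}_{\ep,\rho,\Qoppa}(\psi_1)=\pmb{F}_{\ep,\rho,\Qoppa}(\psi_2)=f$ with $\psi_1,\psi_2\in\Bar{B(0,r_1)}$, then~\eqref{the reverse Holder bound} gives $\tnorm{\psi_1-\psi_2}_{C^1\tp{\Bar{U}}}\le 0$, using the convention $t(1+|\log t|)\to 0$ as $t\to 0^+$; hence $\psi_1 = \psi_2$ and the left inverse $\tp{\pmb{F}_{\ep,\rho,\Qoppa}}^{-1}$ is a well-defined function. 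For the quantitative estimate, I simply set $\psi = \tp{\pmb{F}_{\ep,\rho,\Qoppa}}^{-1}(f)$ and $\tilde\psi = \tp{\pmb{F}_{\ep,\rho,\Qoppa}}^{-1}(\tilde f)$ in~\eqref{the reverse Holder bound} to recover the desired inequality verbatim. I do not anticipate a substantive obstacle: the proof is an assembly step, and the genuine analytic content -- converting nondegeneracy into coercivity of $D\pmb{F}$ and running Newton iteration in a parameter-uniform neighborhood despite the singular limit $\ep\to0$ -- has already been carried out upstream in Sections~\ref{subsection on preliminary estimates and identities} and~\ref{subsection on key consequences of nondegeneracy}.
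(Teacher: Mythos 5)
Your proposal is correct and follows essentially the same route as the paper: both verify the hypotheses of Theorem~\ref{thm on abstract quant loc surj} using Corollary~\ref{coro on reverse Holder estimate} (with $\tilde\psi = 0$) for the nonlinear a priori estimate and Corollary~\ref{coro on invertibility of the derivative} for the right inverse, choose $r_2$ small via the vanishing of $t(1+|\log t|)$ at the origin (the paper writes this explicitly as a $\tnorm{\cdot}^{1/2}$ bound to set $\chi = (r_1/2C)^2$), and then obtain uniqueness and the logarithmic-Lipschitz estimate from one more application of Corollary~\ref{coro on reverse Holder estimate}. No substantive differences.
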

\begin{proof}
    We begin by proving the first item. The strategy is to show that the hypotheses of Theorem~\ref{thm on abstract quant loc surj} are satisfied for $F = \pmb{F}_{\ep,\rho,\Qoppa}$, $X = Y = C^1_{\tp{m}}\tp{\Bar{U}}$, $r^+ = r_1$, and $\chi$, $r^-$ to be determined. Thanks to Proposition~\ref{coro on reverse Holder estimate} in the special case that $\tilde{\psi} = 0$, we know that
    \begin{equation}
        \tnorm{\psi}_{C^1\tp{\Bar{U}}}\le C\tnorm{\pmb{F}_{\ep,\rho,\Qoppa}\tp{\psi}}_{C^1\tp{\Bar{U}}}\tp{1 + \tabs{\log\tnorm{\pmb{F}_{\ep,\rho,\Qoppa}\tp{\psi}}_{C^1\tp{\Bar{U}}}}}\le C\tnorm{\pmb{F}_{\ep,\rho,\Qoppa}\tp{\psi}}_{C^1\tp{\Bar{U}}}^{1/2}
    \end{equation}
    for all $\tnorm{\psi}_{C^1\tp{\Bar{U}}}\le r_1$. Therefore, we may take $r^- = r_1/2$ and $\chi = (r_1/2C)^2$ and conclude (by also implicitly using Corollary~\ref{coro on invertibility of the derivative}) that the hypotheses of Theorem~\ref{thm on abstract quant loc surj} are met. We set $r_2 = \chi$.

    We are thus granted for each $f\in\Bar{B(0,r_2)}\subset C_{\tp{m}}^1\tp{\Bar{U}}$ at least one $\psi\in\Bar{B(0,r_1)}\subset C^1_{\tp{m}}\tp{\Bar{U}}$ such that $
    \pmb{F}_{\ep,\rho,\Qoppa}\tp{\psi} = f$. That there is at most one such $\psi$ follows from Corollary~\ref{coro on reverse Holder estimate}'s establishment of local injectivity. This proves the first item. The second item is an immediate consequence of the aforementioned corollary.
\end{proof}

We now recall the definition of the functions $f_{\ep,\rho,\Qoppa}$ introduced in equation~\eqref{the source term target} of Lemma~\ref{lem on principal actors and their basic properties}. Our next result, in a certain sense, proves Theorem~\ref{thm_main_1}, \ref{thm_main_2}, and~\ref{thm_main_3} simultaneously.

\begin{coroC}[Existence and estimates for the perturbed equation]\label{coro on existence and estimates for the regularized equation}
        Let $\ep_1,\rho_1$, $\Qoppa_1$, and $r_1$ be the positive values granted by Proposition~\ref{prop on quantitative closed range estimate}, and let $0<r_2\le r_1$ be from Corollary~\ref{coro on local invertibility}. There exists $\ep_\star,\rho_\star,\Qoppa_\star,C\in\R^+$ with $\ep_\star\le\ep_1$, $\rho_\star\le\rho_1$, $\Qoppa_\star\le\Qoppa_1$ such that for all $0<\ep\le\ep_\star$, $0\le\rho\le\rho_\star$, $0\le\Qoppa\le\Qoppa_\star$ the following hold.
    \begin{enumerate}
        \item There exists $\psi_{\ep,\rho,\Qoppa}\in \Bar{B(0,r_1)}\subset C_{\tp{m}}^1\tp{\Bar{U}}$ satisfying $\pmb{F}_{\ep,\rho,\Qoppa}\tp{\psi_{\ep,\rho,\Qoppa}} = f_{\ep,\rho,\Qoppa}$ along with the estimates
        \begin{equation}\label{estimates on the solutions}
            \tnorm{\psi_{\ep,\rho,\Qoppa}}_{\m{LL}^1\tp{\Bar{U}}}\le C,\;\tnorm{\psi_{\ep,\rho,\Qoppa}}_{C^1\tp{\Bar{U}}}\le C\tp{\ep\log\tp{1/\ep} + \rho\log\tp{1/\rho} + \Qoppa\log\tp{1/\Qoppa}}.
        \end{equation}
    \item There exists $\Psi_{\ep,\rho,\Qoppa}\in C^\infty_{\tp{m}}\tp{\Bar{B\tp{0,\Qoppa^{-1/2}}}}$ and $\omega_{\ep,\rho,\Qoppa}\in C^\infty_{\tp{m},\m{c}}\tp{B(0,\Qoppa^{-1/2}/2)}$ satisfying
        \begin{equation}\label{what it means to solve the steady rotating euler equations}
            \Psi_{\ep,\rho,\Qoppa} = \bf{N}_\Qoppa\tsb{\omega_{\ep,\rho,\Qoppa}} - \f{\Omega}{2}\tabs{\cdot}^2 - c\text{ and }\grad^\perp\Psi_{\ep,\rho,\Qoppa}\cdot\grad\omega_{\ep,\rho,\Qoppa} = 0\text{ on }\Bar{B(0,\Qoppa^{-1/2})};
        \end{equation}
        moreover, recalling $\Gamma^\Psi_\ep$ from Lemma~\ref{lem on regularized hevi},
        \begin{equation}\label{unfortunately this needs to be cited}
            \Psi_{\ep,\rho,\Qoppa} = \Psi + \psi_{\ep,\rho,\Qoppa}\text{ and }\omega_{\ep,\rho,\Qoppa} = \sum_{k=-M}^M\sig_k\Gamma^\Psi_\ep\tp{\psi_{\ep,\rho,\Qoppa} + k \rho}
        \end{equation}
        on the set $\Bar{U}$. In the case that $\Qoppa>0$ we also have that $\Psi_{\ep,\rho,\Qoppa} + \Omega|\cdot|^2/2$ is constant on $\pd B(0,\Qoppa^{-1/2})$.
        \item  We have the estimates
        \begin{multline}\label{estimates on the stream functions and vorticity in BV}
            \tnorm{\omega_{\ep,\rho,\Qoppa} - \mathds{1}_{\tilde{U}}}_{L^1\tp{B(0,\Qoppa^{-1/2})}} + \tabs{\tnorm{\grad\omega_{\ep,\rho,\Qoppa}}_{\m{TV}\tp{B(0,\Qoppa^{-1/2})}} - \tnorm{\grad\mathds{1}_{\tilde{U}}}_{\m{TV}\tp{B(0,\Qoppa^{-1/2})}}} \\+ \tnorm{\grad\tp{\Psi_{\ep,\rho,\Qoppa} - \Psi}}_{L^\infty\tp{B(0,\Qoppa^{-1/2})}}\le C\tp{\ep\log\tp{1/\ep} + \rho\log\tp{1/\rho} + \Qoppa\log\tp{1/\Qoppa}},
        \end{multline}
        with $\m{TV}$ referring to the total variation norm -- see Section~\ref{section on notational conventions}.
    \end{enumerate}
\end{coroC}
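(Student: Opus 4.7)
The plan is to apply Corollary~\ref{coro on local invertibility} once the source term $f_{\ep,\rho,\Qoppa}$ has been quantitatively estimated, and then to upgrade the resulting solution to the full pair $(\Psi_{\ep,\rho,\Qoppa},\omega_{\ep,\rho,\Qoppa})$ by the equivalence in Lemma~\ref{lem on principal actors and their basic properties}(3).

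The first step is a quantitative estimate on $f_{\ep,\rho,\Qoppa}$. Using $\sum_k \sig_k = 1$, I would decompose
\begin{equation*}
    f_{\ep,\rho,\Qoppa} = \sum_{k=-M}^M \sig_k \bf{N}_{\Qoppa}\sb{\Gamma^\Psi_\ep(k\rho) - \mathds{1}_{\tilde U}} + (\bf{N}_{\Qoppa} - \bf{N}_0)\sb{\mathds{1}_{\tilde U}}.
\end{equation*}
The differences $\Gamma^\Psi_\ep(k\rho) - \mathds{1}_{\tilde U}$ are bounded pointwise by $1$ and supported in a tubular region whose Lebesgue measure is $O(\ep + |k|\rho)$ thanks to the area estimate~\eqref{The area estimates} combined with the coarea formula. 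Pairing these bounds with the logarithmic kernel of $\bf{N}_\Qoppa$ (via the pointwise bounds in~\eqref{ee_1}, with the gradient incurring the usual $\log$ factor) yields
\begin{equation*}
    \tnorm{f_{\ep,\rho,\Qoppa}}_{C^0(\bar U)} \le C(\ep + \rho + \Qoppa), \quad \tnorm{f_{\ep,\rho,\Qoppa}}_{C^1(\bar U)} \le C\bp{\ep \log(1/\ep) + \rho\log(1/\rho) + \Qoppa \log(1/\Qoppa)},
\end{equation*}
the boundary-correction term being controlled by Taylor expanding the right-hand summand of~\eqref{generalized Newtonian potentials} near $\Qoppa = 0$ on the bounded set $\bar U$. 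Shrinking $\ep_\star,\rho_\star,\Qoppa_\star$ so that $\tnorm{f_{\ep,\rho,\Qoppa}}_{C^1(\bar U)} \le r_2$, Corollary~\ref{coro on local invertibility} produces the unique $\psi_{\ep,\rho,\Qoppa}$. The $C^1$-estimate in~\eqref{estimates on the solutions} follows by applying item (2) of Corollary~\ref{coro on local invertibility} with $\tilde f = 0$ and inserting the $C^0$ and $C^1$ bounds above. The uniform $\m{LL}^1$-bound comes from rewriting the fixed-point identity as $\psi_{\ep,\rho,\Qoppa} = f_{\ep,\rho,\Qoppa} - \pmb{K}_{\ep,\rho,\Qoppa}(\psi_{\ep,\rho,\Qoppa})$: the auxiliary claim~\eqref{ee_0} inside the proof of Proposition~\ref{prop on preliminary mapping estimates II} bounds $\pmb{K}_{\ep,\rho,\Qoppa}(\psi_{\ep,\rho,\Qoppa})$ uniformly in $\m{LL}^1(\bar U)$, and a parallel direct computation handles $f_{\ep,\rho,\Qoppa}$.

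Next I would extend globally. Define $\omega_{\ep,\rho,\Qoppa} = \sum_k \sig_k \Gamma^\Psi_\ep(\psi_{\ep,\rho,\Qoppa} + k\rho)$ on $\R^2$, which is compactly supported in $B(0,\Qoppa^{-1/2}/2)$ by Lemma~\ref{lem on regularized hevi}(4); then set $\Psi_{\ep,\rho,\Qoppa} = \bf{N}_\Qoppa[\omega_{\ep,\rho,\Qoppa}] - \Omega|\cdot|^2/2 - c$. The identity $\Psi_{\ep,\rho,\Qoppa} = \Psi + \psi_{\ep,\rho,\Qoppa}$ on $\bar U$ is exactly the equivalence of Lemma~\ref{lem on principal actors and their basic properties}(3). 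Smoothness of $\omega_{\ep,\rho,\Qoppa}$ propagates via an elliptic bootstrap: on $\bar U$ one rewrites $-\Delta \psi_{\ep,\rho,\Qoppa} = -\omega_{\ep,\rho,\Qoppa} + 2\Omega$ (plus smooth corrections of $\Qoppa$-origin) and, since $\omega_{\ep,\rho,\Qoppa}$ is a smooth function of $\Psi + \psi_{\ep,\rho,\Qoppa}$, each derivative of $\psi_{\ep,\rho,\Qoppa}$ gained by elliptic regularity passes to $\omega_{\ep,\rho,\Qoppa}$ through the chain rule; outside $\bar U$ the vorticity is locally constant (Lemma~\ref{lem  on simple consequences of admissibility}(4)) so $\Psi_{\ep,\rho,\Qoppa}$ inherits $C^\infty$ smoothness from $\bf{N}_\Qoppa$ acting on a piecewise-smooth source. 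The transport equation $\grad^\perp\Psi_{\ep,\rho,\Qoppa}\cdot\grad\omega_{\ep,\rho,\Qoppa} = 0$ then holds because on $\bar U$ the vorticity has the form $V(\Psi_{\ep,\rho,\Qoppa})$ with $V(t) = \sum_k \sig_k \gamma(-(t+k\rho)/\ep)$, while outside $\bar U$ it is locally constant. The Dirichlet-type boundary behavior $\Psi_{\ep,\rho,\Qoppa} + \Omega|\cdot|^2/2 \equiv \text{const}$ on $\partial B(0,\Qoppa^{-1/2})$ is immediate from~\eqref{generalized Newtonian potentials}.

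For the deviation estimates~\eqref{estimates on the stream functions and vorticity in BV}, the $L^1$-bound follows from the pointwise bound $|\omega_{\ep,\rho,\Qoppa} - \mathds 1_{\tilde U}| \le 1 + \sum_k|\sig_k|$, the vanishing of the difference outside $U$, and the estimate $\mathcal H^2(\{x\in U : |\Psi + \psi_{\ep,\rho,\Qoppa} + k\rho| \le \ep\}) \le C(\ep + |k|\rho + \tnorm{\psi_{\ep,\rho,\Qoppa}}_{C^1})$ from~\eqref{The area estimates} and coarea. The $L^\infty$-gradient bound on $\bar U$ is just the $C^1$-estimate on $\psi_{\ep,\rho,\Qoppa}$, and outside $\bar U$ reduces to $\bf{N}_\Qoppa$ acting on the $L^1 \cap L^\infty$-small difference just bounded. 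The step I expect to be the main obstacle is the $\m{TV}$ comparison. Here I would apply coarea to the smooth representative, obtaining (after choosing $\ep_\star,\rho_\star$ small enough that the supports of $(\Gamma^\Psi_\ep)'(\psi_{\ep,\rho,\Qoppa}+k\rho)$ are mutually disjoint across $k$)
\begin{equation*}
    \int_U |\grad \omega_{\ep,\rho,\Qoppa}| = \sum_{k=-M}^M |\sig_k| \int_{-1}^{1} |\gamma'(-s)| \mathcal H^1\bp{\{\Psi + \psi_{\ep,\rho,\Qoppa} = -k\rho + \ep s\}} ds.
\end{equation*}
Invoking item (4) of Corollary~\ref{coro on refined estimates on level sets} to write each $\mathcal H^1$ as $\mathcal H^1(\Sigma) + O(\ep + |k|\rho + \tnorm{\psi_{\ep,\rho,\Qoppa}}_{C^1})$ and comparing with $\tnorm{\grad \mathds 1_{\tilde U}}_{\m{TV}} = \mathcal H^1(\Sigma)$ gives the claim, the logarithmic loss in the rate being inherited from $\tnorm{\psi_{\ep,\rho,\Qoppa}}_{C^1}$. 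The delicate point is keeping track of cancellations among the $\sig_k$ as they are summed against the $|\gamma'|$-weighted level-set perimeters, so that the $(\sum_k |\sig_k|)\,\mathcal H^1(\Sigma)$ bulk matches $\mathcal H^1(\Sigma)$ up to the claimed error -- this is where the structural assumption on $\sig_k$ in the hypotheses of the downstream theorems has to be imported.
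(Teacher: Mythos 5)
Your overall architecture follows the paper's (estimate the source term, invoke Corollary~\ref{coro on local invertibility}, extend and bootstrap via Lemma~\ref{lem on principal actors and their basic properties}, then prove the deviation bounds), but the total variation step contains a genuine gap. You reduce to the case where the supports of $(\Gamma^\Psi_\ep)'(\psi_{\ep,\rho,\Qoppa}+k\rho)$ are pairwise disjoint ``after choosing $\ep_\star,\rho_\star$ small enough'': this is not achievable, because disjointness is a \emph{relation} between $\ep$ and $\rho$ (roughly $\rho\gtrsim\ep$), whereas the corollary must hold for every pair with $0<\ep\le\ep_\star$, $0\le\rho\le\rho_\star$ -- in particular for $\rho=0$, which is exactly the case fed into Theorem~\ref{thm_main_1} and where all the supports coincide. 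Moreover, even in the regime where your splitting is legitimate, the leading term it produces is $\big(\sum_k|\sig_k|\big)\,\tnorm{\gam'}_{L^1}\,\mathcal{H}^1(\Sigma)$, and there is no ``structural assumption on the $\sig_k$'' available to reconcile this with $\mathcal{H}^1(\Sigma)$: the only hypothesis is $\sum_k\sig_k=1$. The paper's proof never splits over $k$: in \eqref{_label_1}--\eqref{_label_2} the coarea formula is applied to the \emph{aggregate} profile $\mathsf{v}_{\ep,\rho}(t)=\sum_k\sig_k\gam\tp{-(t+k\rho)/\ep}$, so the normalization enters as $\int \mathsf{v}_{\ep,\rho}'(t)\,\m{d}t=-1$ and the bulk cancels identically, uniformly in the relation between $\ep$ and $\rho$; what survives is weighted by $|\mathsf{v}_{\ep,\rho}'|$ and is controlled by the level-set continuity estimate of item (4) of Corollary~\ref{coro on refined estimates on level sets}, together with $\tnorm{\psi_{\ep,\rho,\Qoppa}}_{C^1(\Bar{U})}$. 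The cancellation you were hunting for is exactly this, and it only appears if the $k$-sum is kept inside the vorticity profile before the coarea formula is applied.

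A secondary under-justification: your claimed bound $\tnorm{f_{\ep,\rho,\Qoppa}}_{C^1(\Bar{U})}\le C\tp{\ep\log(1/\ep)+\rho\log(1/\rho)+\Qoppa\log(1/\Qoppa)}$ does not follow from ``support of measure $O(\ep+|k|\rho)$'' paired with the generic pointwise kernel bounds of \eqref{ee_1}: for a set $E$ of measure $\delta$ one only gets $\int_E|x-y|^{-1}\,\m{d}y\le C\sqrt{\delta}$, which is far too large. What makes the estimate true is that the support is a thin tube around a level set, so one must use the level-set density estimates (items (1)--(3) of Corollary~\ref{coro on refined estimates on level sets}, equivalently Proposition~\ref{prop on uniform intrgrals estimates for the Newtonian potential}), or follow the paper's route: prove smallness in $\m{LL}(\Bar{U})$ via coarea computations as in \eqref{just the way you look}, pair it with the uniform $\m{LL}^1$ bound \eqref{initial LL1 estimates on the source term}, and convert to the $C^1$ bound with only a logarithmic loss through the interpolation inequality of Appendix~\ref{appendix on an interpolation inequality}. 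This piece is fillable with tools already in the paper, but as written the step is not justified; the remaining parts of your proposal (local inversion, the $\m{LL}^1$ bound on $\psi_{\ep,\rho,\Qoppa}$, the extension and smoothness bootstrap, the $L^1$ and $L^\infty$ deviation bounds) track the paper's argument.
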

\begin{proof}
    For the first part of the proof, we shall estimate the norm of the source terms $f_{\ep,\rho,\Qoppa}$, defined in~\eqref{the source term target}, in the space $C^1_{\tp{m}}\tp{\Bar{U}}$. We begin by considering the decomposition
    \begin{equation}\label{the summing up of the decomposition}
        f_{\ep,\rho,\Qoppa} = f^0_{\ep,\rho,\Qoppa} + f^1_{\ep,\rho} + f^2_{\ep} 
    \end{equation}
    with 
    \begin{equation}\label{the further decomposition of the source_0}
        f^0_{\ep,\rho,\Qoppa} = \sum_{k=-M}^M\sig_k\tp{\bf{N}_{\Qoppa} - \bf{N}_0}\tsb{\Gamma^\Psi_\ep\tp{k\rho}},
    \end{equation}
    \begin{equation}\label{the further decomposition of the source_1}
        f^1_{\ep,\rho} = \sum_{k=-M}^M\sig_k\bf{N}\ast\tp{\Gamma^\Psi_\ep\tp{k\rho} - \Gamma^\Psi_\ep\tp{0}},
    \end{equation}
    and
    \begin{equation}\label{the further decomposition of the source_2}
        f^2_\ep = \bf{N}\ast\tp{\Gamma_\ep^\Psi\tp{0} - \mathds{1}_{\tilde{U}}}.
    \end{equation}
    The claim is that there exists $C$ such that for all $0<\ep\le\ep_1$, $0\le\Qoppa\le\Qoppa_1$, and $0\le\rho\le\rho_1$ we have the estimates
    \begin{multline}\label{the claim on the size of the source terms}
        \tnorm{f^0_{\ep,\rho,\Qoppa}}_{C^0\tp{\Bar{U}}} + \tp{1/\log\tp{1/\Qoppa}}\tnorm{f^0_{\ep,\rho,\Qoppa}}_{C^1\tp{\Bar{U}}}\le C\Qoppa,\;\tnorm{f^1_{\ep,\rho}}_{C^0\tp{\Bar{U}}} + \tp{1/\log\tp{1/\rho}}\tnorm{f^1_{\ep,\rho}}_{C^1\tp{\Bar{U}}}\le C\rho,\\\text{and }\tnorm{f^2_\ep}_{C^0\tp{\Bar{U}}} + \tp{1/\log\tp{1/\ep}}\tnorm{f^2_\ep}_{C^1\tp{\Bar{U}}}\le C\ep.
    \end{multline}
    
    By arguing exactly as in the first part of the proof of Proposition~\ref{prop on preliminary mapping estimates II}, we find that 
    \begin{equation}\label{initial LL1 estimates on the source term}
        \tnorm{f^0_{\ep,\rho,\Qoppa}}_{\m{LL}^1\tp{\Bar{U}}} + \tnorm{f^1_{\ep,\rho}}_{\m{LL}^1\tp{\Bar{U}}} + \tnorm{f^2_{\ep}}_{\m{LL}^1\tp{\Bar{U}}}\le C.
    \end{equation}
    The interpolation results of Appendix~\ref{appendix on an interpolation inequality} combine with~\eqref{initial LL1 estimates on the source term} to reduce the task of establishing estimate~\eqref{the claim on the size of the source terms} to showing the following bounds:
    \begin{equation}\label{second LL estimates on the source terms}
        \tnorm{f^0_{\ep,\rho,\Qoppa}}_{\m{LL}\tp{\Bar{U}}}\le C\Qoppa,\;\tnorm{f^1_{\ep,\rho}}_{\m{LL}\tp{\Bar{U}}}\le C\rho,\text{ and }\tnorm{f^2_\ep}_{\m{LL}\tp{\Bar{U}}}\le C\ep. 
    \end{equation}

    The first bound in~\eqref{second LL estimates on the source terms} for the source term $f^0_{\ep,\rho,\Qoppa}$ holds thanks to the simple uniform kernel bounds
    \begin{equation}
        \tabs{\bf{N}_\Qoppa(x,y) - \bf{N}_0(x,y)} = \tabs{\log\tp{1 - 2\Qoppa x\cdot y + \Qoppa^2|x|^2|y|^2}}/4\pi\le C\Qoppa
    \end{equation}
    and
    \begin{equation}
        \tabs{\tp{\bf{N}_\Qoppa - \bf{N}_0}(x,y) - \tp{\bf{N}_\Qoppa - \bf{N}_0}(\tilde{x},y)}/\tabs{x - \tilde{x}}\le C\Qoppa,
    \end{equation}
    that are valid for all $x,\tilde{x},y\in\Bar{U\cup\tilde{U}}\subset B(0,\Qoppa_0^{-1/2}/2)$ with $x\neq\tilde{x}$.

   The establishment of the second bound in~\eqref{second LL estimates on the source terms} is our next task. For $x\in\Bar{U}$ the coarea formula (Theorem 3.2.11 in~\cite{MR257325} or Theorem 3.13 in~\cite{MR3409135}) and the fundamental theorem of calculus give us the identity
   \begin{equation}
       f^1_{\ep,\rho}\tp{x} = -\f{\rho}{2\pi}\sum_{k=-M}^Mk\sig_k\int_0^1\int_{-\ep-|k\rho|}^{\ep + |k\rho|}\f{1}{\ep}\gam'\bp{-\f{t + \tau k\rho}{\ep}}\int_{\tcb{\Psi = t}}\f{\log|x - y|}{|\grad\Psi(y)|}\;\m{d}\mathcal{H}^1\tp{y}\;\m{d}t\;\m{d}\tau.
   \end{equation}
   By taking the norm in $\m{LL}\tp{\Bar{U}}$, arguing as in the proof of Proposition~\ref{prop on uniform intrgrals estimates for the Newtonian potential} to bound the logarithmic terms over the level sets, and performing a simple change of variables, we arrive at the desired estimate:
   \begin{equation}
       \tnorm{f^1_{\ep,\rho}}_{\m{LL}\tp{\Bar{U}}}\le C\rho\sum_{k=-M}^M\int_0^1\int_{-\ep - |k\rho|}^{\ep + |k\rho|}\f{1}{\ep}\babs{\gam'\bp{-\f{t + \tau k\rho}{\ep}}}\;\m{d}t\;\m{d}\tau\le C\rho.
   \end{equation}

   Let us now discuss the proof of the third bound of~\eqref{second LL estimates on the source terms}. We again use the coarea formula to derive the pointwise equality
   \begin{equation}\label{just the way you look}
       f^2_\ep\tp{x} = \f{1}{2\pi}\int_{-\ep}^\ep\tp{\gam(-t/\ep) - \m{sgn}\tp{-t}}\int_{\tcb{\Psi = t}}\f{\log\tabs{x - y}}{\tabs{\grad\Psi(y)}}\;\m{d}\mathcal{H}^1\tp{y}\;\m{d}t
   \end{equation}
   for $x\in\Bar{U}$. Again we take the norm in $\m{LL}(\Bar{U})$ of~\eqref{just the way you look}, estimate the logarithmic contributions with the proof of Proposition~\ref{prop on uniform intrgrals estimates for the Newtonian potential}, and make a change of variables to get the sought after bound:
   \begin{equation}
       \tnorm{f^2_\ep}_{\m{LL}\tp{\Bar{U}}}\le C\int_{-\ep}^\ep\tabs{\gam(-t/\ep) - \m{sgn}\tp{-t}}\;\m{d}t\le C\ep.
   \end{equation}

    Estimate~\eqref{the claim on the size of the source terms} paired with the decomposition~\eqref{the summing up of the decomposition} implies the existence of $0<\ep_\star\le\ep_1$, $0<\rho_\star\le\rho_1$, and $0<\Qoppa_\star\le\Qoppa_1$ such that $\tnorm{f_{\ep,\rho,\Qoppa}}_{C^1\tp{\Bar{U}}}\le r_2$ whenever $0<\ep\le\ep_\star$, $0\le\rho\le\rho_\star$, and $0\le\Qoppa\le\Qoppa_\star$. The first item of Corollary~\ref{coro on local invertibility} then allows us to define $\psi_{\ep,\rho,\Qoppa} = \tp{\pmb{F}_{\ep,\rho,\Qoppa}}^{-1}\tp{f_{\ep,\rho,\Qoppa}}\in\Bar{B(0,r_1)}\subset C^1_{\tp{m}}\tp{\Bar{U}}$. In turn, the estimates~\eqref{the claim on the size of the source terms} in conjunction with the second item of the aforementioned corollary imply the second estimate in~\eqref{estimates on the solutions}. The first estimate of~\eqref{estimates on the solutions} follows from~\eqref{initial LL1 estimates on the source term} and~\eqref{ee_0}. The first item is now shown.

    We turn our attention to the second item. Define $\Psi_{\ep,\rho,\Qoppa}\in C^1_{\tp{m}}\tp{\Bar{U}}$ via $\Psi_{\ep,\rho,\Qoppa} = \Psi + \psi_{\ep,\rho,\Qoppa}$ with the latter summand being the function constructed by the previous item. Then Lemma~\ref{lem on regularized hevi} allows us to define $\omega_{\ep,\rho,\Qoppa}\in C^1_{\tp{m},\m{c}}\tp{B(0,\Qoppa^{-1/2}/2)}$ via 
    \begin{equation}\label{definition of omega}
        \omega_{\ep,\rho,\Qoppa} = \sum_{k=-M}^M\sig_k\Gamma^\Psi_\ep\tp{\psi_{\ep,\rho,\Qoppa} + k \rho}.    
    \end{equation}
    In fact, we have the inclusions
    \begin{equation}\label{support conditions on the vorticity}
        \m{supp}\tp{\omega_{\ep,\rho,\Qoppa}}\subset\Bar{U}\cup U^{\m{in}}\text{ and }\m{supp}\tp{\grad\omega_{\ep,\rho,\Qoppa}}\subset U.
    \end{equation}

    Next, we claim that the formula $\Psi_{\ep,\rho,\Qoppa} = \bf{N}_\Qoppa\tsb{\omega_{\ep,\rho,\Qoppa}} + \Omega\tabs{\cdot}^2/2 - c$ constitutes a smooth extension of $\Psi_{\ep,\rho,\Qoppa}$ to all of $\Bar{B(0,\Qoppa^{-1/2})}$. Since $\omega_{\ep,\rho,\Qoppa}\in C^1_{\tp{m},\m{c}}\tp{B(0,\Qoppa^{-1/2}/2)}$ and $\bf{N}_{\Qoppa}$, being the Poisson kernel, is an operator of order $-2$, the function
    \begin{equation}\label{the extended Psi identity}
        \tilde{\Psi}_{\ep,\rho,\Qoppa} = \bf{N}_\Qoppa\tsb{\omega_{\ep,\rho,\Qoppa}} - \Omega\tabs{\cdot}^2/2 - c
    \end{equation}
    is indeed in the regularity class $C^2_{\tp{m}}\tp{\Bar{B(0,\Qoppa^{-1/2})}}$. That we have the equality $\Psi_{\ep,\rho,\Qoppa} = \tilde{\Psi}_{\ep,\rho,\Qoppa}$ on the set $\Bar{U}$ is then a consequence of $\psi_{\ep,\rho,\Qoppa} = \tp{\pmb{F}_{\ep,\rho,\Qoppa}}^{-1}\tp{f_{\ep,\rho,\Qoppa}}$ and the equivalence in the third item of Lemma~\ref{lem on principal actors and their basic properties}.

    The second item is complete as soon as we verify that: 1) $\omega_{\ep,\rho,\Qoppa},\Psi_{\ep,\rho,\Qoppa}\in C^\infty\tp{\Bar{B(0,\Qoppa^{-1/2})}}$ and 2) the second equation of~\eqref{what it means to solve the steady rotating euler equations} is satisfied (the first equation is satisfied thanks to~\eqref{the extended Psi identity}). Item 1) follows from a simple induction argument, properties of the Poisson kernel, identities~\eqref{definition of omega} and~\eqref{the extended Psi identity}, and Lemma~\ref{lem on regularized hevi}. By the second inclusion of~\eqref{support conditions on the vorticity}, it suffices to verify that $\grad^\perp\Psi_{\ep,\rho,\Qoppa}\cdot\grad\omega_{\ep,\rho,\Qoppa} = 0$ on $U$. But $\omega_{\ep,\rho,\Qoppa} = \mathsf{v}_{\ep,\rho}\tp{\Psi_{\ep,\rho,\Qoppa}}$ where
    \begin{equation}
        \mathsf{v}_{\ep,\rho}\tp{t} = \sum_{k=-M}^M\sig_k\gam\bp{-\f{t + k\rho}{\ep}}\text{ for }t\in\R
    \end{equation}
    is a smooth function and so the equation is satisfied as a result of the chain rule and $\grad^\perp\Psi_{\ep,\rho,\Qoppa}\cdot\grad\Psi_{\ep,\rho,\Qoppa} = 0$.

    At last, we consider the third item. The claimed relative stream function estimate of~\eqref{estimates on the stream functions and vorticity in BV} in the region $\Bar{U}$, i.e. the bound $\tnorm{\grad\tp{\Psi_{\ep,\rho,\Qoppa} - \Psi}}_{L^\infty\tp{\Bar{U}}}\le C\tp{\ep\log\tp{1/\ep} + \rho\log\tp{1/\rho} + \Qoppa\log\tp{1/\Qoppa}}$, follows directly from~\eqref{estimates on the solutions}, so it suffices to study the behavior in the region $\Bar{B(0,\Qoppa^{-1/2})}\setminus\Bar{U}$. By using familiar coarea formula based arguments and that
    \begin{equation}
        \sup\tcb{\tabs{D_1\bf{N}_{\Qoppa}(x,y)}\;:\;x\in\Bar{B(0,\Qoppa^{-1/2})}\setminus\Bar{U},\;y\in\m{supp}\tp{\omega_{\ep,\rho} - \mathds{1}_{\tilde{U}}}}\le C,
    \end{equation}
    we get for any $x\in\Bar{B(0,\Qoppa^{-1/2})}\setminus\Bar{U}$ the pointwise bound
    \begin{multline}\label{also shows the L1 bounds}
        \tabs{\grad\tp{\Psi_{\ep,\rho,\Qoppa} - \Psi}\tp{x}}\le C\int_{U}\tabs{\omega_{\ep,\rho,\Qoppa}\tp{y} - \mathds{1}_{\tilde{U}}\tp{y}}\;\m{d}y + C\Qoppa
        \le C\sum_{k=-M}^M\int_{U}\babs{\gam\bp{-\f{\Psi_{\ep,\rho,\Qoppa} + k\rho}{\ep}} - \gam\bp{-\f{\Psi + k\rho}{\ep}}} \\+ C\int_U\babs{\sum_{k=-M}^M\sig_k\gam\bp{-\f{\Psi + k\rho}{\ep}} - \mathds{1}_{\tilde{U}}}
        + C\Qoppa\le C\tp{\ep\log{1/\ep} + \rho\log\tp{1/\rho} + \Qoppa\log\tp{1/\Qoppa}}.
    \end{multline}
    The desired inequality is now apparent.

    The last thing to prove are the $L^1\tp{B(0,\Qoppa^{-1/2})}$ and $\m{TV}\tp{B(0,\Qoppa^{-1/2})}$ estimates on the vorticity. The $L^1(B(0,\Qoppa^{-1/2}))$ bounds are established by a computation similar to~\eqref{also shows the L1 bounds}. On the other hand, for the $\m{TV}\tp{B(0,\Qoppa^{-1/2})}$-estimates, we use the coarea formula to identify
    \begin{equation}\label{_label_1}
        \tnorm{\grad\mathds{1}_{\tilde{U}}}_{\m{TV}\tp{B(0,\Qoppa^{-1/2})}} = \mathcal{H}^1\tp{\tcb{\Psi = 0}}\text{ and }\tnorm{\grad\omega_{\ep,\rho,\Qoppa}}_{\m{TV}\tp{B(0,\Qoppa^{-1/2})}} = \int_{-\ep-M|\rho|}^{\ep + M|\rho|}\mathsf{v}_{\ep,\rho}'\tp{t}\mathcal{H}^1\tp{\tcb{\Psi_{\ep,\rho,\Qoppa} = t}}\;\m{d}t.
    \end{equation}
    The above can be combined with the fourth item of Corollary~\ref{coro on refined estimates on level sets} to estimate
    \begin{multline}\label{_label_2}
        \tabs{\tnorm{\grad\omega_{\ep,\rho,\Qoppa}}_{\m{TV}\tp{B(0,\Qoppa^{-1/2})}} - \tnorm{\grad\mathds{1}_{\tilde{U}}}_{\m{TV}\tp{B(0,\Qoppa^{-1/2})}}}\le C\sum_{k=-M}^M\sup_{|t + k\rho|\le\ep}\tabs{\mathcal{H}^1\tp{\tcb{\Psi_{\ep,\rho,\Qoppa} = t}} - \mathcal{H}^1\tp{\tcb{\Psi = 0}}}\\\le C\tp{\ep\log\tp{1/\ep} + \rho\log\tp{1/\rho} + \Qoppa\log\tp{1/\Qoppa}}.
    \end{multline}
    Estimate~\eqref{estimates on the stream functions and vorticity in BV} is now shown.
\end{proof}

% ***(((***(((***(()%!% ***(((***(((***(()%!% ***(((***(((***(()%!% ***(((***(((***(()%!% ***(((***(((***(()%!% ***(((***(((***(()%!% ***(((***(((***(()%!% ***(((***(((***(()%!% ***(((***(((***(()%!% ***(((***(((***(()%!% ***(((***(((***(()%!% ***(((***(((***(()%!% ***(((***(((***(()%!% ***(((***(((***(()%!% ***(((***(((***(()%!% ***(((***(((***(()%!% ***(((***(((***(()%!% ***(((***(((***(()%!% ***(((***(((***(()%!% ***(((***(((***(()%!
\subsection{Synthesis}\label{subsection on proofs of main theorems}
% ***(((***(((***(()%!% ***(((***(((***(()%!% ***(((***(((***(()%!% ***(((***(((***(()%!% ***(((***(((***(()%!% ***(((***(((***(()%!% ***(((***(((***(()%!% ***(((***(((***(()%!% ***(((***(((***(()%!% ***(((***(((***(()%!% ***(((***(((***(()%!% ***(((***(((***(()%!% ***(((***(((***(()%!% ***(((***(((***(()%!% ***(((***(((***(()%!% ***(((***(((***(()%!% ***(((***(((***(()%!% ***(((***(((***(()%!% ***(((***(((***(()%!% ***(((***(((***(()%!

Our goal now is to combine the analyses of Sections~\ref{section on existence of admissible states} and~\ref{section on analysis near admissible states} to complete the proofs of the main theorems stated in Section~\ref{subsection on main results and discussion}.

\begin{proof}[Proof of Theorem~\ref{thm_main_1}]
    The desired result is a special consequence of Corollary~\ref{coro on existence and estimates for the regularized equation}. We take $\rho = 0$, $\Qoppa = 0$, and $0<\ep\le\ep_\star$ then define $\Psi_\ep = \Psi_{\ep,0,0}$ and $\omega_{\ep} = \omega_{\ep,0,0}$ where the latter are the smooth functions guaranteed to exist by the second item of the corollary. The neighborhood $U$ is taken as in Lemma~\ref{lem  on simple consequences of admissibility}. The validity of~\eqref{what it means to solve the steady rotating euler equations} immediately implies the first item of the theorem. Similarly, the second item of the theorem is a direct consequence of the estimate~\eqref{estimates on the stream functions and vorticity in BV}. The theorem's third item follows from~\eqref{unfortunately this needs to be cited} and Lemma~\ref{lem on regularized hevi} while the fourth item is a restatement of $\Psi_\ep\in C^\infty_{\tp{m}}\tp{\R^2}$ and $\omega_\ep\in C^\infty_{\tp{m},\m{c}}\tp{\R^2}$. Lastly, the limits~\eqref{THM_1_LIMS} hold by interpolation, equation~\eqref{THM_I_deviation}, and the uniform bounds $\sup_{0<\ep\le\ep_\star}\tnorm{\grad\Psi_\ep}_{\m{LL}\tp{\R^2}}<\infty$.
\end{proof}

\begin{proof}[Proof of Theorem~\ref{thm_main_2}]
    The idea is to pass to the limit $\ep\to0$ in Corollary~\ref{coro on existence and estimates for the regularized equation}. Let $\ep_\star$, $\rho_\star$, and $\Qoppa_\star$ be the small parameters granted by this result. Fix $\rho\in[0,\rho_\star]$ and $\Qoppa\in[0,\Qoppa_\star]$ and define the sequences $\tcb{\Psi^n_{\rho,\Qoppa}}\subset C_{\tp{m}}^\infty\tp{\Bar{B(0,\Qoppa^{-1/2})}}$ and $\tcb{\omega^n_{\rho,\Qoppa}}\subset C^\infty_{\tp{m},\m{c}}\tp{B(0,\Qoppa^{-1/2}/2)}$ via 
    \begin{equation}
        \Psi^n_{\rho,\Qoppa} = \Psi_{\ep_n,\rho,\Qoppa}\text{ and }\omega^n_{\rho,\Qoppa} = \omega_{\ep_n,\rho,\Qoppa}\text{ where }\ep_n = \ep_\star/2^n
    \end{equation}
    and $\Psi_{\cdot,\cdot,\cdot}$ and $\omega_{\cdot,\cdot,\cdot}$ are from the second item of Corollary~\ref{coro on existence and estimates for the regularized equation}.

    The uniform estimates of equation~\eqref{estimates on the solutions} paired with the far-field strategy used in and around equation~\eqref{also shows the L1 bounds} in fact allows one to deduce the uniform bounds
    \begin{equation}
        \sup_{n\in\N}\tnorm{\Psi^n_{\rho,\Qoppa}}_{\m{LL}^1\tp{\Bar{U}}}<\infty\text{ and }\sup_{n\in\N}\tnorm{\grad\Psi^n_{\rho,\Qoppa}}_{\m{LL}\tp{\Bar{B(0,\Qoppa^{-1/2})}}}<\infty.
    \end{equation}
    On the other hand, from~\eqref{estimates on the stream functions and vorticity in BV} and~\eqref{support conditions on the vorticity} we have
    \begin{equation}
        \sup_{n\in\N}\tnorm{\omega^n_{\rho,\Qoppa}}_{\tp{L^\infty\cap\m{BV}}\tp{B(0,\Qoppa^{-1/2}/2)}}<\infty\text{ and }\supp\tp{\omega^n_{\rho,\Qoppa}}\subset\Bar{U}\cup U^{\m{in}},
    \end{equation}
    where we recall that $U^{\m{in}}$ is from the fourth item of Lemma~\ref{lem  on simple consequences of admissibility}.

    By the Arzel\`{a}-Ascoli theorem, compactness properties of functions of bounded variation (e.g. Theorem 3.23 in Ambrosio, Fusco, and Pallara~\cite{MR1857292}), and extraction and relabeling of a diagonal subsequence, we are bestowed with 
    \begin{equation}\label{the result of the compactness argument}
        \Psi_{\rho,\Qoppa}\in C^1_{\tp{m}}\tp{\Bar{B(0,\Qoppa^{-1/2})}}\text{ satisfying }\grad\Psi_{\rho,\Qoppa}\in\m{LL}\tp{\Bar{B(0,\Qoppa^{-1/2})}}\text{ and }\omega_{\rho,\Qoppa}\in\tp{L^\infty\cap\m{BV}}_{\tp{m},\m{c}}\tp{\Bar{B(0,\Qoppa^{-1/2})}}
    \end{equation}
    and may suppose the satisfaction of the following strong limits (for all finite $1\le R\le \Qoppa^{-1/2}$):
    \begin{equation}\label{strong convergences}
        \tnorm{\omega^n_{\rho,\Qoppa} - \omega_{\rho,\Qoppa}}_{L^1\tp{B(0,\Qoppa^{-1/2}/2)}}\to 0\text{ and }\tnorm{\Psi^n_{\rho,\Qoppa} - \Psi_{\rho,\Qoppa}}_{C^1\tp{\Bar{B(0,R)}}}\to 0\text{ as }n\to\infty,
    \end{equation}
    as well as the following weak-$\ast$ convergences:
    \begin{equation}
        \grad\omega^n_{\rho,\Qoppa}\overset{\ast}{\rightharpoonup}\grad\omega_{\rho,\Qoppa}\text{ (in the space of Radon measures) and }\omega^n_{\rho,\Qoppa}\overset{\ast}{\rightharpoonup}\omega_{\rho,\Qoppa}\text{ (in }L^\infty\tp{B(0,\Qoppa^{-1/2})}\text{) as }n\to\infty.
    \end{equation}

    One can directly pass to the limit $n\to\infty$ in the first identity of~\eqref{what it means to solve the steady rotating euler equations} to deduce that 
    \begin{equation}\label{stream function vorticity limit 1}
        \Psi_{\rho,\Qoppa} = \bf{N}_{\Qoppa}\tsb{\omega_{\rho,\Qoppa}} - \Omega\tabs{\cdot}^2/2 - c\text{ in }\Bar{B(0,\Qoppa^{-1/2})}
    \end{equation}
    and hence, upon differentiation of~\eqref{stream function vorticity limit 1}, for almost every $x\in \Bar{B(0,\Qoppa^{-1/2})}$
    \begin{equation}\label{stream function vorticity limit 2}
        \grad\Psi_{\rho,\Qoppa}\tp{x} = -\Omega x - \f{1}{2\pi}\int_{B(0,\Qoppa^{-1/2})}\bf{K}_{\Qoppa^{-1/2}}\tp{x,y}^\perp\omega_{\rho,\Qoppa}(y)\;\m{d}y
    \end{equation}
    where $\bf{K}$ is the Biot-Savart kernel defined in~\eqref{bio savart in a domain}. On the other hand, for all $f\in C^\infty_{\m{c}}\tp{\Bar{B(0,\Qoppa^{-1/2})}}$ we may use the strong convergences in equation~\eqref{strong convergences} to justify passing to the limit $n\to\infty$ in the weak form of the second identity in equation~\eqref{what it means to solve the steady rotating euler equations}. Executing this procedure gives 
    \begin{equation}\label{stream function vorticity limit 3}
        \lim_{n\to\infty}\int_{B(0,\Qoppa^{-1/2})}\omega^n_{\rho,\Qoppa}\grad^\perp\Psi^n_{\rho,\Qoppa}\cdot\grad f = \int_{B(0,\Qoppa^{-1/2})}\omega_{\rho,\Qoppa}\grad^\perp\Psi_{\rho,\Qoppa}\cdot\grad f = 0.
    \end{equation}

    By synthesizing~\eqref{stream function vorticity limit 1}, \eqref{stream function vorticity limit 2}, and~\eqref{stream function vorticity limit 3} we see that the triple $\tp{\Omega,\Psi_{\rho,\Qoppa},\omega_{\rho,\Qoppa}}$ form a steady rotating weak solution to the Euler system, in the sense of Definition~\ref{defn od steady rotating weak solutions to the planar Euler system}, in the domain $\Bar{B(0,\Qoppa^{-1/2})}$. Therefore the first and fifth items of the theorem follows by taking $\Qoppa = 0$, $\omega_{\rho} = \omega_{\rho,0}$, and $\Psi_{\rho} = \Psi_{\rho,0}$. 

    We may also use~\eqref{conv_2} from Proposition~\ref{prop on limit identification} in conjunction with the convergences~\eqref{strong convergences} and the vorticity identity~\eqref{unfortunately this needs to be cited} to pass to the limit in the space of tempered distributions and deduce
    \begin{equation}\label{limiting vorticity function}
        \omega_{\rho,\Qoppa} = \sum_{k=-M}^M\sig_k\mathds{1}_{(0,\infty)}\tp{-\Psi_{\rho,\Qoppa} - k\rho}\text{ a.e. in }U,\;\omega_{\rho,\Qoppa} = 1\text{ a.e. in }\Bar{U^{\m{in}}},\;\omega_{\rho,\Qoppa} = 0\text{ a.e. in }\Bar{U^{\m{out}}}.
    \end{equation}

    We now turn our attention to the proof of the second item. Define the sequence of open sets $\tcb{W^k_\rho}_{k=-M}^M$ via
    \begin{equation}\label{definition of the W sets}
        W^k_\rho = \tcb{x\in U\;:\;\Psi_\rho(x) + k\rho<0}\cup\Bar{U^{\m{in}}}\text{ for all }k\in\tcb{-M,\dots,M}.
    \end{equation}
    The monotone inclusion relations~\eqref{the monotone inclusion relations} as well as the vorticity identities~\eqref{the vorticity cake} and~\eqref{other vorticity cake} are now immediate from~\eqref{limiting vorticity function} and~\eqref{definition of the W sets}. As $\Psi_\rho$ is class $\bigcap_{0<\al<1}C^{1,\al}$ with $\min_{\Bar{U}}\tabs{\grad\Psi_\rho}>0$ (see the first item of Lemma~\ref{lem on regularized hevi} and~\eqref{strong convergences}) we deduce from the identity
    \begin{equation}
        \pd W^k_\rho = \tcb{x\in U\;:\;\Psi_{\rho}(x) + k\rho = 0}
    \end{equation}
    and the implicit function theorem that $\pd W^k_\rho$ is class $\bigcap_{0<\al<1}C^{1,\al}$ as well.

    The distance estimate~\eqref{the distance estimates for the set boundaries} follows from the following observation. Fix $k\in\tcb{-M,\dots,M-1}$ and let $x\in\pd W^k_\rho$, $y\in\pd W^{k+1}_\rho$ satisfy
    \begin{equation}
        |x - y| = \m{dist}\tp{\pd W^{k}_{\rho},\pd W^{k+1}_\rho}.
    \end{equation}
    By the fundamental theorem of calculus, the lower bound
    \begin{equation}
        \rho = \Psi_\rho(x) - \Psi_{\rho}(y)\le\tnorm{\grad\Psi_\rho}_{L^\infty\tp{\R^2}}\tabs{x - y}\le C\cdot \m{dist}\tp{\pd W^{k}_{\rho},\pd W^{k+1}_\rho}
    \end{equation}
    holds.

    With the first, second, fourth, and fifth items of the theorem established, we complete the proof by justifying the third item and the limit~\eqref{THM_2_LIMS}. The first and third estimates of~\eqref{estimates of the fourth item} are immediate from the strong convergences~\eqref{strong convergences} and estimate~\eqref{estimates on the stream functions and vorticity in BV}. The claimed total variation estimate
    \begin{equation}\label{they_charged_him}
        \tabs{\tnorm{\grad\omega_\rho}_{\m{TV}\tp{\R^2}} - \tnorm{\grad\mathds{1}_{\tilde{U}}}_{\m{TV}\tp{\R^2}}}\le C\rho\log\tp{1/\rho}
    \end{equation}
    follows by an argument similar to that used in equations~\eqref{_label_1} and~\eqref{_label_2} in the proof of Corollary~\ref{coro on existence and estimates for the regularized equation}. The limit~\eqref{THM_2_LIMS} is again a consequence of~\eqref{they_charged_him}, interpolation, and a uniform bound on $\grad\Psi_\rho$ in $\m{LL}\tp{\R^2}$. We omit the repetitive details.
\end{proof}

\begin{proof}[Proof of Theorem~\ref{thm_main_3}]
    Most of the work was already carried out in the first step of the proof of Theorem~\ref{thm_main_2}. Recall that we took the limit $\ep\to0$ in Corollary~\ref{coro on existence and estimates for the regularized equation} and obtained the functions $\Psi_{\rho,\Qoppa}$, $\omega_{\rho,\Qoppa}$ of equation~\eqref{the result of the compactness argument}, which are defined for $0\le\rho\le\rho_\star$ and $0\le\Qoppa\le\Qoppa_\star$. We take $R_\star = \Qoppa_\star^{-1/2}$ and for $R_\star\le R<\infty$ define
    \begin{equation}
        \omega_R = \omega_{0,R^{-2}}\in\tp{L^\infty\cap\m{BV}}_{\tp{m},\m{c}}\tp{B(0,R/2)}\text{ and }\Psi_R = \Psi_{0,R^{-2}}\in\m{LL}_{\tp{m}}^1\tp{\Bar{B(0,R)}}.
    \end{equation}

    The satisfaction of the first and fourth items of the theorem are an immediate consequences of the proof of Theorem~\ref{thm_main_2}, specifically the paragraph following~\eqref{stream function vorticity limit 3}, since it was shown that $\tp{\Omega,\Psi_{\rho,\Qoppa},\omega_{\rho,\Qoppa}}$ is a steady rotating weak solution in the domain $\Bar{B(0,\Qoppa^{-1/2})}$. We require here only the special case of $\rho = 0$ and $\Qoppa = R^{-2}$.

    The proof of the second item is similar to that for the corresponding item of Theorem~\ref{thm_main_2}. We define
    \begin{equation}
        W_R = \tcb{x\in U\;:\;\Psi_R\tp{x}<0}\cup\Bar{U^{\m{in}}}.
    \end{equation}
    The proof of Lemma~\ref{lem on regularized hevi} guarantees the existence of $C_1\le R_{\star}$ such that $W_R\subset B(0,C_1)$. As in the proof of Theorem~\ref{thm_main_2} we observe also that
    \begin{equation}
        \pd W_R = \tcb{x\in U\;:\;\Psi_R(x) = 0}\text{ and }\min_{\Bar{U}}\tabs{\grad\Psi_R}>0
    \end{equation}
    from which it follows that $\pd W_R$ is class $\bigcap_{0<\al<1}C^{1,\al}$.

    The proofs of the estimate~\eqref{the convergence rate} and the limit~\eqref{THM_3_LIMS} are extremely similar to that of~\eqref{estimates of the fourth item} and~\eqref{THM_2_LIMS} in the proof of Theorem~\ref{thm_main_2}. We again omit the repetitive details.
\end{proof}

\begin{proof}[Proof of Theorem~\ref{thm_main_4}]
    The first item is Theorem~\ref{thm on admissibility of Kirchhoff}. The second item is Theorem~\ref{thm on admissibility of rankine vortex perturbations}.
\end{proof}

% ***(((***(((***(()%!% ***(((***(((***(()%!% ***(((***(((***(()%!% ***(((***(((***(()%!% ***(((***(((***(()%!% ***(((***(((***(()%!% ***(((***(((***(()%!% ***(((***(((***(()%!% ***(((***(((***(()%!% ***(((***(((***(()%!% ***(((***(((***(()%!% ***(((***(((***(()%!% ***(((***(((***(()%!% ***(((***(((***(()%!% ***(((***(((***(()%!% ***(((***(((***(()%!% ***(((***(((***(()%!% ***(((***(((***(()%!% ***(((***(((***(()%!% ***(((***(((***(()%!
\appendix
% ***(((***(((***(()%!% ***(((***(((***(()%!% ***(((***(((***(()%!% ***(((***(((***(()%!% ***(((***(((***(()%!% ***(((***(((***(()%!% ***(((***(((***(()%!% ***(((***(((***(()%!% ***(((***(((***(()%!% ***(((***(((***(()%!% ***(((***(((***(()%!% ***(((***(((***(()%!% ***(((***(((***(()%!% ***(((***(((***(()%!% ***(((***(((***(()%!% ***(((***(((***(()%!% ***(((***(((***(()%!% ***(((***(((***(()%!% ***(((***(((***(()%!% ***(((***(((***(()%!

% ***(((***(((***(()%!% ***(((***(((***(()%!% ***(((***(((***(()%!% ***(((***(((***(()%!% ***(((***(((***(()%!% ***(((***(((***(()%!% ***(((***(((***(()%!% ***(((***(((***(()%!% ***(((***(((***(()%!% ***(((***(((***(()%!% ***(((***(((***(()%!% ***(((***(((***(()%!% ***(((***(((***(()%!% ***(((***(((***(()%!% ***(((***(((***(()%!% ***(((***(((***(()%!% ***(((***(((***(()%!% ***(((***(((***(()%!% ***(((***(((***(()%!% ***(((***(((***(()%!
\section{Tools from analysis}\label{appendix on tools from analysis}
% ***(((***(((***(()%!% ***(((***(((***(()%!% ***(((***(((***(()%!% ***(((***(((***(()%!% ***(((***(((***(()%!% ***(((***(((***(()%!% ***(((***(((***(()%!% ***(((***(((***(()%!% ***(((***(((***(()%!% ***(((***(((***(()%!% ***(((***(((***(()%!% ***(((***(((***(()%!% ***(((***(((***(()%!% ***(((***(((***(()%!% ***(((***(((***(()%!% ***(((***(((***(()%!% ***(((***(((***(()%!% ***(((***(((***(()%!% ***(((***(((***(()%!% ***(((***(((***(()%!

% ***(((***(((***(()%!% ***(((***(((***(()%!% ***(((***(((***(()%!% ***(((***(((***(()%!% ***(((***(((***(()%!% ***(((***(((***(()%!% ***(((***(((***(()%!% ***(((***(((***(()%!% ***(((***(((***(()%!% ***(((***(((***(()%!% ***(((***(((***(()%!% ***(((***(((***(()%!% ***(((***(((***(()%!% ***(((***(((***(()%!% ***(((***(((***(()%!% ***(((***(((***(()%!% ***(((***(((***(()%!% ***(((***(((***(()%!% ***(((***(((***(()%!% ***(((***(((***(()%!
\subsection{Uniform local level set coordinates}\label{appendix on uniform local level set coordinates}
% ***(((***(((***(()%!% ***(((***(((***(()%!% ***(((***(((***(()%!% ***(((***(((***(()%!% ***(((***(((***(()%!% ***(((***(((***(()%!% ***(((***(((***(()%!% ***(((***(((***(()%!% ***(((***(((***(()%!% ***(((***(((***(()%!% ***(((***(((***(()%!% ***(((***(((***(()%!% ***(((***(((***(()%!% ***(((***(((***(()%!% ***(((***(((***(()%!% ***(((***(((***(()%!% ***(((***(((***(()%!% ***(((***(((***(()%!% ***(((***(((***(()%!% ***(((***(((***(()%!

In this appendix we give the proof of Lemma~\ref{lem on uniform local level set coordinates} and construct the uniform local level set coordinates. Throughout the proof we shall employ the following bracket notation. For any $d\in\N^+$ the function $\tbr{\cdot}:\C^d\to\R^+$ has the action
\begin{equation}\label{Japanese_Bracket}
    \tbr{z_1,\dots,z_d} = \tp{1 + \tabs{z_1}^2 + \dots + \tabs{z_d}^2}^{1/2}.
\end{equation}

\begin{proof}[Proof of Lemma~\ref{lem on uniform local level set coordinates}]
        We define the following positive values
    \begin{multline}\label{the small positive values}
        \del_+ = \m{dist}\tp{\Sigma,\pd U}/4,\;\upmu = \min_{\Sigma}\tp{\nu\cdot\grad\Psi},\;\upchi = \exp\tp{\max_{x\in\Bar{U}\setminus\Sigma}\tabs{\log\tabs{\Psi(x)/\m{dist}\tp{x,\Sigma}}}},\\\uplambda = \tp{\min\tcb{1,\upmu,\delta_+}/20\tbr{\tnorm{\Psi}_{C^{1,1/2}\tp{\Bar{U}}}}},\;L = \uplambda^2,\;\ell = \uplambda^3,\text{ and }\tilde{r}_0 = \tp{1/2\upchi}\uplambda^5.
    \end{multline}
    These determine the following complete metric spaces
    \begin{equation}
        \mathcal{X} = \tcb{g\in C^0\tp{\Bar{\ell Q}}\;:\;\tnorm{g}_{C^0\tp{\Bar{\ell Q}}}\le L}\text{ and }\mathcal{B} = \tcb{\psi\in C^1\tp{\Bar{U}}\;:\;\tnorm{\psi}_{C^1\tp{\Bar{U}}}\le2\tilde{r}_0}.
    \end{equation}

    Now let $x\in\Sigma$. It must be the case that either $\tabs{\pd_1\Psi(x)}\ge\tabs{\grad\Psi(x)}/2$ or $\tabs{\pd_2\Psi(x)}\ge\tabs{\grad\Psi(x)}/2$. Let us suppose that the latter inequality is satisfied. The construction that follows can be modified in the obvious way for points $x$ obeying the alternative inequality.

    Let us define the mapping $\aleph_x:\mathcal{B}\times\mathcal{X}\to\mathcal{X}$ via
    \begin{equation}\label{definition of the map we want to contract}
        \tsb{\aleph_x(\psi,g)}(y) = -\tp{1/\pd_2\Psi(x)}\tp{\Psi(x + \tp{y_1,g(y)}) - \Psi(x) - \grad\Psi(x)\cdot\tp{y_1,g(y)} + \psi(x + \tp{y_1,g(y)}) + y_1\pd_1\Psi(x) - y_2}
    \end{equation}
    for all $\tp{\psi,g}\in\mathcal{B}\times\mathcal{X}$ and $y\in\Bar{\ell Q}$. The values $\ell$, $L$, and $\tilde{r}_0$ from equation~\eqref{the small positive values} are chosen much more conservatively than what is required to ensure that $\aleph_x$ is well defined in the sense that the compositions in its definition are permissible and it maps into the stated codomain in the sense that $\tnorm{\aleph_x\tp{\psi,g}}_{C^0\tp{\Bar{\ell Q}}}\le L$. Furthermore, this selection of parameters also permit the following contraction and Lipschitz estimates:
    \begin{equation}
        \tnorm{\aleph_x\tp{\psi,g} - \aleph_x\tp{\psi,\tilde{g}}}_{C^0\tp{\Bar{\ell Q}}}\le\tp{2/3}\tnorm{g - \tilde{g}}_{C^0\tp{\Bar{\ell Q}}},\;\tnorm{\aleph_x\tp{\psi,g} - \aleph_x\tp{\tilde{\psi},g}}_{C^{0}\tp{\Bar{\ell Q}}}\le\tp{2/\upmu}\tnorm{\psi - \tilde{\psi}}_{C^0\tp{\Bar{U}}}.
    \end{equation}
    for all $\psi,\tilde{\psi}\in\mathcal{B}$ and all $g,\tilde{g}\in\mathcal{X}$. The contraction mapping principle with parameter thus applies (see, for instance, Theorem C.7 in Irwin~\cite{MR1867353}) and we are assured the existence of a Lipschitz continuous mapping $\bf{g}_x:\mathcal{B}\to\mathcal{X}$ that satisfies
    \begin{equation}\label{the fixed point identity}
        \bf{g}_x(\psi) = \aleph_x\tp{\psi,\bf{g}_x\tp{\psi}},\;\text{for all }\psi\in\mathcal{B}.
    \end{equation}
    By rearrangement of identity~\eqref{definition of the map we want to contract}, we compute that the above fixed point~\eqref{the fixed point identity} satisfies the level set parametrization identity
    \begin{equation}\label{the level set parametrization identity}
        \tp{\Psi + \psi}\tp{x + \tp{y_1,\tsb{\bf{g}_x\tp{\psi}}\tp{y}}} = y_2,
    \end{equation}
    whenever $\psi\in\mathcal{B}$ and $y\in\Bar{\ell Q}$. By using~\eqref{the level set parametrization identity} we can perform a standard argument via difference quotients to deduce that $\bf{g}_x\tp{\psi}\in C^1\tp{\Bar{\ell Q}}$ and its partial derivatives satisfy
    \begin{equation}\label{partial derivative identities}
        \pd_1\tsb{\bf{g}_x\tp{\psi}}\tp{y} = -\f{\pd_1\tp{\Psi + \psi}\tp{x + \tp{y_1,\tsb{\bf{g}_x\tp{\psi}}\tp{y}}}}{\pd_2\tp{\Psi + \psi}\tp{x + \tp{y_1,\tsb{\bf{g}_x\tp{\psi}}\tp{y}}}}\text{ and }\pd_2\tsb{\bf{g}_x\tp{\psi}}\tp{y} = \f{1}{\pd_2\tp{\Psi + \psi}\tp{x + \tp{y_1,\tsb{\bf{g}_x\tp{\psi}}\tp{y}}}}
    \end{equation}
    for all $y\in\Bar{\ell Q}$. Identities~\eqref{partial derivative identities} readily imply the estimates
    \begin{equation}\label{estimates on the derivative}
        \tnorm{\pd_1\tsb{\bf{g}_x\tp{\psi}}}_{C^0\tp{\Bar{\ell Q}}}\le\f{5}{2\upmu}\tnorm{\grad\Psi}_{C^0\tp{\Bar{U}}},\;\tnorm{\tp{\pd_2\tsb{\bf{g}_x\tp{\psi}}}^{-1}}_{C^0\tp{\Bar{U}}}\le\f{10}{9}\tnorm{\grad\Psi}_{C^0\tp{\Bar{U}}},\;\tnorm{\pd_2\tsb{\bf{g}_x\tp{\psi}}}_{C^0\tp{\Bar{U}}}\le\f{9}{4\upmu};
    \end{equation}
    That the map $\mathcal{B}\ni\psi\mapsto\bf{g}_x\tp{\psi}\in C^1\tp{\Bar{\ell Q}}$ is continuous can also be deduced from~\eqref{partial derivative identities}.

    From the fixed point functions above we now construct a continuous family of diffeomorphisms. Define $\bf{G}_x:\mathcal{B}\to C^1\tp{\Bar{\ell Q};\R^2}$ via
    \begin{equation}\label{the diffeo defn}
        \tsb{\bf{G}_x\tp{\psi}}(y) = x + \tp{y_1,\tsb{\bf{g}_x\tp{\psi}}\tp{y}}\text{ for }y\in\Bar{\ell Q},\;\psi\in\mathcal{B}.
    \end{equation}
    An elementary calculation using~\eqref{partial derivative identities} and the graphical structure of $\bf{G}_x\tp{\psi}$ shows that the map $\bf{G}_x\tp{\psi}:\Bar{\ell Q}\to\R^2$ is a diffeomorphism onto its image; moreover, $\det\grad\tsb{\bf{G}_x\tp{\psi}} = \pd_2\tsb{\bf{g}_x\tp{\psi}}>0$ and we have the derivative estimates for every $y\in\Bar{\ell Q}$
    \begin{equation}
        \tabs{\grad\tsb{\bf{G}_x\tp{\psi}}\tp{y}} + \tabs{\tp{\grad\tsb{\bf{G}_x\tp{\psi}}}^{-1}\tp{y}}\le R\text{ for }R = \tp{10/\min\tcb{1,\upmu}}\tbr{\tnorm{\grad\Psi}_{C^0\tp{\Bar{U}}}},
    \end{equation}
    where the $-1$ exponent above refers to matrix inversion. We also compute from the formulae~\eqref{partial derivative identities} that
    \begin{equation}
        \tabs{\pd_1\tsb{\bf{G}_x\tp{\psi}}\tp{y}} = \tp{1 + \tabs{\pd_1\tsb{\bf{g}_x\tp{\psi}}\tp{y}}^2}^{1/2} = \tabs{\grad\tp{\Psi + \psi}\tp{\tsb{\bf{G}_x\tp{\psi}}\tp{y}}}\det\grad\tsb{\bf{G}_x\tp{\psi}}\tp{y}.
    \end{equation}

    We now need to find a universal radius for a ball centered at $x$ which remains a subset of the images of the family of diffeomorphisms constructed above. We claim that we can take
    \begin{equation}\label{the ball man}
        \del_- = \ell\min\tcb{1,\upmu}/40\tbr{\tnorm{\Psi}_{C^{1,1/2}\tp{\Bar{U}}}}^2\text{ and satisfy }B(x,2\del_-)\subset\tsb{\bf{G}_x\tp{\psi}}\tp{\Bar{\ell Q}}
    \end{equation}
    for all $\psi\in\mathcal{B}$. The first thing we need to estimate is the points $\tsb{\bf{g}_x\tp{\psi}}\tp{0}\in\R$. From the fixed point identity~\eqref{the fixed point identity} and the definition~\eqref{definition of the map we want to contract} we find that 
    \begin{equation}\label{for what is a man}
        \tabs{\tsb{\bf{g}_x\tp{\psi}}\tp{0}}\le\tp{2/\upmu}\tp{\tnorm{\grad\Psi}_{C^{1/2}\tp{\Bar{U}}}\tabs{\tsb{\bf{g}_x\tp{\psi}}\tp{0}}^{3/2} + 2\tilde{r}_0}.
    \end{equation}
    On the other hand we know that $\tabs{\tsb{\bf{g}_x\tp{\psi}}(0)}^{1/2}\le L^{1/2} = \uplambda$. So inserting this bound into the right hand side of the above~\eqref{for what is a man} and solving for $\tabs{\tsb{\bf{g}_x\tp{\psi}}(0)}$ gives:
    \begin{equation}\label{location of the zero}
        \tabs{\tsb{\bf{g}_x\tp{\psi}}(0)}\le\f{(4/\upmu)\tilde{r}_0}{1 - (2/\upmu)\tnorm{\grad\Psi}_{C^{1/2}\tp{\Bar{U}}} L^{1/2}}\le\f{20}{9\upmu\upchi}\uplambda^5.
    \end{equation}

    The next ingredient is a deviation estimate. We use, for each fixed $y_1\in[-\ell,\ell]$, the fundamental theorem of calculus applied to the map $[-\ell,\ell]\ni y_2\mapsto\tsb{\bf{g}_x\tp{\psi}}\tp{y_1,y_2} - \tsb{\bf{g}_x\tp{\psi}}\tp{y_1,0}\in\R$ in conjunction with the positive lower bound on $\pd_2\tsb{\bf{g}_x\tp{\psi}}$ from~\eqref{estimates on the derivative}; doing this gives the inclusion
    \begin{equation}\label{intermediate inclusion}
        \tsb{\bf{G}_x\tp{\psi}}\tp{\Bar{\ell Q}} - x \supseteq\tcb{\tp{y_1,h + \tsb{\bf{g}_x\tp{\psi}}\tp{y_1,0}}\;:\;|y_1|\le\ell,\;|h|\le 9\ell/10\tnorm{\grad\Psi}_{C^0\tp{\Bar{U}}}}.
    \end{equation}
    The set on the right hand side of~\eqref{intermediate inclusion} is the region between the graphs of Lipschitz functions. Therefore we can deduce, for the values $h_\star = 9\ell/10\tnorm{\grad\Psi}_{C^0\tp{\Bar{U}}}$ and $s_\star = 5\tnorm{\grad\Psi}_{C^0\tp{\Bar{U}}}/2\upmu$, the following polygonal region inclusion:
    \begin{equation}\label{intermediate inclusion 2}
        \tsb{\bf{G}_x\tp{\psi}}\tp{\Bar{\ell Q}} - x - \tp{0,\tsb{\bf{g}_x\tp{\psi}}\tp{0}} \supseteq\tcb{y\in\R^2\;:\;|y_1|\le\ell,|y_2|\le h_\star - s_\star|y_1|}.
    \end{equation}
    Elementary planar geometry now permits us to conclude that the right hand side polygonal region above contains the ball centered at the origin of radius
    \begin{equation}\label{updelta lower bound}
        \updelta = \min\tcb{\ell,h_\star/\tbr{s_\star}} \ge 9\ell\min\tcb{1,\upmu}/50\tbr{\tnorm{\Psi}_{C^{1,1/2}\tp{\Bar{U}}}}^2.
    \end{equation}
    By synthesizing equations~\eqref{location of the zero}, \eqref{intermediate inclusion 2}, and~\eqref{updelta lower bound} with the estimate
    \begin{equation}
        \updelta - \tabs{[\bf{g}_x\tp{\psi}]\tp{0}}\ge\tp{157/900}\tp{\ell\min\tcb{1,\upmu}/\tbr{\tnorm{\Psi}_{C^{1,1/2}\tp{\Bar{U}}}}^2}\ge 2\del_-,
    \end{equation}
    we conclude that the inclusion asserted in~\eqref{the ball man} indeed holds.

    We are now ready to complete the construction of the uniform local level set coordinates. The set $\tilde{\Sigma} = \tcb{x\in\Bar{U}\;:\;\m{dist}(x,\Sigma)\le\del_-}$ is compact and is covered by the collection of open balls $\tcb{B(x,2\del_-)}_{x\in\Sigma}$ and so there exists $N\in\N$ and a finite collection of points $\tcb{x_i}_{i=1}^N\subset\Sigma$ with $\tilde{\Sigma}\subset\bigcup_{i=1}^NB(x_i,2\del_-)$. 
    
    Define $\tilde{\ep}_0 = \min\tcb{\ep_0,\tilde{r}_0}$. If $0<\ep\le\tilde{\ep}_0$, $\psi\in\mathcal{B}$, and $x\in\Bar{U}$ is such that $\tabs{\Psi(x) + \psi(x)}\le\ep$, then
    \begin{equation}
        \tabs{\Psi(x)}\le3\tilde{r}_0 = \f{3\ell}{2\upchi}\uplambda^2<\f{\del_-}{\upchi}\imp\m{dist}\tp{x,\Sigma}<\delta_-\imp x\in\bigcup_{i=1}^NB(x_i,2\del_-).
    \end{equation}
    The first item is now shown.

    For the second item we simply define for $\psi\in\mathcal{B}$ and $i\in\tcb{1,\dots,N}$ the map $G^\psi_i\in C^1\tp{\Bar{\ell Q};\R^2}$ via $G^\psi_i = \bf{G}_{x_i}\tp{\psi}$ where the latter map is defined in~\eqref{the diffeo defn}. The claimed properties~\eqref{p_1}, \eqref{p_2}, \eqref{p_3}, \eqref{p_4}, and~\eqref{p_5} along with the third item follow from the previous construction and elementary calculations.
\end{proof}

% ***(((***(((***(()%!% ***(((***(((***(()%!% ***(((***(((***(()%!% ***(((***(((***(()%!% ***(((***(((***(()%!% ***(((***(((***(()%!% ***(((***(((***(()%!% ***(((***(((***(()%!% ***(((***(((***(()%!% ***(((***(((***(()%!% ***(((***(((***(()%!% ***(((***(((***(()%!% ***(((***(((***(()%!% ***(((***(((***(()%!% ***(((***(((***(()%!% ***(((***(((***(()%!% ***(((***(((***(()%!% ***(((***(((***(()%!% ***(((***(((***(()%!% ***(((***(((***(()%!
\subsection{A logarithmic interpolation inequality}\label{appendix on an interpolation inequality}
% ***(((***(((***(()%!% ***(((***(((***(()%!% ***(((***(((***(()%!% ***(((***(((***(()%!% ***(((***(((***(()%!% ***(((***(((***(()%!% ***(((***(((***(()%!% ***(((***(((***(()%!% ***(((***(((***(()%!% ***(((***(((***(()%!% ***(((***(((***(()%!% ***(((***(((***(()%!% ***(((***(((***(()%!% ***(((***(((***(()%!% ***(((***(((***(()%!% ***(((***(((***(()%!% ***(((***(((***(()%!% ***(((***(((***(()%!% ***(((***(((***(()%!% ***(((***(((***(()%!

\begin{lemC}\label{lem on interpolation}
    Let $U\subset\R^d$ be a smooth bounded domain. There exists constants $C,c\in\R^+$ such that the following hold for all $f\in\m{LL}^1\tp{\Bar{U}}$:
    \begin{enumerate}
        \item We have $\tnorm{f}_{\m{LL}\tp{\Bar{U}}}\le c\tnorm{f}_{\m{LL}^{1}\tp{\Bar{U}}}$.
        \item We have
        \begin{equation}\label{log-lip interpolation estimates__}
            \tnorm{f}_{C^1\tp{\Bar{U}}}\le C\tnorm{f}_{\m{LL}\tp{\Bar{U}}}\tp{1 + \log\tp{c\tnorm{f}_{\m{LL}^1\tp{\Bar{U}}}/\tnorm{f}_{\m{LL}\tp{\Bar{U}}}}}.
        \end{equation}
    \end{enumerate}
\end{lemC}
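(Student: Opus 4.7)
The plan is to dispatch item (1) with a direct mean value argument and then derive item (2) through a Taylor expansion with an optimally-chosen scale. For item (1), I would use the fact that a smooth bounded domain $\Bar U$ is quasi-convex: there exists $C_U\in\R^+$ such that any two points of $\Bar U$ are joined by a rectifiable arc in $\Bar U$ of length at most $C_U|x-y|$. Coupling this with the mean value theorem yields $|f(x) - f(y)| \leq C_U\tnorm{\nabla f}_{C^0(\Bar U)}|x-y|$ for $f \in C^1(\Bar U)$, and upon dividing by $|x-y|(1+|\log|x-y||) \geq |x-y|$ one obtains $\tnorm{f}_{\m{LL}(\Bar U)} \leq (1+C_U)\tnorm{f}_{C^1(\Bar U)} \leq (1+C_U)\tnorm{f}_{\m{LL}^1(\Bar U)}$, fixing the value of $c$.

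For item (2), since $\tnorm{f}_{C^0(\Bar U)} \leq \tnorm{f}_{\m{LL}(\Bar U)}$ trivially, the task reduces to bounding $\tnorm{\nabla f}_{C^0(\Bar U)}$ in the desired logarithmic-interpolation form. The central identity, valid for a unit vector $e$ and a scale $\delta \in (0,\delta_0]$ whenever the line segment $[x, x+\delta e]$ lies in $\Bar U$, is
\[
\nabla f(x) \cdot e = \frac{f(x+\delta e) - f(x)}{\delta} - \frac{1}{\delta}\int_0^\delta \bigl(\nabla f(x+te) - \nabla f(x)\bigr)\cdot e\;\m{d}t.
\]
I would estimate the first term by $\tnorm{f}_{\m{LL}(\Bar U)}(1+|\log\delta|)$ directly from the log-Lipschitz definition, and the integral term using $|\nabla f(x+te) - \nabla f(x)| \leq \tnorm{f}_{\m{LL}^1(\Bar U)}\,t(1+|\log t|)$ together with the elementary bound $\int_0^\delta t(1+|\log t|)\;\m{d}t \leq C\delta^2(1+|\log\delta|)$, producing an overall contribution of order $\tnorm{f}_{\m{LL}^1(\Bar U)}\,\delta(1+|\log\delta|)$. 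Choosing $\delta$ to balance the two contributions -- that is, $\delta = \min\{\delta_0,\, \tnorm{f}_{\m{LL}(\Bar U)}/\tnorm{f}_{\m{LL}^1(\Bar U)}\}$ -- yields the stated estimate; in the degenerate regime where the minimum is $\delta_0$, the quotient $\tnorm{f}_{\m{LL}^1}/\tnorm{f}_{\m{LL}}$ is bounded, so the conclusion reduces to item (1) up to a multiplicative constant.

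The main obstacle I anticipate is ensuring that the Taylor identity can be applied at boundary points, where an arbitrary direction $e$ need not keep the segment $[x, x+\delta e]$ inside $\Bar U$. I would resolve this through the uniform interior cone condition guaranteed by smoothness of $U$: there exist $r_0, \theta_0 \in \R^+$ such that every $x \in \Bar U$ anchors a cone of aperture $\theta_0$ and height $r_0$ contained in $\Bar U$. Selecting $d$ directions $e^1,\dots,e^d$ inside such a cone that form a basis of $\R^d$ with condition number depending only on $\theta_0$, I would apply the pointwise estimate along each $e^j$ and then invert the resulting linear system to recover $|\nabla f(x)|$ with the same logarithmic bound (up to an additional geometric constant). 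Alternatively, an appeal to a Stein-type bounded extension operator $E:\m{LL}^1(\Bar U) \to \m{LL}^1(\R^d)$, whose existence for smooth bounded domains is standard, would reduce the problem to the case $U = \R^d$ and sidestep the boundary geometry entirely.
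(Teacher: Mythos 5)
Your proof is correct. Both items check out: the quasi-convexity plus mean-value argument handles item (1), the directional Taylor identity with optimized scale $\delta$ produces the logarithmic interpolation, and the boundary difficulty is legitimately dispatched either via the interior cone condition or (your alternative) a Stein-type extension. However, your route to item (2) differs from the paper's in the choice of key identity. The paper reduces immediately to $U=\R^d$ via extension, then represents $\grad f(x)$ using a $d$-dimensional mean-value/divergence-theorem identity over a ball $B(x,r)$, namely an average over $\pd B(x,r)$ of $f(y)-f(x)$ plus an average over $B(x,r)$ of $\grad f(x)-\grad f(y)$, and optimizes the radius $r$. You instead use the one-dimensional fundamental theorem of calculus along a single direction $e$, $\nabla f(x)\cdot e = \delta^{-1}(f(x+\delta e)-f(x)) - \delta^{-1}\int_0^\delta (\nabla f(x+te)-\nabla f(x))\cdot e\,\m{d}t$, and recover the full gradient by combining $d$ cone directions forming a well-conditioned basis. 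Both decompositions split $\grad f(x)$ into a low-regularity term of size $\tnorm{f}_{\m{LL}}(1+\tabs{\log\delta})$ and a high-regularity remainder of size $\tnorm{f}_{\m{LL}^1}\delta(1+\tabs{\log\delta})$, and the optimization step (balancing at $\delta\sim\tnorm{f}_{\m{LL}}/\tnorm{f}_{\m{LL}^1}$) is identical. The paper's ball-averaging identity avoids the cone-direction bookkeeping entirely at the cost of invoking the extension operator, whereas your directional approach is more elementary and can be carried out intrinsically on $\Bar U$; your treatment of item (1) via quasi-convexity rather than extension is likewise more self-contained than the paper's.
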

\begin{proof}
    Since $U$ is a domain with $C^{1,1}$ boundary, the extension operator $\mathfrak{E}$ constructed for Theorem 5 of Chapter 6 in Stein~\cite{MR290095} can be shown to map boundedly
    \begin{equation}\label{the stein extension operator}
        \mathfrak{E}:X\tp{\Bar{U}}\to X\tp{\R^2}\cap\tcb{f\;:\;\m{supp}\tp{f}\subset B(0,R)}\text{ for }X = \tcb{\m{LL},C^1,\m{LL}^1},
    \end{equation}
    for some fixed $R<\infty$ chosen sufficiently large depending on $U$. By using $\mathfrak{E}$ appropriately, we can reduce to proving the first and second items for the case $U = \R^2$; in this case the first item is trivial.

    To prove the second item (when $U = \R^2$) and $f\in\m{LL}^1\tp{\Bar{U}}\setminus\tcb{0}$, we let $0<r<1/e$ and begin with the following pointwise formula for $x\in\R^2$:
    \begin{equation}
        \grad f(x) = -\f{1}{\pi r^2}\int_{\pd B(x,r)}\tp{f(y) - f(x)}\f{y - x}{|y - x|}\;\m{d}y + \f{1}{\pi r^2}\int_{B(x,r)}\tp{\grad f(x) - \grad f(y)}\;\m{d}y.
    \end{equation}
    In turn, we arrive at the pointwise estimate
    \begin{equation}\label{intermediate bound}
        \tabs{\grad f(x)}\le2\log\tp{1/r}\tnorm{f}_{\m{LL}\tp{\R^2}} + r\log(1/r)\tnorm{f}_{\m{LL}^1\tp{\R^2}}.
    \end{equation}
    The estimate~\eqref{log-lip interpolation estimates__} follows from~\eqref{intermediate bound} by selecting
    \begin{equation}
        r = \min\tcb{1/e,\tnorm{f}_{\m{LL}\tp{\R^2}}/c\tnorm{f}_{\m{LL}^1\tp{\R^2}}}.
    \end{equation}
\end{proof}

% ***(((***(((***(()%!% ***(((***(((***(()%!% ***(((***(((***(()%!% ***(((***(((***(()%!% ***(((***(((***(()%!% ***(((***(((***(()%!% ***(((***(((***(()%!% ***(((***(((***(()%!% ***(((***(((***(()%!% ***(((***(((***(()%!% ***(((***(((***(()%!% ***(((***(((***(()%!% ***(((***(((***(()%!% ***(((***(((***(()%!% ***(((***(((***(()%!% ***(((***(((***(()%!% ***(((***(((***(()%!% ***(((***(((***(()%!% ***(((***(((***(()%!% ***(((***(((***(()%!

\section*{Acknowledgments}

The authors would like to express their gratitude to Peter Constantin, Javier G\'{o}mez-Serrano, and Miles Wheeler, for helpful discussions, comments, and suggestions.

% \section*{Statements and declarations}

% \noindent\textbf{Data availability statement:} No datasets were generated or analyzed as a part of this research.

% \noindent\textbf{Conflict of interest statement:} There is no conflict of interest in this document.

% ***(((***(((***(()%!% ***(((***(((***(()%!% ***(((***(((***(()%!% ***(((***(((***(()%!% ***(((***(((***(()%!% ***(((***(((***(()%!% ***(((***(((***(()%!% ***(((***(((***(()%!% ***(((***(((***(()%!% ***(((***(((***(()%!% ***(((***(((***(()%!% ***(((***(((***(()%!% ***(((***(((***(()%!% ***(((***(((***(()%!% ***(((***(((***(()%!% ***(((***(((***(()%!% ***(((***(((***(()%!% ***(((***(((***(()%!% ***(((***(((***(()%!% ***(((***(((***(()%!
\bibliographystyle{abbrv}
\bibliography{bib.bib}
% ***(((***(((***(()%!% ***(((***(((***(()%!% ***(((***(((***(()%!% ***(((***(((***(()%!% ***(((***(((***(()%!% ***(((***(((***(()%!% ***(((***(((***(()%!% ***(((***(((***(()%!% ***(((***(((***(()%!% ***(((***(((***(()%!% ***(((***(((***(()%!% ***(((***(((***(()%!% ***(((***(((***(()%!% ***(((***(((***(()%!% ***(((***(((***(()%!% ***(((***(((***(()%!% ***(((***(((***(()%!% ***(((***(((***(()%!% ***(((***(((***(()%!% ***(((***(((***(()%!
\end{document}